\definecolor{myblue}{RGB}{0,29,119}
\newtheorem{theorem}{Theorem}[section]
\newtheorem{proposition}[theorem]{Proposition}
\newtheorem{corollary}[theorem]{Corollary}
\newtheorem{lemma}[theorem]{Lemma}
\theoremstyle{definition}
\newtheorem{definition}[theorem]{Definition}
\newtheorem{example}[theorem]{Example}
\newtheorem{conjecture}[theorem]{Conjecture}
\newtheorem{remark}[theorem]{Remark}
\newtheorem{notation}[theorem]{Notation}
\newtheorem{claim}[theorem]{Claim}
\newtheorem{problem}[theorem]{Problem}
\newtheorem*{theorem*}{Theorem}
\newcommand{\Zz}{{\mathbb Z}} 
\newcommand{\Aa}{{\mathbb A}}
\newcommand{\Kk}{{\mathbb K}}
\DeclareMathOperator{\End}{End}
\DeclareMathOperator{\coker}{coker}
\DeclareMathOperator{\findim}{fin.\! dim}
\DeclareMathOperator{\gldim}{gl\,\! dim}
\DeclareMathOperator{\domdim}{dom.\! dim}
\DeclareMathOperator{\Hom}{Hom}
\DeclareMathOperator{\rad}{rad}
\DeclareMathOperator{\pdim}{p\!\dim}
\DeclareMathOperator{\defect}{def}
\DeclareMathOperator{\rel}{rel}
\DeclareMathOperator{\soc}{soc}
\DeclareMathOperator{\topp}{top}
\DeclareMathOperator{\del}{del}
\DeclareMathOperator{\rank}{rank}
\newcommand{\cB}{{\mathcal B}}
\newcommand{\cL}{{\mathcal L}}
\newcommand{\cP}{{\mathcal P}}
\newcommand{\cS}{{\mathcal S}}
\providecommand{\AMS}{$\mathcal{A}$\kern-.1667em%
\lower.25em\hbox{$\mathcal{M}$}\kern-.125em$\mathcal{S}$}
\newenvironment{customthm}[1]
  {\innercustomthm}{\endinnercustomthm}
\begin{document}


\author{Emre SEN}

\email{emresen641@gmail.com}
\title{Nakayama Algebras which are Higher Auslander Algebras}
\begin{abstract} 
We prove that any cyclic Nakayama algebra which is a higher Auslander algebra can be uniquely constructed from Nakayama algebras of smaller ranks by reversing the syzygy filtration process. This creates chains of higher Auslander algebras upto ${\bm\varepsilon}$-equivalences. Therefore, the classification of all cyclic Nakayama algebras which are higher Auslander algebras reduces to the classification of linear ones.  We give two applications of this: for any integer $k$ where $2\leq k\leq 2n-2$, there is  a Nakayama algebra of rank $n$ which is a higher Auslander algebra of global dimension $k$ and the possible values of the global dimensions of cyclic Nakayama algebras which are higher Auslander algebras form the sets $\left\{2,\ldots,2n-2\right\}\setminus\left\{n-1\right\}$ if $n$ is even and $\left\{2,\ldots,2n-2\right\}\setminus\left\{ 2,n-1\right\}$ if $n$ is odd.
\end{abstract}

\maketitle

{\let\thefootnote\relax\footnotetext{Keywords: Nakayama algebras, higher Auslander algebras, dominant dimension, global dimension\\
MSC 2020: 16E05, 16E10,16G20}}

\section{Introduction}
O. Iyama intoduced higher Auslander algebras in \cite{iyama} which corresponds bijectively to endomorphism algebras of cluster-tilting modules. A finite dimensional algebra $A$ over a field $\mathbb{K}$ is called a higher Auslander algebra if the dominant dimension of $A$ is equal to the global dimension of $A$. This generalizes classical Auslander algebras where \begin{align*}
\gldim A\leq 2\leq\domdim A.
\end{align*}

In general, it is not easy to find algebras of given arbitrary global and dominant dimensions. This is true even for algebras with fairly well understood module categories. In the last section of the work \cite{rene} the authors introduce the spectrum of a fixed quiver $Q$ over a field $\Kk$ as the set of all possible values of global dimensions of $\Kk Q/I$ where $I$ is an admissible ideal provided that $\Kk Q/I$ is a higher Auslander algebra i.e
\begin{align}\label{defspec}
\zeta(Q)=\left\{\gldim(\Kk Q/I)\,\,\vert\,\,KQ/I\,\,\text{is a higher Auslander algebra for ideal }I \right\}.
\end{align}

Then they state the problem:
\begin{problem}\label{problem} Let $Q$ be oriented cyclic quiver of rank $n$. What is the spectrum $\zeta(Q)$?
\end{problem}

In their own words: "In general it seems very complicated to answer the previous question for a given n" [pg.803]\cite{rene}. Notice that the algebras which are the subject of the problem are cyclic Nakayama algebras i.e. indecomposable modules are uniserial. They posed the following conjecture, and verified it up to $n\leq 14$ by using computers.

\begin{conjecture}\label{conj} For every $n\geq 2$ and $k$ with $2\leq k\leq 2n-2$ there exists a connected, linear or cyclic Nakayama algebra with $n$ simple modules that is a higher Auslander algebra with global dimension $k$.
\end{conjecture}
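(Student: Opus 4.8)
The plan is to prove the statement by exploiting the freedom that the conjecture permits \emph{either} a linear or a cyclic Nakayama algebra. The backbone is the reduction furnished by the Main Theorem: every cyclic Nakayama algebra that is a higher Auslander algebra is obtained from one of strictly smaller rank by reversing the syzygy filtration, and iterating this process terminates at a linear Nakayama algebra. Consequently, if I can (i) produce enough linear base cases with prescribed global dimensions and (ii) control the effect of a single reversal step on the pair $(\mathrm{rank},\gldim)$, then I can reach every admissible $(n,k)$ by choosing a base and a suitable number of reversal steps. So the proof splits into a linear part and an inductive cyclic part.

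For the linear part I would write down explicit Kupisch series of linear Nakayama algebras and compute their minimal projective resolutions directly; because all indecomposables are uniserial, the syzygies of simple modules are again uniserial, and both $\gldim$ and $\domdim$ can be extracted from the combinatorics of the series. The goal is to determine, for each rank $n$, precisely which global dimensions are realized by a linear Nakayama higher Auslander algebra, and in particular to produce linear witnesses of global dimension $n-1$ for every $n$ and of global dimension $2$ when $n$ is odd, since these are exactly the values that the cyclic spectrum will omit. Known higher Auslander Nakayama algebras of type $A$ supply a first supply of such witnesses, and the remaining values are to be filled by direct Kupisch-series constructions.

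For the cyclic part I would use the reversal of the syzygy filtration as the inductive step. By the earlier results, reversing one step sends a higher Auslander Nakayama algebra to another higher Auslander Nakayama algebra of larger rank, and the change in global dimension is governed by a fixed rule tied to the added uniserial length. Starting from a suitable linear base and applying the appropriate number of reversals, I would fill in every $k$ lying in the cyclic spectrum $\{2,\dots,2n-2\}\setminus\{n-1\}$, respectively $\{2,\dots,2n-2\}\setminus\{2,n-1\}$, and then cover the omitted values $n-1$ and, for odd $n$, also $2$ by the linear algebras produced above. Since a connected Nakayama algebra of finite global dimension and $n$ simples satisfies $\gldim\le 2n-2$, the interval cannot be exceeded, so this establishes both the conjecture and the sharper spectrum statement.

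The main obstacle I anticipate is the quantitative control in step (ii): proving that reversing the syzygy filtration changes the global dimension by exactly the predicted amount, and that as the number of reversal steps and the choice of base vary, the attained values sweep out the whole interval with no gaps. The parity phenomena in the spectrum, namely the unconditional exclusion of $n-1$ and the extra exclusion of $2$ for odd $n$, indicate that the interaction between the reversal rule and the rank is delicate, so the heart of the argument is a careful case analysis showing that these two families of exceptional values, and only these, require linear rather than cyclic witnesses.
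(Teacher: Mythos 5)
Your strategy coincides with the paper's: realize the exceptional values $k=n-1$ (all $n$) and $k=2$ ($n$ odd) by connected linear algebras (the paper uses $(2^{n-1},1)$ and $((2,3)^a,2,2,1)$), and obtain all other pairs $(n,k)$ by starting from linear higher Auslander Nakayama algebras and iterating the reversal of the syzygy filtration, each step adding exactly $2$ to the global dimension and adding the (invariant) defect to the rank. However, the step you defer as "the main obstacle" is not a routine verification to be filled in later; it is the actual content of the proof, and as your proposal stands it cannot deliver the conclusion, for two concrete reasons.

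First, from a fixed base of rank $r$, defect $\delta$ and global dimension $d$, the reversal process reaches exactly the pairs $(r+j\delta,\,d+2j)$, an arithmetic progression. To hit every admissible rank $n$ for a fixed $k$ you must exhibit base families whose rank, defect and global dimension can be tuned independently, and then prove the resulting progressions leave no gaps. The paper does this with the families $((2^{d-1},3)^{a},2^d,1)^{\delta}\oplus(2^d,1)^{b}$ (rank $\delta(ad+d+1)+b(d+1)$, defect $\delta(a+1)+b$, global dimension $d$) and a genuinely nontrivial combinatorial argument: the attainable ranks are unions of translated sets $mX+t$, and one must check these unions form overlapping intervals covering $\Zz_{\geq k+4}$ ($k$ odd), resp. $\Zz_{\geq k+2}$ ($k$ even), which only works for $k$ large; small $k$ requires separate case analysis. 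Second, these bases are in general \emph{disconnected} linear algebras, so the reversal must be run from Nakayama cycles (Theorem (C)(ii), where a cyclic permutation $\tau$ glues the components), not from connected linear bases; and even then several sporadic pairs, e.g. $(n,k)=(12,5)$, $(16,5)$, $(15,7)$, $(8,6)$, $(9,4)$, are reached in the paper only via covering algebras (Theorem (D): $\bm\varepsilon$ of an $m$-fold covering is the $m$-fold covering of $\bm\varepsilon$), equivalently via bases of the form $L^{\oplus m}$. A plan that envisions only connected linear bases and treats the gap-filling as bookkeeping omits precisely the machinery — Nakayama cycles, coverings, and the interval-covering combinatorics — on which the theorem rests.
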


Here we develop new tools based on the syzygy filtration method i.e. $\bm\varepsilon$-construction, and give complete solutions to the problem \ref{problem} and the conjecture. In particular this shows the efficiency of the ${\bm\varepsilon}$-construction also as pointed out by C.M. Ringel (appendices B and C \cite{rin2}). Briefly, the core idea of the syzygy filtration method is constructing the algebra $\bm\varepsilon(\Lambda)$ whose modules are equivalent to modules filtered by the \emph{second} syzygies of the original algebra $\Lambda$ and then repeating this process recursively. In \ref{sectionprelim}, definition and the properties of the method are stated.
 
  Results concerning the problem and the conjecture are:

\begin{theorem}\label{thmsolutiontoproblem}[Solution to the problem \ref{problem}] Let $\Lambda$ be a cyclic Nakayama algebra over quiver $Q$ which is higher Auslander algebra of rank $n$, then the spectrum of $Q$ is
\begin{enumerate}[label=\roman*)]
\item  $\zeta(Q)=\left\{2,3,\ldots,2n-2\right\}\setminus\left\{2,n-1\right\}$ if $n$ is odd,
\item $\zeta(Q)=\left\{2,3,\ldots,2n-2\right\}\setminus \left\{n-1\right\}$ if $n$ is even.
\end{enumerate}
\end{theorem}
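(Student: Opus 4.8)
The plan is to prove the two set-equalities by establishing each inclusion separately, treating the \emph{realization} of all claimed global dimensions and the \emph{exclusion} of the forbidden values as independent tasks, with the $\bm\varepsilon$-construction serving as the principal engine throughout. At the outset I would record the crude bounds that confine the spectrum to the window $\{2,\dots,2n-2\}$: since a higher Auslander algebra satisfies $\gldim\Lambda=\domdim\Lambda\geq 2$, no value below $2$ can occur, while Gustafson's bound for connected Nakayama algebras with $n$ simple modules and finite global dimension gives $\gldim\Lambda\leq 2n-2$. It then remains to decide, inside this window, exactly which values are attained by cyclic examples, and the two halves of the theorem differ only in the status of $2$.

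For the realization direction I would exploit the reduction asserted in the abstract, namely that $\bm\varepsilon$ sends a cyclic higher Auslander Nakayama algebra of global dimension $d$ to one of global dimension $d-2$ and strictly smaller rank, and that this passage is reversible up to $\bm\varepsilon$-equivalence. Reading this backwards, one starts from a suitable \emph{linear} Nakayama higher Auslander algebra, whose admissible global dimensions are supplied by the classification of the linear case, and applies inverse $\bm\varepsilon$-steps to inflate the rank while raising the global dimension in increments of $2$. Carrying this out for both parities and choosing the linear base together with the number of reverse steps so that the rank lands exactly on $n$ (equivalently, exhibiting explicit Kupisch series) would yield, for each target $k$ in the claimed set, a cyclic higher Auslander Nakayama algebra of rank $n$ and global dimension $k$. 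The bookkeeping here is a careful count of how many simple modules each reverse step introduces, matched against the residue of $n$, together with a check that the output really has cyclic quiver rather than accidentally collapsing to a linear one.

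For the exclusion direction I would argue that the two missing values cannot arise. Applying $\bm\varepsilon$ repeatedly to any cyclic higher Auslander Nakayama algebra of rank $n$ lowers the global dimension by $2$ at each step and terminates at a minimal member of global dimension $2$ or $3$; since parity is preserved along the chain while the rank decreases in a controlled way, one obtains a congruence linking $n$, the global dimension $d$, and the rank of the minimal member. I expect $d=n-1$ to force this minimal member to be degenerate, that is semisimple, hereditary, or self-injective, rather than a genuine cyclic higher Auslander algebra, which is the contradiction excluding $n-1$ in both parities. For odd $n$ the value $d=2$ should be ruled out separately by showing that a cyclic Nakayama algebra can be a classical Auslander algebra only when $n$ is even, a fact I would verify directly at the level of Kupisch series and the projective resolutions of the simple modules (the base case $n=2$, whose spectrum is $\{2\}$, being consistent with this).

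The main obstacle is the exclusion direction, and within it the exact rank-versus-global-dimension bookkeeping through the $\bm\varepsilon$-chains. The difficulty is twofold: one must show the reduction never leaves the class of cyclic higher Auslander algebras prematurely, so that the induction is legitimate, and one must pin down the minimal base cases sharply enough to forbid \emph{exactly} $n-1$ (in both parities) and \emph{exactly} $2$ (in odd rank) while leaving every other value in the window attainable. I expect the parity bookkeeping to fall out cleanly from the property that $\gldim$ drops by two, but the fine congruence that isolates the single value $n-1$, and the direct Kupisch-series analysis excluding $\gldim=2$ for odd $n$, to demand the most care.
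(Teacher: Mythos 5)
Your architecture is the same as the paper's (realize values by reversing $\bm\varepsilon$ from linear bases, exclude values by running $\bm\varepsilon$-chains forward), but the proposal is missing the one ingredient that makes either direction work: the \emph{defect} $\defect\Lambda$ (the number of injective non-projective modules) and its invariance under $\bm\varepsilon$ for higher Auslander algebras (Theorem \ref{thmLamdaauslanderimplies}). Each $\bm\varepsilon$-step drops the rank by exactly the defect, and each reverse step raises it by exactly the defect; this, and nothing weaker, is the ``controlled way'' your exclusion argument appeals to. Concretely, for the exclusion of $k=n-1$: the forward chain terminates not at ``a minimal member of global dimension $2$ or $3$'' but at a linear (possibly disconnected) Nakayama algebra $L$ whose global dimension can still be large; writing $r_i\geq 1$ for the rank drop at step $i$ and using the linear bound $\gldim L\leq\rank L-1$ (Lemma \ref{lemmalinear}), the case $k=n-1$ yields only $\sum_i r_i\leq 2(m+1)$ over $m+1$ steps. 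That is perfectly consistent with steps of unequal size, so no contradiction follows, and the ``congruence linking $n$, $d$ and the rank of the minimal member'' does not exist at this level of generality. The paper closes the gap by using defect invariance to force every $r_i$ to equal the common defect, deducing $\defect\Lambda\leq 2$, then killing $\defect\Lambda=1$ (defect-one cyclic higher Auslander algebras satisfy $n\leq k$, Lemma \ref{defect one}) and $\defect\Lambda=2$ (equality in the linear bound forces $L=(2^d,1)$, which has defect $1$). Without the invariance theorem, whose proof occupies Section \ref{sectionmaintools} and is not routine, your exclusion step fails.

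The realization half has the same lacuna plus a second one. Since each reverse step adds $\defect$ to the rank, hitting \emph{every} admissible $n$ requires a supply of linear higher Auslander algebras realizing enough triples $(\rank,\defect,\gldim)$; this is where most of the paper's work sits (the family $((2^{d-1},3)^a,2^d,1)^\delta\oplus(2^d,1)^b$ and the translated-set covering argument of Theorem \ref{thmnumericalreduction}). Moreover, connected linear bases do not suffice: several pairs, e.g.\ $(n,k)=(15,7),(12,5),(16,5),(8,3),(6,4),(9,4),(8,6)$, are only reached through disconnected bases (Nakayama cycles, Theorem \ref{thmreverseepsilon2}, part (ii)) or through $m$-fold coverings (Theorem \ref{thmdoubling}), neither of which your sketch provides; one must also check that the reverse construction applied to a disconnected base produces a \emph{connected} cyclic algebra. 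Your direct Kupisch-series exclusion of $k=2$ for odd $n$ is fine and is essentially the paper's argument ($\bm\varepsilon(\Lambda)$ semisimple of rank $m$ and defect $m$ forces $n=2m$), but the two genuinely hard points --- defect invariance and the arithmetic of which ranks are reachable --- are exactly the ones the proposal leaves as ``bookkeeping.''
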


\begin{theorem}\label{thmconj}
Conjecture \ref{conj} holds, for every pair of integers $n,k$ with $2\leq k\leq 2n-2$, there exists Nakayama algebra with $n$ simple modules that is a higher Auslander algebra with global dimension $k$.
\end{theorem}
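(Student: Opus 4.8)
The plan is to combine the classification of the cyclic spectrum in Theorem~\ref{thmsolutiontoproblem} with two explicit families of \emph{linear} Nakayama algebras that realise exactly the global dimensions missing from the cyclic case. Comparing the target set $\{2,\ldots,2n-2\}$ with the cyclic spectrum of Theorem~\ref{thmsolutiontoproblem}, the only values not already attained by a cyclic higher Auslander algebra are $k=n-1$ (for every $n$) and, when $n$ is odd, $k=2$. Hence it suffices to produce, for each $n$, a connected linear Nakayama algebra with $n$ simple modules that is a higher Auslander algebra of global dimension $n-1$, together with one of global dimension $2$ in the odd case.

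For the value $k=n-1$ I would take the radical square zero linear Nakayama algebra $\Lambda_n$ with Kupisch series $(2,2,\ldots,2,1)$; its indecomposable projectives are the length-two uniserials $P_i$ with top $S_i$ and socle $S_{i+1}$ for $i<n$, together with $P_n=S_n$. Then $\Omega S_i=\rad P_i=S_{i+1}$ for $i<n$ while $S_n$ is projective, so iterating gives $\pdim S_i=n-i$ and $\gldim\Lambda_n=n-1$. For the dominant dimension one identifies $P_1,\ldots,P_{n-1}$ with the indecomposable projective--injective modules $I_2,\ldots,I_n$, the only non-injective projective being $S_n$, whose minimal injective coresolution
\begin{equation*}
0\to S_n\to I_n\to I_{n-1}\to\cdots\to I_2\to I_1\to 0
\end{equation*}
has its first $n-1$ terms projective--injective and first non-projective term $I_1=S_1$. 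Thus $\domdim\Lambda_n=n-1=\gldim\Lambda_n$, so $\Lambda_n$ is a higher Auslander algebra realising $k=n-1$.

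For $k=2$ with $n$ odd, write $n=2m-1$ and let $\Gamma_m$ be the Auslander algebra of $\Lambda_m=\Kk A_m/\rad^2$. Since $\Lambda_m$ has exactly $2m-1$ indecomposables (its $m$ simples and its $m-1$ uniserials of length two), $\Gamma_m$ has $n=2m-1$ simple modules; being the Auslander algebra of a non-semisimple algebra it satisfies $\gldim\Gamma_m=2\le\domdim\Gamma_m$, and because $\Lambda_m$ is not self-injective one has $\domdim\Gamma_m=2$, so $\Gamma_m$ is a higher Auslander algebra of global dimension $2$. The remaining point is that $\Gamma_m$ is itself \emph{linear}: the Auslander--Reiten quiver of $\Lambda_m$ is the straight line
\begin{equation*}
S_m\longrightarrow P_{m-1}\longrightarrow S_{m-1}\longrightarrow\cdots\longrightarrow P_1\longrightarrow S_1
\end{equation*}
of type $A_{2m-1}$, so $\Gamma_m=\Kk A_{2m-1}/I$ for the admissible ideal $I$ of mesh relations.

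Assembling these, every $k\in\{2,\ldots,2n-2\}$ is realised: the cyclic algebras of Theorem~\ref{thmsolutiontoproblem} cover all values except the two gaps, $\Lambda_n$ covers $k=n-1$, and $\Gamma_{(n+1)/2}$ covers $k=2$ in the odd case (for $n=3$ the two gaps coincide and $\Lambda_3$ alone suffices). I expect the main obstacle to be the dominant-dimension bookkeeping: one must verify that the projective--injective prefixes of the relevant injective coresolutions have exactly the claimed length, and in particular that $\domdim\Gamma_m$ does \emph{not} exceed $2$. This rests on $\Lambda_m$ failing to be self-injective and on correctly identifying the projective--injective $\Gamma_m$-modules, after which the equalities $\gldim=\domdim$ follow and the conjecture is complete.
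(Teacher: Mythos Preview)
Your proof is correct and follows the same strategy as the paper: invoke Theorem~\ref{thmsolutiontoproblem} for the cyclic case and fill the two missing values $k=n-1$ and (for odd $n$) $k=2$ with explicit linear Nakayama algebras---the paper uses exactly $(2^{n-1},1)$ (Lemma~\ref{lemmalinear}) and $((2,3)^a,2,2,1)$ (Proposition~\ref{sectionkis2}), the latter being precisely your Auslander algebra $\Gamma_m$ of $\Kk A_m/\rad^2$ described via its Kupisch series. On the point you flag, the equality $\domdim\Gamma_m=2$ is secured not by $\Lambda_m$ failing to be self-injective but by the general inequality $\domdim A\le\gldim A$ for algebras of finite global dimension (an acyclic bounded complex of projectives splits), which together with the Auslander-algebra bound $\domdim\Gamma_m\ge 2$ forces equality.
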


To state our main tools, we describe the \emph{defect} of an algebra here which is the number of injective but non-projective modules \cite{rene}. Actually, it is the complementary notion to the number of relations defining a cyclic Nakayama algebra (see \cite{sen18}) because
\begin{align*}
\rank\Lambda=\#\rel\Lambda+\defect\Lambda
\end{align*}
where $\rank\Lambda$ is the number of nonisomorphic simple $\Lambda$-modules, $\#\rel\Lambda$ is the minimal number of relations defining the algebra $\Lambda$ and $\defect\Lambda$ denotes the defect of the $\Lambda$. For semisimple algebras $\Aa^n_1$ we set the defect to $n$. In particular the defect is additive, in other words if $\Lambda$ is a linear Nakayama algebra then the defect of $\Lambda$ is the sum of defects of the connected components  (see definition \ref{defect of linear}).

Under $\bm\varepsilon$-construction (Definition \ref{defep}), the defect of algebras might change. However, there is an exception for higher Auslander algebras.
\begin{customthm}{(A)} \label{thmLamdaauslanderimplies} If $\Lambda$ is higher Auslander algebra which is cyclic, then ${\bm\varepsilon}(\Lambda)$ is also a higher Auslander algebra which is either cyclic or linear (not necessarily connected) provided that global dimension of $\Lambda$ is greater or equal than three. Moreover their defects are equal, i.e. $\defect\Lambda=\defect\bm\varepsilon(\Lambda)$ .
\end{customthm}


Here is the first of our key tools, which can be viewed as reversing construction of syzygy filtered algebra (see def.\ref{defigherNakayama}) process.

\begin{customthm}{(B)}\label{thmreverseepsilon}
 If $\Lambda$ is a \textbf{cyclic} Nakayama algebra of rank $n$ which is a higher Auslander algebra of global dimension $k$, then there exists a \textbf{unique} cyclic Nakayama algebra $\Lambda'$ of rank $n+\defect\Lambda$ which is a higher Auslander algebra of global dimension $k+2$ and ${\bm\varepsilon}(\Lambda')\cong\Lambda$.
\end{customthm}

\begin{customthm}{(C)}\label{thmreverseepsilon2}
$(i)$ If $\Lambda$ is a \textbf{connected linear} Nakayama algebra of rank $n$ which is a higher Auslander algebra of global dimension $k$, then there exists a \textbf{unique} cyclic Nakayama algebra $\Lambda'$ of rank $n+\defect\Lambda$ which is a higher Auslander algebra of global dimension $k+2$ and ${\bm\varepsilon}(\Lambda')\cong\Lambda$.\\
\par $(ii)$ Let $(\Lambda,\tau)$ denote the algebra $\Lambda =\Lambda_1\times ... \times \Lambda_t$ with connected linear Nakayama algebras $\Lambda_1,\ldots,\Lambda_t$ and the cyclic permutation $\tau$ of the simple $\Lambda$-modules, such that the restriction to the simple $\Lambda_i$-modules is the Auslander-Reiten
translation for the simple $\Lambda_i$-modules, and $\tau$ maps the simple projective  $\Lambda_i$-module to the simple injective $\Lambda_{i-1}$-module (with $\Lambda_0 =\Lambda_t$). If $(\Lambda,\tau)$ is a higher Auslander algebra of global dimension $k$ and rank $n$, then there exists a \textbf{unique} \textbf{cyclic} connected Nakayama algebra $\Lambda'$ of rank $n+\defect\Lambda$ which is a higher Auslander algebra of global dimension $k+2$ and ${\bm\varepsilon}(\Lambda')\cong\Lambda$.
\end{customthm}
The uniqueness part of the above result  depends on the permutation $\tau$, since different cyclic permutations of linear Nakayama algebras can produce various nonisomorphic higher Auslander algebras sharing the same rank and global dimension. Indeed, we show that they are enumerated by \emph{necklace} numbers which we discuss in \ref{necklace}.\\

The strength of the syzygy filtration method together with Theorems \ref{thmLamdaauslanderimplies},\ref{thmreverseepsilon}, \ref{thmreverseepsilon2} is hidden in its recursive nature as illustrated below.

\begin{center}
$\xymatrixcolsep{5pt}\xymatrix{\cdots\ar@/^1pc/[r]^{\varepsilon^{-1}}&\Lambda \ar@/^1pc/[rr]^{\varepsilon^{-1}}\ar@/^1pc/[l]^{\varepsilon}\ar[d] && \Lambda'\ar@/^1pc/[rr]^{\varepsilon^{-1}}\ar@/^1pc/[ll]^{\varepsilon}\ar[d] && \Lambda''\ar@/^1pc/[r]^{\varepsilon^{-1}}\ar@/^1pc/[ll]^{\varepsilon}\ar@/^1pc/[ll]^{\varepsilon}\ar[d]&\cdots\ar@/^1pc/[l]^{\varepsilon} \\
&(\rank \Lambda,k)&&(\rank \Lambda+\defect\Lambda,k+2)&&(\rank\Lambda+2\defect\Lambda,k+4)}$
\end{center}

The strategy to solve the problem \ref{problem} is to detect small rank and small global dimensional Nakayama algebras which are higher Auslander algebras and then generate a chain of algebras up to ${\bm\varepsilon}$-equivalences by using the invariance of the defect. It is quite unexpected that classification of cyclic Nakayama algebras which are higher Auslander reduces to the classification of \emph{linear} Nakayama algebras which are higher Auslander.\\

We state another tool based on the syzygy filtration method.
\begin{customthm}{(D)}\label{thmdoubling}
Let $\Lambda$ be a cyclic Nakayama algebra of rank $n$ and $(c_1,\ldots,c_n)$ be its Kupisch series. If $\Lambda'$ is double covering\footnote{In the earlier version we used the term repetition. We thank C.M. Ringel for the correct terminology.} of $\Lambda$ i.e. $c_i=c_{i+n}$ for all $i$ $1\leq i\leq n$  where $c_i$ is the length of projective $\Lambda'$-module $P_i$, then $\bm\varepsilon(\Lambda')$ is double covering of $\bm\varepsilon(\Lambda)$. 
\end{customthm}

This is a very useful result. If we choose $\Lambda$ as a higher Auslander algebra, then coverings of $\Lambda$ are also higher Auslander algebra (see corollary \ref{corAus}). Therefore, we can create families of algebras sharing the same global dimension simultaneously. In particular, this result shows how ${\bm\varepsilon}$-construction preserves higher Auslander algebra structure in contrast to other reduction methods (see Example \ref{example method}).

We will give a constructive proof of Theorem \ref{thmsolutiontoproblem}. Organization of the paper is the following: in \ref{sectionprelim} we recall basics and some properties of the syzygy filtration method. In section \ref{sectionmaintools}, we present proofs and some outcomes of theorems \ref{thmLamdaauslanderimplies}, \ref{thmdoubling}. Proofs of Theorems \ref{thmreverseepsilon} and \ref{thmreverseepsilon2} are given in the sections \ref{section cyclic} and \ref{section linear}. After obtaining small rank and  global dimensional linear Nakayama algebras which are higher Auslander algebras in \ref{sectionCalculation}, we apply our main tools recursively to complete the proof of Theorem \ref{thmsolutiontoproblem}. The last part of \ref{sectionCalculation} contains some remarks and examples which are necessary to complete the solution to the conjecture \ref{thmconj}. There is also appendix \ref{sectionappendix} where we list different families of algebras with odd global dimension.\\

We are grateful to Gordana Todorov for all her help and support. We are thankful to Claus Michael Ringel for his interest in the syzygy filtration method, stating its difference from other methods in \cite{rin2}, informing us how it fits into simplification process nicely \cite{rin1}   together with suggestions to improve the text and fixing a bug in the earlier versions (see def. \ref{nakayamacycle} and thm \ref{thmreverseepsilon}), to Shijie Zhu for various discussions on the subject which leads to the joint work \cite{stz}, to Rene Marczinzik for suggesting their conjecture as a possible application of syzygy filtration method, as well as providing some GAP computations, to Laertis Vaso for notifying some mistakes in the earlier versions and to the referee for very helpful, valuable suggestions and comments to improve the paper.

\subsection{Preliminaries on Syzygy Filtration}\label{sectionprelim}
Here we briefly recall some basics about Nakayama algebras. For details we refer to \cite{sen18} and \cite{sen19}. A finite dimensional bound quiver algebra is called Nakayama algebra if all indecomposable modules are uniserial. There are two types of Nakayama algebras depending on the underlying quiver: linear or cyclic. 
A cyclic Nakayama algebra of rank $n$ can be described by the irredundant system of relations $\boldsymbol\alpha_{k_{2i}}\ldots\boldsymbol\alpha_{k_{2i-1}}=0$ where $1\leq i\leq  r$ and $k_{f}\in\left\{1,2,\ldots,n\right\}$ where each arrow $\boldsymbol\alpha_i$, $1\leq i\leq n-1$ starts at the vertex $i$ and ends at the vertex $i+1$ and $\boldsymbol\alpha_n$ starts at vertex $n$ and ends at vertex $1$. For instance, if there are $5$ vertices i.e. rank is $5$ and the relations are $\boldsymbol\alpha_3\boldsymbol\alpha_2\boldsymbol\alpha_1=0$, $\boldsymbol\alpha_5\boldsymbol\alpha_4\boldsymbol\alpha_3=0$ and $\boldsymbol\alpha_1\boldsymbol\alpha_5=0$ then the indecomposable projective modules are
\begin{align}
P_1=\begin{vmatrix}
S_1\\S_2\\S_3
\end{vmatrix}, P_2=\begin{vmatrix}
S_2\\S_3\\S_4\\S_5
\end{vmatrix}, P_3=\begin{vmatrix}
S_3\\S_4\\S_5
\end{vmatrix}, P_4=\begin{vmatrix}
S_4\\S_5\\S_1
\end{vmatrix}, P_5=\begin{vmatrix}
S_5\\S_1
\end{vmatrix}
\end{align}
where $S_i$, $1\leq i\leq 5$ stands for simple module at the vertex $i$.

We assume that the algebra $\Lambda$ is given as the path algebra of the cyclic oriented quiver of rank $n$ modulo the irredundant system of relations:
\begin{gather}
\label{relations}
\begin{gathered}
\boldsymbol\alpha_{k_2}\ldots\boldsymbol\alpha_{k_1+1}\boldsymbol\alpha_{k_1}\ \ =0 \\
\boldsymbol\alpha_{k_4}\ldots\boldsymbol\alpha_{k_3+1}\boldsymbol\alpha_{k_3}\ \ =0 \\
\dots \\
\boldsymbol\alpha_{k_{2r-2}}\ldots\boldsymbol\alpha_{k_{2r-3}+1}\boldsymbol\alpha_{k_{2r-3}}=0\\
\boldsymbol\alpha_{k_{2r}}\ldots\boldsymbol\alpha_{k_{2r-1}+1}\boldsymbol\alpha_{k_{2r-1}}=0
\end{gathered}
\end{gather}

Using the above system of relations, we describe indecomposable projective and injective modules. Since the system is irredundant, projective-injective modules will occur as projective covers of the simples labeled by the index next to the first index of each relation. Explicitly they are:
\begin{align}
P_{k_1+1}=I_{k_4},\quad P_{k_3+1}=I_{k_6},\quad \dots \quad P_{k_{2r-3}+1}=I_{k_{2r}},\qquad P_{k_{2r-1}+1}=I_{k_{2}}
\end{align}
Furthermore, we get \textbf{classes of projective modules} characterized by their socles:
\begin{align}
\label{projectiveclasses}
\begin{split}
P_{k_{1}}\hookrightarrow\ldots\hookrightarrow  P_{(k_{2r-1})+1}=I_{k_{2}} \quad\text{ have simple } S_{k_{2}} \text{  as their socle}\\
P_{k_3}\hookrightarrow\ldots\hookrightarrow P_{k_1+1}=I_{k_4}  \quad\text{ have simple } S_{k_4} \text{  as their socle}\\
\dots\qquad\qquad\qquad\qquad\qquad\qquad\qquad\\
P_{k_{2r-1}}\hookrightarrow\ldots\hookrightarrow P_{(k_{2r-3})+1}=I_{k_{2r}}\quad  \text{ have simple } S_{k_{2r}} \text{  as their socle}
\end{split}
\end{align}
Similarly, we get \textbf{classes of injective modules} characterized by their tops:
\begin{align}\label{injective class}
\begin{split}
 P_{(k_{2r-1})+1}=I_{k_{2}}\twoheadrightarrow\ldots\twoheadrightarrow I_{k_4+1}  \quad\text{ have simple } S_{k_{2r-1}+1}\text{  as their top}\\
P_{k_1+1}=I_{k_4}\twoheadrightarrow\ldots\twoheadrightarrow I_{k_2+1}   \quad\text{ have simple } S_{k_1+1} \text{  as their top}\\
\dots\qquad\qquad\qquad\qquad\quad\quad\quad\quad\quad\quad\\
P_{(k_{2r-3})+1}=I_{k_{2r}} \twoheadrightarrow\ldots\twoheadrightarrow I_{k_4+1}  \quad\quad\text{ have simple } S_{k_{2r-3}+1} \text{  as their top}
\end{split}
\end{align}

\begin{definition}\cite{sen18},\cite{sen19}\label{defbase}
$\cS(\Lambda)$ is the complete set of representatives of socles of indecomposable projective modules over $\Lambda$ i.e. $\cS(\Lambda)=\left\{S_{k_2}, S_{k_4},\ldots,S_{k_{2r}}\right\}$. $\cS'(\Lambda)$ is the complete set of representatives of simple modules such that they are indexed by one cyclically larger (see definition 2.3.2 \cite{sen18}) indices of $\cS(\Lambda)$ i.e. $\cS'(\Lambda)=\left\{S_{k_{2}+1}, S_{k_4+1},\ldots,S_{k_{2r}+1}\right\}$. In other words, those are Auslander-Reiten translates of elements of $\cS(\Lambda)$. The base set $\cB(\Lambda)$ of $\Lambda$ is given by:
\begin{align}\label{baseset}
\cB(\Lambda):=\left\{ \Delta_1\cong\begin{vmatrix}
    S_{k_{2r}+1} \\
    \vdots  \\
    S_{k_{2}}
\end{vmatrix}\!, \Delta_2\cong\begin{vmatrix}
    S_{k_{2}+1}  \\
    \vdots  \\
   S_{k_{4}}
\end{vmatrix}\!,\ldots,\Delta_r\cong \begin{vmatrix}
   S_{k_{2r-2}+1}  \\
    \vdots  \\
    S_{k_{2r}}
\end{vmatrix}\!\right\}
\end{align}
\end{definition}

\begin{definition}\label{defep} \cite{sen19}
For any cyclic Nakayama algebra $\Lambda$, syzygy filtered algebra $\bm{\varepsilon}(\Lambda)$ is the endomorphism algebra of direct sum of projective modules over $\Lambda$ indexed by $\cS'(\Lambda)$, i.e.
\begin{align*}
\bm{\varepsilon}(\Lambda):=\End_{\Lambda}\left(\bigoplus\limits_{S\in \cS'(\Lambda)}P(S)\right)
\end{align*}
\end{definition}

\begin{definition}\cite{sen19} \label{defigherNakayama}
For any cyclic Nakayama algebra $\Lambda$, $j$th syzygy filtered algebra $\bm{\varepsilon}^j(\Lambda)$ is the endomorphism algebra of direct sum of projective modules which are indexed depending on $(j-1)$th syzygy filtered algebra i.e.
\begin{align*}
\bm{\varepsilon}^j(\Lambda):=\End_{\bm{\varepsilon}^{j-1}(\Lambda)}\left(\bigoplus\limits_{S\in\cS'(\bm{\varepsilon}^{j-1}(\Lambda))}P(S)\right)
\end{align*}
provided that $\bm{\varepsilon}^{j-1}(\Lambda)$ is a cyclic non self-injective Nakayama algebra.
\end{definition}

\begin{definition}\cite{aus}\label{def domdim} Dominant dimension of algebra $A$ is:
\begin{align*}
\domdim A=\sup\left\{m\vert\,I_i\, \text{is projective for }i=0,1,\ldots m\right\}+1
\end{align*}
where $0\rightarrow\,_AA\rightarrow I_0\rightarrow I_1\ldots$ is the minimal injective resolution of $A$. 
Dominant dimension of semisimple algebra is set to be infinity. We can define dominant dimension of any module $M$ by
 \begin{align*}
\domdim M=\sup\left\{m\vert\,I_i\, \text{is projective for }i=0,1,\ldots m\right\}+1
\end{align*}
where $0\rightarrow _A M\rightarrow I_0\rightarrow I_1\ldots$ is the minimal injective resolution of $M$. If we restrict $M$ to projective modules we get the following characterization of dominant dimension 
\begin{align}\label{gercek tanim2}
\domdim A=\sup\left\{\domdim P\,\vert\, P \text{ is projective non-injective } A\, \text{module}\right\}.
\end{align}
It is convenient for us to study projective dimensions of injective modules, because for a Nakayama algebra $\Lambda$, $\domdim\Lambda=\domdim\Lambda^{op}$ holds. Therefore \ref{gercek tanim2}
 is equivalent to
 \begin{align}\label{gercek tanim}
\domdim A=\sup\left\{\domdim I\,\vert\, I \text{ is injective non-projective } A\, \text{module}\right\}.
\end{align}
 We always use the characterization of dominant dimension \ref{gercek tanim} in the proofs. 
Global dimension of algebra $A$ is the supremum of projective dimensions of all $A$-modules:
\begin{align*}
\gldim A=\sup\left\{\pdim M\vert\,M\in \text{mod-}A \right\}
\end{align*} where mod-$A$ denotes the category of finitely generated $A$-modules.
\end{definition}

Although we do not use directly in this paper, in order to give the complete picture, we also mention here the definition of $\varphi$-dimension. Because, the syzygy filtration method is originally introduced in  \cite{sen19} to calculate its possible values.
\begin{definition}\label{varphi}
For a given $A$-module $M$, $\varphi\left(M\right)$ is defined in \cite{it} as:
$$\varphi(M):=\min\{t\ |\ \rank\left(L^t\langle add M\rangle\right)=\rank\left(L^{t+j}\langle add M\rangle\right)\text{ for }\forall j\geq 1\}$$ where $L[M]:=[\Omega M]$ in the Grothendieck group $K_0$ of $A$-modules modulo projective summands and $\langle addM\rangle$ is the subgroup of $K_0$ generated by all the indecomposable summands of the module $M$.   
$\varphi$-dimension of algebra $A$ is:
$$\varphi\dim(A):=\sup\{\varphi(M)\ |\ M \in\text{ mod-}A\}$$ where mod-$A$ denotes the category of finitely generated $A$-modules.
\end{definition}

\begin{remark} Here we collect some useful remarks about the outcomes of the syzygy filtration method. For details we refer to papers \cite{sen18}, \cite{sen19}. We assume that $\Lambda$ is a connected cyclic Nakayama algebra.
\begin{enumerate}[label=\arabic*.]
\item\label{remarklis1} If the global dimension of $\Lambda$ is finite, then there exists number $m$ such that ${\bm\varepsilon}^m(\Lambda)$ is a cyclic Nakayama algebra and ${\bm\varepsilon}^{m+1}(\Lambda)$ is a linear Nakayama algebra (not necessarily connected) which might have semisimple components.
\item\label{remarklis2} If $\Lambda$ is not selfinjective, then we have the following reductions:
\begin{enumerate}
\item $\gldim\Lambda=\gldim{\bm\varepsilon}(\Lambda)+2$ if $2\leq \gldim\Lambda<\infty$.
\item $\domdim\Lambda=\domdim{\bm\varepsilon}(\Lambda)+2$ when $3\leq \domdim\Lambda$. We want to explain why  $3$ is the lower bound. Semisimple algebra has global dimension $0$ or $-\infty$ and dominant dimension $\infty$. When we apply $\bm\varepsilon$-construction, we might get linear and semisimple components. For example $(3,2,3,2,2)$ is syzygy equivalent to $\Aa_2\oplus\Aa_1$. The former has dominant dimension $2$, global dimension $3$. The latter has dominant dimension one and global dimension one if we use the convention that dominant dimension of $\Aa_1$ is infinity. To avoid this, we choose lower bound for reduction of dominant dimensions $3$ in general. We give further details in example \ref{example dominant dimension}.
\item $\findim\Lambda=\findim{\bm\varepsilon}(\Lambda)+2$ if $2\leq \findim\Lambda$.
\item $\del\Lambda=\del{\bm\varepsilon}(\Lambda)+2$ if $\del\Lambda\geq 2$, where $\del$ denotes the delooping level of $\Lambda$. It turns out that $\del\Lambda=\findim\Lambda$ if $\Lambda$ is Nakayama algebra (\cite{rin2},\cite{sen20}).
\item $\varphi\dim\Lambda=\varphi\dim{\bm\varepsilon}(\Lambda)+2$ if $\varphi\dim\Lambda\geq 2$.
\end{enumerate}
$\Lambda$ is selfinjective if and only of there is isomorphism $\Lambda\cong\bm\varepsilon(\Lambda)$. Therefore the statements above cannot be valid for a selfinjective Nakayama algebra. In this case we get $\gldim\Lambda=\domdim\Lambda=\infty$, $\varphi\dim\Lambda=\findim\Lambda=\del\Lambda=0$. When global dimension is infinite, $\varphi\dim\Lambda$ is always an even number (\cite{sen18} Theorem A) , and it satisfies $0\leq \varphi\dim\Lambda-\findim\Lambda\leq 1$ (\cite{sen19}). For all the reductions we mention above, the lower bound cannot be one. Because, a cyclic Nakayama algebra cannot have global dimension and $\varphi$-dimension one. If $\findim\Lambda=1$, then $\varphi\dim\Lambda=2$, and $\bm\varepsilon(\Lambda)$ becomes selfinjective, so $\findim\bm\varepsilon(\Lambda)=0$. 
\item\label{gorgor} If $\Lambda$ is a Gorenstein algebra, then ${\bm\varepsilon}(\Lambda)$ is also Gorenstein.
\item\label{secondsyzygy} For any $\Lambda$-module $M$, $\Omega^i(M)$ and their projective covers when $i\geq 2$ has $\cB(\Lambda)$-filtration. Elements of the base set are exactly the second syzygies of simple modules which are the tops of \emph{minimal} projective modules. We call a projective module \emph{minimal} if its radical is not projective. Explicitly, by using \ref{projectiveclasses}, the second syzygies of simple modules $S_{k_1},S_{k_3},\ldots,S_{k_{2r-1}}$ gives the elements of $\cB(\Lambda)$.
\item \label{submodule of projectives} Any indecomposable projective $\Lambda$-module $P$ has a submodule from the base set $\cB(\Lambda)$ provided that none of the elements of $\cB(\Lambda)$ are projective. This simply follows from $\soc\Delta_i$ and $\soc P$ are elements of $\cS(\Lambda)$, by uniseriality of modules we get $\Delta_i\subset P$. If $\Delta_i\in\cB(\Lambda)$ and projective, then there exists at least one projective module having $\Delta_i$ as a proper submodule. Because by the previous item \ref{secondsyzygy}, there is a simple $\Lambda$-module $S$ such that $\Omega^2(S)\cong \Delta_i$, therefore $\Delta_i$ is a proper submodule of $P(\Omega^1(S))$.
\item\label{remarklis3} The category of $\cB(\Lambda)$-filtered modules is equivalent to the category of ${\bm\varepsilon}(\Lambda)$-modules.
\end{enumerate}

\subsection{Categorical Equivalence}\label{categoricalequivalnce} Here we want to give further details for Remark \ref{remarklis3} the category of $\cB(\Lambda)$ filtered $\Lambda$-modules is equivalent to $\bm\varepsilon(\Lambda)$-modules. We apply the construction which can be found in \cite{aus} section II.2.5.
Let $Filt(\cB(\Lambda))$ be the category of $\cB(\Lambda)$-filtered $\Lambda$-modules. A module $U\in\cB(\Lambda)$ if and only if $U$ has minimal projective presentation: 
\begin{align}
\xymatrixcolsep{5pt} 
\xymatrix{ P_a\ar[rrr]^{f} &&& P_b\ar[rrr] &&& U
}
\end{align}
where $P_a,P_b\in Filt(\cB(\Lambda))$ and $f$ cannot be factored through a projective module in  $Filt(\cB(\Lambda))$. We can describe modules over syzygy filtered algebra in the following way. Let $\cP:=\bigoplus_{i\in \cS'(\Lambda)} P_i$. Then $\bm{\varepsilon}(\Lambda)=\End_{\Lambda}\cP$ is finite dimensional algebra and $\cP$ is left $\bm{\varepsilon}(\Lambda)$-module. If $X$ is right $\Lambda$-module, then $\Hom_{\Lambda}(\cP,X)$ is right $\bm{\varepsilon}(\Lambda)$-module.
Simple $\bm{\varepsilon}(\Lambda)$-modules are of the form 
$\Hom_{\Lambda}(\cP,\Delta_i)$ where $\Delta_i\in\cB(\Lambda)$ (see \ref{baseset}). We get the diagram:

\[\xymatrixcolsep{5pt}
\xymatrix{   \text{mod-}\Lambda\ar[rrrr]^{\Hom_{\Lambda}(\cP,-)} &&&&\text{mod-}\bm{\varepsilon}(\Lambda)\\
 Filt(\cB(\Lambda)) \ar[rrrru] \ar@{^{(}->}[u]
}\]

Restriction of $\Hom_{\Lambda}(\cP,-)$ onto $Filt(\cB(\Lambda))$ gives equivalence of categories mod-$\bm{\varepsilon}(\Lambda)$ and  $Filt(\cB(\Lambda))$. Specifically,
\begin{enumerate}[label=\roman*)]
\item Any $\bm{\varepsilon}(\Lambda)$-module is isomorphic to $\Hom_{\Lambda}(\cP,X)$ for some $X\in Filt(\cB(\Lambda))$. 
\item Simple $\bm\varepsilon(\Lambda)$-modules are of the form $\Hom_{\Lambda}(\cP,\Delta_i)$ where $\Delta_i\in\cB(\Lambda)$.
\item Projective $\bm\varepsilon(\Lambda)$-modules are of the form $\Hom_{\Lambda}(\cP,P)$ where $P$ is $\cB(\Lambda)$-filtered projective $\Lambda$-module.
\item  If $P$ is projective-injective $\Lambda$-module where $P\in Filt\cB(\Lambda)$, then $\Hom_{\Lambda}(\cP,P)$ is projective-injective $\bm\varepsilon(\Lambda)$-module. We give a brief explanation. By item \ref{submodule of projectives}, there is a submodule $\Delta_i$ of $P$, therefore $\Hom_{\Lambda}(\cP,P)$ is the longest module having the socle $\Hom_{\Lambda}(\cP,\Delta_i)$ in mod-$\bm\varepsilon(\Lambda)$ which makes it injective.
\end{enumerate}
 We point out that converse of the last statement is not true in general.
\end{remark}

\section{Proofs of Theorems \ref{thmLamdaauslanderimplies}, \ref{thmdoubling}}\label{sectionmaintools}
Here, we give proofs of theorems \ref{thmLamdaauslanderimplies} and \ref{thmdoubling} as well as their implications.
\subsection{Relationship between the defect and $\varepsilon$-construction}
The proof of Theorem \ref{thmLamdaauslanderimplies} relies on properties of injective modules. That's why we want to examine their structure in details. First we recall some statements from \cite{sen19} and their new proofs.

\begin{lemma}\label{injectivity} \cite{sen19}
If $X$ is a proper submodule of $\Delta_i$ where $\Delta_i\in\cB(\Lambda)$, then there exist a projective-injective module $P$ such that  $\Delta_i\subset P$ and the quotient $\faktor{P}{X}$ is an injective $\Lambda$-module.
\end{lemma}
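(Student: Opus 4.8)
The plan is to exhibit the required projective-injective module explicitly and then identify the quotient with an indecomposable injective by a socle comparison among quotients of that module. Fix $\Delta_i\in\cB(\Lambda)$ with $\soc\Delta_i=S_{k_{2i}}$ and $\topp\Delta_i=S_{k_{2i-2}+1}$ as in \ref{baseset}. Since $S_{k_{2i}}\in\cS(\Lambda)$ is the socle of an indecomposable projective, the class of projectives sharing this socle in \ref{projectiveclasses} terminates in the projective-injective module $P:=I_{k_{2i}}=P_{k_{2i-3}+1}$, whose top is $S_{k_{2i-3}+1}$. Both $P$ and $\Delta_i$ are uniserial with socle $S_{k_{2i}}$, so by uniseriality $\Delta_i\subseteq P$ (this is the reasoning of Remark \ref{submodule of projectives}); this is the module claimed in the statement. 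It then remains to prove that $\faktor{P}{X}$ is injective for every proper submodule $X\subsetneq\Delta_i$ (the case $X=0$ being trivial, as $\faktor{P}{0}=P$ is injective).

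First I would record the top of the quotient. As $X$ is a proper submodule of the uniserial module $\Delta_i$, we have $X\subseteq\rad\Delta_i=(\rad\Lambda)\Delta_i\subseteq(\rad\Lambda)P=\rad P$, hence $\topp(\faktor{P}{X})=\topp P=S_{k_{2i-3}+1}$. Next I would locate its socle: writing $\topp X=S_x$, the submodule $X\subseteq P$ consists of the bottom $\ell(X)$ layers of $P$, so $\faktor{P}{X}$ has socle $S_{x-1}$. Because $X\subseteq\rad\Delta_i$ and $\topp\Delta_i=S_{k_{2i-2}+1}$, the index $x$ ranges over the cyclic interval $\{k_{2i-2}+2,\ldots,k_{2i}\}$, so that the socle index $x-1$ ranges over $\{k_{2i-2}+1,\ldots,k_{2i}-1\}$.

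The key observation — and, I expect, the main obstacle requiring careful bookkeeping — is to match this range against the injective class in \ref{injective class}. The class of injectives sharing the top $S_{k_{2i-3}+1}$ is precisely the chain of successive quotients $P=I_{k_{2i}}\twoheadrightarrow\cdots\twoheadrightarrow I_{k_{2i-2}+1}$, whose socles run through $S_j$ for $j\in\{k_{2i-2}+1,\ldots,k_{2i}\}$. Since the socle index $x-1$ of $\faktor{P}{X}$ always lies in the subinterval $\{k_{2i-2}+1,\ldots,k_{2i}-1\}$, the injective $I_{x-1}$ is defined and occurs as a quotient of $P$ with the same top $S_{k_{2i-3}+1}$. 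Getting this inclusion of intervals right hinges on pairing $\Delta_i$ with the correct projective-injective $P$ and on reading the index shifts in \ref{projectiveclasses} and \ref{injective class} precisely; this is where the argument is most delicate.

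Finally I would conclude by a uniqueness-of-quotient argument. Both $\faktor{P}{X}$ and $I_{x-1}$ are quotients of the uniserial module $P$, and the quotients of a uniserial module are totally ordered and determined by their length, equivalently by their socle. As both have socle $S_{x-1}$, they coincide, so $\faktor{P}{X}\cong I_{x-1}$ is injective, completing the proof.
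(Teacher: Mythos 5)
Your route is genuinely different from the paper's: the paper proves this lemma by a homological induction along the composition series of $\Delta_i$, chasing commutative diagrams that compare $0\to X\to P\to \faktor{P}{X}\to 0$ with the projective cover of the injective envelope $I(\faktor{P}{X})$, and splitting into cases according to whether certain kernels vanish. You instead identify everything explicitly through the index bookkeeping of \ref{projectiveclasses} and \ref{injective class}, and this part of your argument is correct and clean: $P=P_{k_{2i-3}+1}=I_{k_{2i}}$ is indeed the projective-injective terminating the class with socle $S_{k_{2i}}$, the top and socle of $\faktor{P}{X}$ are computed correctly, and the socle index $x-1$ does land in the interval $\{k_{2i-2}+1,\ldots,k_{2i}-1\}$ indexing injective quotients of $P$. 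What your approach buys is transparency (one sees exactly which injective the quotient is), at the cost of being tied to the explicit relation indices; the paper's diagram argument is index-free.

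However, your final step contains a genuine gap: the principle that quotients of a uniserial module are ``determined by their socle'' is false over a cyclic Nakayama algebra once the module has length greater than the rank $n$, because composition factors then repeat with period $n$, so the socle determines the length of a quotient only modulo $n$. This is not a hypothetical worry here: projective-injectives of length $>n$ occur in exactly the algebras under consideration (e.g.\ Gustafson's algebras $((n+1)^{n-1},n)$, where a projective-injective of length $n+1$ has its length-one quotient and itself sharing the same socle). So ``same socle, hence same quotient'' does not stand as written. The gap is fillable with one extra observation: since $\soc(\faktor{P}{X})\cong S_{x-1}$ and $I_{x-1}$ is the injective envelope of $S_{x-1}$, you get an embedding $\faktor{P}{X}\hookrightarrow I_{x-1}$; writing $I_{x-1}=\faktor{P}{Y}$ as a quotient of $P$, the embedding gives $\ell(Y)\leq\ell(X)$, while equality of socles gives $\ell(X)-\ell(Y)\equiv 0\pmod n$. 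Because the composition factors of $\Delta_i$ are pairwise distinct (the elements of $\cB(\Lambda)$ partition the simple modules), $\ell(X)<\ell(\Delta_i)\leq n$, so $0\leq\ell(X)-\ell(Y)<n$ forces $\ell(X)=\ell(Y)$, hence $\faktor{P}{X}=I_{x-1}$ is injective. With this correction your proof is complete.
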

\begin{proof}

Let $S$ be a submodule of projective-injective module $P$. We have the exact sequence
\begin{align*}
0\rightarrow S\rightarrow P\rightarrow \faktor{P}{S}\rightarrow 0.
\end{align*}
We want to find the injective envelope of the quotient $\faktor{P}{S}$. Consider the map $\pi: P'\rightarrow I(\faktor{P}{S})$ where $P'$ be the projective cover of the injective envelope $I(\faktor{P}{S})$. Because modules are uniserial and $I$ is injective there exists an embedding 

\begin{align}\label{sat exact}
c:\faktor{P}{S}\mapsto I(\faktor{P}{S}).
\end{align} We work on the diagram \ref{diagram sth2}.

\begin{figure}\caption{Diagram for the embedding \ref{sat exact}}\label{diagram sth2}
\begin{center}
 $
\xymatrix{&& 0\ar[d]   && 0\ar[d] && &&\\
&& \ker a\ar[rr]^{\approx}\ar[d]   && \ker b\ar[d] &&&&\\
0\ar[rr]&& S\ar[rr]\ar[d]^{\Large{a}}   && P\ar[rr]\ar[d]^b &&\faktor{P}{S}\ar@{.>}[dll]_{\beta}\ar[rr]\ar@{_{(}->}[d]^c&&0\\
0\ar[rr]&& \ker\pi\ar[rr]\ar[d]   && P'\ar[rr]^{\pi}\ar[d] &&I(\faktor{P}{S})\ar[rr]\ar[d]&&0\\
0\ar[rr]&& \coker a\ar[rr]\ar[d] && \coker b\ar[d]\ar[rr]&&\coker c\ar[d]\ar[rr] &&0\\
&& 0 && 0&&0 && }$
 \end{center}
 \end{figure}
 
\begin{enumerate}[label=\roman*)]
\begingroup
\allowdisplaybreaks
\item If $\ker a=S$ then $a$ is zero map and there exists $\beta$ from $\faktor{P}{S}$ to $P'$ such that $\pi\circ\beta=c$. Therefore $\beta$ is injective map and $\soc\faktor{P}{S}\cong \soc P' $ and in particular $\soc \faktor{P}{S}\in\cS(\Lambda)$. This makes $S$ an element of the base set $\cB(\Lambda)$ because $\soc\faktor{P}{S}\cong\tau^{-1}S$ and $S$ are consecutive modules. 
\item Let $S$ be a proper simple submodule of $\Delta_i$. Therefore $\ker a\ncong S$. This implies $a$ is monomorphism and then $b$ is monomorphism which makes $P\cong P'$. Since $P$ is injective then $b,c$ are injective. As a result $c$ is isomorphism i.e. $\faktor{P}{S}\cong I(\faktor{P}{S})$.
\endgroup
\end{enumerate} 
We use the diagram \ref{diagram sth2} as inductive step.
Let $0\subset X_1\subset X_2\subset \cdots\subset \Delta_i$ be the composition series of $\Delta_i$. Assume that $\faktor{P}{X_t}$ is injective for $1\leq t\leq k-1$. Consider the exact sequence 
\begin{align}
0\rightarrow X_k\rightarrow P\rightarrow \faktor{P}{X_k}\rightarrow 0 
\end{align}
and the map $\pi: P'\rightarrow I(\faktor{P}{X_k})$. Therefore there is embedding $c:\faktor{P}{X_k}\mapsto I(\faktor{P}{X_k})$.
We get the following diagram:
\begin{center}
 $
\xymatrix{
&& \ker a\ar[rr]^{\approx}\ar[d]   && \ker b\ar[d] &&&&\\
0\ar[rr]&& X_k\ar[rr]\ar[d]^{\Large{a}}   && P\ar[rr]\ar[d]^b &&\faktor{P}{X_k}\ar[rr]\ar@{_{(}->}[d]^c&&0\\
0\ar[rr]&& \ker\pi\ar[rr]   && P'\ar[rr]^{\pi} &&I(\faktor{P}{X_k})\ar[rr]&&0\\ }$
 \end{center}
 \begin{enumerate}[label=Case \roman*)]
 \item $\ker a=0$ implies $\ker b=0$ and $P$ is submodule of $P'$. Because $P$ is projective-injective, $P\cong P'$ and $c$ is isomorphism. Hence $\faktor{P}{X_k}$ is an injective module.
 \item $\ker a\cong X_k$ so $a$ is zero map and $\faktor{P}{X_k}$ is submodule of $P'$. So $\soc \faktor{P}{X_k}\in\cS(\Lambda)$ which makes $X_k$ isomorphic to $\Delta_i$.
 \item\label{case3}: By induction $\faktor{P}{X_t}$ is injective module, so $b'$ splits. this makes $\faktor{P}{X_t}\cong P'$ and $c'$ an isomorphism. So $\faktor{P}{X_k}$ is an injective module since $\faktor{P}{X_k}\cong I(\faktor{P}{X_k})$. See the diagram \ref{diagram sth}.
 \end{enumerate}
 
 \begin{figure}\caption{Diagram for \ref{case3}}\label{diagram sth}
  \begin{center}
 $\xymatrix{
&& X_t\ar[rr]^{\approx}\ar@{_{(}->}[d]   && X_t\ar@{_{(}->}[d] &&&&\\
0\ar[rr]&& X_k\ar[rr]\ar[dd]^a\ar@{^{(}->}[dr]   && P\ar[rr]\ar[dd]^<<<<<<<{b}\ar@{^{(}->}[dr] &&\faktor{P}{X_k}\ar[rr]\ar@{_{(}->}[dd]^<<<<<<<c\ar@{^{(}->}[dr]^{\approx}&&0\\
&&& \faktor{X_k}{X_t}\ar[rr]\ar@{_{(}->}[dl]   &&\faktor{P}{X_t}\ar@{_{(}->}[dl]^{b'}\ar[rr] && \faktor{P}{X_k}\ar@{_{(}->}[dl]^{c'}&\\
0\ar[rr]&& \ker\pi\ar[rr]   && P'\ar[rr]^{\pi} &&I(\faktor{P}{X_k})\ar[rr]&&0\\ }$
 \end{center}
  \end{figure}
\end{proof}

\begin{lemma}\label{every second syzy of injective} For any injective non-projective $\Lambda$-module $I$, $\Omega^1(I)$ is a proper submodule of $\Delta_i\in\cB(\Lambda)$ for some $i$.
\end{lemma}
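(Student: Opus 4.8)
The plan is to realise $I$ explicitly as a proper quotient of a projective-injective module, to compute $\Omega^1(I)$ as the associated kernel, and then to compare it with the unique base element that shares its socle by a length (equivalently, top-index) argument. All of this rests on the uniseriality of $\Lambda$-modules together with the descriptions in \ref{injective class} and \ref{baseset}.

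First I would invoke the classification of injective modules in \ref{injective class}: every injective $\Lambda$-module sits in a chain $P_{k_{2j-1}+1}=I_{k_{2j+2}}\twoheadrightarrow\cdots\twoheadrightarrow I_{k_{2j}+1}$ all of whose members share the common top $S_{k_{2j-1}+1}$, the leftmost term being projective-injective and every other term being injective but non-projective. Hence I may write $I\cong I_c$ with $\soc I=S_c$ for some index $c$ satisfying $k_{2j}+1\leq c\leq k_{2j+2}-1$, and put $P:=P_{k_{2j-1}+1}=I_{k_{2j+2}}$, the projective-injective of that same chain. Since $I$ and $P$ have the common top $S_{k_{2j-1}+1}$ and $P$ is projective, $P$ is the projective cover of $I$, so $\Omega^1(I)=\ker(P\twoheadrightarrow I)$ is the uniserial bottom part of $P$; reading off composition factors, it has socle $\soc P=S_{k_{2j+2}}\in\cS(\Lambda)$ and top $S_{c+1}$.

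Next I would identify the relevant base element. By \ref{baseset} there is a unique $\Delta_{j+1}\in\cB(\Lambda)$ with socle $S_{k_{2j+2}}$, and it has top $S_{k_{2j}+1}$. Both $\Omega^1(I)$ and $\Delta_{j+1}$ are uniserial modules with socle $S_{k_{2j+2}}$, so both embed into the injective envelope $I(S_{k_{2j+2}})=P$; by uniseriality of $P$ they are therefore comparable. Comparing tops settles the inclusion: from $c\geq k_{2j}+1$ we get $c+1>k_{2j}+1$, so the top $S_{c+1}$ of $\Omega^1(I)$ lies strictly below the top $S_{k_{2j}+1}$ of $\Delta_{j+1}$ inside $P$, whence $\Omega^1(I)$ is a proper submodule of $\Delta_{j+1}$.

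The only delicate point, which I regard as the main obstacle, is the bookkeeping of the cyclic indices $k_f$: one has to check that the top of $\Omega^1(I)$ genuinely sits strictly between the top and the socle of $\Delta_{j+1}$. This is exactly where both constraints on $c$ enter — the non-projectivity of $I$ forces $c\leq k_{2j+2}-1$, so that $\Omega^1(I)\neq 0$, while membership in the chain forces $c\geq k_{2j}+1$, so that the top drops strictly below that of $\Delta_{j+1}$. Beyond this index tracking I expect no essential difficulty.
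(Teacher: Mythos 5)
Your proof is correct, but it takes a genuinely different route from the paper's. You argue constructively: placing $I$ in its injective class from \ref{injective class}, you observe that the projective-injective head $P=P_{k_{2j-1}+1}=I_{k_{2j+2}}$ of that class is at once the projective cover of $I$ and the injective envelope of $S_{k_{2j+2}}$, read off $\Omega^1(I)$ as the uniserial submodule of $P$ with top $S_{c+1}$ and socle $S_{k_{2j+2}}$, and compare it inside $P$ with the unique $\Delta_{j+1}\in\cB(\Lambda)$ having that socle. The paper instead proceeds by elimination: it supposes some $\Delta_i$ is a proper submodule of $\Omega^1(I)$, uses Remark \ref{submodule of projectives} to produce a projective module containing the base element with the same socle as the cokernel $X$, derives the length inequality $\ell(P)-\ell(X)>\ell(I)$, which would make $I$ a proper submodule of a quotient of a projective and so contradicts injectivity; a parallel argument rules out $\Omega^1(I)\cong\Delta_i$; and then, since $\soc\Omega^1(I)\in\cS(\Lambda)$, uniseriality leaves only $\Omega^1(I)\subsetneq\Delta_i$. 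Your approach buys explicitness: it pins down exactly which base element occurs and the precise shape of $\Omega^1(I)$, information the paper in effect re-derives later (for instance in Proposition \ref{GorensteinReduction} and Proposition \ref{def prop4}); the paper's approach buys independence from the cyclic index bookkeeping of \ref{injective class}, and is correspondingly less fragile on that front. One small point you should spell out: if $\Delta_{j+1}$ were simple, i.e. $k_{2j}+1=k_{2j+2}$, it would have no nonzero proper submodule; this case never arises, because then the injective class of $I$ would consist of the projective-injective $P$ alone and would contain no non-projective member. Your inequalities $k_{2j}+1\leq c\leq k_{2j+2}-1$ silently encode this, but it merits a sentence.
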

\begin{proof} There are two possibilities that we need to eliminate separately, either there is $\cB(\Lambda)$-filtered proper submodule of $\Omega^1(I)$ or $\Omega^1(I)\in \cB(\Lambda)$. We start with the first case. Assume to the contrary that $\Delta_i$ is a proper submodule of $\Omega^1(I)$, so there exists an exact sequence
\begin{align}\label{sth sth 2}
0\rightarrow \Delta_i\rightarrow \Omega^1(I)\rightarrow X\rightarrow 0.
\end{align}
where $X$ is a subquotient of projective cover $P(I)$ of $I$.
Because $\soc X\in\cS(\Lambda)$, there exists $\Delta_{i-1}\in\cB(\Lambda)$ sharing the same socle. Moreover there exist an indecomposable projective module $P$ such that $\Delta_{i-1}\subset P$ by \ref{submodule of projectives}. Since $P$ is projective, $X$ and $\faktor{P(I)}{\Delta_i}$ have to be proper submodules of $P$. Otherwise $P$ would be quotient or subquotient of $P(I)$ which is not possible. We can compare the lengths:
\begin{gather}
\ell(P)>\ell(P(I))-\ell(\Delta_i)\implies\nonumber\\
\ell(P)-\ell(X)>\ell(P(I))-\ell(X)-\ell(\Delta_i)\implies\nonumber\\
\ell(P)-\ell(X)>\ell(I)\label{sth sth}
\end{gather}
The right hand side of the second inequality is the length of $I$, because $\ell(I)=\ell(P(I))-\ell(\Omega^1(I))$ and $\ell(\Omega^1(I))=\ell(X)+\ell(\Delta_i)$ by \ref{sth sth 2}. However \ref{sth sth} implies that $I$ is proper submodule of $\faktor{P}{X}$ which contradicts to the injectivity of $I$.
\par For the case $\Omega^1(I)\cong\Delta_i$ we use similar arguments. Socle of $I$ is $\tau^{-1}\topp\Delta_i\cong\soc\Delta_{i-1}$, so $\soc I\in\cS(\Lambda)$. By Remark \ref{submodule of projectives} there exists a projective module $P$ having the same socle. Since $P$ is projective, $\faktor{P(I)}{\Delta_i}$ have to be proper submodule of $P$. Otherwise $P$ would be quotient or subquotient of $P(I)$ which is not possible. However  $\faktor{P(I)}{\Delta_i}\cong \faktor{P(I)}{\Omega^1(I)}\cong I$ makes $I$ proper submodule of $P$ which violates the injectivity of $I$.
\par Because the socle of $\Omega^1(I)$ is an element of $\cS(\Lambda)$, there exists $\Delta_i\in\cB(\Lambda)$ having the same socle and satisfying either $\Delta_i\subseteq \Omega^1(I)$ or $\Omega^1(I)\subset \Delta_i$ by the uniseriality of modules. We showed that the former is not possible. Hence $\Omega^1(I)$ is submodule of $\Delta_i$ for some $i$.
\end{proof}
We can combine the results \ref{injectivity} and \ref{every second syzy of injective} to get:
\begin{corollary} If $\Delta_i$ is not a simple module, then for all proper submodules $X\subset\Delta_i$, there exist injective modules such that $\Omega^1(I)\cong X$.
\end{corollary}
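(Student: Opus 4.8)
The plan is to read Lemma \ref{injectivity} as already producing the injective module we want, and then to recover $X$ as its first syzygy by a projective-cover argument. So I would fix a proper submodule $X\subset\Delta_i$. By Lemma \ref{injectivity} there is a projective-injective module $P$ with $\Delta_i\subset P$ for which the quotient $I:=\faktor{P}{X}$ is injective. This $I$ is the candidate module, coming with the short exact sequence $0\rightarrow X\rightarrow P\rightarrow I\rightarrow 0$, where the inclusion $X\subset P$ makes sense because $X\subsetneq\Delta_i\subseteq P$.

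It then remains to identify $\Omega^1(I)$ with $X$, and the key point is that $P$ is in fact the projective cover of $I$. Since $\Lambda$ is Nakayama, every module is uniserial, so the submodules of $P$ form a chain and the unique maximal one is $\rad P$; the strict inclusion $X\subsetneq P$ therefore forces $X\subseteq\rad P$. Consequently the canonical surjection $P\twoheadrightarrow\faktor{P}{X}=I$ induces an isomorphism on tops, $\topp P\cong\topp I$, which is exactly the condition for $P=P(I)$. Reading off the kernel of this projective cover gives $\Omega^1(I)=\ker(P\rightarrow I)=X$, as required.

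Finally, when $X\neq 0$ the equality $\Omega^1(I)=X\neq 0$ shows that $I$ is non-projective, so $I$ is a genuine injective non-projective $\Lambda$-module; this is where the hypothesis that $\Delta_i$ is not simple is used, as it guarantees the existence of nonzero proper submodules $X$ and hence of such $I$. Viewed this way, the statement is precisely the converse of Lemma \ref{every second syzy of injective}: that lemma shows that every $\Omega^1(I)$ sits properly inside some $\Delta_j$, whereas the present corollary shows that every proper submodule of a non-simple $\Delta_i$ is realized in this way. I expect essentially no obstacle beyond the projective-cover bookkeeping: all the real work, namely producing the injective quotient $\faktor{P}{X}$, has already been done in Lemma \ref{injectivity}, and the only point needing care is verifying $X\subseteq\rad P$ so that $P$ genuinely serves as the projective cover of $I$ rather than merely as a projective module surjecting onto it.
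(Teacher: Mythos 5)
Your proof is correct and matches the paper's intent: the paper gives no separate argument for this corollary beyond saying it follows by combining Lemma \ref{injectivity} with Lemma \ref{every second syzy of injective}, and your filling-in — taking $I=\faktor{P}{X}$ from Lemma \ref{injectivity} and checking that $P$ is the projective cover of $I$ (since the proper submodule $X$ of the uniserial module $P$ lies in $\rad P$) so that $\Omega^1(I)\cong X$ — is exactly the intended argument. Your closing observation that the corollary is the converse of Lemma \ref{every second syzy of injective} also reflects how the paper frames the "combination" of the two lemmas.
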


\begin{proposition}\label{GorensteinReduction}
Let $I$ be an injective but non-projective ${\bm\varepsilon}(\Lambda)$-module. Then there exists an injective but non-projective $\Lambda$-module $I_{\Lambda}$ such that $\Hom_{\Lambda}\left(\cP,\Omega^2(I_{\Lambda})\right)\cong I$.
\end{proposition}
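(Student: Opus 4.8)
The plan is to transport the problem across the categorical equivalence $Filt(\cB(\Lambda))\simeq\text{mod-}\bm\varepsilon(\Lambda)$ of Remark \ref{remarklis3} and \S\ref{categoricalequivalnce}, and then to realize the relevant $\cB(\Lambda)$-filtered module as a genuine second syzygy over $\Lambda$ by invoking Lemmas \ref{injectivity}, \ref{every second syzy of injective} and the corollary that follows them. Since every injective decomposes into indecomposable injectives and both $\Omega^2$ and $\Hom_\Lambda(\cP,-)$ are additive, I may assume $I$ is indecomposable. Using item (i) of \S\ref{categoricalequivalnce} I would first write $I\cong\Hom_\Lambda(\cP,Y)$ for a (necessarily uniserial, since $I$ is indecomposable) module $Y\in Filt(\cB(\Lambda))$. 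Because $\cP$ is projective, $\Hom_\Lambda(\cP,-)$ is exact on all of mod-$\Lambda$, so it turns $\Lambda$-inclusions into inclusions and, restricted to $Filt(\cB(\Lambda))$, reflects them; in particular $\soc I\cong\Hom_\Lambda(\cP,\Delta_j)$ for some $\Delta_j\in\cB(\Lambda)$, which forces $\soc Y=\soc\Delta_j\in\cS(\Lambda)$ and makes $\Delta_j$ the bottom step of the $\cB(\Lambda)$-filtration of $Y$.

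The key reduction is the following: it suffices to produce a non-simple $\Delta_i\in\cB(\Lambda)$ and a proper submodule $W\subsetneq\Delta_i$ with $\Omega^1(W)\cong Y$. Indeed, granting this, the corollary following Lemma \ref{every second syzy of injective} yields an injective non-projective $\Lambda$-module $I_\Lambda$ with $\Omega^1(I_\Lambda)\cong W$, whence $\Omega^2(I_\Lambda)\cong\Omega^1(W)\cong Y$ and $\Hom_\Lambda(\cP,\Omega^2(I_\Lambda))\cong\Hom_\Lambda(\cP,Y)\cong I$, as required. One checks $\Omega^2(I_\Lambda)=\Omega^1(W)$ on the nose, since $W$ is uniserial, its projective cover is indecomposable, and $\Omega^1(W)$ is the corresponding uniserial submodule.

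To build $W$ I would use the projective-injective structure recorded in \eqref{projectiveclasses}. Let $Q$ be the projective-injective $\Lambda$-module whose socle is $\soc\Delta_j=S_{k_{2j}}$, i.e. the top term of the $j$-th class in \eqref{projectiveclasses}; it is the injective envelope of $S_{k_{2j}}$ over $\Lambda$, so the uniserial module $Y$ (having that socle) embeds in $Q$. Since $I$ is not projective, item (iii) of \S\ref{categoricalequivalnce} forbids $Y=Q$, so $Y\subsetneq Q$, and a short argument with the injective envelope of $\Hom_\Lambda(\cP,\Delta_j)$ shows that $Y$ is in fact the \emph{maximal} $\cB(\Lambda)$-filtered submodule of $Q$. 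Set $W:=Q/Y$; then $\topp W=\topp Q$, so $Q$ is the projective cover of $W$ and $\Omega^1(W)=\ker(Q\twoheadrightarrow W)=Y$.

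It remains to see that $W$ is a proper submodule of some non-simple $\Delta_i$, and this is the step I expect to be the main obstacle. Reading $Q$ from the bottom, its maximal $\cB(\Lambda)$-filtered submodule $Y$ is an initial stack $\Delta_j,\Delta_{j-1},\ldots$ of base modules, consecutive base modules gluing because $\topp\Delta_m$ sits directly below $\soc\Delta_{m-1}$ by Definition \ref{defbase}; hence the composition factor of $Q$ immediately above $Y$ is the socle of the next base module $\Delta_i$ in this stack, which lies in $\cS(\Lambda)$. Thus $\soc W=\soc\Delta_i$, and by uniseriality $W$ embeds into $\Delta_i$, while maximality of $Y$ guarantees that the whole of $\Delta_i$ does not fit in $Q$ above $Y$, so the embedding $W\subsetneq\Delta_i$ is proper and $\Delta_i$ is non-simple (as $W\neq0$ because $Y\subsetneq Q$). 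This furnishes the required $W$ and completes the argument. The delicate points to verify carefully are the exact gluing pattern of the $\Delta$'s inside $Q$ coming from \eqref{projectiveclasses} and the maximality claim for $Y$, both of which pin down $\soc W\in\cS(\Lambda)$ and the properness of $W\subsetneq\Delta_i$.
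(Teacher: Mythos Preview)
Your argument is correct and follows essentially the same route as the paper's proof. Both proofs lift $I$ to the $\cB(\Lambda)$-filtered module $Y$ (the paper calls it $I\Delta$), embed it into a projective, take the quotient $W$ (the paper's $Q$), show $W$ is a proper submodule of some $\Delta_i$, and then invoke Lemma~\ref{injectivity} (or its corollary) to produce the injective $I_\Lambda$. The only organizational differences are that you choose the projective-injective envelope $Q$ of $\soc Y$ from the start and phrase the key step as ``$Y$ is the \emph{maximal} $\cB(\Lambda)$-filtered submodule of $Q$,'' whereas the paper takes an arbitrary projective $P\supset I\Delta$, shows $P\notin Filt(\cB(\Lambda))$, and rules out $\Delta_i\subseteq P/I\Delta$ by noting that otherwise $\begin{smallmatrix}\Delta_i\\ I\Delta\end{smallmatrix}$ would sit inside $P$ and force $I$ to embed properly in $\begin{smallmatrix}S_i\\ I\end{smallmatrix}$. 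Your maximality claim is exactly this observation repackaged; the short argument you allude to (any $\cB(\Lambda)$-filtered $M$ with $Y\subseteq M\subseteq Q$ gives $I\subseteq\Hom_\Lambda(\cP,M)$, but $\Hom_\Lambda(\cP,M)$ has the same socle as $I$, hence equals $I$) is precisely what is needed and should be written out rather than deferred.
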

\begin{proof} By Remark \ref{secondsyzygy}, $\Omega^2(I_{\Lambda})$ has $\cB(\Lambda)$-filtration. For simplicity, we denote $\Omega^2(I_{\Lambda})$ by $I\Delta$.
Because $\soc I\Delta\in\cS(\Lambda)$, there exist indecomposable projective $\Lambda$-module $P$ such that $\soc P\cong\soc I\Delta$. Because modules are uniserial either $P\subseteq I\Delta$ or $I\Delta\subset P$. The former implies that $I\Delta$ is projective $\Lambda$-module (because it has projective submodule $P$), therefore by Remark \ref{categoricalequivalnce} $I$ is projective $\bm\varepsilon(\Lambda)$-module, which is impossible due to the assumption on non-projectivity of $I$.
\par Now we can assume that $I\Delta$ is submodule of projective $\Lambda$-module $P$, hence we get the exact sequence
\begin{align}\label{seq1}
0\rightarrow I\Delta\rightarrow P\rightarrow \faktor{P}{I\Delta}\rightarrow 0.
\end{align}

We conclude $P\notin Filt(\cB(\Lambda))$. Otherwise, $I$ would be a submodule of projective module $\Hom_{\Lambda}(\cP,P)$ which violates injectivity of $I$. We claim that the quotient $Q=\faktor{P}{I\Delta}$ is a submodule of an element of the base set $\cB(\Lambda)$. Notice that $\soc Q\in\cS(\Lambda)$, so there exists $\Delta_i\in\cB(\Lambda)$ such that $\soc Q\cong \soc\Delta_i$. Because modules are uniserial either $\Delta_i\subseteq Q$ or $Q\subset \Delta_i$ holds. The former  implies that the module $\begin{vmatrix}
\Delta_i\\I\Delta
\end{vmatrix}$ is indecomposable submodule of $P$. Therefore $I$ becomes submodule of the corresponding module $\begin{vmatrix}
S_i\\I
\end{vmatrix}$ in $\bm\varepsilon(\Lambda)$ which violates the injectivity of $I$ where $\Delta_i\in\cB(\Lambda)$ corresponds to simple $\bm\varepsilon(\Lambda)$-module $S_i$ for some $i$ i.e. $\Hom_{\Lambda}(\cP,\Delta_i)\cong S_i$.
\par Because $Q$ is a proper submodule of $\Delta_i$, by the lemma \ref{injectivity}, the quotient $\faktor{PI}{Q}$ is an injective $\Lambda$-module where $PI$ is the projective-injective module satisfying $Q\subset\Delta_i\subset PI$. Existence of such $PI$ follows from the lemma \ref{injectivity}. Therefore we get the sequence:
\begin{align}\label{seq2}
0\rightarrow Q\rightarrow PI\rightarrow \faktor{PI}{Q}\rightarrow 0.
\end{align}
By using the exact sequences \ref{seq1} and \ref{seq2}, we get $\Omega^2\left(\faktor{PI}{Q}\right)\cong I\Delta$. Therefore $I_{\Lambda}\cong \faktor{PI}{\left(\faktor{P}{I\Delta}\right)}$ is the injective $\Lambda$-module satisfying the hypothesis.
\end{proof}
As a corollary we can conclude that for every injective but non-projective ${\bm\varepsilon}(\Lambda)$-module $I$, there exists at least one injective but non-projective $\Lambda$-module $I_{\Lambda}$ such that $\Omega^2(I_{\Lambda})\cong I$. This can occur because  in the proof above, projective $P$ can be longer. In general there might be more than one injective module such that their second syzygies are isomorphic. Or the first or second syzygies of injective module can be projective. For example, the projective resolution of $I_2$ stops at $P_3$ if the algebra is given  by the Kupisch series  $(4,3,2,2)$.  We point out that this does not contradict to \ref{every second syzy of injective}, because $P_2$ is an element of the base set and $\Omega^1(I_2)\cong P_3\subset P_2$. Algebra with Kupisch series $(5,4,4,4,3)$ is $\bm\varepsilon$-equivalent to $(3,2,2)$. However $I_4$ and $I_3$ gives $I_2$ in $(3,2,2)$ via the functor $\Hom(\cP,-)$ (see \ref{categoricalequivalnce}). In particular dominant dimension is one, because $P_5$ is not projective-injective.

\par Another consequence of the Proposition \ref{GorensteinReduction} is the Remark \ref{gorgor}.
We call an algebra Gorenstein if the projective dimensions of injective modules are finite. If $\Lambda$ is a cyclic Nakayama algebra which is Gorenstein, then $\bm\varepsilon(\Lambda)$ is also Gorenstein because all injective non-projective $\bm\varepsilon(\Lambda)$-modules can be obtained as the second syzygies of injective $\Lambda$-modules. So we get
\begingroup
\allowdisplaybreaks
\begin{align*}
\sup\left\{\pdim I\vert I\in\text{mod-}\Lambda\text{ injective}\right\}<\infty \implies \sup\left\{\pdim I'\vert I'\in\text{mod-}\bm\varepsilon(\Lambda)\,\text{injective}\right\}<\infty.
\end{align*}
\endgroup

\begin{proposition}\label{different injectives} If $I_1,I_2,\ldots,I_m$ are indecomposable nonisomorphic injective $\Lambda$-modules satisfying $\Omega^2(I_i)\cong \Omega^2(I_{j})$ for all $1\leq i\neq j\leq m$, then projective covers $P(I_1),$ $P(I_2),\ldots,P(I_m)$ are isomorphic. In particular $\topp I_i\cong \topp I_{j}$ and $\topp P(\Omega^1(I_i))\ncong \topp P(\Omega^1(I_{j}))$  for all $1\leq i\neq j\leq m$.
\end{proposition}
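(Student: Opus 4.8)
The plan is to reduce everything to two elementary facts about syzygies of uniserial modules together with the organization of injectives into classes recorded in \ref{injective class}. Since $\Lambda$ is Nakayama, an indecomposable module is determined by its top and its length, and the projective cover $P(M)$ is the unique indecomposable projective with $\topp P(M)\cong\topp M$; hence proving $P(I_1)\cong\cdots\cong P(I_m)$ is the same as proving that all $\topp I_i$ coincide, which simultaneously gives the assertion $\topp I_i\cong\topp I_j$. I would use repeatedly the two identities valid for any module $M$ over a Nakayama algebra: $\soc\Omega^1(M)\cong\soc P(M)$ (the first syzygy is the submodule of $P(M)$ complementary to $M$, so it shares the socle of $P(M)$) and $\topp\Omega^1(M)\cong\tau^{-1}(\soc M)$ (the top of $\Omega^1(M)$ is the simple immediately below $\soc M$ in $P(M)$). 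Iterating these gives $\topp\Omega^2(I)\cong\tau^{-1}(\soc\Omega^1(I))\cong\tau^{-1}(\soc P(I))$.

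First I would settle the isomorphism $P(I_i)\cong P(I_j)$. Each $I_i$ is injective non-projective, hence a proper quotient of the projective-injective module heading its injective class in \ref{injective class}; since quotients preserve tops, that projective-injective module is exactly $P(I_i)$, and its socle lies in $\cS(\Lambda)$. The essential structural observation is that distinct injective classes have projective-injective heads with distinct socles: the heads are the modules $P_{k_{2l-1}+1}=I_{k_{2l+2}}$, whose socles $S_{k_{2l+2}}$ run without repetition over the (distinct) elements of $\cS(\Lambda)$. Now from $\Omega^2(I_i)\cong\Omega^2(I_j)$ we obtain $\topp\Omega^2(I_i)\cong\topp\Omega^2(I_j)$, and the identity above converts this into $\tau^{-1}(\soc P(I_i))\cong\tau^{-1}(\soc P(I_j))$, i.e. $\soc P(I_i)\cong\soc P(I_j)$. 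By this observation the two projective-injective heads must coincide, so $I_i$ and $I_j$ lie in the same class and $P(I_i)\cong P(I_j)$, whence also $\topp I_i\cong\topp I_j$.

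It remains to prove the last clause, that $\topp P(\Omega^1(I_i))\ncong\topp P(\Omega^1(I_j))$ for $i\neq j$, and I would argue by contradiction. If two of these tops agreed, then $P(\Omega^1(I_i))\cong P(\Omega^1(I_j))=:Q$, and $\Omega^2(I_i),\Omega^2(I_j)$ are both submodules of $Q$ with socle $\soc Q$; being isomorphic they have equal length, and in a uniserial module there is a unique submodule of each length, so they coincide as submodules of $Q$. Consequently $\Omega^1(I_i)\cong Q/\Omega^2(I_i)\cong Q/\Omega^2(I_j)\cong\Omega^1(I_j)$. Applying the inverse of the second identity, $\soc I_i\cong\tau(\topp\Omega^1(I_i))\cong\tau(\topp\Omega^1(I_j))\cong\soc I_j$; since indecomposable injectives over a Nakayama algebra are determined by their socles, this forces $I_i\cong I_j$, contradicting that the $I_i$ are pairwise nonisomorphic.

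The main obstacle is the structural fact used in the second paragraph, that the socles of the projective-injective heads separate the injective classes; this is where the description \ref{injective class} of injectives and the distinctness of the elements of $\cS(\Lambda)$ do the real work, while everything else is uniserial bookkeeping with the two syzygy identities. The only point needing a word of care is the degenerate case where the common module $\Omega^2(I_i)$ vanishes, i.e. each $\Omega^1(I_i)$ is already projective and the computation of $\topp\Omega^2(I)$ is unavailable; this does not arise in the intended application, where $\Omega^2(I_i)$ represents a nonzero injective $\bm\varepsilon(\Lambda)$-module (cf. Proposition \ref{GorensteinReduction}), so I would simply record that $\Omega^2(I_i)$ is assumed nonzero, after which the argument above applies verbatim.
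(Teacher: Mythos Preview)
Your argument is correct and follows essentially the same line as the paper's: both obtain $\soc P(I_i)\cong\soc P(I_j)$ from the common $\topp\Omega^2$ via the identity $\soc\Omega^1(I)\cong\soc P(I)$, and then use that projective-injective modules are determined by their socle. Your treatment of the last clause is in fact more explicit than the paper's, which just asserts ``$\Omega^1(I_i)\ncong\Omega^1(I_j)$ and in particular $P(\Omega^1(I_i))\ncong P(\Omega^1(I_j))$'' without spelling out that this uses $\Omega^2(I_i)\cong\Omega^2(I_j)$ and uniseriality; your contradiction argument makes this step transparent. The caveat you flag about $\Omega^2(I_i)=0$ is apt: the paper's proof tacitly assumes nonzero $\Omega^2$ as well, and the proposition is only ever applied in that situation. (A minor point: your identity $\topp\Omega^1(M)\cong\tau^{-1}(\soc M)$ uses the opposite sign convention on $\tau$ from the paper's $\tau^{-1}\topp\Omega^2(I)\cong\soc\Omega^1(I)$, but this is immaterial to the argument.)
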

\begin{proof}
Consider the projective resolution of $I_1,\ldots,I_m$ where $1\leq i\leq m$:
\begin{center}
 $\xymatrixcolsep{5pt}
\xymatrix{& \cdots\ar[rr]\ar[rd] && P(\Omega^1(I_i))\ar[rr]\ar[rd]   && P(I_i)\ar[rr] &&I_i\\
&&\Omega^2(I_i)\ar[ru] && \Omega^1(I_i)\ar[ru]}$
 \end{center}
Because $\Omega^2(I_i)$ is submodule of $P(\Omega^1(I_i))$, the socle of the quotient is consecutive to the top  of $\Omega^2(I_i)$ i.e. $\tau^{-1}\topp\Omega^2(I_i)\cong \soc\Omega^1(I_i)$ for all $i$. In particular it is isomorphic to socle of $P(I_i)$. Notice that $P(I_i)$'s are projective-injective modules since they are projective covers of injective modules. Indecomposable projective-injective modules with the same socle is unique upto isomorphism, the first result follows.
\par Let's denote the projective covers of $I_1,\ldots,I_m$ by $P$. It is clear that $\topp P\cong \topp I_i$ for all $i$. Because injective modules are not isomorphic, we get $\Omega^1(I_i)\ncong\Omega^1(I_j)$ and in particular  $P(\Omega^1(I_i))\ncong P(\Omega^1(I_j))$ for $i\neq j$.
\end{proof}
\begin{corollary}\label{cor of different injectives} If there exists at least two injective $\Lambda$-modules $I_1$, $I_2$ satisfying $\Omega^2(I_1)\cong\Omega^2(I_2)$, then the dominant dimension of $\Lambda$ is one.
\end{corollary}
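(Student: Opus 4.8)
The plan is to deduce the statement from Proposition~\ref{different injectives} by contradiction, turning the equality of socles forced by $\Omega^2(I_1)\cong\Omega^2(I_2)$ against the distinctness of tops provided by that proposition. Assume $I_1\ncong I_2$ are indecomposable injective non-projective $\Lambda$-modules with $\Omega^2(I_1)\cong\Omega^2(I_2)$, and suppose toward a contradiction that $\domdim\Lambda\geq 2$. By the characterization \ref{gercek tanim}, read uniformly over all injective non-projective modules, the condition $\domdim\Lambda\geq 2$ forces the minimal projective resolution of every such module $I$ to begin with two projective-injective terms, namely $P(I)$ and $P(\Omega^1(I))$. In particular, applying this to $I_1$ and $I_2$, both $P(\Omega^1(I_1))$ and $P(\Omega^1(I_2))$ would be projective-injective.

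I would then compare their socles. Since $\Omega^2(I_i)\cong\ker\bigl(P(\Omega^1(I_i))\twoheadrightarrow\Omega^1(I_i)\bigr)$ is a nonzero submodule of the uniserial module $P(\Omega^1(I_i))$, and a uniserial module shares its socle with each of its nonzero submodules, I get $\soc P(\Omega^1(I_i))\cong\soc\Omega^2(I_i)$ for $i=1,2$. The hypothesis $\Omega^2(I_1)\cong\Omega^2(I_2)$ then yields $\soc P(\Omega^1(I_1))\cong\soc P(\Omega^1(I_2))$.

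Finally I would invoke the fact, already used in the proof of Proposition~\ref{different injectives}, that an indecomposable projective-injective module is determined up to isomorphism by its socle. This forces $P(\Omega^1(I_1))\cong P(\Omega^1(I_2))$, and in particular $\topp P(\Omega^1(I_1))\cong\topp P(\Omega^1(I_2))$, contradicting the conclusion of Proposition~\ref{different injectives} that $\topp P(\Omega^1(I_1))\ncong\topp P(\Omega^1(I_2))$. Hence $\domdim\Lambda\leq 1$; since $P(I_1)$ is projective-injective (again by Proposition~\ref{different injectives}), a projective-injective module exists and $\domdim\Lambda\geq 1$, so $\domdim\Lambda=1$.

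The main obstacle will be the very first reduction: extracting from $\domdim\Lambda\geq 2$ that the second step $P(\Omega^1(I))$ of the projective resolution is projective-injective for the specific modules $I_1,I_2$, which relies on interpreting \ref{gercek tanim} as a condition holding for every injective non-projective module rather than merely for some extremal one. A minor point to dispose of is the degenerate case $\Omega^2(I_1)\cong\Omega^2(I_2)=0$: then each $\Omega^1(I_i)$ is itself projective, so $P(\Omega^1(I_i))=\Omega^1(I_i)\subseteq P(I_1)=P(I_2)$, and the identical socle comparison applies with $\soc P(I_1)$ in place of $\soc\Omega^2(I_i)$, producing the same contradiction.
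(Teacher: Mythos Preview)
Your proof is correct and follows essentially the same approach as the paper: both use Proposition~\ref{different injectives} together with the observation that $P(\Omega^1(I_1))$ and $P(\Omega^1(I_2))$ share a socle (via $\Omega^2(I_1)\cong\Omega^2(I_2)$), and then argue that they cannot both be projective-injective. The paper phrases this directly (one is a proper submodule of the other, hence not injective), while you phrase it as a contradiction via uniqueness of projective-injectives with a given socle; your explicit treatment of the degenerate case $\Omega^2(I_i)=0$ is a nice extra touch.
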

\begin{proof}
By Proposition \ref{different injectives}, projective covers of $I_1$ and $I_2$ are isomorphic, however $P(\Omega^1(I_1))\ncong P(\Omega^1(I_2))$. Notice that they have isomorphic socles since $P(\Omega^1(I_1))\supset\Omega^2(I_1)\cong\Omega^2(I_2)\subset P(\Omega^1(I_2))$. They are uniserial modules, without loss of generality let $P(\Omega^1(I_1))$ be proper submodule of $P(\Omega^1(I_2))$. Therefore $P(\Omega^1(I_1))$ cannot be a projective-injective module, by definition \ref{gercek tanim}, the dominant dimension of $I_1$ is one which makes $\domdim\Lambda=1$.
\end{proof}

We are ready to state and prove the Theorem \ref{thmLamdaauslanderimplies}.
\begin{theorem}\label{thmdefect}[Theorem \ref{thmLamdaauslanderimplies}]
If $\Lambda$ is a higher Auslander algebra, then $\bm\varepsilon(\Lambda)$ is so, provided that $\gldim\Lambda\geq 3$. Moreover, their defects are same i.e. $\defect\Lambda=\defect\bm\varepsilon(\Lambda)$.
\end{theorem}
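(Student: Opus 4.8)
The plan is to separate the two assertions; both rest on the correspondence between injective $\Lambda$-modules and injective $\bm\varepsilon(\Lambda)$-modules developed above. Throughout set $k:=\gldim\Lambda$, so that the higher Auslander hypothesis reads $\domdim\Lambda=\gldim\Lambda=k\geq 3$. In particular $\Lambda$ is not selfinjective (a cyclic selfinjective Nakayama algebra has infinite global dimension), so the reduction formulas are available.

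First I would settle the higher Auslander assertion. By item \ref{remarklis2}(a) one has $\gldim\bm\varepsilon(\Lambda)=k-2$, and since $\domdim\Lambda=k\geq 3$ the dominant-dimension reduction of item \ref{remarklis2}(b) applies and gives $\domdim\bm\varepsilon(\Lambda)=k-2$. Hence $\domdim\bm\varepsilon(\Lambda)=\gldim\bm\varepsilon(\Lambda)$, so $\bm\varepsilon(\Lambda)$ is again a higher Auslander algebra, and by item \ref{remarklis1} it is cyclic or linear, possibly disconnected. This is exactly where the hypothesis $\gldim\Lambda\geq 3$ is used: for $\gldim\Lambda=2$ the algebra $\bm\varepsilon(\Lambda)$ is semisimple and reduction (b) is unavailable.

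For the equality of defects I would exhibit a bijection between the indecomposable injective non-projective $\Lambda$-modules and the indecomposable injective non-projective $\bm\varepsilon(\Lambda)$-modules, since the defect counts precisely these. Define $\Phi(I_\Lambda):=\Hom_\Lambda\!\left(\cP,\Omega^2(I_\Lambda)\right)$; this makes sense because $\Omega^2(I_\Lambda)$ lies in $Filt(\cB(\Lambda))$ by Remark \ref{secondsyzygy} and $\Hom_\Lambda(\cP,-)$ is the categorical equivalence of \ref{categoricalequivalnce}. Surjectivity of $\Phi$ onto the injective non-projective $\bm\varepsilon(\Lambda)$-modules is exactly Proposition \ref{GorensteinReduction} together with its corollary. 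For injectivity, suppose $\Phi(I_1)\cong\Phi(I_2)$; since the equivalence reflects isomorphisms this gives $\Omega^2(I_1)\cong\Omega^2(I_2)$, and were $I_1\not\cong I_2$ then Corollary \ref{cor of different injectives} would force $\domdim\Lambda=1$, contradicting $\domdim\Lambda=k\geq 3$. Thus $\Phi$ is injective, and combining the two conclusions yields $\defect\Lambda=\defect\bm\varepsilon(\Lambda)$.

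The main obstacle is the well-definedness of $\Phi$: that $\Omega^2(I_\Lambda)$ corresponds to an injective, and non-projective, $\bm\varepsilon(\Lambda)$-module for \emph{every} injective non-projective $I_\Lambda$. Proposition \ref{GorensteinReduction} only manufactures preimages, and its corollary explicitly warns that distinct injectives may share a second syzygy in general, so this direction does not come for free. I expect to establish it by the argument dual to Proposition \ref{GorensteinReduction}: use Lemma \ref{every second syzy of injective} to write $\Omega^1(I_\Lambda)$ as a proper submodule of some $\Delta_j\in\cB(\Lambda)$, and then Lemma \ref{injectivity} to recognize $\Omega^2(I_\Lambda)$ as the $\cB(\Lambda)$-filtered module of maximal socle, whence its image under $\Hom_\Lambda(\cP,-)$ is injective. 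A secondary point, relevant only when $\bm\varepsilon(\Lambda)$ is linear, is to check that no semisimple ($\Aa_1$) direct factor occurs, so that under the defect convention the count of injective non-projective modules genuinely computes $\defect\bm\varepsilon(\Lambda)$ on both sides.
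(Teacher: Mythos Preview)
Your proposal follows essentially the same route as the paper: the reduction formulas of Remark~\ref{remarklis2} for the higher Auslander claim, and the combination of Proposition~\ref{GorensteinReduction} (surjectivity) with Corollary~\ref{cor of different injectives} (injectivity, via $\domdim\Lambda\geq 3$) for the defect equality. The paper's own proof is actually terser than yours on the second part: it establishes that each injective non-projective $\bm\varepsilon(\Lambda)$-module has a \emph{unique} preimage under $\Hom_\Lambda(\cP,\Omega^2(-))$ and then simply writes ``Therefore defects are equal,'' without separately verifying that every injective non-projective $\Lambda$-module is such a preimage.

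So what you flag as the ``main obstacle''---that $\Phi(I_\Lambda)$ always lands among the injective non-projective $\bm\varepsilon(\Lambda)$-modules---is precisely the point the paper leaves implicit. Your instinct to recover it from Lemmas~\ref{injectivity} and~\ref{every second syzy of injective} is the right direction; in fact the paper carries out exactly this kind of verification later (in the proof of Proposition~\ref{def prop4}, for the constructed algebra $\Lambda'$), showing that for each injective $I_{\Lambda'}$ the module $\Hom_{\Lambda'}(\cP,\Omega^2(I_{\Lambda'}))$ is an injective quotient of some $P(S_{i+1})$. The same mechanism works in general once one knows $\Omega^1(I_\Lambda)\subsetneq\Delta_i$ and hence $\Omega^2(I_\Lambda)$ is the maximal $\cB(\Lambda)$-filtered module with its given socle inside $P(\Omega^1(I_\Lambda))$. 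Your secondary worry about $\Aa_1$ summands is addressed only in the discussion \emph{after} the theorem and in Example~\ref{example dominant dimension}; under the hypothesis $\gldim\Lambda\geq 3$ the paper does not treat it within the proof itself.
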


\begin{proof} [Proof of Theorem \ref{thmLamdaauslanderimplies}] By the remark \ref{remarklis2}, we get the reductions
\begin{gather}
\gldim\Lambda=\gldim{\bm\varepsilon}(\Lambda)+2\\
\domdim\Lambda=\domdim{\bm\varepsilon}(\Lambda)+2
\end{gather}
when $\gldim\Lambda\geq 3$ and $\domdim\Lambda\geq 3$.  If $\Lambda$ is a higher Auslander algebra i.e. $\gldim\Lambda=\domdim\Lambda$, it immediately follows that ${\bm\varepsilon}(\Lambda)$ is higher Auslander algebra because $\gldim\bm\varepsilon(\Lambda)=\domdim\bm\varepsilon(\Lambda)$. 
\par Now we want to show the equality of the defects. By the Proposition \ref{GorensteinReduction}, every injective non-projective $\bm\varepsilon(\Lambda)$-module can be obtained as a second syzygy of injective $\Lambda$-modules. Therefore $\defect\bm\varepsilon(\Lambda)\leq\defect\Lambda$ for all cyclic $\Lambda$. Moreover by the Proposition \ref{different injectives} and its corollary \ref{cor of different injectives}, if there are more than two injective $\Lambda$-modules which induces the same injective $\bm\varepsilon(\Lambda)$-module via the functor $\Hom_{\Lambda}(\cP,\Omega^2(-))$, then the dominant dimension cannot be reduced by two. In other words, $\Lambda$ cannot be a higher Auslander algebra. This forces that for each injective non-projective $\bm\varepsilon(\Lambda)$-module $I$, there exist exactly one injective $\Lambda$-module $I'$ such that $\Hom_{\Lambda}(\cP,\Omega^2(I'))\cong I$ which is unique. Therefore defects are equal.
\end{proof}

The Theorem \ref{thmLamdaauslanderimplies} does not cover global dimensions one and two. In the case of $\gldim\Lambda=2$, ${\bm\varepsilon}(\Lambda)$ has to be semisimple i.e. ${\bm\varepsilon}(\Lambda)\cong \Aa^{\times n}_1$ for some $n$. By the definition \ref{def domdim} its dominant dimension is infinity.  We give detailed exposition of global dimension $2$ case in Section \ref{sectionkis2}. However, since we set the defect of $\Aa^{\times n}_1$ to $n$, we can still conclude that the defects are same (see \ref{defect of linear}). A cyclic Nakayama algebra cannot have global dimension one, so we can skip it. An important consequence is:
\begin{corollary}\label{corAus}
If $\Lambda$ is a higher Auslander algebra, then  higher syzygy filtered algebras ${\bm\varepsilon}^j(\Lambda)$, provided that $1\leq j\leq m+1$ where $\bm\varepsilon^{m}(\Lambda)$ is cyclic, $\bm\varepsilon^{m+1}(\Lambda)$ is linear (not necessarily connected) have the same defect, which is the defect of $\Lambda$. In other words, the defect of higher Auslander algebra is an invariant under ${\bm\varepsilon}$-construction.
\end{corollary}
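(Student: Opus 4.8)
The plan is to argue by induction on $j$, using Theorem \ref{thmdefect} as the engine that advances each step. The inductive claim at stage $j$ is that $\bm\varepsilon^{j}(\Lambda)$ is a higher Auslander algebra with $\defect\bm\varepsilon^{j}(\Lambda)=\defect\Lambda$; by the definition of $m$ in Remark \ref{remarklis1} it is cyclic when $j\leq m$ and linear when $j=m+1$. The base case $j=0$ is precisely the hypothesis on $\Lambda$.

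For an interior step $1\leq j\leq m$ the algebra $\bm\varepsilon^{j-1}(\Lambda)$ is cyclic, and the first thing I would check is the hypothesis $\gldim\bm\varepsilon^{j-1}(\Lambda)\geq 3$ that Theorem \ref{thmdefect} requires. Iterating the reduction $\gldim\bm\varepsilon(-)=\gldim(-)-2$ from Remark \ref{remarklis2} gives $\gldim\bm\varepsilon^{i}(\Lambda)=\gldim\Lambda-2i$, so the global dimension strictly decreases along the chain. Since $\bm\varepsilon^{m}(\Lambda)$ is cyclic and a cyclic Nakayama algebra cannot have global dimension one, we get $\gldim\bm\varepsilon^{m}(\Lambda)\geq 2$, and hence $\gldim\bm\varepsilon^{j-1}(\Lambda)\geq\gldim\bm\varepsilon^{m-1}(\Lambda)=\gldim\bm\varepsilon^{m}(\Lambda)+2\geq 4$ for every $j\leq m$. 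Thus Theorem \ref{thmdefect} applies and yields that $\bm\varepsilon^{j}(\Lambda)$ is again higher Auslander with $\defect\bm\varepsilon^{j}(\Lambda)=\defect\bm\varepsilon^{j-1}(\Lambda)=\defect\Lambda$, closing the interior induction.

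The step I expect to require the most care is the last one, $j=m+1$, which is exactly the transition from the cyclic algebra $\bm\varepsilon^{m}(\Lambda)$ to the linear algebra $\bm\varepsilon^{m+1}(\Lambda)$. If $\gldim\bm\varepsilon^{m}(\Lambda)\geq 3$, then Theorem \ref{thmdefect} still applies verbatim and produces a (possibly disconnected) linear higher Auslander algebra of the same defect. The genuinely exceptional situation is $\gldim\bm\varepsilon^{m}(\Lambda)=2$, which Theorem \ref{thmdefect} does not cover and in which $\bm\varepsilon^{m+1}(\Lambda)$ is semisimple, $\bm\varepsilon^{m+1}(\Lambda)\cong\Aa^{\times d}_1$. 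Here I would invoke the convention $\defect\Aa^{\times d}_1=d$ of Definition \ref{defect of linear} together with the global-dimension-two analysis following Theorem \ref{thmdefect}: Propositions \ref{GorensteinReduction}, \ref{different injectives} and Corollary \ref{cor of different injectives} force a bijection between the injective non-projective $\bm\varepsilon^{m}(\Lambda)$-modules and their distinct second syzygies, so the number $d$ of simple summands equals $\defect\bm\varepsilon^{m}(\Lambda)=\defect\Lambda$ and the defect is preserved through this final reduction as well. Combining the interior steps with this last one gives the claimed invariance for all $1\leq j\leq m+1$.
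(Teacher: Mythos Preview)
Your proof is correct and follows essentially the same approach as the paper: iterate Theorem~\ref{thmdefect} along the $\bm\varepsilon$-chain, handling the boundary case $\gldim=2$ separately via the convention on $\defect\Aa_1^{\times d}$. The paper does not spell out the induction---the corollary is stated as an immediate consequence---so your explicit verification that $\gldim\bm\varepsilon^{j-1}(\Lambda)\geq 4$ for $j\leq m$ (using that $\bm\varepsilon^{m}(\Lambda)$ is cyclic, hence has $\gldim\geq 2$) is a welcome addition.

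One small caution on the final $\gldim=2$ step: the propositions you cite give that distinct injective non-projectives of $\bm\varepsilon^{m}(\Lambda)$ have distinct second syzygies, which yields $\defect\bm\varepsilon^{m}(\Lambda)\leq d$, but the surjection onto the $d$ simple summands is not quite automatic from those statements alone, since the semisimple target has no injective non-projectives for Proposition~\ref{GorensteinReduction} to hit. The paper is equally brief here; the cleanest way to close the equality is either to note $d=\rank\bm\varepsilon^{m+1}(\Lambda)=\#\rel\bm\varepsilon^{m}(\Lambda)$ and invoke that a cyclic higher Auslander algebra of global dimension two is a covering of $(3,2)$ (Proposition~\ref{sectionkis2}), for which $\#\rel=\defect$, or to appeal directly to the construction in Example~\ref{friday example}.
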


\subsection{Covering of Algebras}

Let $\Lambda$ be a cyclic Nakayama algebra of rank $n$. Let $(c_1,\ldots,c_n)$ be its Kupish series. Cyclic Nakayama algebra $\Lambda'$ is called double covering of $\Lambda$, if $c_i=c_{i+n}$ for all $1\leq i\leq n$ where $c_i$ is the length of projective $\Lambda'$-module $P_i$. In other words, Kupisch series of $\Lambda'$ is $(c_1,\ldots,c_n,c_1,\ldots,c_n)$. 

Because of the remark \ref{categoricalequivalnce}, to understand $\bm\varepsilon(\Lambda)$-modules we use $\cB(\Lambda)$-filtered $\Lambda$-modules. It is necessary for us to introduce the following.
\begin{definition}\label{b length} Let $M$ be a $\cB(\Lambda)$-filtered $\Lambda$-module. $\cB(\Lambda)$-length of $M$ is defined as the number $m$ such that
\begin{align}\label{composition series}
0\subset M_1\subset M_2\subset\cdots\subset M_m\cong M
\end{align}
where each $\faktor{M_{i+1}}{M_{i}}$ is an element of $\cB(\Lambda)$. The modules $\faktor{M_{i+1}}{M_{i}}$ are called $\cB(\Lambda)$-composition factors or series of module $M$.  $\cB(\Lambda)$-radical of $M$ is defined as $M_{m-1}$. $\cB(\Lambda)$-socle of $M$ is defined as $M_1$. $\cB(\Lambda)$-top of $M$ is $\faktor{M}{M_{m-1}}$. 
\end{definition}
By the previous definition, the $\cB(\Lambda)$-length of $\cB(\Lambda)$-filtered $\Lambda$-module $M$ is equal to the length of $\bm\varepsilon(\Lambda)$-module $\Hom_{\Lambda}(\cP,M)$.

\begin{theorem} \label{thmdoublininside}[Theorem \ref{thmdoubling}]  If $\Lambda'$ is a double covering of $\Lambda$, then $\bm\varepsilon(\Lambda')$ is double covering of $\bm\varepsilon(\Lambda)$. 
\end{theorem}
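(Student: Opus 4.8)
The plan is to compute the Kupisch series of $\bm\varepsilon(\Lambda')$ explicitly and to recognise it as the period-$n$ doubling of the Kupisch series of $\bm\varepsilon(\Lambda)$. The guiding observation is that every datum entering the $\bm\varepsilon$-construction --- the defining relations (\ref{relations}), the sets $\cS(\Lambda),\cS'(\Lambda)$, the base set $\cB(\Lambda)$, and the $\cB$-filtrations of the relevant projectives --- is \emph{local} in the cyclic quiver, and therefore inherits the period $n$ that the Kupisch series of $\Lambda'$ carries by hypothesis. Throughout I write $\sigma$ for the shift of vertex labels by $n$ (the generator of the $\Zz/2$-action on the $2n$-cycle underlying $\Lambda'$).

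First I would record the relations of $\Lambda'$. Since $c_i=c_{i+n}$ for all $i$, the irredundant system (\ref{relations}) for $\Lambda'$ is exactly two shifted copies of that of $\Lambda$: every relation $\boldsymbol\alpha_{k_{2i}}\cdots\boldsymbol\alpha_{k_{2i-1}}=0$ of $\Lambda$ occurs in $\Lambda'$ together with its $\sigma$-shift, so $\Lambda'$ has $2r$ relations. Reading off Definition \ref{defbase}, the sets $\cS(\Lambda')$ and $\cS'(\Lambda')$ are then the disjoint unions of $\cS(\Lambda),\cS'(\Lambda)$ with their $\sigma$-shifts, and the base set is $\cB(\Lambda')=\{\Delta_1,\ldots,\Delta_r,\sigma\Delta_1,\ldots,\sigma\Delta_r\}$ with $\ell(\sigma\Delta_j)=\ell(\Delta_j)$. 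In particular $\rank\bm\varepsilon(\Lambda')=2r=2\rank\bm\varepsilon(\Lambda)$, which is the correct rank for a double covering, and the cyclic order on these $2r$ vertices splits into two arcs, each reproducing the vertex pattern of $\bm\varepsilon(\Lambda)$.

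Next, by Definition \ref{defep} the indecomposable projective $\bm\varepsilon(\Lambda')$-modules are $\Hom_{\Lambda'}(\cP',P(S))$ for $S\in\cS'(\Lambda')$, where $\cP'=\bigoplus_{S\in\cS'(\Lambda')}P(S)$, and by the remark following Definition \ref{b length} the length of such a module equals the $\cB(\Lambda')$-length of the projective $\Lambda'$-module $P(S)$. So it remains to prove, for each $S\in\cS'(\Lambda)$, that the $\cB(\Lambda')$-length of $P(S)$ equals the $\cB(\Lambda')$-length of $P(\sigma S)=\sigma P(S)$, and that both equal the $\cB(\Lambda)$-length of $P(S)$ in $\Lambda$. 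Using Remark \ref{secondsyzygy}, the $\cB$-composition factors of a projective are obtained by walking along the cycle from its top and recording successive base elements; since the lengths $c_i$ and the placement of the base elements are periodic mod $n$, this walk for $P(S)$ in $\Lambda'$ reproduces the walk for $P(S)$ in $\Lambda$ factor by factor, while $\sigma$ carries the walk for $P(S)$ to that for $\sigma P(S)$. Granting this, the Kupisch series of $\bm\varepsilon(\Lambda')$ is $(d_1,\ldots,d_r,d_1,\ldots,d_r)$ where $(d_1,\ldots,d_r)$ is that of $\bm\varepsilon(\Lambda)$, which is precisely the assertion that $\bm\varepsilon(\Lambda')$ is a double covering of $\bm\varepsilon(\Lambda)$.

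The main obstacle is this length comparison: a projective $\Lambda$-module with $c_i>n$ wraps around the $n$-cycle and may meet a given base element more than once, whereas its lift in $\Lambda'$ lives in the $2n$-cycle and wraps differently (or not at all), so a naive embedding of uniserial modules does not transport the filtration verbatim. I would resolve this by phrasing the $\cB$-filtration intrinsically through the sequence of socles encountered, which is dictated only by the relations and the lengths --- data genuinely periodic mod $n$ --- so that the count of base factors is insensitive to how the module physically wraps. Equivalently, one verifies that the push-down functor of the $\Zz/2$-covering $\Lambda'\to\Lambda$ given by $\sigma$ restricts to $Filt(\cB(\Lambda'))\to Filt(\cB(\Lambda))$ and, under the equivalences of \ref{categoricalequivalnce}, becomes the push-down functor of a covering $\bm\varepsilon(\Lambda')\to\bm\varepsilon(\Lambda)$, from which the doubling of the Kupisch series follows at once.
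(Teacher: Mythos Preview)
Your approach is essentially the same as the paper's: identify that the relations, $\cS(\Lambda')$, $\cS'(\Lambda')$ and $\cB(\Lambda')$ are the union of the corresponding $\Lambda$-data with their $\sigma$-shifts, then show that the $\cB(\Lambda')$-length of each relevant projective is $\sigma$-invariant and agrees with the $\cB(\Lambda)$-length downstairs. The only substantive difference is in how the length comparison is executed. Where you sketch it via the ``walk along the cycle recording base elements'' heuristic and then defer to periodicity (or alternatively to the push-down functor of the $\Zz/2$-covering), the paper carries out an explicit induction on $\cB(\Lambda')$-length: it checks directly that $\Delta_i\in\cB(\Lambda')$ forces $\Delta_{[i+n]_{2n}}\in\cB(\Lambda')$, and then uses the short exact sequence $0\to\rad_{\cB(\Lambda')}M_i\to M_i\to\Delta_i\to 0$ to propagate the equality of $\cB$-lengths from length $t-1$ to length $t$. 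That induction is precisely what makes your ``the count of base factors is insensitive to how the module physically wraps'' rigorous, so you have correctly located the one point requiring care; your covering-theoretic reformulation is a clean conceptual alternative that the paper does not pursue.
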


\begin{proof} For a given $N$, let $[j]_N$ denote the least positive residue of $j$ modulo $N$ when $j>0$ and $[N]_N=N$. We set $\rank\Lambda=n$, hence $\rank\Lambda'=2n$.  If $P_i$ is projective $\Lambda'$-module for $1\leq i\leq n$, the index of its socle is $[i+\ell(P_i)-1]_{2n}$ where $\ell(P_i)$ denotes the length of $P_i$. Because $\Lambda'$ is double covering of $\Lambda$, the index of the socle of $P_{i+n}$ is
\begin{align}\label{function gus}
[i+n+\ell(P_{i+n})-1]_{2n}=[i+n+\ell(P_i)-1]_{2n}
\end{align}

If $j$ is an index of $\soc P_i$ satisfying  $1\leq j\leq n$, then $j+n$ is the index of $\soc P_{i+n}$ satisfying $n+1\leq j+n\leq 2n$ by \ref{function gus}.
Therefore, if $S_{k_2},\ldots,S_{k_{2r}}$ are the socles of projective $\Lambda'$-modules satisfying $1\leq k_{2j}\leq n$, then $S_{[k_2+n]_{2n}},\ldots,S_{[k_{2r}+n]_{2n}}$ are in $\cS(\Lambda)$. Similarly, If $j$ is an index of $\soc P_i$ satisfying  $n+1\leq j\leq 2n$, then $j-n=[j+n]_{2n}$ is the index of $\soc P_{i+n}$ satisfying $1\leq j-n\leq n$ by \ref{function gus}. Therefore, if $S_{k_2},\ldots,S_{k_{2r}}$ are the socles of projective $\Lambda'$-modules satisfying $n+1\leq k_{2j}\leq 2n$, then $S_{[k_2+n]_{2n}},\ldots,S_{[k_{2r}+n]_{2n}}$ are in $\cS(\Lambda')$. After combining these two cases and using the definition of $\cS'(\Lambda')$-set, we have
 \begin{align}
\begin{gathered}\label{base set of doubling}
\cS(\Lambda')=\left\{S_{k_2},S_{k_4},\ldots,S_{k_{2r}},S_{k_{2+n}},\ldots,S_{k_{2r+n}}\right\}\\
\cS'(\Lambda')=\left\{\tau S_{k_2},\tau S_{k_4},\ldots,\tau S_{k_2r},\tau S_{k_{2}+n},\ldots,\tau S_{k_{2r}+n}\right\}.
\end{gathered}
\end{align}
where $1\leq k_{2j}\leq n$ for all $1\leq j\leq r$.

A module $\Delta$ is an element of the base set $\cB(\Lambda')$ if $\soc\Delta\in\cS(\Lambda')$, $\topp\Delta\in\cS'(\Lambda')$ and the composition factors of the largest subquotient of $\Delta$ are not from the set $\cS(\Lambda')\cup\cS'(\Lambda')$. We want to show that $P_i,P_{i+n}$ $1\leq i\leq n$ have the same $\cB(\Lambda')$-length if $P_i, P_{i+n}\in Filt\cB(\Lambda')$. We give a proof by induction.

\par Let $\topp\Delta_i\cong S_i$ and $\soc\Delta_i\cong T_i$. Therefore $\topp\Delta_{[i+n]_{2n}}\cong S_{[i+n]_{2n}}$ and $\soc\Delta_{[i+n]_{2n}}\cong T_{[i+n]_{2n}}$ for all $i$. $\cB(\Lambda')$-length of $\Delta_{[i+n]_{2n}}$ is one because for any subquotient $S_j$ of $\Delta_i$, we get $S_{[j+n]_{2n}}\notin \cS(\Lambda')\cup\cS'(\Lambda')$ by using \ref{base set of doubling} which makes $\Delta_{[i+n]_{2n}}\in\cB(\Lambda')$. Therefore, if module $M_i$ has $\cB(\Lambda')$-filtration, then $M_{[i+n]_{2n}}$ has also $\cB(\Lambda')$-filtration, because $\topp M_{[i+n]_{2n}}\in\cS'(\Lambda')$ and $\soc M_{[i+n]_{2n}}\in\cS(\Lambda')$ by similar arguments. This is the inductive step. Now, we assume that for all $\cB(\Lambda)$-filtered indecomposable modules $M_i$ of $\cB(\Lambda')$-length less than $t$, $M_{[i+n]_{2n}}$ has $\cB(\Lambda')$-filtration and the same length less than $t$. Let $\cB(\Lambda')$-length of module $M_i$ be $t$. The exact sequence
\begin{align}
0 \rightarrow \rad_{\cB(\Lambda')} M_i\rightarrow M_i\rightarrow \Delta_i\rightarrow 0
\end{align}
shows that $\rad_{\cB(\Lambda')} M_i$ is of $\cB(\Lambda')$-length $t-1$. So the module $\rad_{\cB(\Lambda')}M_{[i+n]_{2n}}$ is of $\cB(\Lambda')$-length $t-1$. Similarly $\cB(\Lambda')$-lengths of $\Delta_i$ and $\Delta_{[i+n]_{2n}}$ are one. Therefore
\begin{align}
0 \rightarrow \rad_{\cB(\Lambda')} M_{[i+n]_{2n}}\rightarrow M_{[i+n]_{2n}}\rightarrow \Delta_{[i+n]_{2n}}\rightarrow 0
\end{align}
is an exact sequence where the $\cB(\Lambda')$-length of $M_{[i+n]_{2n}}$ is $t$.
\par If $P_i$ has $\cB(\Lambda')$-filtration $1\leq i\leq n$ then $P_{i+n}$ has also, because $\topp P_i\in\cS'(\Lambda')$, $\soc P_i\in\cS(\Lambda')$ implies that top and socle of $P_{i+n}$ is in $\cS'(\Lambda')$ and $\cS(\Lambda')$ respectively which follows from  \ref{base set of doubling}. Moreover their $\cB(\Lambda')$-lengths are equal by the induction we used above. Hence we get repetition in the Kupisch series of $\Lambda'$. Consider the map $F:\mod\Lambda'\rightarrow \mod\Lambda$ such that $F(S_i)\cong F(S_{[i+n]_{2n}})\cong\tilde{S_i}$.  Via the map $F$, both $P_i,P_{[i+n]_{2n}}$ maps to $\tilde{P_i}=P(\tilde{S_i})$ of $\Lambda$ and $\cB(\Lambda')$ to $\cB(\Lambda)$. Therefore Kupisch series of $\bm\varepsilon(\Lambda)$ appears twice in $\bm\varepsilon(\Lambda')$.
\end{proof}

Actually, this result can be generalized directly by using an analogous idea of the proof of the Theorem \ref{thmdoublininside}:
\begin{corollary} If $\Lambda'$ is a $m$-fold covering of $\Lambda$, then $\bm\varepsilon(\Lambda')$ is $m$-fold covering of $\bm\varepsilon(\Lambda)$.
\end{corollary}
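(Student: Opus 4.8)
The plan is to run the argument of Theorem \ref{thmdoublininside} almost verbatim, replacing the modulus $2n$ by $mn$ and the single shift $j\mapsto[j+n]_{2n}$ by the family of shifts $j\mapsto[j+sn]_{mn}$ for $1\le s\le m-1$. First I would set $\rank\Lambda=n$, so that $\rank\Lambda'=mn$, and keep the notation $[j]_N$ for the least positive residue of $j$ modulo $N$. Since $\Lambda'$ is an $m$-fold covering, the Kupisch series satisfies $c_i=c_{i+n}=\cdots=c_{i+(m-1)n}$ for $1\le i\le n$, and therefore the socle index of $P_{i+sn}$ equals $[i+sn+\ell(P_i)-1]_{mn}$ for every $0\le s\le m-1$, the exact analogue of \ref{function gus}. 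This identity forces the shift $j\mapsto[j+n]_{mn}$ to permute the index sets $\cS(\Lambda')$ and $\cS'(\Lambda')$, and iterating it organizes each of these sets into $m$ congruent blocks.

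Next I would record the base set, generalizing \ref{base set of doubling}: writing $S_{k_2},\ldots,S_{k_{2r}}$ for the socles of the minimal projective $\Lambda'$-modules with representatives chosen so that $1\le k_{2j}\le n$, one gets
\begin{align*}
\cS(\Lambda')=\bigcup_{s=0}^{m-1}\left\{S_{[k_2+sn]_{mn}},\ldots,S_{[k_{2r}+sn]_{mn}}\right\}
\end{align*}
together with $\cS'(\Lambda')=\tau\,\cS(\Lambda')$. A module $\Delta$ lies in $\cB(\Lambda')$ precisely when $\soc\Delta\in\cS(\Lambda')$, $\topp\Delta\in\cS'(\Lambda')$, and no proper intermediate composition factor lies in $\cS(\Lambda')\cup\cS'(\Lambda')$; since each shift $j\mapsto[j+sn]_{mn}$ preserves all three conditions, $\Delta_{[i+sn]_{mn}}\in\cB(\Lambda')$ whenever $\Delta_i\in\cB(\Lambda')$, and all of these have $\cB(\Lambda')$-length one.

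The heart of the argument is then the same induction on $\cB(\Lambda')$-length as in Theorem \ref{thmdoublininside}, but carried out simultaneously across all $m$ shifts. Assuming that every indecomposable $\cB(\Lambda')$-filtered module $M_i$ of $\cB(\Lambda')$-length less than $t$ has the property that $M_{[i+sn]_{mn}}$ is $\cB(\Lambda')$-filtered of the same length for each $s$, the short exact sequence $0\to\rad_{\cB(\Lambda')}M_i\to M_i\to\Delta_i\to 0$ together with its shifted copies propagates the statement to length $t$. Applying this to projectives, for each $1\le i\le n$ the modules $P_i,P_{i+n},\ldots,P_{i+(m-1)n}$ are either all outside $Filt\,\cB(\Lambda')$ or all inside it with a common $\cB(\Lambda')$-length, so the Kupisch series of $\bm\varepsilon(\Lambda')$ is periodic of period $\rank\bm\varepsilon(\Lambda)$. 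To finish, I would invoke the collapsing functor $F$ with $F(S_{[i+sn]_{mn}})\cong\tilde S_i$ for all $s$, which sends $\cB(\Lambda')$ onto $\cB(\Lambda)$ and identifies the $m$ blocks, so that the Kupisch series of $\bm\varepsilon(\Lambda)$ appears $m$ times in that of $\bm\varepsilon(\Lambda')$.

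The one point that needs genuine care---and the expected main obstacle---is the bookkeeping in this simultaneous induction: one must check that the $\cB(\Lambda')$-length equality holds across all $m-1$ shifts at once, not merely for the single shift $s=1$. However, because the shifts commute with one another and each individually preserves $\cB(\Lambda')$, the verification is structurally identical to the $m=2$ case and introduces no new phenomenon; this is precisely why the result follows \emph{directly} from the analogous idea.
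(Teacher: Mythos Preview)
Your proposal is correct and follows exactly the approach the paper indicates: the paper's own proof of this corollary consists of the single sentence that the result ``can be generalized directly by using an analogous idea of the proof of the Theorem \ref{thmdoublininside}.'' You have carried out precisely that generalization, replacing the modulus $2n$ by $mn$ and the single shift by the family of shifts $j\mapsto[j+sn]_{mn}$, with the same induction on $\cB(\Lambda')$-length and the same collapsing functor $F$ at the end.
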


\begin{example}\label{example method} Here we give two examples:
 \begin{enumerate}[label=\roman*)]
\item Let $\Lambda=(5,4,4,3)$. Then $\bm\varepsilon(\Lambda)=(2,2)$. One can verify that $(5,4,4,3,5,4,4,3)$ gives $(2,2,2,2)$.
\item Let $\Lambda=(4,3,3,3)$. Then ${\bm\varepsilon}(\Lambda)=(3,2,2)$ and ${\bm\varepsilon}^2(\Lambda)=(2,1)$. One can verify that $(4,3,3,3,4,3,3,3)$ is $\bm\varepsilon$-equivalent to $(3,2,2,3,2,2)$. If we apply $\bm\varepsilon$ once more, we get $(2,1)\oplus (2,1)$. In particular this shows that our construction is different than other retraction methods, as opposed to what they claim in p.798 \cite{rene}, because they reduce $(2,2,3,2,2,3)$ to $(2, 2, 3, 2, 1)$ at p.792 and they would reduce $(3,3,3,4,3,3,3,4)$ to $(3, 3, 3, 4, 3, 2, 2)$.
\end{enumerate}
\end{example}

\begin{corollary} If $\Lambda$ is a cyclic Nakayama algebra of rank $n$ which is a higher Auslander algebra of global dimension $k$, then for each integer multiple of $n$, there exists a higher Auslander algebra of global dimension $k$.
\end{corollary}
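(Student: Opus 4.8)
The plan is to take the witness algebra to be the $m$-fold covering $\Lambda^{(m)}$ of $\Lambda$: it is again a cyclic Nakayama algebra, now of rank $mn$, and letting $m$ range over the positive integers realizes every integer multiple of $n$. It then suffices to prove that $\Lambda^{(m)}$ is itself a higher Auslander algebra with $\gldim\Lambda^{(m)}=\gldim\Lambda=k$. Since $\Lambda$ is higher Auslander we have $\domdim\Lambda=\gldim\Lambda=k$, so the target splits into the two statements $\gldim\Lambda^{(m)}=k$ and $\domdim\Lambda^{(m)}=k$; proving both simultaneously yields $\gldim\Lambda^{(m)}=\domdim\Lambda^{(m)}$, which is precisely the higher Auslander property. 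Thus the whole corollary reduces to the single assertion that the covering operation preserves both global and dominant dimension.

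The engine I would use is the commutation of covering with the $\bm\varepsilon$-construction: by the $m$-fold generalization of Theorem \ref{thmdoubling} (the corollary immediately preceding this one), $\bm\varepsilon(\Lambda^{(m)})$ is the $m$-fold covering $(\bm\varepsilon(\Lambda))^{(m)}$. I would induct on $k=\gldim\Lambda$ in steps of two. In the inductive step, assuming $k\geq 3$ and $\bm\varepsilon(\Lambda)$ still cyclic, Remark \ref{remarklis2} gives $\gldim\Lambda=\gldim\bm\varepsilon(\Lambda)+2$ and $\domdim\Lambda=\domdim\bm\varepsilon(\Lambda)+2$, while Theorem \ref{thmLamdaauslanderimplies} guarantees that $\bm\varepsilon(\Lambda)$ is again higher Auslander of global dimension $k-2$. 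Applying the same two reductions to $\Lambda^{(m)}$ and invoking the commutation identity $\bm\varepsilon(\Lambda^{(m)})\cong(\bm\varepsilon(\Lambda))^{(m)}$, I obtain
\begin{align*}
\gldim\Lambda^{(m)}=\gldim(\bm\varepsilon(\Lambda))^{(m)}+2,\qquad \domdim\Lambda^{(m)}=\domdim(\bm\varepsilon(\Lambda))^{(m)}+2,
\end{align*}
and the induction hypothesis applied to the lower-dimensional higher Auslander algebra $\bm\varepsilon(\Lambda)$ gives $\gldim(\bm\varepsilon(\Lambda))^{(m)}=\domdim(\bm\varepsilon(\Lambda))^{(m)}=k-2$. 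Hence $\gldim\Lambda^{(m)}=\domdim\Lambda^{(m)}=k$, closing the step.

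The main obstacle is precisely where this recursion terminates, namely when $\bm\varepsilon(\Lambda)$ ceases to be cyclic (it becomes linear or semisimple) and the clean reductions of Remark \ref{remarklis2} are no longer available. The sharpest instance is $\gldim\Lambda=2$, where $\bm\varepsilon(\Lambda)$ is semisimple and its dominant dimension jumps to infinity, so the two-step dominant-dimension reduction fails outright — this is exactly the boundary phenomenon behind the lower bound $3$ flagged in Remark \ref{remarklis2}; the terminal stages where $\bm\varepsilon(\Lambda)$ is merely linear (and the small odd value $\gldim\Lambda=3$, recalling that a cyclic Nakayama algebra cannot have global dimension one) must also be treated, since the induction hypothesis, phrased for cyclic algebras, does not transfer directly across the cyclic-to-linear transition. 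For all of these base cases I would abandon the recursion and argue directly with the push-down functor $F$ from the proof of Theorem \ref{thmdoublininside}, which sends $S_i$ and $S_{[i+n]}$ to a common $\Lambda$-simple: because projective covers, syzygies, and injective envelopes of uniserial modules are governed entirely by the (periodic) Kupisch series, minimal projective and minimal injective resolutions of $\Lambda^{(m)}$-modules are exactly the periodic lifts under $F$ of those of the corresponding $\Lambda$-modules. Consequently projective dimensions of simples and the projective–injective pattern along injective coresolutions are preserved vertex by vertex, giving $\gldim\Lambda^{(m)}=\gldim\Lambda$ and $\domdim\Lambda^{(m)}=\domdim\Lambda$ unconditionally. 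This direct computation both settles the terminal cases and, together with the inductive step above, establishes the corollary; indeed it even makes the induction dispensable, though presenting it through the $\bm\varepsilon$-recursion keeps the argument in line with the rest of the paper.
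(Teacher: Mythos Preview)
Your proposal is correct and follows essentially the same route as the paper: take the $m$-fold covering, use the commutation of covering with $\bm\varepsilon$ (the corollary to Theorem \ref{thmdoubling}), and then invoke the two-step reductions of global and dominant dimension from Remark \ref{remarklis2} to transfer the higher Auslander property. The only organizational difference is that the paper descends in one shot to the linear terminus $L=\bm\varepsilon^{m+1}(\Lambda)$, notes that $\oplus^t L$ is still higher Auslander of the same dimensions, and then climbs back, whereas you induct step by step and handle the terminal (linear/semisimple) stage with a direct push-down argument; your extra care at the boundary is a welcome complement to the paper's terser treatment, which simply points to the semisimple case handled in \ref{friday example} and \ref{sectionkis2}.
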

\begin{proof}
Since $\Lambda$ is of finite global dimension, by remark \ref{remarklis1}, there is $m$ such that $\bm\varepsilon^{m+1}(\Lambda)$ is a linear Nakayama algebra which is a higher Auslander algebra, we denote it by $L$.
Let $\Lambda'$ be covering algebra of $\Lambda$ for some integer $t$. Then by the Theorem \ref{thmdoublininside}, ${\bm\varepsilon}^{m+1}(\Lambda')\cong \oplus ^tL$, where $\oplus^t L$ is higher Auslander linear Nakayama algebra. We exclude the semisimple case which is examined in \ref{friday example} (see also \ref{sectionkis2}). Because $\bm\varepsilon$-process reduces both of global dimension and dominant dimension by two, we get:
$\gldim \oplus^t L=\domdim \oplus^t L\iff \gldim\Lambda'=\domdim\Lambda'$. This finishes the proof. 
\end{proof}

\section{Proof of Theorem \ref{thmreverseepsilon}}\label{section cyclic}
First we assume existence of $\Lambda'$ such that $\bm\varepsilon(\Lambda')\cong\Lambda$. If we pose the condition that $\Lambda'$ is a higher Auslander algebra, this will lead uniqueness of such $\Lambda'$. And then we can describe Kupisch series of $\Lambda'$ which shows its existence and then finishes the proof.\\

 Let $\Lambda$ be a cyclic Nakayama algebra of rank $n$ and given by irredundant system of relations \ref{relations}. We have the equality
\begin{align}\label{longequality}
\#\rel\Lambda=\#\cB(\Lambda)=\#\cS(\Lambda)=\#\cS'(\Lambda)
\end{align}
(see definition \ref{defbase}) where $\#$ denotes the cardinality of given set. Moreover, we have
\begin{align}
\label{cardinalityofrelations}
\begin{split}
\#\rel\Lambda&=\text{number of projective classes}\,\,(\text{see}\,\ref{projectiveclasses})\\
&=\text{number of minimal projectives}\,\,( \text{see Remark}\,\ref{secondsyzygy})\\
&=\text{number of projective-injectives}\\
&=\text{number of injective classes}\,\,(\text{see}\,\ref{injective class})
\end{split}
\end{align}
An injective $\Lambda$-module $I$ is not projective if and only if socle of $I$ is not an element of $\cS(\Lambda)$. Therefore the defect of any cyclic Nakayama algebra $\Lambda$ is subject to equality
\begin{align}\label{rankequaation}
\rank\Lambda=\defect\Lambda+\#\rel\Lambda
\end{align}
since $\#\cS(\Lambda)=\#\rel\Lambda$.

Let $\Lambda$ and $\Lambda'$ be cyclic Nakayama algebras which are higher Auslander algebras satisfing $\bm\varepsilon(\Lambda')\cong \Lambda$.  Keeping this information in mind, we can deduce the following properties of $\Lambda$ and $\Lambda'$.
\begin{enumerate}[label=\arabic*)]
\item By \ref{rankequaation}, cyclic Nakayama algebras $\Lambda$ and $\Lambda'$ satisfy
\begin{align*}
\begin{gathered}
\rank\Lambda=\#\rel\Lambda+\defect\Lambda\\
\rank\Lambda'=\#\rel\Lambda'+\defect\Lambda'
\end{gathered}
\end{align*}
\item Because $\Lambda'$ is higher Auslander algebra, by Theorem \ref{thmdefect}:
\begin{align*}
\defect\Lambda'=\defect\Lambda
\end{align*}
\item Because of the $\bm\varepsilon$-construction i.e. $\bm\varepsilon(\Lambda')\cong\Lambda$:
\begin{align}\label{equalityof some numbers}
\#\rel\Lambda'=\#\cB(\Lambda')=\#\cS'(\Lambda')=\#\cS(\Lambda')=\rank\bm\varepsilon(\Lambda')=\rank\Lambda.
\end{align}
\item Hence we obtain the rank of $\Lambda'$:
\begin{align*}
\rank\Lambda' & =\#\rel\Lambda'+\defect\Lambda'\\
&=\rank\Lambda+\defect\Lambda.
\end{align*}
\end{enumerate}

\begin{definition} [Inheritance] \label{definheritance}
Let $S_1,S_2,\ldots,S_n$ be simple $\Lambda$-modules. If $\Lambda'$ is a cyclic Nakayama algebra satisfying $\bm\varepsilon(\Lambda')\cong\Lambda$, then there is a bijection between simple $\Lambda$-modules and elements of $\cB(\Lambda')$ by Remark \ref{remarklis3}. For each $S_i$, we denote the corresponding $\Lambda'$-module by $\Delta(S_i)$. In other words, $\Hom_{\Lambda'}(\cP,\Delta(S_i))\cong S_i$, see \ref{categoricalequivalnce}. The tops of elements of the base set $\cB(\Lambda')$ are called \emph{inherited simple modules}. Projective covers of inherited simple modules are called \emph{inherited projective modules}. The socles of inherited projective modules are called \emph{inherited socles}.
 \end{definition}

\begin{lemma} \label{lemma 3.3} The number of inherited simple and projective modules is the rank of $\Lambda$.
\end{lemma}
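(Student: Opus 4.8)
The plan is to count the inherited simple modules directly from the base set $\cB(\Lambda')$, and then transport the count to the projective modules via the projective cover map. First I would recall the inheritance bijection of Definition \ref{definheritance}, which comes from the categorical equivalence of Remark \ref{remarklis3}: the elements of $\cB(\Lambda')$ are in bijection with the simple $\Lambda$-modules through $\Delta \mapsto \Hom_{\Lambda'}(\cP,\Delta)$. Combined with the chain of equalities \ref{equalityof some numbers}, this gives $\#\cB(\Lambda') = \rank\bm\varepsilon(\Lambda') = \rank\Lambda$. Since by Definition \ref{definheritance} the inherited simple modules are exactly the tops of the elements of $\cB(\Lambda')$, it suffices to check that distinct elements of $\cB(\Lambda')$ have distinct tops, so that the count of inherited simples coincides with $\#\cB(\Lambda')$.

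Next I would invoke the explicit description of the base set in Definition \ref{defbase}, applied to $\Lambda'$. There the map $\Delta \mapsto \topp\Delta$ sends $\cB(\Lambda')$ onto $\cS'(\Lambda')$, and by the indexing used in \ref{baseset} the images are precisely the simples obtained by applying the cyclic shift $S_j \mapsto S_{j+1}$ to the socle representatives in $\cS(\Lambda')$. The socles in $\cS(\Lambda')$ are pairwise distinct, being a complete set of representatives of socles of the indecomposable projective $\Lambda'$-modules, and the shift $S_j \mapsto S_{j+1}$ is a bijection on the simple modules; hence the tops are pairwise distinct. Therefore the number of inherited simple modules equals $\#\cS'(\Lambda') = \#\cB(\Lambda') = \rank\Lambda$.

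Finally I would pass to the projectives. The inherited projective modules are the projective covers $P(S)$ of the inherited simple modules $S$, and in a Nakayama algebra $P(S)$ is the unique indecomposable projective with top $S$; thus $S \mapsto P(S)$ carries non-isomorphic simples to non-isomorphic indecomposable projectives. Consequently the number of inherited projective modules equals the number of inherited simple modules, which is $\rank\Lambda$, establishing both claims.

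I expect the only point genuinely requiring care to be the distinctness of the tops: a priori the $\#\cB(\Lambda')$ elements of the base set could share repeated tops, which would make the count of inherited simples strictly smaller than $\#\cB(\Lambda')$. This is exactly what the structural description in Definition \ref{defbase} rules out, since it forces the tops to fill out the entire set $\cS'(\Lambda')$, whose cardinality already equals $\#\cB(\Lambda')$ by \ref{equalityof some numbers}. Once this is in place, everything else is bookkeeping through the already-established bijections.
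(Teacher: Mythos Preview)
Your argument is correct and follows essentially the same route as the paper: both proofs reduce the count to $\#\cB(\Lambda')$ and then invoke the chain of equalities \ref{equalityof some numbers} to get $\rank\Lambda$. You are simply more explicit than the paper in justifying that distinct elements of $\cB(\Lambda')$ have distinct tops (so that the inherited simples really number $\#\cB(\Lambda')$), a point the paper takes for granted from the construction in Definition~\ref{defbase}.
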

\begin{proof}
The number of inherited simple and projective modules is the number of elements of the base set $\cB(\Lambda')$. By \ref{equalityof some numbers}, $\#\cB(\Lambda')=\rank\Lambda$.
\end{proof}

\begin{proposition}\label{prop 3.3} A projective $\Lambda'$-module is inherited if it has $\cB(\Lambda')$-filtration.
\end{proposition}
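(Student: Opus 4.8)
The plan is to read off the ordinary top of a $\cB(\Lambda')$-filtered projective module from its $\cB(\Lambda')$-top, using the uniseriality of Nakayama modules. The conceptual background is the equivalence of \ref{categoricalequivalnce}, under which $\cB(\Lambda')$-filtered projective $\Lambda'$-modules correspond exactly to the projective $\bm\varepsilon(\Lambda')$-modules; the argument itself, however, only needs uniseriality together with the definitions \ref{definheritance} and \ref{b length}. Since $\Lambda'$ is a cyclic Nakayama algebra, every indecomposable projective $\Lambda'$-module is uniserial, so it suffices to treat an indecomposable projective $P$, the general statement following by decomposing into indecomposable summands. I would assume $P$ admits a $\cB(\Lambda')$-filtration
\[
0\subset M_1\subset M_2\subset\cdots\subset M_m=P,
\]
so that, in the terminology of Definition \ref{b length}, its $\cB(\Lambda')$-top $\faktor{P}{M_{m-1}}$ is a single element $\Delta\in\cB(\Lambda')$ (when $m=1$ this just reads $P=\Delta$).

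The decisive step is to transfer information from the $\cB(\Lambda')$-top $\Delta$ to the ordinary top of $P$. Because $P$ is uniserial, its proper submodule $M_{m-1}$ is contained in the unique maximal submodule $\rad P$, whence $\topp P\cong\topp\left(\faktor{P}{M_{m-1}}\right)=\topp\Delta$. By Definition \ref{definheritance} the module $\topp\Delta$, being the top of an element of $\cB(\Lambda')$, is an inherited simple module. Consequently $P$ is the projective cover of the inherited simple module $\topp\Delta$, i.e.\ the indecomposable projective $\Lambda'$-module with top $\topp\Delta$, and hence is an inherited projective module, again by Definition \ref{definheritance}. This completes the argument.

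The step I expect to require the most care is the identification $\topp P\cong\topp\Delta$, that is, the compatibility of passing to $\cB(\Lambda')$-quotients with passing to ordinary tops. This is precisely where the Nakayama hypothesis enters: for a uniserial module the submodules are totally ordered, so every proper submodule---in particular the $\cB(\Lambda')$-radical $M_{m-1}$---lies inside $\rad P$, forcing the two tops to coincide. For a non-uniserial module the $\cB(\Lambda')$-top would not control the ordinary top, so it is worth recording explicitly that uniseriality is used here, together with the reduction to indecomposable $P$, the notion of inherited projective module in Definition \ref{definheritance} being defined for indecomposables.
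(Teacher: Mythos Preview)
Your proof is correct and follows essentially the same idea as the paper's: both arguments hinge on the observation that a $\cB(\Lambda')$-filtered indecomposable module has its top in $\cS'(\Lambda')$, which for a projective is exactly the condition of being inherited. The paper phrases this via the top/socle characterization of $\cB(\Lambda')$-filtered modules (top in $\cS'(\Lambda')$, socle in $\cS(\Lambda')$, the latter being automatic for projectives), while you extract $\topp P=\topp\Delta\in\cS'(\Lambda')$ directly from the last filtration step using uniseriality; these are two ways of saying the same thing, and your version is slightly more explicit about where uniseriality enters.
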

\begin{proof}
By definition \ref{definheritance}, $P$ is inherited projective if $P=P(S)$ where $S$ is inherited simple module. Therefore $S\in\cS'(\Lambda')$. On the other hand, $\soc P\in\cS(\Lambda')$ since $P$ is projective module. Any module with top from $\cS'(\Lambda')$ and socle from $\cS(\Lambda')$ has $\cB(\Lambda')$-filtration.
\end{proof}

\begin{proposition}\label{prop 3.5}
The number of nonisomorphic socles of indecomposable $\Lambda'$-projective modules which are not socles of $\cB(\Lambda')$-filtered indecomposable $\Lambda'$-projective modules is the defect of $\Lambda$.
\end{proposition}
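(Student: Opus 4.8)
$\textbf{Proof proposal for Proposition \ref{prop 3.5}.}$

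The plan is to count socles of projective $\Lambda'$-modules by splitting them into two groups: those socles that occur as the socle of some $\cB(\Lambda')$-filtered (i.e. inherited) projective, and those that do not. The claim identifies the second group with $\defect\Lambda$, so I would set up a bijection between such ``non-inherited socles'' and the injective non-projective $\Lambda$-modules (whose count is $\defect\Lambda$ by the defect bookkeeping in equations \ref{rankequaation} and \ref{rankequaation} combined with \ref{equalityof some numbers}).

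First I would fix notation for the socles of indecomposable projective $\Lambda'$-modules: these are exactly the elements of $\cS(\Lambda')$, and by \ref{equalityof some numbers} we have $\#\cS(\Lambda') = \rank\Lambda = \#\cB(\Lambda')$. By Proposition \ref{prop 3.3}, a projective $\Lambda'$-module is inherited precisely when it has $\cB(\Lambda')$-filtration, so the socles realized by inherited projectives are exactly the inherited socles of Definition \ref{definheritance}. Thus the quantity to be computed is
\begin{align*}
\#\{\text{socles of indecomposable projective }\Lambda'\text{-modules}\} - \#\{\text{inherited socles}\}.
\end{align*}
The first term is $\#\cS(\Lambda') = \rank\Lambda$. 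So the whole proposition reduces to showing that the number of \emph{inherited} socles equals $\rank\Lambda - \defect\Lambda = \#\rel\Lambda$, i.e. that precisely $\#\rel\Lambda$ of the $\rank\Lambda$-many socle classes in $\cS(\Lambda')$ arise as socles of $\cB(\Lambda')$-filtered projectives.

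Next I would connect the non-inherited socles to the defect via the functor $\Hom_{\Lambda'}(\cP,-)$ of Remark \ref{categoricalequivalnce}. The key mechanism is Proposition \ref{GorensteinReduction} together with Theorem \ref{thmdefect}: under the higher Auslander hypothesis, each injective non-projective $\bm\varepsilon(\Lambda')\cong\Lambda$-module comes from a \emph{unique} injective non-projective $\Lambda'$-module via second syzygies, and $\defect\Lambda' = \defect\Lambda$. An injective non-projective $\Lambda'$-module $I$ has $\Omega^1(I)$ a proper submodule of some $\Delta_i\in\cB(\Lambda')$ by Lemma \ref{every second syzy of injective}; its projective cover $P(I)$ is projective-injective, and I would argue that $P(I)$ is precisely \emph{not} $\cB(\Lambda')$-filtered (otherwise $\Hom_{\Lambda'}(\cP,P(I))$ would be projective-injective and force $I_{\Lambda'}$ to induce a projective $\bm\varepsilon(\Lambda')$-module, contradicting the analysis in Proposition \ref{GorensteinReduction}). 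The socle of this non-$\cB(\Lambda')$-filtered projective cover is then a non-inherited socle, and distinct injectives give distinct such socles because their projective covers have distinct socles. This yields an injection from injective non-projective $\Lambda'$-modules into the non-inherited socles.

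Finally I would establish the reverse inequality: every socle of a projective $\Lambda'$-module that is not the socle of any $\cB(\Lambda')$-filtered projective must arise this way, i.e. as $\soc P(I)$ for an injective non-projective $I$. Here I expect the \textbf{main obstacle}: controlling the uniseriality/length comparisons to ensure that a socle failing to be inherited genuinely forces the existence of an injective non-projective module with that socle, rather than merely a longer projective that ``hides'' an inherited one. The tool is again uniseriality: if $\soc P'$ lies in $\cS(\Lambda')$ but $P'$ has no $\cB(\Lambda')$-filtration, then comparing $P'$ with the inherited projective sharing its socle (via $\topp\in\cS'(\Lambda')$, $\soc\in\cS(\Lambda')$ of Proposition \ref{prop 3.3}) exhibits a proper containment whose quotient is injective by Lemma \ref{injectivity}, producing the required injective non-projective module. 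Combining the injection with this surjection gives a bijection, so the number of non-inherited socles equals the number of injective non-projective $\Lambda'$-modules, which is $\defect\Lambda' = \defect\Lambda$, completing the proof.
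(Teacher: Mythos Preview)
Your reduction in the first two paragraphs is correct and matches the paper: the total number of socles of indecomposable $\Lambda'$-projectives is $\#\cS(\Lambda')=\rank\Lambda$, so the claim boils down to showing that the number of \emph{inherited} socles equals $\#\rel\Lambda$. At that point, however, you miss the one-line finish the paper uses. Under the categorical equivalence of Remark~\ref{categoricalequivalnce}, the $\cB(\Lambda')$-filtered projectives correspond exactly to the projective $\Lambda$-modules, and the $\cB(\Lambda')$-socles of the former correspond to the socles of the latter. Hence the inherited socles are in bijection with $\cS(\Lambda)$, whose cardinality is $\#\rel\Lambda$ by \ref{longequality}. Subtracting gives $\rank\Lambda-\#\rel\Lambda=\defect\Lambda$. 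That is the paper's entire proof.

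Your detour through injective non-projective $\Lambda'$-modules is not only unnecessary but has a real gap. You assert that for an injective non-projective $\Lambda'$-module $I$, the projective cover $P(I)$ is ``precisely not $\cB(\Lambda')$-filtered,'' and that $\soc P(I)$ is therefore a non-inherited socle. Neither step is justified: $P(I)$ is projective-injective with $\topp P(I)=\topp I$, and nothing so far prevents $\topp I\in\cS'(\Lambda')$, in which case $P(I)$ \emph{is} $\cB(\Lambda')$-filtered and its socle \emph{is} inherited. (In fact, the structural fact that the \emph{non-inherited} projectives are exactly certain projective-injectives is Proposition~\ref{newmodules are injective}, which the paper proves \emph{after} and \emph{using} the present proposition; invoking that mechanism here risks circularity.) The reverse direction you sketch via Lemma~\ref{injectivity} likewise produces an injective quotient, but does not pin down which socle class it witnesses. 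Drop this bijection argument and use the categorical equivalence directly to count inherited socles as $\#\rel\Lambda$.
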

\begin{proof}
Notice that the number of socles of $\cB(\Lambda')$-filtered indecomposable $\Lambda'$-projective modules is the number of socles of $\Lambda$-projective modules via $\bm\varepsilon$-construction which is $\#\rel\Lambda$ by \ref{longequality}. The number of all socles of $\Lambda'$-projective modules is $\#\rel\Lambda'$. Therefore the number of new socles of $\Lambda'$ satisfies:
\begin{align*}
\#\text{new socles of}\, \Lambda'&=\#\text{all socles of}\, \Lambda'-\# \text{inherited socles of}\, \Lambda'\\
&= \#\rel\Lambda'-\#\rel\Lambda\\
&=\rank\Lambda-\#\rel\Lambda\\
&= \defect\Lambda
\end{align*}
\end{proof}
\begin{corollary}\label{cor 3.3} The number of projective classes of $\Lambda'$ which are not classes of $\cB(\Lambda')$-filtered $\Lambda'$-projective modules is the defect of $\Lambda$.
\end{corollary}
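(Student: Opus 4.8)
The plan is to reduce the statement to Proposition \ref{prop 3.5} by using that, for a cyclic Nakayama algebra, the indecomposable projective modules are partitioned into classes which are in bijection with the distinct socles of indecomposable projectives. Indeed, the description \ref{projectiveclasses} shows that each projective class consists of all indecomposable projectives sharing a fixed socle, so assigning to a class its common socle gives a bijection between the projective classes of $\Lambda'$ and the set $\cS(\Lambda')$; the count \ref{cardinalityofrelations} records that there are exactly $\#\rel\Lambda'$ of them. Under this bijection a class qualifies as a class of $\cB(\Lambda')$-filtered projectives precisely when its socle is the socle of some $\cB(\Lambda')$-filtered indecomposable projective $\Lambda'$-module, so the classes that are \emph{not} of this form correspond exactly to the socles excluded in Proposition \ref{prop 3.5}. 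That proposition then gives their number as $\defect\Lambda$.

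Equivalently, and this is the version I would write out, one can run the same arithmetic as in Proposition \ref{prop 3.5} directly at the level of classes. First I would count all projective classes of $\Lambda'$: by \ref{cardinalityofrelations} and \ref{equalityof some numbers} this number is $\#\rel\Lambda'=\rank\Lambda$. Next I would identify the classes arising from $\cB(\Lambda')$-filtered projectives. By Proposition \ref{prop 3.3} the $\cB(\Lambda')$-filtered indecomposable projectives are exactly the inherited ones, and through the categorical equivalence \ref{remarklis3} --- under which projective $\bm\varepsilon(\Lambda')\cong\Lambda$-modules are precisely the images of $\cB(\Lambda')$-filtered projective $\Lambda'$-modules --- their classes correspond to the projective classes of $\Lambda$, of which there are $\#\rel\Lambda$ by \ref{longequality}. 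Subtracting, the number of projective classes of $\Lambda'$ that are not classes of $\cB(\Lambda')$-filtered projectives equals $\rank\Lambda-\#\rel\Lambda$, which is $\defect\Lambda$ by the defect equation \ref{rankequaation}.

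The step that needs the most care is the claim that the $\cB(\Lambda')$-filtered part of the class structure matches the projective class structure of $\Lambda$ on the nose, i.e. that passing to socles identifies the classes of $\cB(\Lambda')$-filtered projectives with precisely the socles retained in Proposition \ref{prop 3.5}, with no class counted twice. This is where one uses that distinct classes have distinct socles, so that the socle map is injective on classes, and that each inherited socle is realized inside exactly one class, namely the class it determines through \ref{projectiveclasses}. Once this bijection is fixed, the corollary is merely the translation of Proposition \ref{prop 3.5} from the language of socles to the language of projective classes, and no further module-theoretic input is needed.
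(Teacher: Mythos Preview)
Your proposal is correct and follows essentially the same route as the paper: the paper's one-line proof simply invokes the bijection between projective classes and socles given by \ref{projectiveclasses}, so that the corollary is an immediate translation of Proposition \ref{prop 3.5} from socles to classes. Your first paragraph is exactly this argument (with more detail than the paper supplies), and the alternative arithmetic in your second paragraph is just the computation from Proposition \ref{prop 3.5} rewritten at the level of classes rather than socles.
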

\begin{proof}
Because the number of socles is equal to number of classes of projective modules following \ref{projectiveclasses}, claim holds.
\end{proof}
\begin{proposition}\label{prop defect}
The number of nonisomorphic indecomposable $\Lambda'$-projective modules which are not $\cB(\Lambda')$-filtered indecomposable $\Lambda'$-projective modules is the defect of $\Lambda$.
\end{proposition}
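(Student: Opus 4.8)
The plan is to count the nonisomorphic indecomposable projective $\Lambda'$-modules by splitting them into those that admit a $\cB(\Lambda')$-filtration and those that do not, and then to evaluate each piece from the results already in hand. Since $\Lambda'$ is a cyclic Nakayama algebra, its nonisomorphic indecomposable projective modules are in bijection with its vertices, so there are exactly $\rank\Lambda'$ of them; by the rank computation derived above, this total equals $\rank\Lambda+\defect\Lambda$. Thus the whole problem reduces to pinning down how many of these are $\cB(\Lambda')$-filtered.

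Next I would identify the $\cB(\Lambda')$-filtered projectives. By Proposition \ref{prop 3.3} an indecomposable projective $\Lambda'$-module is $\cB(\Lambda')$-filtered if and only if it is inherited, so the filtered indecomposable projectives are exactly the inherited projective modules. These are the projective covers $P(S)$ of the inherited simple modules $S\in\cS'(\Lambda')$; because $\Lambda'$ is Nakayama, distinct simple modules have distinct indecomposable projective covers, so the inherited projectives are in bijection with the inherited simples. By Lemma \ref{lemma 3.3} (equivalently, $\#\cS'(\Lambda')=\rank\Lambda$ from \ref{equalityof some numbers}) there are exactly $\rank\Lambda$ of them.

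Subtracting then gives the count: the number of nonisomorphic indecomposable $\Lambda'$-projective modules that are not $\cB(\Lambda')$-filtered is
\[
\rank\Lambda'-\rank\Lambda=(\rank\Lambda+\defect\Lambda)-\rank\Lambda=\defect\Lambda,
\]
which is the assertion. There is essentially no deep obstacle once Proposition \ref{prop 3.3} is available; the only point deserving care is that counting individual indecomposable projectives is a priori finer than counting socles or projective classes (Proposition \ref{prop 3.5}, Corollary \ref{cor 3.3}), since several projectives in one class share a socle yet may have different tops and hence different filtration status. The argument sidesteps this by computing the complementary count (total minus filtered) rather than directly enumerating the non-filtered projectives, which reduces everything to the clean bijection between inherited projectives and $\cS'(\Lambda')$ and makes the subtraction immediate.
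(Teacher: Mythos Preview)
Your proof is correct and follows essentially the same approach as the paper: both compute the number of non-filtered projectives as the difference $\rank\Lambda'-\rank\Lambda=\defect\Lambda$, using that the $\cB(\Lambda')$-filtered indecomposable projectives are exactly the inherited ones and number $\rank\Lambda$. Your write-up is somewhat more detailed in justifying the bijections, but the argument is the same.
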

\begin{proof}
Notice that the number of $\Lambda'$-projective modules which have $\cB(\Lambda')$-filtration is the rank of $\Lambda$ via $\bm\varepsilon$-construction. Therefore the number of new projective modules of $\Lambda'$ satisfies:
\begin{align*}
\#\text{new projectives of} \,\Lambda'&=\#\text{all projectives of}\, \Lambda'-\# \text{inherited projectives of}\, \Lambda'\\
&= \rank\Lambda'-\rank\Lambda\\
&= \defect\Lambda
\end{align*}
\end{proof}

\begin{proposition}\label{newmodules are injective}
Indecomposable projective $\Lambda'$-modules which are not filtered by $\cB(\Lambda')$ are  projective-injective $\Lambda'$-modules. Moreover, their socles are not inherited. They are unique in the sense that each of them are the only projective module in their projective class.
\end{proposition}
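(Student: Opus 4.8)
The plan is to prove all three assertions at once by a counting argument that squeezes the number of new (i.e.\ non-$\cB(\Lambda')$-filtered) projective $\Lambda'$-modules against the number of projective-injective $\Lambda'$-modules. First I would record the two exact counts already available: by Proposition \ref{prop defect} there are exactly $\defect\Lambda$ indecomposable projective $\Lambda'$-modules without $\cB(\Lambda')$-filtration, while by \ref{cardinalityofrelations} the total number of indecomposable projective-injective $\Lambda'$-modules is $\#\rel\Lambda'$, which by \ref{equalityof some numbers} and \ref{rankequaation} equals $\rank\Lambda=\#\rel\Lambda+\defect\Lambda$.

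The key step is an upper bound on the number of $\cB(\Lambda')$-\emph{filtered} projective-injectives. Since $\Hom_{\Lambda'}(\cP,-)$ restricts to an equivalence between $Filt(\cB(\Lambda'))$ and the category of $\bm\varepsilon(\Lambda')=\Lambda$-modules (Remark \ref{categoricalequivalnce}, \ref{remarklis3}), it is injective on isomorphism classes, and by item (iv) of Remark \ref{categoricalequivalnce} it sends a $\cB(\Lambda')$-filtered projective-injective $\Lambda'$-module to a projective-injective $\Lambda$-module. Hence the $\cB(\Lambda')$-filtered projective-injectives inject into the set of projective-injective $\Lambda$-modules, whose cardinality is $\#\rel\Lambda$ by \ref{cardinalityofrelations}. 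Subtracting, at least $(\#\rel\Lambda+\defect\Lambda)-\#\rel\Lambda=\defect\Lambda$ of the projective-injective $\Lambda'$-modules are not $\cB(\Lambda')$-filtered, hence are new. As there are only $\defect\Lambda$ new projectives altogether, the two numbers coincide: every new projective is projective-injective, proving the first assertion, and exactly $\#\rel\Lambda$ projective-injectives are $\cB(\Lambda')$-filtered.

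For the statements about socles and classes I would invoke the Nakayama dictionary visible in \ref{projectiveclasses}: each projective class has a single socle and a unique projective-injective member (its maximal element), so taking socles is a bijection from indecomposable projective-injectives onto $\cS(\Lambda')$. The $\#\rel\Lambda$ filtered projective-injectives then carry $\#\rel\Lambda$ distinct socles, which lie among the inherited socles; since the inherited socles also number $\#\rel\Lambda$ by the proof of Proposition \ref{prop 3.5}, the two sets coincide. The $\defect\Lambda$ new projective-injectives carry $\defect\Lambda$ further socles, necessarily distinct from the filtered ones (two projective-injectives with equal socle are isomorphic, but one is filtered and the other is not), so the socles of the new projectives are exactly the non-inherited socles; this is the second assertion. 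Finally, any projective lying in the class of a new projective shares its non-inherited socle, so by the contrapositive of the definition of inherited socles it is itself new, hence projective-injective by the first assertion; as a class contains only one projective-injective, it must coincide with the given new projective, which is therefore the only projective in its class.

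The step I expect to be most delicate is the upper bound on the filtered projective-injectives: item (iv) of Remark \ref{categoricalequivalnce} may be used only in the direction in which it holds (filtered projective-injective $\Rightarrow$ projective-injective downstairs, the converse being false), and the whole argument closes only because the resulting inequality is saturated, leaving no slack once the counts $\defect\Lambda$ and $\#\rel\Lambda+\defect\Lambda$ are in place. I would therefore verify carefully that the equivalence sends distinct filtered modules to distinct $\Lambda$-modules and that no projective-injective $\Lambda$-module is hit twice, so that the cardinality bound $\#\rel\Lambda$ is genuinely valid and the squeeze is exact.
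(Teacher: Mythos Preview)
Your proof is correct, but it takes a genuinely different route from the paper's argument.

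The paper's proof is a direct pigeonhole from the three equalities already in hand: the number of new projectives, the number of new socles, and the number of new projective classes are all equal to $\defect\Lambda$ (Propositions \ref{prop defect}, \ref{prop 3.5}, Corollary \ref{cor 3.3}). Every new projective class (one whose common socle is non-inherited) contains at least one projective, and that projective cannot be filtered since its socle is not inherited; so the $\defect\Lambda$ new classes absorb all $\defect\Lambda$ new projectives one apiece. Each such class then contains exactly one projective, which must be its projective-injective member, and its socle is non-inherited by construction. All three assertions fall out at once.

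Your argument instead bounds the number of \emph{filtered} projective-injectives from above by $\#\rel\Lambda$ using the categorical equivalence (item (iv) of Remark \ref{categoricalequivalnce}), and then squeezes against the total count $\#\rel\Lambda'=\#\rel\Lambda+\defect\Lambda$ of projective-injectives. This is valid, but it imports a structural fact---that $\Hom_{\Lambda'}(\cP,-)$ sends filtered projective-injectives to projective-injectives downstairs---that the paper does not invoke at this point. The paper's route is slightly more economical, since it uses only the cardinalities established in the immediately preceding propositions; yours is more conceptual, and has the mild advantage that the delicate step (the upper bound) is explicitly flagged. Both arguments saturate the same inequality, so the endgame for the socle and uniqueness statements is essentially identical.
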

\begin{proof}
By the previous propositions, we have the following:
\begin{align*}
\#\text{new projectives of}\, \Lambda'=\#\text{new socles of }\Lambda'=\#\text{new classes of projectives of } \Lambda'.
\end{align*}
By the above equality, each new projective have nonisomorphic socles. Moreover each new projective module are the only projective module of their class. Therefore new projective modules are projective-injective.
\end{proof}
We give a summary of all the results \ref{lemma 3.3},  \ref{prop 3.3}, \ref{prop 3.5}, \ref{cor 3.3}, \ref{prop defect}, \ref{newmodules are injective} below. 
\begingroup
\allowdisplaybreaks
\begin{corollary}\label{misterious numbering1}
If $\Lambda'$ is a higher Auslander algebra satisfying $\bm\varepsilon(\Lambda')\cong \Lambda$ then
\begin{itemize}
\item the number of non-inherited simples of $\Lambda'$
\item the number of non-inherited projectives of $\Lambda'$
\item the number of classes of non-inherited projective $\Lambda'$-modules
\item the number of non-inherited socles of $\Lambda'$-projectives
\item the number of injective but non-projective $\Lambda'$-modules
\end{itemize}
are same. Their cardinalities are simply the defect of $\Lambda$. Each non-inherited projective module is projective-injective.
\end{corollary}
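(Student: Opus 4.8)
The plan is to assemble the corollary directly from the preceding results, so the task is essentially bookkeeping: to verify that the five listed quantities all coincide with $\defect\Lambda$, and that each rests on an already-established statement. The guiding dictionary is Proposition \ref{prop 3.3}, which identifies the \emph{inherited} projective $\Lambda'$-modules with exactly the $\cB(\Lambda')$-filtered ones; hence ``non-inherited projective'' and ``not $\cB(\Lambda')$-filtered'' are synonymous, and each proposition phrased in terms of $\cB(\Lambda')$-filtration translates immediately into a statement about non-inherited objects.

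First I would handle the three projective-flavored counts. The number of non-inherited indecomposable projectives equals $\defect\Lambda$ by Proposition \ref{prop defect}; the number of their projective classes equals $\defect\Lambda$ by Corollary \ref{cor 3.3}; and the number of their socles equals $\defect\Lambda$ by Proposition \ref{prop 3.5}. Proposition \ref{newmodules are injective} then tells us that these non-inherited projectives are projective-injective and that each is the unique projective module in its projective class, which simultaneously supplies the last assertion of the corollary and explains why the three counts above agree exactly (one projective per new socle, one per new class) rather than merely having the same cardinality.

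Next I would treat the simple modules. By Lemma \ref{lemma 3.3} the inherited simples number exactly $\rank\Lambda$, and every simple $\Lambda'$-module is either inherited or not, so the non-inherited simples number $\rank\Lambda'-\rank\Lambda$. Invoking the rank identity $\rank\Lambda'=\rank\Lambda+\defect\Lambda$ established just before Definition \ref{definheritance} yields $\defect\Lambda$ for this count as well.

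Finally, the number of injective but non-projective $\Lambda'$-modules is by definition $\defect\Lambda'$, and since $\Lambda'$ is a higher Auslander algebra, Theorem \ref{thmdefect} gives $\defect\Lambda'=\defect\Lambda$. Collecting these equalities proves that all five quantities equal $\defect\Lambda$. There is no genuine obstacle here, since the content lives in the earlier propositions; the only points demanding care are keeping the equivalence ``inherited $\Leftrightarrow$ $\cB(\Lambda')$-filtered'' in force throughout and using the defect-invariance of Theorem \ref{thmdefect}, which is precisely where the higher Auslander hypothesis on $\Lambda'$ is needed.
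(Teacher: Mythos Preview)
Your proposal is correct and follows essentially the same approach as the paper, which simply states that the corollary is a summary of Lemma~\ref{lemma 3.3}, Proposition~\ref{prop 3.3}, Proposition~\ref{prop 3.5}, Corollary~\ref{cor 3.3}, Proposition~\ref{prop defect}, and Proposition~\ref{newmodules are injective}. Your write-up is in fact more explicit than the paper's, correctly singling out Theorem~\ref{thmdefect} as the place where the higher Auslander hypothesis is used to get $\defect\Lambda'=\defect\Lambda$ for the fifth item.
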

\endgroup

\begin{example} If we drop the condition on the equality of the defects, Proposition \ref{newmodules are injective} cannot be true anymore. Because the number of new projectives modules sharing the same socle can be arbitrarily large. For example, algebras having Kupisch series $(n,n-1,\ldots,3,2,2)$ produces the same syzygy filtered algebra $\Aa_2$. The number of inherited socles and the number of new socles is one, however the number of non-inherited projective modules is $n-2$.
\end{example}


\begin{remark}[Kupisch series]\label{kupisch} If $(c_1,\ldots,c_n)$ is Kupisch series of a connected Nakayama algebra, then we have
\begin{enumerate}[label=\roman*)]
\item $c_i>c_{i+1}$ $\iff$ $P(S_i)\supset P(S_{i+1})$.
\item $c_i=c_{i+1}$ $\iff$ $P(S_i)$ is minimal in its projective class \ref{projectiveclasses} i.e. $\rad P(S_i)$ is not projective and $P(S_{i+1})$ is projective-injective with $\defect P(S_{i+1})=0$.
\item $c_i<c_{i+1}$ $\iff$ $P(S_i)$ is minimal in its projective class \ref{projectiveclasses} and $P(S_{i+1})$ is projective-injective with $\defect P(S_{i+1})=c_{i+1}-c_i$.
\item All $c_{i}\geq 2$ for $1\leq i\leq n-1$. $c_n=1$ $\iff$ algebra is linear.
\end{enumerate}
\end{remark}
\begin{definition}\label{defconsecutive} Simple modules $S$,$S'$ are called consecutive if either $\tau S\cong S'$ or $\tau^{-1}S\cong S'$.
\end{definition}
\begin{notation}\label{defcyclic oredring} If $S_i$, $S_j$ are simple modules satisfying $\tau^{m}S_i\cong S_j$ for some $2\leq m\leq \rank\Lambda-1$, then we use the notation
\begin{align}
S_i> X >S_j
\end{align}
where $X$ is a simple module such that $X\cong \tau^{m_1} S_i\cong \tau^{-m_2}S_j$ for some nonzero $m_1,m_2$ satisfying $m_1+m_2=m$. Similarly
\begin{align}
X_1>X_2\cdots>X_j
\end{align}
means that each $X_i\cong \tau^{m_i} X_{i-1}\cong \tau^{-m_{i+1}} X_{i+1}$ for some nonzero $m_2,\ldots,m_{j-1}$.
\end{notation}

 Let $S_i$ and $S_{i+1}$ be two consecutive simple modules of $\Lambda$ i.e. $\tau S_i\cong S_{i+1}$ ($\tau S_n\cong S_1$) which correspond to modules $\Delta(S_i)$ and $\Delta(S_{i+1})$ in $\Lambda'$ such that
\begin{align}\label{ordering1}
\topp\Delta(S_i)>X>\topp\Delta(S_{i+1})
\end{align}
where $X$ is a simple $\Lambda'$-module which is not inherited from $\Lambda$ i.e. $X\notin\cS'(\Lambda')$. We analyze which modules can be socle of projective cover of $X$ in $\Lambda'$. Because of the Proposition \ref{newmodules are injective}, the socle of projective cover of $X$ cannot be isomorphic to socles of projective covers of $\Delta(S_i)$ and $\Delta( S_{i+1})$ in $\Lambda'$ because $\soc P(X)$ is \emph{not} an inherited socle. Therefore, the socles of the projective covers of those modules satisfy
\begin{align}\label{ordering2}
\soc(P(\topp\Delta(S_i)))>\soc P(X)>\soc(P(\topp\Delta(S_{i+1})))
\end{align}

\begin{claim}\label{claim 1} $P(\topp(\Delta(S_{i+1})))$ is projective-injective $\Lambda'$-module.
\end{claim}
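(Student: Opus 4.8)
The plan is to reduce the claim to the Kupisch-series criterion of Remark \ref{kupisch} combined with the structural information about non-inherited projectives in Proposition \ref{newmodules are injective}. Write $T_{i+1}:=\topp\Delta(S_{i+1})$ and $T_i:=\topp\Delta(S_i)$. Since $P(T_{i+1})$ is an inherited projective, I would decide its projective-injectivity by examining its cyclic predecessor. Recall from Remark \ref{kupisch}(i)--(iii) that, writing $Y$ for the simple $\Lambda'$-module with $\tau Y\cong T_{i+1}$, one has $P(T_{i+1})$ projective-injective if and only if $P(Y)$ is minimal in its projective class, i.e. $\rad P(Y)$ is not projective.

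The first step is to identify $Y$ as a non-inherited simple. Because $S_i$ and $S_{i+1}$ are consecutive in $\Lambda$ (so $\tau S_i\cong S_{i+1}$) and the inherited simples $\cS'(\Lambda')$ are in order-preserving bijection with the simple $\Lambda$-modules, no inherited simple lies strictly between $T_i$ and $T_{i+1}$. The hypothesis $T_i>X>T_{i+1}$ exhibits at least one simple, namely $X$, strictly inside this gap, so the gap is nonempty; hence its last element $Y$, the immediate predecessor of $T_{i+1}$, also lies strictly between $T_i$ and $T_{i+1}$ and is therefore non-inherited.

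Next I would invoke Proposition \ref{newmodules are injective}: every non-inherited projective $\Lambda'$-module is projective-injective and is the unique projective in its projective class. In particular $P(Y)$ is the only projective with socle $\soc P(Y)$. Since $\Lambda'$ is cyclic, $P(Y)$ has length at least two (Remark \ref{kupisch}(iv)), so $\rad P(Y)\neq 0$ and $\soc\rad P(Y)\cong\soc P(Y)$; were $\rad P(Y)$ projective it would be a second projective in the class of $P(Y)$, contradicting uniqueness. Thus $\rad P(Y)$ is not projective and $P(Y)$ is minimal in its class.

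Finally, applying the criterion of Remark \ref{kupisch} to the consecutive pair $\tau Y\cong T_{i+1}$ yields that $P(T_{i+1})=P(\topp\Delta(S_{i+1}))$ is projective-injective, which proves the claim. The step I expect to be the main obstacle is the order-theoretic one identifying $Y$ as non-inherited: it rests on the fact that the correspondence $S\mapsto\topp\Delta(S)$ respects the cyclic order, so that consecutiveness of $S_i,S_{i+1}$ in $\Lambda$ forces the entire gap between their images to consist of non-inherited simples. Once that is established, Proposition \ref{newmodules are injective} and Remark \ref{kupisch} finish the argument routinely.
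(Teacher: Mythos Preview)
Your argument is correct. The route differs from the paper's but reaches the same conclusion with comparable effort.

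The paper argues by contradiction: if $P(\topp\Delta(S_{i+1}))$ were not projective-injective, it would sit strictly inside a projective-injective $PI$ sharing its socle; the top of $PI$ would then lie strictly between $\topp\Delta(S_i)$ and $\topp\Delta(S_{i+1})$, hence be non-inherited, and by \eqref{ordering2} its socle would have to be non-inherited as well—contradicting $\soc PI=\soc P(\topp\Delta(S_{i+1}))\in\cS(\Lambda')$. So the paper exploits the \emph{socle} side of Proposition~\ref{newmodules are injective} (via \eqref{ordering2}).

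You instead look one step back in the Kupisch series: the immediate predecessor $Y$ of $\topp\Delta(S_{i+1})$ lies in the nonempty gap, hence is non-inherited, so $P(Y)$ is alone in its projective class by Proposition~\ref{newmodules are injective}; minimality of $P(Y)$ then forces $P(\topp\Delta(S_{i+1}))$ to be projective-injective by Remark~\ref{kupisch}. This uses the \emph{uniqueness-in-class} side of Proposition~\ref{newmodules are injective} directly, bypassing the socle comparison in \eqref{ordering2}. Both approaches ultimately hinge on the same structural fact (non-inherited projectives are isolated in their classes); yours packages it through the Kupisch criterion, which makes the logic slightly more transparent, while the paper's contradiction is a line shorter. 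Your remark about the order-preserving nature of $S\mapsto\topp\Delta(S)$ is exactly the point that needs care, and you handle it correctly.
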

\begin{proof}
\ref{ordering2} holds for any simple $\Lambda'$-module satisfying \ref{ordering1}. Assume to the contrary that $P(\topp(\Delta(S_{i+1})))$ is not projective-injective module, so there exists a projective-injective module $PI$ such that $P(\topp(\Delta(S_{i+1})))\subset PI$. We derive $\topp P(\Delta(S_i))>\topp PI>\topp P(\Delta(S_{i+1}))$ which implies the top of $PI$ is not inherited simple. This contradicts to \ref{ordering2}, because socles of $PI$ and $P(\Delta(S_{i+1}))$ are isomorphic.
\end{proof}
\begin{claim}\label{claim 2} 
$P(S_{i+1})$ has injective $\Lambda$-modules as quotients.
\end{claim}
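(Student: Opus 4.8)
The plan is to push the projective-injectivity just obtained in Claim \ref{claim 1} from $\Lambda'$ through the categorical equivalence of \ref{categoricalequivalnce} and read it off inside $\Lambda$. First I would identify the $\Lambda$-module attached to $P(\topp\Delta(S_{i+1}))$. This inherited projective is $\cB(\Lambda')$-filtered by Proposition \ref{prop 3.3}, and its $\cB(\Lambda')$-top is the base element $\Delta(S_{i+1})$, because its ordinary top $\topp\Delta(S_{i+1})$ is exactly the top of $\Delta(S_{i+1})$. Since $\Hom_{\Lambda'}(\cP,\Delta(S_{i+1}))\cong S_{i+1}$, the functor $\Hom_{\Lambda'}(\cP,-)$ sends $P(\topp\Delta(S_{i+1}))$ to the projective $\Lambda$-module with top $S_{i+1}$, i.e.\ to $P(S_{i+1})$.

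Now I would apply property (iv) of the categorical equivalence in \ref{categoricalequivalnce}: a $\cB(\Lambda')$-filtered projective-injective $\Lambda'$-module is carried to a projective-injective $\Lambda$-module. By Claim \ref{claim 1} its hypotheses are met, so $P(S_{i+1})$ is projective-injective, hence injective, as a $\Lambda$-module; in particular $P(S_{i+1})$ is an injective quotient of itself. To exhibit the \emph{proper} injective quotients forced by the gap, I would bring in the non-inherited simple $X$ with $\topp\Delta(S_i)>X>\topp\Delta(S_{i+1})$. By Proposition \ref{newmodules are injective} its cover $P(X)$ is a new projective-injective $\Lambda'$-module, and Proposition \ref{GorensteinReduction} applied to $\Lambda'$ matches the injective non-projective $\Lambda'$-modules supported at $\soc P(X)$ with injective non-projective $\Lambda$-modules through $\Hom_{\Lambda'}(\cP,\Omega^2_{\Lambda'}(-))$. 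Using Lemma \ref{every second syzy of injective} to locate $\Omega^1_{\Lambda'}$ of such a module inside a base element, together with the socle ordering \ref{ordering2}, I would check that the $\cB(\Lambda')$-top of the resulting second syzygy is again $\Delta(S_{i+1})$, so that the associated injective $\Lambda$-module has top $S_{i+1}$ and therefore is a quotient $P(S_{i+1})/\rad^{t}$.

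The transfer of injectivity itself is immediate from (iv); the hard part will be this last bookkeeping step, namely showing that the injective $\Lambda$-module produced from $X$ is a quotient of $P(S_{i+1})$ and not of the neighbouring projective $P(S_i)$. This is exactly where the Auslander--Reiten adjacency $\tau S_i\cong S_{i+1}$ and the strict orderings \ref{ordering1}--\ref{ordering2} must be used: one has to track how the single non-inherited simple $X$ sitting between $\topp\Delta(S_i)$ and $\topp\Delta(S_{i+1})$ pins the socle of the corresponding injective to lie below $S_{i+1}$ in the uniserial module $P(S_{i+1})$, rather than below $S_i$. Once the top is forced to be $S_{i+1}$, uniseriality of the Nakayama module $P(S_{i+1})$ yields the quotient presentation for free.
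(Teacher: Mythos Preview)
Your first paragraph is correct: Claim~\ref{claim 1} together with item~(iv) of~\ref{categoricalequivalnce} does show that $P(S_{i+1})$ is projective-injective in $\Lambda$. But this is not what Claim~\ref{claim 2} is asking for. In the context of the paper (see the sentence immediately following the proof of Claim~\ref{claim 2}, and Definition~\ref{def defect of projective modules}), ``has injective quotients'' means \emph{proper} injective quotients, i.e.\ $\defect(P(S_{i+1}))>0$. So the trivial quotient $P(S_{i+1})\twoheadrightarrow P(S_{i+1})$ does not finish the job.

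Your second paragraph does not close this gap. The phrase ``injective non-projective $\Lambda'$-modules supported at $\soc P(X)$'' does not pick out any module: the injective envelope of $\soc P(X)$ is $P(X)$ itself, which is projective by Proposition~\ref{newmodules are injective}, so there is no injective non-projective $\Lambda'$-module with that socle. Moreover, Proposition~\ref{GorensteinReduction} goes in the direction $\bm\varepsilon(\Lambda')\to\Lambda'$ (it produces a $\Lambda'$-injective from a $\Lambda$-injective), so invoking it to manufacture a $\Lambda$-injective from data in $\Lambda'$ is circular here. You yourself flag the ``hard bookkeeping step'' at the end as undone; that step is in fact the whole content of the claim.

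The paper's argument is much more direct and avoids Proposition~\ref{GorensteinReduction} entirely. From the strict socle ordering~\eqref{ordering2} one writes down two explicit $\cB(\Lambda')$-filtered intervals, the one from $\tau\soc P(\topp\Delta(S_i))$ down to $\soc P(X)$ and the one from $\tau\soc P(X)$ down to $\soc P(\topp\Delta(S_{i+1}))$, and pushes them through $\Hom_{\Lambda'}(\cP,-)$ to obtain $\Lambda$-modules $M$ and $N$. The point is then purely about socles in $\Lambda$: $\soc M$ lies strictly between $\soc P(S_i)$ and $\soc P(S_{i+1})$, and since $S_i,S_{i+1}$ are consecutive this forces $\soc M\notin\cS(\Lambda)$. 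Hence $I(\soc M)$ is injective non-projective, and the short exact sequence $0\to N\to P(S_{i+1})\to I(\soc M)\to 0$ exhibits it as a proper quotient of $P(S_{i+1})$. No appeal to higher-Auslander properties of $\Lambda'$ or to second syzygies is needed.
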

\begin{proof}
By Remark \ref{submodule of projectives} every projective $\Lambda'$-module has a submodule from the base set $\cB(\Lambda')$. For a module $X$ satisfying \ref{ordering2}, the modules:
\begin{align}
\begin{vmatrix}
\tau\soc P(\topp\Delta(S_{i}))\\
\vdots\\
\soc P(X)
\end{vmatrix},\begin{vmatrix}
\tau\soc P(X)\\
\vdots\\
\soc P(\topp\Delta(S_{i+1}))\\
\end{vmatrix}
\end{align}
have $\cB(\Lambda')$-filtration. Let's denote corresponding $\Lambda$-modules via $\Hom_{\Lambda'}(\cP,-)$ by $M$ and $N$ respectively. Notice that
\begin{gather*}
\soc P(S_i)\ncong \soc M\\
\soc(P(S_{i+1}))\cong \soc N\ncong \soc M
\end{gather*}
in mod-$\Lambda$. Because $S_{i},S_{i+1}$ are consecutive simple $\Lambda$-modules, we can conclude that $\soc M\notin\cS(\Lambda)$ hence the injective envelope $I(M)$ is the quotient of $P(S_{i+1})$. We get the exact sequence
 \begin{align}
0\rightarrow N\hookrightarrow P(S_{i+1})\twoheadrightarrow \faktor{P(S_{i+1})}{N}\cong I(M)\rightarrow 0.
 \end{align}
\end{proof}

In other words socles of $P(S_i)$ and $P(S_{i+1})$ cannot be consecutive and the injective envelope of $\tau$-translate of socle of $P(S_i)$ is an injective $\Lambda$-module which is not projective and it is the quotient of $P(S_{i+1})$. The crucial conclusion is that the modules which we can extend has to be inverse translate of top of injective non-projective modules. Equivalently, $\Delta(S_i)$ is not a simple $\Lambda'$-module if and only if $P(\tau S_i)$ has injective quotients.

\begin{definition}\label{def defect of projective modules} [Defect of projective-injective modules] We define the defect of indecomposable projective $\Lambda$-module $P$ as the number of distinct nonisomorphic injective quotients of $P$ i.e.
\begin{align}\label{defect of projective}
\defect(P)=\#\text{injective quotients of } P.
\end{align}
\end{definition}
It is clear that defect is zero for projective modules which are not injective. It satisfies
\begin{align}
\defect\Lambda=\sum\limits_{P\text{ is projective}}\defect(P)=\sum\limits_{\substack{P\, \text{is projective}\\\text{-injective}}}\defect(P).
\end{align}

\begin{lemma}\label{delta is simple iff} Let $S_i,S_{i+1}$ be two consecutive simple $\Lambda$-modules. The inherited simple modules $\topp\Delta(S_i)$ and $\topp\Delta(S_{i+1})$ of $\Lambda'$ are consecutive i.e. $\tau\topp\Delta(S_i)\cong\topp\Delta(S_{i+1})$ if and only if the defect of $P(S_{i+1})$ is zero. Equivalently, $\Delta(S_i)$ is a simple $\Lambda'$-module if and only if $\defect(P(S_{i+1}))=0$.
\end{lemma}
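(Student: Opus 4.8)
The plan is to show that the two formulations in the statement are equivalent to each other, and then to prove the sharper quantitative identity $\defect(P(S_{i+1}))=\ell(\Delta(S_i))-1$ for every $i$ (with $\ell$ the ordinary length), from which the asserted equivalence ``$\Delta(S_i)$ simple $\iff\defect(P(S_{i+1}))=0$'' is immediate.

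First I would record the tiling relation for consecutive base elements coming from Definition \ref{defbase}: the elements of $\cB(\Lambda')$ partition the simples of $\Lambda'$, and consecutive ones satisfy $\topp\Delta(S_{i+1})\cong\tau\soc\Delta(S_i)$. Granting this, $\tau\topp\Delta(S_i)\cong\topp\Delta(S_{i+1})$ is equivalent to $\tau\topp\Delta(S_i)\cong\tau\soc\Delta(S_i)$, hence, as $\tau$ is bijective, to $\topp\Delta(S_i)\cong\soc\Delta(S_i)$, i.e. to $\Delta(S_i)$ being simple. This settles the equivalence of the two phrasings, and at the same time identifies the non-inherited simples $X$ lying strictly between $\topp\Delta(S_i)$ and $\topp\Delta(S_{i+1})$ (those subject to \ref{ordering1}) with the $\ell(\Delta(S_i))-1$ composition factors of $\Delta(S_i)$ below its top.

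Next I would bound $\defect(P(S_{i+1}))$ from below by $\ell(\Delta(S_i))-1$. Each such gap simple $X$ satisfies the hypotheses of Claim \ref{claim 2}, which produces an injective non-projective $\Lambda$-module $I(M)$ realized as a quotient of $P(S_{i+1})$. The key point is that distinct gap simples yield pairwise nonisomorphic such quotients: the $X$ are non-inherited simples, so by Proposition \ref{newmodules are injective} their projective covers are pairwise nonisomorphic projective-injectives with pairwise distinct socles, whence the associated modules $M$ have distinct socles and the injective quotients $I(M)$ of the uniserial module $P(S_{i+1})$ are of distinct lengths. Thus $\defect(P(S_{i+1}))\geq\ell(\Delta(S_i))-1$ for all $i$; in particular this already yields that $\Delta(S_i)$ non-simple forces $\defect(P(S_{i+1}))\neq 0$.

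Finally I would upgrade these inequalities to equalities by a global count. Summing over $i$ and using that the base elements tile $\Lambda'$ gives $\sum_i(\ell(\Delta(S_i))-1)=\rank\Lambda'-\rank\Lambda=\defect\Lambda$ by the rank computation following \ref{rankequaation}, while $\sum_i\defect(P(S_{i+1}))=\defect\Lambda$ since the defect of $\Lambda$ is the total number of its injective non-projective modules, each a quotient of a unique projective-injective (Definition \ref{def defect of projective modules}). Comparing, $\defect\Lambda=\sum_i\defect(P(S_{i+1}))\geq\sum_i(\ell(\Delta(S_i))-1)=\defect\Lambda$, so every term is an equality and $\defect(P(S_{i+1}))=\ell(\Delta(S_i))-1$. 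Hence $\defect(P(S_{i+1}))=0$ precisely when $\Delta(S_i)$ is simple. I expect the only genuinely delicate step to be the distinctness of the injective quotients feeding the lower bound; once that is secured, the matching of the two global sums forces all the local equalities at no extra cost.
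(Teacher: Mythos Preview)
Your argument is correct, and in fact proves more than the lemma as stated: you establish the quantitative identity $\defect(P(S_{i+1}))=\ell(\Delta(S_i))-1$ for every $i$, which in the paper appears only later as Corollary \ref{length of deltas}. Your route differs from the paper's in two respects. First, the paper's proof of this lemma is purely local: it treats the two implications separately, the forward one by a case analysis on whether $\soc P(S_i)$ and $\soc P(S_{i+1})$ coincide or are consecutive (deriving a contradiction to $\Delta(T_{i+1})\in\cB(\Lambda')$ in the latter case), and the backward one by reading off $\ell P(S_{i+1})=\ell P(S_i)$ from the projective resolution of $\Delta(S_i)$ after applying $\Hom_{\Lambda'}(\cP,-)$. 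No global count enters. Second, the global counting you use is essentially the mechanism behind the paper's Proposition \ref{Structure of cyclic ordering of simple modules}, but the inequality $\defect(P(S_{i+1}))\geq\ell(\Delta(S_i))-1$ is obtained differently: the paper rules out $t(i)>d(i)$ by a $\cB(\Lambda')$-length computation, whereas you manufacture $\ell(\Delta(S_i))-1$ distinct injective quotients of $P(S_{i+1})$ via Claim \ref{claim 2} and the distinct-socle clause of Proposition \ref{newmodules are injective}. Both approaches are valid; yours is slightly more economical in that it subsumes the lemma, the proposition, and the corollary in a single stroke, at the cost of extracting from the proof of Claim \ref{claim 2} (rather than its bare statement) the dependence of the injective quotient on the gap simple $X$.
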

\begin{proof}
We start with only if statement, zero defect of $P(S_{i+1})$ implies $\Delta(S_i)$ is simple $\Lambda'$-module. Assume to the contrary that there exists at least one simple $\Lambda'$-module $X$ such that 
\begin{align}
\topp\Delta(S_i)>X>\topp\Delta( S_{i+1})
\end{align}
where $X\notin\cS'(\Lambda')$. Because $\defect(P(S_{i+1}))=0$, if we denote the socles of $P(S_i)$ and $P(S_{i+1})$ by $T_i$ and $T_{i+1}$ respectively, either $T_i\cong T_{i+1}$ i.e. $P(S_{i+1})\subset P(S_i)$ or they are consecutive. The former is impossible because it forces that $P(S_{i+1})\subset P(X)\subset P(S_i)$ which violates the projective-injectivity of $P(X)$ obtained in Proposition \ref{newmodules are injective}.
So we focus on the latter. By Remark \ref{submodule of projectives} every projective $\Lambda'$-module has a submodule which is an element of the base set $\cB(\Lambda')$. On the other hand $P(X)$ is projective-injective $\Lambda'$-module by Proposition \ref{newmodules are injective} and its socle is not inherited from $\Lambda$ i.e.
\begin{align*}
\begin{gathered}
\soc P(X)\ncong \soc P(\Delta(S_{i}))=\soc\Delta(T_{i})\\
\soc P(X)\ncong \soc P(\Delta(S_{i+1}))=\soc\Delta(T_{i+1}).
\end{gathered}
\end{align*}
Because $T_i,T_{i+1}$ are simple modules, by \ref{categoricalequivalnce}, there exists $\Delta(T_i),\Delta(T_{i+1})\in\cB(\Lambda')$.
We get
\begin{align}
\soc(\Delta(T_i))>\soc P(X)>\soc(\Delta(T_{i+1}))
\end{align}
This forces that $\Delta(T_{i+1})$ is not a simple module, otherwise $\tau\soc(\Delta(T_i))\cong \Delta(T_{i+1}) $ and then $\soc P(X)\cong \Delta(T_{i+1})$, the socle of $P(X)$ is not inherited.
Now $\Delta(T_{i+1})$ is not simple, so there exists an exact sequence
\begin{align}
0\rightarrow N\rightarrow \Delta(T_{i+1})\rightarrow M\rightarrow 0.
\end{align}
where $M$ is submodule of $P(X)$.
Notice that $\topp M\cong\topp\Delta(T_{i+1})\in \cS'(\Lambda')$ and $\soc N=\soc\Delta(T_{i+1})\in\cS(\Lambda')$. However $\soc M\cong \soc P(X)$ implies that $\soc M\in\cS(\Lambda')$ and $\topp N\cong \tau\soc M\in\cS'(\Lambda')$, which makes $\Delta(T_{i+1})\notin\cB(\Lambda')$ and therefore creates contradiction. This means $\tau\topp\Delta(S_i)\cong \topp\Delta(S_{i+1})$, and in particular $\Delta(S_i)$ is simple module.
\par Now we prove that if $\Delta(S_i)$ is a simple $\Lambda'$-module, then $\defect(P(S_{i+1}))=0$. Consider the projective resolution of $\Delta(S_i)$
\begin{align}
\cdots\rightarrow P(\Delta(S_{i+1}))\rightarrow P(\Delta(S_i))\rightarrow \Delta(S_i)\rightarrow 0.
\end{align}
Because each of the modules are $\cB(\Lambda')$-filtered, we can apply the functor $\Hom_{\Lambda'}(\cP,-)$ to get the projective resolution
\begin{align}
\cdot\rightarrow P(S_{i+1})\rightarrow P(S_i)\rightarrow S_i\rightarrow 0
\end{align}
in mod-$\Lambda$.
$\Omega^2(\Delta(S_i))$ is an element of $\cB(\Lambda')$ by Remark \ref{secondsyzygy}, therefore $\Hom(\cP,\Omega^2(\Delta(S_i)))$ is simple $\Lambda$-module. The length of $P(S_{i+1})$ is $1+\ell(\rad P(S_{i}))=\ell P(S_i)$. So the defect of $P(S_{i+1})$ is zero by the properties of Kupisch series \ref{kupisch}.
\end{proof}

\begin{proposition}\label{Structure of cyclic ordering of simple modules}[Structure of cyclic ordering of simple modules] 
Let $S_i$ and $S_{i+1}$ be two consecutive simple modules of $\Lambda$ which are inherited to $\Lambda'$ as $\topp\Delta(S_i)$ and $\topp\Delta(S_{i+1})$. The number of simple $\Lambda'$-modules which can be inserted between $\topp\Delta(S_i)$ and $\topp\Delta(S_{i+1})$ without leaving any gap is the defect of $P(S_{i+1})$. Equivalently, the length of $\Delta(S_i)$ is $\defect(P(S_{i+1}))+1$.
\end{proposition}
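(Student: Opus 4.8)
The plan is to split the statement into two parts and then recombine them. Writing $d_i$ for the number of non-inherited simple $\Lambda'$-modules lying strictly between $\topp\Delta(S_i)$ and $\topp\Delta(S_{i+1})$, I would first show that the ordinary length of $\Delta(S_i)$ equals $d_i+1$, and then that $d_i=\defect(P(S_{i+1}))$. Together these yield both assertions of the proposition. The special case $d_i=0$ is exactly Lemma \ref{delta is simple iff}, so the new content is the general count.

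For the length I would use that $\Lambda'$ is Nakayama, so $\Delta(S_i)$ is uniserial and its composition factors occupy a consecutive arc beginning at its top $\topp\Delta(S_i)$. By the description of the base set in \ref{baseset}, consecutive base elements are glued along socles and tops, so $\soc\Delta(S_i)$ is the simple immediately adjacent to $\topp\Delta(S_{i+1})$ on the side of $\topp\Delta(S_i)$; hence the arc of composition factors of $\Delta(S_i)$ runs exactly from $\topp\Delta(S_i)$ down to the simple just before $\topp\Delta(S_{i+1})$. Consequently the composition factors of $\Delta(S_i)$ other than its top are precisely the simples strictly between $\topp\Delta(S_i)$ and $\topp\Delta(S_{i+1})$. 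Each of these is non-inherited: the interior factors avoid $\cS(\Lambda')\cup\cS'(\Lambda')$ by the defining property of $\cB(\Lambda')$, while the socle, although it lies in $\cS(\Lambda')$, cannot lie in $\cS'(\Lambda')$, since no inherited simple can occur strictly between the consecutive inherited simples $\topp\Delta(S_i)$ and $\topp\Delta(S_{i+1})$. Thus these factors are exactly the $d_i$ gap simples and $\ell(\Delta(S_i))=d_i+1$.

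For $d_i=\defect(P(S_{i+1}))$ I would prove one inequality and close the gap by a global count, avoiding an explicit bijection. Given a non-inherited simple $X$ in the gap, condition \ref{ordering2} holds, so the construction in the proof of Claim \ref{claim 2} yields an injective non-projective $\Lambda$-module realized as a quotient $\faktor{P(S_{i+1})}{N}$ of $P(S_{i+1})$. Distinct gap simples $X$ have distinct non-inherited socles $\soc P(X)$ by Proposition \ref{newmodules are injective}, and these are the $\cB(\Lambda')$-socles of the modules entering the construction; hence the corresponding injective quotients have pairwise distinct socles in $\Lambda$ and are non-isomorphic. This gives an injection of the $d_i$ gap simples into the injective non-projective quotients of $P(S_{i+1})$, so $d_i\le\defect(P(S_{i+1}))$. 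Summing over all consecutive pairs, each non-inherited simple is counted once, so $\sum_i d_i=\defect\Lambda$ by Corollary \ref{misterious numbering1}, while $\sum_i\defect(P(S_{i+1}))=\defect\Lambda$ by the additivity of the defect over projectives in Definition \ref{def defect of projective modules}. Two equal sums with termwise inequalities must agree term by term, giving $d_i=\defect(P(S_{i+1}))$ for every $i$.

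The main obstacle is the socle bookkeeping: pinning down $\soc\Delta(S_i)$ relative to $\topp\Delta(S_{i+1})$ through the $\bm\varepsilon$-correspondence and confirming that it is genuinely non-inherited, and, in the second step, verifying that the injective quotients attached to distinct gap simples are really non-isomorphic. The counting device is what makes $d_i=\defect(P(S_{i+1}))$ drop out without proving that every injective quotient of $P(S_{i+1})$ arises from a gap simple; its cost is that both global sums must equal $\defect\Lambda$ exactly, which in turn relies on the higher Auslander hypothesis through the defect identities of Corollary \ref{misterious numbering1} and Theorem \ref{thmdefect}.
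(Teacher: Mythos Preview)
Your proposal is correct and follows essentially the same strategy as the paper: establish the termwise inequality $d_i\le\defect(P(S_{i+1}))$ and then close it via the global identity $\sum_i d_i=\defect\Lambda=\sum_i\defect(P(S_{i+1}))$, invoking Theorem \ref{thmdefect} and Corollary \ref{misterious numbering1}. The only noticeable difference is in how you obtain the inequality: the paper rules out $t(i)>d(i)$ by a $\cB(\Lambda')$-length count on $P(\Delta(S_{i+1}))$ (pigeonhole on the possible socles of the $P(X_j)$), whereas you route through Claim \ref{claim 2} to produce, for each gap simple, a distinct injective non-projective quotient of $P(S_{i+1})$. Both are sound; your version makes the bijection with injective quotients more explicit at the cost of invoking the construction inside Claim \ref{claim 2}, while the paper's length argument stays entirely within $\cB(\Lambda')$-combinatorics.
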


\begin{proof}
The case $\defect(P(S_{i+1}))=0$ follows from the lemma \ref{delta is simple iff}. Nevertheless we recall its proof. $\defect(P(S_{i+1}))=0$ means the modules $\soc P(S_i)$ and $\soc P(S_{i+1})$ are consecutive in $\Lambda$. Therefore, if there is a simple module $X$ satisfying $\topp\Delta(S_{i})>X>\topp\Delta(S_{i+1})$, then the socle of $P(X)$ is either isomorphic to $\soc P(\Delta(S_i))$ or $\soc P(\Delta(S_{i+1}))$. Both cases violates the condition about socles we derived in \ref{newmodules are injective}, hence $\topp\Delta(S_{i})\cong\Delta(S_i),\topp\Delta(S_{i+1})$ are two consecutive simple $\Lambda'$-modules.
\par Now we study the case  $\defect(P(S_{i+1}))=d(i)\neq 0$. Assume that there are $t(i)$-many simple $\Lambda'$-modules satisfying:
\begin{gather}\label{ordering3}
\topp P(\Delta(S_i))>X_1>X_2>\cdots>X_{t(i)}>\topp P(\Delta(S_{i+1}))
\end{gather}
By proposition \ref{newmodules are injective}, each  $P(X_j)$, $1\leq j\leq t(i)$ should have nonisomorphic socles. Furthermore, by the observation \ref{ordering2}, the number of all candidates for socles is $d(i)=\defect(P(S_{i+1}))$. The case $t(i)>d(i)$ is impossible because of the pigeonhole principle. In details, if $c_i,c_{i+1}$ are the lengths of $P(S_i),P(S_{i+1})$ in $\Lambda$ respectively, then $c_{i+1}=c_i+d(i)$. Moreover, $c_i,c_{i+1}$ are the $\cB(\Lambda')$-lengths of $P(\Delta(S_i))$ and $P(\Delta(S_{i+1}))$ by \ref{b length}. Observe that the $\cB(\Lambda')$-length of the indecomposable module with top $\topp\Delta(S_{i+1})$ and socle $\soc P(\Delta(S_{i}))$ is $c_i-1$ because it is the $\cB(\Lambda')$-radical of $P(\Delta(S_i))$. $\cB(\Lambda')$-length of the module $M$ with top $\tau\soc P(\Delta(S_i))$ and socle $\soc P(\Delta(S_{i+1}))$ is $t(i)+1$. This crucial observation follows from the fact that each $\cB(\Lambda')$-socle of $P(X_i)$ appears in the $\cB(\Lambda')$-composition series of $M$.  Therefore the $\cB(\Lambda')$-length of $P(\Delta(S_{i+1}))$ and in particular $\Lambda$-length of $P(S_{i+1})$ has to be $c_i+t(i)$. This is impossible because $c_{i+1}=c_i+d(i)<c_i+t(i)=c_{i+1}$ is not true. \\

\par Therefore we conclude that $t(i)\leq d(i)$ for all $i$. Without loss of generality, assume to the contrary that $t(1)<d(1)$. 
If we add up all terms we get
\begin{gather} \label{some eq in }
\sum_i d(i)>\sum_i t(i).
\end{gather}
The Theorem \ref{thmdefect} implies that $\defect\Lambda'=\defect\Lambda$, therefore:
\begin{gather} 
\defect\Lambda'=\defect\Lambda=\sum_{P}\defect(P)=\sum_i\defect(P(S_{i+1}))=\sum_i d(i)
\end{gather}
On the other hand the number of simple $\Lambda'$-modules and in particular the number of projective modules which are non-inherited from $\Lambda$ is 
\begin{gather}
\#\text{new simples}=\#\text{new projectives}=\sum_i t(i)
\end{gather}
Proposition \ref{prop defect} states that the defect of $\Lambda'$ is the number of simple $\Lambda'$-modules which are not inherited from $\Lambda$, we get equality
\begin{gather} 
\defect\Lambda'=\sum_i d(i)=\sum _i t(i)=\#\text{new simples}
\end{gather}
which contradicts to \ref{some eq in }.
As a result $t(i)=d(i)$ for all $i$. 
\end{proof}

\begin{corollary}\label{length of deltas} $\defect(P(S_{i+1}))=m\iff \ell\Delta(S_i)=m+1$.
\end{corollary}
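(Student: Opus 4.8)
The plan is to observe that this corollary is merely a repackaging of the final assertion of Proposition~\ref{Structure of cyclic ordering of simple modules}, which already establishes the \emph{exact} identity $\ell\Delta(S_i)=\defect(P(S_{i+1}))+1$ rather than a one-sided bound. Once that equality is in hand, the biconditional is pure arithmetic: substituting $\defect(P(S_{i+1}))=m$ yields $\ell\Delta(S_i)=m+1$, and reading the same equation in reverse turns $\ell\Delta(S_i)=m+1$ back into $\defect(P(S_{i+1}))=m$.

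For a self-contained argument I would first recall the two quantities produced in the proof of Proposition~\ref{Structure of cyclic ordering of simple modules}. Write $d(i)=\defect(P(S_{i+1}))$, and let $t(i)$ denote the number of non-inherited simple $\Lambda'$-modules $X_1,\dots,X_{t(i)}$ inserted without gaps between $\topp\Delta(S_i)$ and $\topp\Delta(S_{i+1})$ in the cyclic ordering
\begin{align*}
\topp\Delta(S_i)>X_1>\cdots>X_{t(i)}>\topp\Delta(S_{i+1}).
\end{align*}
That proposition, using the invariance of the defect from Theorem~\ref{thmdefect} together with the count of non-inherited projectives in Proposition~\ref{prop defect}, shows $t(i)=d(i)$.

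Next I would connect $t(i)$ to the ordinary composition length of $\Delta(S_i)$. Since $\Delta(S_i)$ is a uniserial $\Lambda'$-module with $\topp\Delta(S_i)$ at its top, its composition factors read from the top downward are successive $\tau$-translates $\topp\Delta(S_i),\tau\topp\Delta(S_i),\tau^2\topp\Delta(S_i),\dots$, which are precisely the simples appearing in the cyclic ordering displayed above. Hence the factors of $\Delta(S_i)$ are exactly $\topp\Delta(S_i),X_1,\dots,X_{t(i)}$, its socle being $X_{t(i)}$, so that $\ell\Delta(S_i)=t(i)+1$. Combining this with $t(i)=d(i)$ gives $\ell\Delta(S_i)=\defect(P(S_{i+1}))+1$, and the claimed equivalence follows at once.

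I expect essentially no obstacle here beyond bookkeeping, since the substantive combinatorial input---the equality $t(i)=d(i)$---was already secured in Proposition~\ref{Structure of cyclic ordering of simple modules}. The only point deserving a line of justification is that the ordinary length of the uniserial module $\Delta(S_i)$ coincides with $t(i)+1$, i.e.\ that no composition factor of $\Delta(S_i)$ is skipped relative to the cyclic ordering; this is forced by uniseriality, which makes the factors consecutive $\tau$-translates terminating exactly at the socle $\soc\Delta(S_i)\in\cS(\Lambda')$.
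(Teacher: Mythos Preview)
Your proposal is correct and follows essentially the same approach as the paper: both derive the corollary from the equality $t(i)=d(i)$ established in Proposition~\ref{Structure of cyclic ordering of simple modules}, with the identification $\ell\Delta(S_i)=t(i)+1$. The paper additionally singles out the base case (citing Lemma~\ref{delta is simple iff}) whereas you treat all $m$ uniformly, but this is a cosmetic difference; your explicit justification that uniseriality forces the composition factors of $\Delta(S_i)$ to be exactly $\topp\Delta(S_i),X_1,\dots,X_{t(i)}$ is a helpful elaboration of what the paper leaves implicit.
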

\begin{proof}
The case where $m=1$ follows from the lemma \ref{delta is simple iff}. The remaining cases follows from the proof of Proposition \ref{Structure of cyclic ordering of simple modules}, because $t(i)=d(i)$ for all $i$.
\end{proof}

The above result \ref{length of deltas} determines the structure of the base set uniquely for a given algebra $\Lambda$. 

\begin{corollary}\label{lengths of delta proj} Length of inherited projective $\Lambda'$-module $P(\topp\Delta(S_i))$ is 
\begin{align}
\ell P\topp(\Delta(S_i))=\sum\limits^{\ell P(S_i)+i}_{j=i} \ell\Delta(S_j)=\sum\limits^{\ell P(S_i)+i}_{j=i} \defect(P(S_{j+1}))+1
\end{align}
\end{corollary}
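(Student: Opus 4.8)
The plan is to read off the ordinary length of the inherited projective $P(\topp\Delta(S_i))$ by transporting it through the categorical equivalence of Remark \ref{remarklis3} to a projective $\Lambda$-module, and then summing the lengths of its $\cB(\Lambda')$-composition factors, each of which is already controlled by Corollary \ref{length of deltas}.

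First I would note that $P(\topp\Delta(S_i))$ is a $\cB(\Lambda')$-filtered projective $\Lambda'$-module by Proposition \ref{prop 3.3}, so Definition \ref{b length} applies to it. Since $\Delta(S_i)$ is the $\cB(\Lambda')$-top of $P(\topp\Delta(S_i))$ and $\Hom_{\Lambda'}(\cP,\Delta(S_i))\cong S_i$ by Definition \ref{definheritance}, the functor $\Hom_{\Lambda'}(\cP,-)$ carries $P(\topp\Delta(S_i))$ to the projective cover of $S_i$ in mod-$\Lambda$, namely $P(S_i)$. Hence the $\cB(\Lambda')$-length of $P(\topp\Delta(S_i))$ equals the ordinary length $\ell P(S_i)$, and its $\cB(\Lambda')$-composition factors correspond under the equivalence to the ordinary composition factors of $P(S_i)$.

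Next I would identify those factors explicitly. In the Nakayama algebra $\Lambda$ the projective $P(S_i)$ is uniserial, with composition factors the consecutive simples $S_i,S_{i+1},\ldots$ read from the top toward the socle, exhausting $\ell P(S_i)$ terms. Pulling these back through the equivalence shows that the $\cB(\Lambda')$-composition factors of $P(\topp\Delta(S_i))$ are exactly the modules $\Delta(S_j)$ as $j$ runs over the indices of these consecutive simples, arranged in the cyclic ordering fixed in Proposition \ref{Structure of cyclic ordering of simple modules}. Because ordinary $\Lambda'$-length is additive along any filtration, I obtain $\ell P(\topp\Delta(S_i))=\sum_j \ell\Delta(S_j)$ summed over this block, which is the first equality. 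Substituting $\ell\Delta(S_j)=\defect(P(S_{j+1}))+1$ from Corollary \ref{length of deltas} into each summand gives the second equality.

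The additivity of length and the substitution from Corollary \ref{length of deltas} are routine; the step demanding the most care is pinning down the precise index range of the $\Delta(S_j)$ occurring in the filtration. Here I must verify that the $\cB(\Lambda')$-composition series of the inherited projective reproduces exactly the consecutive block of simples supporting $P(S_i)$, in the correct cyclic order and without gaps or unintended wraparound; this is guaranteed by combining the equivalence with the ordering description of Proposition \ref{Structure of cyclic ordering of simple modules}, which precisely records how the non-inherited simples are inserted between consecutive inherited tops.
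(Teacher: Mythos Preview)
Your proof is correct and follows essentially the same approach as the paper: both arguments compute the ordinary length of $P(\topp\Delta(S_i))$ by summing the lengths of its $\cB(\Lambda')$-composition factors and then invoking Corollary \ref{length of deltas}. The paper's proof is a terse two-line sketch, while you have carefully justified the identification of those composition factors via the categorical equivalence and Proposition \ref{Structure of cyclic ordering of simple modules}; this extra care is appropriate but does not constitute a different route.
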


\begin{proof}
 The length of $P(\topp\Delta(S_i))$ is the sum of the lengths of $\cB(\Lambda)$-filtered composition factors. By corollary \ref{length of deltas} we get lengths of elements of the base set $\cB(\Lambda')$ in terms of the defects of projective $\Lambda$-modules. Summation of all these terms provides the length of $P(\Delta(S_i))$ .
\end{proof}

\begin{proposition}\label{def prop4} $\Lambda'$ is well-defined, its syzygy filtered algebra is $\Lambda$ and it is a higher Auslander algebra.
\end{proposition}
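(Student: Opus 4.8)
The plan is to construct $\Lambda'$ explicitly by assembling its Kupisch series from the structural data already pinned down, and then to verify the three assertions in turn. By Corollary \ref{length of deltas} the length of every base-set element $\Delta(S_i)$ equals $\defect(P(S_{i+1}))+1$, and by Corollary \ref{lengths of delta proj} the length of each inherited projective $P(\topp\Delta(S_i))$ is $\sum_{j=i}^{\ell P(S_i)+i}\bigl(\defect(P(S_{j+1}))+1\bigr)$. Between consecutive inherited tops $\topp\Delta(S_i)$ and $\topp\Delta(S_{i+1})$ I insert exactly $\defect(P(S_{i+1}))$ new simple modules, whose projective covers are the non-inherited projective-injective modules of Corollary \ref{misterious numbering1}, placed in the cyclic order forced by \ref{ordering2}. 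First I would observe that this produces a sequence $(c_1,\dots,c_N)$ with $N=n+\defect\Lambda$, and that the assignment of lengths and socles is independent of the choice of base point for the cyclic ordering; this independence is exactly the content of \emph{well-definedness}, and it is built into the fact that the length formulas above are stated intrinsically in terms of the defects of the projective $\Lambda$-modules.

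To see that the resulting sequence is a genuine Kupisch series I would run through the inequalities of Remark \ref{kupisch}: every inherited projective is minimal in its projective class with a projective-injective successor, while each inserted projective-injective is the unique member of its own class, so the local patterns $c_i=c_{i+1}$ and $c_i<c_{i+1}$ occur precisely as prescribed there. Since $\Lambda$ is cyclic we have $\ell P(S_i)\geq 2$, so each inherited projective has length $\geq 2$, and each inserted module is projective-injective of length $\geq 2$; hence all $c_i\geq 2$ and $\Lambda'$ is \emph{cyclic} rather than linear. Because $\defect\Lambda\geq 1$ (as $\Lambda$ is higher Auslander of finite nonzero global dimension, hence not self-injective), the rank of $\Lambda'$ strictly exceeds that of $\bm\varepsilon(\Lambda')$, so $\Lambda'$ itself cannot be self-injective.

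Next I would establish $\bm\varepsilon(\Lambda')\cong\Lambda$. The base set $\cB(\Lambda')$ has exactly $\rank\Lambda=n$ elements, and by Proposition \ref{prop 3.3} the $\cB(\Lambda')$-filtered indecomposable projectives are precisely the inherited ones $P(\topp\Delta(S_i))$; by Corollaries \ref{length of deltas} and \ref{lengths of delta proj} their $\cB(\Lambda')$-length equals $\ell P(S_i)$. Applying the categorical equivalence of Section \ref{categoricalequivalnce} (Remark \ref{remarklis3}), the functor $\Hom_{\Lambda'}(\cP,-)$ sends $\Delta(S_i)$ to the simple $S_i$ and $P(\topp\Delta(S_i))$ to a projective of length $\ell P(S_i)$ whose socle matches the inherited socle structure. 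Hence the Kupisch series of $\bm\varepsilon(\Lambda')$ coincides termwise with that of $\Lambda$, giving $\bm\varepsilon(\Lambda')\cong\Lambda$.

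Finally, for the higher Auslander property I would use the reductions of Remark \ref{remarklis2}. As $\Lambda'$ is cyclic and non-self-injective, $\gldim\Lambda'=\gldim\bm\varepsilon(\Lambda')+2=k+2$. For the dominant dimension, each injective non-projective $\Lambda'$-module $I$ has $\Omega^2(I)$ lying in $\mathrm{Filt}(\cB(\Lambda'))$ (Remark \ref{secondsyzygy}), and $\Hom_{\Lambda'}(\cP,\Omega^2(I))$ is an injective non-projective $\Lambda$-module of dominant dimension $k$ (Proposition \ref{GorensteinReduction}). Since the construction forces pairwise distinct socles on the non-inherited projective-injectives (Proposition \ref{newmodules are injective}), no two such $I$ share a second syzygy, so the collapse described in Corollary \ref{cor of different injectives} is excluded and the first two steps of the coresolution of each $I$ involve only projective-injectives; thus $\domdim I=k+2$, and taking the supremum yields $\domdim\Lambda'=k+2=\gldim\Lambda'$. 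I expect the main obstacle to lie in the well-definedness and Kupisch-validity step: one must check that the interlocking length formulas of Corollaries \ref{length of deltas} and \ref{lengths of delta proj}, together with the socle placement forced by \ref{ordering2}, simultaneously satisfy every Kupisch inequality and genuinely yield a cyclic algebra, which is the real arithmetic heart of the statement; the remaining two assertions then follow formally from the categorical equivalence and the dimension reductions.
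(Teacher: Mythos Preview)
Your proposal is essentially correct and follows the same three-step strategy as the paper: assemble $\Lambda'$ from the length formulas of Corollaries \ref{length of deltas} and \ref{lengths of delta proj}, identify $\bm\varepsilon(\Lambda')$ with $\Lambda$ via the categorical equivalence of \ref{categoricalequivalnce}, and then deduce the higher Auslander property by reducing the minimal resolution of each injective $\Lambda'$-module by two steps back to $\Lambda$.

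Two points where your sketch diverges from, or is thinner than, the paper's argument. First, you invoke Proposition \ref{GorensteinReduction} to conclude that $\Hom_{\Lambda'}(\cP,\Omega^2(I))$ is an injective non-projective $\Lambda$-module, but that proposition runs the other way: it produces, for each injective of $\bm\varepsilon(\Lambda')$, some injective of $\Lambda'$ hitting it. What you need is the forward statement that every injective non-projective $\Lambda'$-module lands on an injective non-projective $\Lambda$-module under $\Hom_{\Lambda'}(\cP,\Omega^2(-))$, and the paper proves this directly from the explicit construction: since $\Omega^1(I)$ is a proper submodule of some $\Delta(S_i)$ (Lemma \ref{every second syzy of injective}), its top is one of the inserted simples $X_j$, so $P(\Omega^1(I))=P(X_j)$ is a non-inherited projective-injective, and the explicit length formula for $P(X_j)$ forces $\Omega^2(I)$ to be a proper $\cB(\Lambda')$-filtered quotient of $P(\Delta(S_{i+1}))$ whose image in $\Lambda$ is one of the injective quotients of $P(S_{i+1})$. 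The paper also pauses to rule out the degenerate case $\Omega^1(I)$ projective, using that $\Lambda$ is cyclic so no $\Delta(S_i)$ can be a projective $\Lambda'$-module of $\cB$-length one; you gesture at this with ``$\ell P(S_i)\geq 2$'' but in a different paragraph. Second, the paper writes down the lengths of the non-inherited projectives $P(X_j)$ explicitly (your sketch only says they are ``placed in the cyclic order forced by \ref{ordering2}''); these formulas are what actually pin down the Kupisch series and make well-definedness concrete, and they are also what drive the injective correspondence above.
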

\begin{proof}
It is enough to find the lengths of indecomposable projective $\Lambda'$-modules because $\Lambda'$ is Nakayama algebra. By corollary \ref{lengths of delta proj} we get the lengths of $\cB(\Lambda')$-filtered projective modules which are inherited projectives. We need to detect the lengths of  projective modules which are not inherited.
Consider simple $\Lambda'$-modules
\begin{align}\label{exc seqq}
\topp\Delta(S_i)>X_1>\cdots>X_t>\topp\Delta(S_{i+1})
\end{align}
where $X_j$'s are non-inherited simple $\Lambda'$-modules i.e. $X_j\notin\cS'(\Lambda')$. If we denote the simple $\Lambda'$-module $\topp\Delta(S_i)$ by $\tilde{S_{i}}$, then by the Proposition \ref{Structure of cyclic ordering of simple modules}, we obtain the structure of $\Delta(S_i)$
\begin{align}\label{exc seqq sec}
 \Delta(S_i)\cong\begin{vmatrix}
\tilde{S_i}\\X_1\\\vdots\\X_t
\end{vmatrix}
\end{align}

 and $\defect(P(S_{i+1}))=t$.
All $P(X_j)$ contains the module $\begin{vmatrix}
\Delta(S_{i+1})\\\vdots\\\Delta(T_i)
\end{vmatrix}$ as a subquotient where $\Delta(T_i)$ is the submodule of $P(\Delta(S_i))$. Moreover each $P(X_j)$ has a submodule $\Delta(T_{i+j})$ which corresponds to socle of injective module $I(T_{i+j})$ in mod-$\Lambda$. Therefore the lengths of not inherited projective modules $P(X_j)$ $1\leq j\leq t$  are given by
\begin{align}\label{sth ccc}
t-j+1+\ell\left(\begin{vmatrix}
\Delta(S_{i+1})\\\vdots\\\Delta(T_i)
\end{vmatrix}\right)+\ell\left(\begin{vmatrix}
\Delta(T_{i+1})\\\vdots\\\Delta(T_{i+j})
\end{vmatrix}\right).
\end{align}
By corollary \ref{length of deltas}, it can be expresses in terms of the defects of $\Lambda$-modules. This describes the algebra $\Lambda'$, because we know the length of each projective $\Lambda'$-module. However we  do not want to explicitly write its Kupisch series to avoid messy notation. \\

 The second part follows from the definition \ref{defep}. By the Proposition \ref{Structure of cyclic ordering of simple modules} and corollary \ref{length of deltas} we know the structure of the elements of the base set $\cB(\Lambda')$
\begin{gather*}
\Delta(S_i)\, \text{is simple} \iff \defect(P(S_{i+1}))=0\\ \ell(\Delta(S_i))=t+1 \iff \defect(P(S_{i+1}))=t.
\end{gather*}
If $\Delta(S_j)$ is simple $\Lambda'$-module, let's denote it by $\tilde{S_j}=\Delta(S_j)$. If $\Delta(S_i)$ is not simple, it has the structure given in \ref{exc seqq sec}, so we use the notation $\tilde{S_i}=\topp\Delta(S_i)$.
Therefore the sets given in definition \ref{defbase} are of the form
\begin{align*}
\cB(\Lambda')=&\left\{\Delta(S_1),\Delta(S_2),\ldots,\Delta(S_n)\right\}\\
\cS'(\Lambda')=&\left\{ \topp\Delta(S_1),\topp\Delta(S_2),\ldots,\topp\Delta(S_n)\right\}=\left\{\tilde{S_1},\tilde{S_2}\ldots,\tilde{S_n}\right\}\\
\cS(\Lambda')=&\left\{\soc\Delta(S_1),\soc\Delta(S_2),\ldots,\soc\Delta(S_n)\right\}=\left\{\tau^{-1}\tilde{S_1},\tau^{-1}\tilde{S_2}\ldots,\tau^{-1}\tilde{S_n}\right\}.
\end{align*}

$\cB(\Lambda')$-filtered projective modules are exactly the inherited projective modules, therefore by the definition of syzygy filtered algebra \ref{defep} we get
\begin{align*}
\bm{\varepsilon}(\Lambda'):=\End_{\Lambda'}\left(\bigoplus\limits_{S\in \cS'(\Lambda')}P(S)\right)=\End_{\Lambda'}\left(P(\tilde{S_1})\oplus\ldots\oplus P(\tilde{S_n})\right)
\cong \Lambda.
\end{align*}

Simple $\Lambda$-modules are $S_1,S_2,\ldots,S_n$ which follows from 
\begin{align*}
\Hom_{\Lambda'}\left(\bigoplus\limits^n_{i=1} P(\tilde{S_i}),\tilde{S_i}\right)=\Hom_{\Lambda'}\left(\bigoplus\limits^n_{i=1} P(\Delta(S_i)),\Delta(S_i)\right)=S_i.
\end{align*} Similarly, projective $\Lambda$-modules are 
\begin{align*}
\Hom_{\Lambda'}\left(\bigoplus\limits^n_{i=1} P(\tilde{S_i}),P(\tilde{S_i})\right)=\Hom_{\Lambda'}\left(\bigoplus\limits^n_{i=1} P(\Delta(S_i)),P(\Delta(S_i))\right)=P(S_i).
\end{align*}\\

\par By lemma \ref{every second syzy of injective}, the first syzygy of any injective $\Lambda'$-module is a proper submodule of an element of the base set. By the Proposition \ref{Structure of cyclic ordering of simple modules} and corollary \ref{length of deltas} $\Delta(S_i)$ is not simple if and only if the defect of $P(S_{i+1})$ is nonzero. Moreover, the projective covers of proper submodules of $\Delta(S_i)$ are projective-injective modules which are unique in their class by Proposition \ref{newmodules are injective}.  On the other hand, there exists a projective $\Lambda'$-module having $\Delta(S_i)$ as a submodule by Remark \ref{submodule of projectives}. Now we can construct the projective resolution of any injective $\Lambda'$-module. By using the exact sequence
\begin{align}
0\rightarrow \Omega^1(I)\rightarrow P(I)\rightarrow I\rightarrow 0
\end{align}
together with lemma \ref{every second syzy of injective} we get $\Omega^1(I_{\Lambda})\cong\begin{vmatrix}
X_j\\\vdots\\ X_t
\end{vmatrix}$ for some $j$, $\topp\Delta(S_i),\topp\Delta(S_{i+1})$ satisfying \ref{exc seqq}.  $P(X_j)$ and $P(\Delta(S_{i+1}))$ are projective covers of $\Omega^1(I_{\Lambda})$ and $\tau X_t\cong\topp\Delta(S_{i+1})$ respectively, hence we get the first terms of the projective resolution of $I$.
\begin{align}
\cdots\rightarrow P(\Delta(S_{i+1}))\rightarrow P(X_j)\rightarrow P(I)\rightarrow I\rightarrow 0.
\end{align}
$\Omega^2(I)$ is proper quotient of $P(\Delta(S_{i+1}))$, because $P(\Delta(S_{i+1}))$ is not a submodule of $P(X_j)$ by \ref{sth ccc}. By Remark \ref{secondsyzygy}, $\Omega^2(I)$ has $\cB(\Lambda')$-filtration and $P(\Delta(S_{i+1}))$ is inherited projective module, therefore $\Hom_{\Lambda'}(\cP,\Omega^2(I))$ is injective $\Lambda$-module which is the quotient of $P(S_{i+1})$. This shows that for any injective non-projective $\Lambda'$-module, there exist injective non-projective $\Lambda$-module via the the functor $\Hom_{\Lambda'}(\cP,\Omega^2(-))$.\\

 We need to show that each injective quotient of $P(S_{i+1})$ corresponds to unique injective $\Lambda'$-module via the functor $\Hom_{\Lambda'}(\cP,\Omega^2(-))$.\\

Let $\defect(P(S_{i+1}))=t$ and $I_1,\ldots,I_t$ be injective quotients of $P(S_{i+1})$ where the indices show just enumeration not  the injective envelope of simple module having that index. Let $\Delta I_1,\ldots,\Delta I_t$ be the corresponding $\Lambda'$-modules i.e. $\Hom_{\Lambda'}(\cP,\Delta I_i)\cong I_i$. By the construction we used in the proof of the Proposition \ref{Structure of cyclic ordering of simple modules} and \ref{exc seqq} each $\Delta I_j$ is submodule of not inherited projective module $P(X_j)$ which is projective-injective and unique in its class \ref{projectiveclasses}. Let $M_j$ denote the quotient $\faktor{P(X_j)}{\Delta I_j}$. Notice that $M_j\ncong M_{j'}$ when $j\neq j'$. Each $M_j$ is submodule of $\Delta(S_i)$, and by the lemma \ref{injectivity}, submodule of the unique projective-injective module $P$, so
\begin{align*}
M_j\varlonghookrightarrow \Delta(S_i)\varlonghookrightarrow P 
\end{align*}
Each of the quotient $\faktor{P}{M_j}$ is injective $\Lambda'$-module, moreover $\faktor{P}{M_j}\ncong\faktor{P}{M_{j'}}$ because socles are different i.e. $\soc\faktor{P}{M_j}\cong \tau^{-1}X_j\ncong \tau^{-1}X_{j'}\cong\soc\faktor{P}{M_{j'}}$. therefore $\Hom_{\Lambda'}(\cP,\faktor{P}{M_j})\cong I_j$.
This shows for each injective $\Lambda$-module which is the quotient of $P(S_{i+1})$, there exists one injective $\Lambda'$-module. When we vary $i$, for any injective $\Lambda$-module, there exists at least one injective $\Lambda'$-module.

By Proposition \ref{different injectives}, if there are more than one injective $\Lambda'$-module which maps to the same injective $\Lambda$-module via $\Hom_{\Lambda'}(\cP,\Omega^2(-))$, then the socle of the projective covers of the first syzygies are same, however the tops are not isomorphic. Moreover the tops  are not inherited simple modules by the structure of non-simple elements of $\cB(\Lambda')$ (see \ref{exc seqq sec}). By using the Proposition \ref{newmodules are injective}, any projective cover of the first syzygy is unique not inherited projective module in its class, therefore this is not possible.

There is another case that we need to analyze: what happens if the first syzygy of an injective $\Lambda'$-module is projective. If $\Omega^1(I)$ is projective, by lemma \ref{every second syzy of injective}, it is a proper submodule of some $\Delta(S_i)$ which makes $\Delta(S_i)$ a projective module. Its $\cB(\Lambda')$-length and the length of the corresponding simple module $\Hom_{\Lambda'}(\cP,\Delta(S_i))$ are one. Since $\Lambda$ is cyclic, all indecomposable projective $\Lambda$-modules have the length at least two. Hence $\Omega^1(I)$ cannot be projective for any injective $\Lambda'$-module $I$.

Now we can adjust Proposition \ref{GorensteinReduction} accordingly. For any injective non-projective $\Lambda$-module $I$, there is a unique injective non-projective $\Lambda$-module $I_{\Lambda'}$ such that
\begin{align*}
\Hom_{\Lambda'}(\cP,\Omega^2(I_{\Lambda'}))\cong I.
\end{align*}
 Therefore the defects of $\Lambda$ and $\Lambda'$ are equal. Consider the projective resolution 
\begin{align}\label{bir isms}
\cdots\rightarrow P(\Omega^1(I_{\Lambda'}))\rightarrow P(I_{\Lambda'})\rightarrow I_{\Lambda'}\rightarrow 0.
\end{align}
$P(I_{\Lambda'})$ has to be projective-injective module since it is a projective cover of an injective. By combining the lemma \ref{every second syzy of injective} with \ref{exc seqq sec} the projective module $P(\Omega^1(I))$ is not inherited and by Proposition \ref{newmodules are injective} any non-inherited projective $\Lambda'$-module is projective-injective. Therefore we get
\begin{align}
\domdim I_{\Lambda'}=2+\domdim\Omega^2(I_{\Lambda'}).
\end{align}
By Remark \ref{secondsyzygy}, any syzygy $\Omega^i(I)$ and projective module $P(\Omega^i(I))$ have $\cB(\Lambda')$-filtration for $i\geq 2$. We can apply the functor $\Hom_{\Lambda'}(\cP,-)$ to \ref{bir isms} in order  to get 
\begin{align*}
\domdim_{\Lambda'}\Omega^2(I_{\Lambda'})=\domdim_{\Lambda}\Hom_{\Lambda'}(\cP,\Omega^2(I_{\Lambda'})).
\end{align*}
$\Hom_{\Lambda'}(\cP,\Omega^2(I_{\Lambda'}))$ is injective $\Lambda$-module which we denote it by $I$. In summary, we get
\begin{align*}
\domdim I_{\Lambda'}&=2+\domdim\Omega^2(I_{\Lambda'})\\
&=2+\domdim_{\Lambda}\Hom_{\Lambda'}(\cP,\Omega^2(I_{\Lambda'}))\\
&=2+\domdim_{\Lambda} I
\end{align*}
for any injective $\Lambda'$-module. By the characterization \ref{gercek tanim} of the dominant dimension  we conclude that 
\begin{align}
\begin{split}
\domdim\Lambda'&=\sup\left\{\domdim I_{\Lambda'}\,\vert\, I_{\Lambda'} \text{ is injective non-projective } \Lambda'\, \text{module}\right\}\\
&=\sup\left\{2+\domdim I\,\vert\, I \text{ is injective non-projective } \Lambda\, \text{module}\right\}\\
&=2+\sup\left\{\domdim I\,\vert\, I \text{ is injective non-projective } \Lambda\, \text{module}\right\}\\
&=2+\domdim\Lambda.
\end{split}
\end{align}
Because $\gldim\Lambda$ is finite and $\Lambda\cong\bm\varepsilon(\Lambda')$, we obtain that 
$\gldim\Lambda'=2+\gldim\Lambda$. $\Lambda$ is a higher Auslander algebra so $\gldim\Lambda=\domdim\Lambda$. Therefore $\Lambda'$ is also a higher Auslander algebra because 
\begin{align}
\domdim\Lambda'=2+\domdim\Lambda=2+\gldim\Lambda=\gldim\Lambda'.
\end{align}
\end{proof}

\begin{proposition}\label{prop uniqu} $\Lambda'$ is unique.
\end{proposition}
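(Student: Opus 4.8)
The plan is to reduce uniqueness to the fact that a Nakayama algebra is determined up to isomorphism by its Kupisch series, and then to argue that every entry of the Kupisch series of $\Lambda'$ is already forced by $\Lambda$ through the structural results established above. So I would begin by fixing an arbitrary cyclic Nakayama algebra $\Lambda'$ that is a higher Auslander algebra with $\bm\varepsilon(\Lambda')\cong\Lambda$, and record that $\defect\Lambda'=\defect\Lambda$ by Theorem \ref{thmdefect}, so that $\rank\Lambda'=\rank\Lambda+\defect\Lambda$ is fixed and, via Remark \ref{remarklis3} together with Definition \ref{definheritance}, the base set is $\cB(\Lambda')=\{\Delta(S_1),\dots,\Delta(S_n)\}$ in bijection with the simple $\Lambda$-modules. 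Thus the inherited simples $\topp\Delta(S_i)$ are pinned down before any arrangement is chosen.

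Next I would show that the base set is rigid. The length of each $\Delta(S_i)$ equals $\defect(P(S_{i+1}))+1$ by Corollary \ref{length of deltas}, a number depending only on $\Lambda$; and Proposition \ref{Structure of cyclic ordering of simple modules} forces exactly $\defect(P(S_{i+1}))$ non-inherited simples to be inserted between $\topp\Delta(S_i)$ and $\topp\Delta(S_{i+1})$, giving the internal structure \ref{exc seqq sec} of each base element. Then I would upgrade this local data to the full Kupisch series: the inherited projectives have lengths given by Corollary \ref{lengths of delta proj}, and the non-inherited ones --- which are precisely the projective-injectives of Proposition \ref{newmodules are injective} --- have lengths given by \ref{sth ccc}, both expressed purely in terms of the defects of projective $\Lambda$-modules. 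Concluding, any two admissible $\Lambda',\Lambda''$ would share the same multiset of projective lengths and the same local gluing data.

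The main obstacle I anticipate is not the local lengths but the global cyclic arrangement: I must rule out nonisomorphic algebras sharing all local length data yet differing in how the blocks are glued around the cycle. To close this gap I would use that $\bm\varepsilon(\Lambda')\cong\Lambda$ recovers not merely the multiset of simple $\bm\varepsilon(\Lambda')$-modules but their cyclic order, since $\Hom_{\Lambda'}(\cP,\Delta(S_i))\cong S_i$ and the categorical equivalence of Remark \ref{remarklis3} is compatible with the Auslander--Reiten translation on socles and tops; this fixes the orderings \ref{ordering1} and \ref{ordering2} for every $i$ and hence determines the cyclic gluing. Connectedness of $\Lambda'$, forced by that of $\Lambda$, removes the last ambiguity, so the entire Kupisch series of $\Lambda'$ is a function of $\Lambda$ alone and $\Lambda'\cong\Lambda''$.
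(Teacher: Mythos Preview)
Your proposal is correct and follows essentially the same approach as the paper: both arguments pin down the base set $\cB(\Lambda')$ via Corollary \ref{length of deltas} and Proposition \ref{Structure of cyclic ordering of simple modules}, then recover every projective length from Corollary \ref{lengths of delta proj} and \ref{sth ccc}, concluding that the Kupisch series is determined by $\Lambda$. The only organizational difference is that the paper phrases the endgame as comparing two candidates $\Lambda',\Lambda''$ and observing that the residual ambiguity is a uniform shift $(c_1+j,\ldots,c_n+j)$ which is then killed by the fixed $\cB(\Lambda')$-lengths, whereas you argue directly that each entry of the Kupisch series is a function of $\Lambda$; your explicit treatment of the cyclic gluing via the equivalence of Remark \ref{remarklis3} makes transparent a point the paper leaves somewhat implicit.
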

\begin{proof} Let $\Lambda'$ and $\Lambda''$ be two cyclic Nakayama algebras satisfying $\bm\varepsilon(\Lambda')\cong\bm\varepsilon(\Lambda'')\cong \Lambda$. We have the following key observations:
\begin{itemize}
\item By \ref{categoricalequivalnce} the category of $\cB(\Lambda')$-filtered $\Lambda'$-modules is equivalent to the category of $\cB(\Lambda'')$-filtered $\Lambda''$-modules.
\item If $\Lambda'$ and $\Lambda''$ are higher Auslander algebras then their rank is same which is $\rank\Lambda'=\rank\Lambda''=\rank\Lambda+\defect\Lambda$. Because $\Lambda$ is fixed, upto permutation induced by $\tau$ on simple modules the base sets $\cB(\Lambda')$ and $\cB(\Lambda'')$ are same because of the Proposition \ref{Structure of cyclic ordering of simple modules} and corollary \ref{length of deltas}. This is much stronger than the categorical equivalence.
\item More importantly, the number of projective $\Lambda'$ and $\Lambda''$-modules sharing the same socle is same. If $P_i\supset P_{i+1}\supset\cdots\supset P_j$ have the socle $T_i$ in mod-$\Lambda$, then the inherited projectives have the same socle $\soc\Delta'(T_i)$ in mod-$\Lambda'$ and $\soc\Delta''(T_i)$ in mod-$\Lambda''$. And by the Proposition \ref{newmodules are injective}, the remaining projective modules (non-inherited) are projective-injective i.e. unique projective modules having submodules $\Delta'(T_j)$'s in $\Lambda'$ and $\Delta''(T_j)$'s in $\Lambda''$ which correspond to socles of injective but non-projective $\Lambda$-modules $T_j$'s. Therefore we know the distribution of socles to projective modules in both $\Lambda'$ and $\Lambda''$. In other words, number of classes of projective modules are equal (see \ref{projectiveclasses}) and the number of projective modules in a class is same for both $\Lambda'$ and $\Lambda''$.
\end{itemize}

There are infinitely many algebras satisfying the items in the above list and they are parametrized by the lengths of projective modules. In details, If $(c_1,\ldots,c_n)$ is Kupisch series of an algebra $\Lambda'$ then each of the $(c_1+j,\ldots,c_n+j)$ for $j\neq 0$ can be $\Lambda''$. The increment or decrement $j$ has to be same for all $i$, otherwise it would change either the structure of $\cB(\Lambda')$ or the distribution of projective modules into projective classes and the number of projectives in the projective class. However, even for the same increment or decrement $j\neq 0$, $\bm\varepsilon(\Lambda'')\ncong\bm\varepsilon(\Lambda')$. Since $\Lambda$ is fixed, the length of inherited projective modules can take only one value which comes from the unique structure of the base set $\cB(\Lambda')$ via the corollary \ref{lengths of delta proj}. This finishes the proof.
\end{proof}

\begin{remark} If we relax the conditions in the proof above, we loose the uniqueness.
\begin{itemize}
\item Two algebras can produce the same syzygy filtered algebra, but the lengths of modules in their base sets cannot be same. For instance the base set of the algebras $(n,n-1,\ldots,4,3,3,3)$ is
\begin{align*}
\cB(\Lambda)=\left\{\Delta_1\cong S_1,\Delta_2\cong S_2,\Delta_3\cong S_3, \Delta_4\cong\begin{vmatrix}
S_{4}\\\vdots\\S_n
\end{vmatrix}\right\}
\end{align*}
where $\Delta_4$ produces different partition while $n$ varies. 
\item Projective covers of not inherited simple modules has to be projective-injective. If we drop the second and third conditions  we can get $\bm\varepsilon(\Lambda')\cong\Lambda$ but $\Lambda'$ is not higher Auslander algebra anymore. Here is an example.
Let $(3,2,2)$ be Kupisch series of cyclic Nakayama algebra $\Lambda$. $(4,3,3,3)$ is higher Auslander algebra, $(5,4,4,4,4)$ and $(5,4,3,3,3)$ are not, even though the number of projective classes are equal.
 This happens because we changed the number of projective modules belonging to the same projective class.
\item Third item is not enough by itself. For example if $\Lambda$ is given by $(3,2,2)$, then simple modules of $\Lambda'$ are $1>2>3>x>1$. So $(4,3,3,3)$ $(4,3,3,4)$ are possible. But the latter affects the defect of $P_1\in$mod-$\Lambda$, and therefore the filtration.
\item If $n\geq 3$ all algebras $(n+1,n,n,n)$ have the same base set upto permutation on simple modules induced by $\tau$. However only one of them is syzygy equivalent to $(3,2,2)$. The length of $\Lambda$-modules is decisive also.
\end{itemize}
\end{remark}

\section{Proof of Theorem \ref{thmreverseepsilon2}}\label{section linear}
The construction of cyclic Nakayama algebra from a linear one is similar to what we did in previous section. First we assume the existence of cyclic $\Lambda'$ satisfying $\bm\varepsilon(\Lambda')\cong\Lambda$. Then we will show $\Lambda'$ is unique. However we need some slight modifications. First we start with a definition.
\begin{definition}\label{nakayamacycle}
A Nakayama cycle $(\Lambda,\tau)$ is given by an algebra $\Lambda =\Lambda_1\times ... \times \Lambda_t$ with connected linear Nakayama algebras $\Lambda_1,\ldots,\Lambda_t$ and the cyclic permutation $\tau$ of the simple $\Lambda$-modules, such that the restriction to the simple $\Lambda_i$-modules is the Auslander-Reiten
translation for the simple $\Lambda_i$-modules, and $\tau$ maps the simple projective  $\Lambda_i$-module to the simple injective $\Lambda_{i-1}$-module (with $\Lambda_0 =\Lambda_t$).
\end{definition}
In the above definition, all components share the same global and dominant dimensions. If there exists a higher Auslander algebra $\Lambda'$ satisfying $\bm\varepsilon(\Lambda')\cong (\Lambda,\tau)$, then either all components are linear algebras or $(\Lambda,\tau)\cong\oplus_m\Aa_1$. Therefore, semisimple and linear components cannot appear together (see Remark \ref{remarklis1}, Example \ref{example dominant dimension}) for Nakayama cycles which are higher Auslander.
\begingroup
\allowdisplaybreaks
\begin{remark}\label{def relations liner} Let $\Lambda$ be a linear Nakayama algebra of rank $n$. The irredundant system of relations is
\begingroup
\allowdisplaybreaks
\begin{gather}\label{relrel3}
\begin{gathered}
\boldsymbol\alpha_{k_2}\ldots\boldsymbol\alpha_{k_1+1}\boldsymbol\alpha_{k_1}\ \ =0 \\
\boldsymbol\alpha_{k_4}\ldots\boldsymbol\alpha_{k_3+1}\boldsymbol\alpha_{k_3}\ \ =0 \\
\vdots\\
\boldsymbol\alpha_{k_{2r-2}}\ldots\boldsymbol\alpha_{k_{2r-3}+1}\boldsymbol\alpha_{k_{2r-3}}=0\\
\boldsymbol\alpha_{k_{2r}}\ldots\boldsymbol\alpha_{k_{2r-1}+1}\boldsymbol\alpha_{k_{2r-1}}=0\\
\boldsymbol\alpha_n=0
\end{gathered}
\end{gather}
\endgroup
where $1\leq k_1< k_3< \cdots<k_{2r-1}$ and $k_2<k_4<\ldots<k_{2r}<n$ for the linear quiver. To make the system irredundant $n$ has to be greater than $k_{2r}$. Notice that the relation for simple projective module $P_n$ is just $\alpha_n=0$. In the case of Nakayama cycles we have a similar description of the irredundant system of relations. Namely, If $L_1,\ldots,L_t$ are linear Nakayama algebras with irredundant system of relations $\rel_1,\ldots,\rel_k$, by shifting the indices of projective modules with respect to the corresponding ranks will be the irredundant system. This means, if $\bm\alpha_{k_{2j}}\cdots\bm\alpha_{k_{2j-1}}=0$ appears in the relations of $L_d$, then it becomes $\bm\alpha_{k_2j+R}\cdots\bm\alpha_{k_{2j-1}+R}=0$ in the relations defining $L_1\oplus\cdots \oplus L_{d}\oplus\ldots \oplus L_t$ where $R=\sum\limits^{d-1}_{i=1}\rank L_i$ for any $2\leq d\leq t$. By this, it is clear that whenever we add another linear algebra to $L_1\oplus\ldots\oplus L_t$, we just need to shift the indices by the total rank of the previous ones.

 To avoid any notational mess, we only give complete  description up to $t=3$. Let $L_1$, $L_2$ and $L_3$ be given by the relations \ref{relrel3}, \ref{relrel1} and \ref{relrel2} respectively.

\begin{minipage}{0.5\textwidth}
\begin{gather}\label{relrel1}
\begin{gathered}
\boldsymbol\alpha_{k'_2}\ldots\boldsymbol\alpha_{k'_1+1}\boldsymbol\alpha_{k'_1}\ \ =0 \\
\boldsymbol\alpha_{k'_4}\ldots\boldsymbol\alpha_{k'_3+1}\boldsymbol\alpha_{k'_3}\ \ =0 \\
\vdots\\
\boldsymbol\alpha_{k'_{2r}}\ldots\boldsymbol\alpha_{k'_{2r-1}+1}\boldsymbol\alpha_{k'_{2r-1}}=0\\
\boldsymbol\alpha_{n'}=0
\end{gathered}
\end{gather}
\end{minipage}
\bigskip
\begin{minipage}{0.5\textwidth}
\begin{gather}\label{relrel2}
\begin{gathered}
\boldsymbol\alpha_{k''_2}\ldots\boldsymbol\alpha_{k''_1+1}\boldsymbol\alpha_{k''_1}\ \ =0 \\
\boldsymbol\alpha_{k''_4}\ldots\boldsymbol\alpha_{k''_3+1}\boldsymbol\alpha_{k''_3}\ \ =0 \\
\vdots\\
\boldsymbol\alpha_{k''_{2r}}\ldots\boldsymbol\alpha_{k''_{2r-1}+1}\boldsymbol\alpha_{k''_{2r-1}}=0\\
\boldsymbol\alpha_{n''}=0
\end{gathered}
\end{gather}
\end{minipage}
\bigskip
For the algebra $L_1\oplus L_2$, the relations are
\begingroup
\allowdisplaybreaks
\begin{gather*}
\boldsymbol\alpha_{k_2}\ldots\boldsymbol\alpha_{k_1+1}\boldsymbol\alpha_{k_1}\ \ =0 \\
\boldsymbol\alpha_{k_4}\ldots\boldsymbol\alpha_{k_3+1}\boldsymbol\alpha_{k_3}\ \ =0 \\
\vdots\\
\boldsymbol\alpha_{k_{2r}}\ldots\boldsymbol\alpha_{k_{2r-1}+1}\boldsymbol\alpha_{k_{2r-1}}=0\\
\boldsymbol\alpha_n=0\\
\boldsymbol\alpha_{k'_2+\rank(L_1)}\ldots\boldsymbol\alpha_{k'_1+1+\rank(L_1)}\boldsymbol\alpha_{k'_1+\rank(L_1)}\ \ =0 \\
\boldsymbol\alpha_{k'_4+\rank(L_1)}\ldots\boldsymbol\alpha_{k'_3+1+\rank(L_1)}\boldsymbol\alpha_{k'_3+\rank(L_1)}\ \ =0 \\
\vdots \\
\boldsymbol\alpha_{k'_{2r}+\rank(L_1)}\ldots\boldsymbol\alpha_{k'_{2r-1}+1+\rank(L_1)}\boldsymbol\alpha_{k'_{2r-1}+\rank(L_1)}=0\\
\boldsymbol\alpha_{n'+\rank(L_1)}=0
\end{gather*}
\endgroup

For the algebra $L_1\oplus L_2\oplus L_3$, the relations are
\begingroup
\allowdisplaybreaks
\begin{gather*}
\boldsymbol\alpha_{k_2}\ldots\boldsymbol\alpha_{k_1+1}\boldsymbol\alpha_{k_1}\ \ =0 \\
\boldsymbol\alpha_{k_4}\ldots\boldsymbol\alpha_{k_3+1}\boldsymbol\alpha_{k_3}\ \ =0 \\
\vdots\\
\boldsymbol\alpha_{k_{2r}}\ldots\boldsymbol\alpha_{k_{2r-1}+1}\boldsymbol\alpha_{k_{2r-1}}=0\\
\boldsymbol\alpha_n=0\\
\boldsymbol\alpha_{k'_2+\rank(L_1)}\ldots\boldsymbol\alpha_{k'_1+1+\rank(L_1)}\boldsymbol\alpha_{k'_1+\rank(L_1)}\ \ =0 \\
\boldsymbol\alpha_{k'_4+\rank(L_1)}\ldots\boldsymbol\alpha_{k'_3+1+\rank(L_1)}\boldsymbol\alpha_{k'_3+\rank(L_1)}\ \ =0 \\
\vdots \\
\boldsymbol\alpha_{k'_{2r}+\rank(L_1)}\ldots\boldsymbol\alpha_{k'_{2r-1}+1+\rank(L_1)}\boldsymbol\alpha_{k'_{2r-1}+\rank(L_1)}=0\\
\boldsymbol\alpha_{n'+\rank(L_1)}=0\\
\boldsymbol\alpha_{k''_2+\rank(L_1)+\rank(L_2)}\ldots\boldsymbol\alpha_{k''_1+1+\rank(L_1)+\rank(L_2)}\boldsymbol\alpha_{k''_1+\rank(L_1)+\rank(L_2)}\ \ =0 \\
\boldsymbol\alpha_{k''_4+\rank(L_1)+\rank(L_2)}\ldots\boldsymbol\alpha_{k''_3+1+\rank(L_1)+\rank(L_2)}\boldsymbol\alpha_{k''_3+\rank(L_1)+\rank(L_2)}\ \ =0 \\
\vdots\\
\boldsymbol\alpha_{k''_{2r}+\rank(L_1)+\rank(L_2)}\ldots\boldsymbol\alpha_{k''_{2r-1}+1+\rank(L_1)+\rank(L_2)}\boldsymbol\alpha_{k''_{2r-1}+\rank(L_1)+\rank(L_2)}=0\\
\boldsymbol\alpha_{n''+\rank(L_1)+\rank(L_2)}=0
\end{gather*}
\endgroup

The algebras $\Aa_n$ are given by one relation $\bm\alpha_n=0$. Therefore the relations of Nakayama cycle whose components are $\Aa$-type algebras are given by
\begin{align}
\bm\alpha_{R_1}=0,\bm\alpha_{R_2}=0,\ldots, \bm\alpha_{R_t}=0
\end{align}
where $R_d=\sum\limits^{d}_{i=1}\rank L_i$ for $1\leq d\leq t$ and each $L_i$ is $\Aa$-type. 
\par Because the syzygy filtered algebra construction in \ref{defep} only works for cyclic Nakayama algebras we cannot define the base set of $\Lambda$ as given in \ref{defbase}. Nevertheless we have
\begingroup
\allowdisplaybreaks
\begin{align}\label{frfr1}
\begin{split}
\#\rel\Lambda&=\text{number of projective classes}\\
&=\text{number of minimal projectives}\\
&=\text{number of projective-injectives}\\
&=\text{number of injective classes}
\end{split}
\end{align}
\endgroup
where projective and injective classes mean projective modules sharing the same socle and injective modules sharing the same top. So we can define $\cS(\Lambda)$ as the set of representatives of isomorphism classes of socles of indecomposable projective $\Lambda$-modules, i.e. $\cS(\Lambda)=\left\{S_{k_2},\ldots,S_{k_{2r}},S_n\right\}$. An injective $\Lambda$-module $I$ is not projective if and only if socle $\soc I$ is not an element of $\cS(\Lambda)$. Therefore the defect of any linear Nakayama algebra $\Lambda$ is subject to equality
\begin{align}
\rank\Lambda=\defect\Lambda+\#\rel\Lambda
\end{align}
since $\#\cS(\Lambda)=\#\rel\Lambda$. In the case of Nakayama cycles, each connected component satisfies 
\begin{align}
\rank \Lambda_i=\#\rel \Lambda_i+\defect\Lambda_i
\end{align}
for all $1\leq i\leq t$.  Because the rank, the number of relations and as a result the defect are additive we conclude 
\begin{gather}\label{defect of linear}
\begin{gathered}
\rank \Lambda=\sum_i \rank \Lambda_i\\
\defect\Lambda=\sum_i\defect\Lambda_i
\end{gathered}
\end{gather}
For semisimple algebra $\Aa^n_1$, we set its defect to $n$. Therefore the defect of algebra $\Lambda$ where $\Lambda$ has linear and semisimple components is the sum of the defects of linear parts and the number of $\Aa_1$ appearing in it.
\end{remark}
\endgroup

Let $\Lambda$ be a linear Nakayama algebra or a Nakayama cycle and $\Lambda'$ be cyclic Nakayama algebra which are higher Auslander algebra satisfing $\bm\varepsilon(\Lambda')\cong \Lambda$. Here we assume the existence of $\Lambda'$.  Keeping this information in mind, we can deduce the following properties of $\Lambda$ and $\Lambda'$.
\begin{enumerate}[label=\arabic*)]
\item By \ref{rankequaation}, cyclic Nakayama algebra $\Lambda'$ satisfies
\begin{align*}
\rank\Lambda'=\#\rel\Lambda'+\defect\Lambda'
\end{align*}
\item By \ref{defect of linear}, linear Nakayama algebra or Nakayama cycle $\Lambda$ satisfies
\begin{align*}
\rank\Lambda=\#\rel\Lambda+\defect\Lambda
\end{align*}
\item Because $\Lambda'$ is higher Auslander algebra, by Theorem \ref{thmLamdaauslanderimplies}:
\begin{align*}
\defect\Lambda'=\defect\Lambda
\end{align*}
\item Because of the $\bm\varepsilon$-construction i.e. $\bm\varepsilon(\Lambda')\cong\Lambda$:
\begin{align*}
\#\rel\Lambda'=\#\cB(\Lambda')=\#\cS'(\Lambda')=\#\cS(\Lambda')=\rank\bm\varepsilon(\Lambda')=\rank\Lambda.
\end{align*}
\item Hence we obtain the rank of $\Lambda'$:
\begin{align*}
\rank\Lambda' & =\#\rel\Lambda'+\defect\Lambda'\\
&=\rank\Lambda+\defect\Lambda.
\end{align*}
\end{enumerate}

Definitions \ref{definheritance}, \ref{def defect of projective modules}, \ref{defconsecutive} are still valid for cyclic Nakayama algebra $\Lambda'$. We use the same notation \ref{defcyclic oredring} for simple modules. Statements \ref{lemma 3.3}, \ref{prop 3.3}, \ref{prop 3.5}, \ref{cor 3.3}, \ref{prop defect},\ref{newmodules are injective}, \ref{delta is simple iff},\ref{claim 1},\ref{claim 2},\ref{Structure of cyclic ordering of simple modules} are still valid for the $\Lambda$-modules $S_i$, $S_{i+1}$ when $\begin{vmatrix}
S_i\\S_{i+1}
\end{vmatrix}$ is indecomposable. We give statements and proofs for the cases not covered since there is no simple and projective $\Lambda$-module in the previous section \ref{section cyclic}.

\begin{lemma}\label{lemma simple proj linear } Let $\Lambda$ be a connected linear Nakayama algebra where $S_n$ is the simple projective $\Lambda$-module which is inherited to $\Lambda'$ as $\topp\Delta(S_n)$. $\Lambda'$-module $\Delta(S_n)$ is not simple. Indeed, the length of $\Delta(S_n)$ is $\defect(P_1)+1$.
\end{lemma}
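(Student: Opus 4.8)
The plan is to realize Lemma \ref{lemma simple proj linear } as the single \emph{wrap-around} instance of the cyclic ordering of inherited simples that is not covered by Proposition \ref{Structure of cyclic ordering of simple modules}, and to pin down the missing length by a global defect count rather than by re-running the local analysis of Claim \ref{claim 2}. First I would fix the two distinguished simples of the connected linear algebra $\Lambda$: since $c_n=1$ while $c_i\geq 2$ for $i<n$ by Remark \ref{kupisch}, the module $S_n=P_n$ is simple projective and $S_1=I_1$ is simple injective. When the cyclic algebra $\Lambda'$ with $\bm\varepsilon(\Lambda')\cong\Lambda$ is assembled, the inherited simples $\topp\Delta(S_1),\ldots,\topp\Delta(S_n)$ are placed around the cycle, and the single-component Nakayama-cycle gluing is precisely $\tau S_n\cong S_1$, so that $\topp\Delta(S_n)$ and $\topp\Delta(S_1)$ are cyclically consecutive. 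Thus the $n$ cyclic gaps are the adjacent pairs $(\topp\Delta(S_i),\topp\Delta(S_{i+1}))$ for $1\leq i\leq n-1$ together with the wrap pair $(\topp\Delta(S_n),\topp\Delta(S_1))$; for the first $n-1$ the two-step module with top $S_i$ and socle $S_{i+1}$ is indecomposable over $\Lambda$, so Proposition \ref{Structure of cyclic ordering of simple modules} applies and gives $\ell\Delta(S_i)=\defect(P(S_{i+1}))+1=\defect(P_{i+1})+1$.

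The main obstacle is that the wrap pair is \emph{not} covered by the earlier results, precisely because $S_n$ and $S_1$ are not adjacent in the linear quiver, so $\begin{vmatrix}S_n\\S_1\end{vmatrix}$ is not a $\Lambda$-module and Claim \ref{claim 2} cannot be invoked verbatim. I would circumvent this with a counting argument carried out entirely on the $\Lambda'$ side. Each composition factor of $\Delta(S_i)$ below its top is a non-inherited simple, and these $\ell\Delta(S_i)-1$ simples are exactly those lying in the $i$th cyclic gap; summing over all $n$ gaps enumerates every non-inherited simple of $\Lambda'$ exactly once. Hence $\sum_{i=1}^{n}\bigl(\ell\Delta(S_i)-1\bigr)$ equals the number of non-inherited simples, which is $\defect\Lambda'=\defect\Lambda$ by Corollary \ref{misterious numbering1} and Theorem \ref{thmLamdaauslanderimplies}.

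Using the identity $\defect\Lambda=\sum_{j=1}^{n}\defect(P_j)$ from Definition \ref{def defect of projective modules} together with the already-known inner gaps, I would subtract to isolate the wrap term:
\begin{align*}
\ell\Delta(S_n)-1=\sum_{j=1}^{n}\defect(P_j)-\sum_{i=1}^{n-1}\defect(P_{i+1})=\defect(P_1),
\end{align*}
which yields $\ell\Delta(S_n)=\defect(P_1)+1$. Finally I would verify non-simplicity, i.e. $\defect(P_1)\geq 1$: the simple injective $S_1=I_1$ is the top quotient $P_1\twoheadrightarrow S_1$, and it is non-projective because $\ell(P_1)=c_1\geq 2>1$; thus $S_1$ is a genuine injective but non-projective quotient of $P_1$, forcing $\defect(P_1)\geq 1$ and hence $\ell\Delta(S_n)\geq 2$, so $\Delta(S_n)$ is not simple.

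As a consistency check, and to match the spirit of the cyclic section, one can alternatively argue directly that $P(\tau S_n)=P_1$ carries the injective quotients responsible for the non-inherited simples of the wrap gap, reading the two relevant $\cB(\Lambda')$-filtered modules inside the cyclic algebra $\Lambda'$ (where they are perfectly well defined) rather than inside $\Lambda$. I expect the global count above to be the cleaner route to record, since it sidesteps the failure of indecomposability of $\begin{vmatrix}S_n\\S_1\end{vmatrix}$ over $\Lambda$ that blocks a verbatim reuse of Claim \ref{claim 2}.
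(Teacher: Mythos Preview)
Your argument is correct and reaches the same conclusion, but the route differs from the paper's proof in two places.

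For non-simplicity, the paper argues directly: since $S_n$ is simple projective in $\Lambda$, the inherited projective $P(\topp\Delta(S_n))$ has $\cB(\Lambda')$-length one, i.e.\ $P(\topp\Delta(S_n))\cong\Delta(S_n)$; if $\Delta(S_n)$ were simple, this would be a simple projective module in the cyclic algebra $\Lambda'$, which is impossible. You instead deduce non-simplicity \emph{a posteriori} from $\defect(P_1)\geq 1$. Both are valid; the paper's version is independent of the length computation.

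For the length, the paper first establishes the local inequality $t\leq\defect(P_1)$ at the wrap gap directly (using that every proper quotient of $P_1$ is injective, so $c_1=\defect(P_1)+1$, and comparing with the $\cB(\Lambda')$-length of $P(\Delta(S_1))$), and only then invokes the global count of Proposition~\ref{Structure of cyclic ordering of simple modules} to force equality at all gaps simultaneously. You bypass this local step by citing the \emph{equality} $\ell\Delta(S_i)=\defect(P_{i+1})+1$ for $i<n$ and subtracting. This is legitimate given the paper's blanket assertion that Proposition~\ref{Structure of cyclic ordering of simple modules} carries over to the non-wrap pairs, but be aware of a subtle dependency: the \emph{proof} of that equality is itself a global count over all gaps and therefore needs $t(i)\leq d(i)$ at the wrap gap as input. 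In other words, the non-wrap equalities and the wrap equality are logically established together, not sequentially, and the paper's direct $t\leq\defect(P_1)$ argument is what closes the circle. Your subtraction is a clean shortcut once one accepts the paper's organizational choice, but if you were writing this self-contained you would still need to supply that one local inequality at the wrap.
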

\begin{proof}
If $\topp\Delta(S_n)\cong\Delta(S_n)$ then $P(\topp\Delta(S_n))$ is simple projective module since $\Hom_{\Lambda'}(\cP,\Delta(S_1))$ cannot be a submodule of $S_n$ by the linearity of $\Lambda$. Claims follows since $\Lambda'$ is cyclic Nakayama algebra, there is no simple projective module.
\par We want to show that $\ell\Delta(S_n)=1+\defect(P_1)$. $\Lambda'$ is cyclic so we consider simple $\Lambda'$-modules
\begin{gather}
\topp P(\Delta(S_n))>X_1>X_2>\cdots>X_{t}>\topp P(\Delta(S_{1})).
\end{gather}

First we assume that $t>\defect(P(\Delta(S_1)))$.
If $c_{1}$ is the length of $P(S_{1})$ in $\Lambda$, then  $c_1=\defect(P(S_1))+1$ which follows from every quotient of $P(S_1)$ is injective by linearity of $\Lambda$. Notice that $\soc\Delta(S_n)\cong X_t$ and $P(\Delta(S_n))\cong \Delta(S_n)$ because $\cB(\Lambda')$-length of $P(\Delta(S_n))$ is one. By Proposition \ref{newmodules are injective} every $P(X_i)$ is projective-injective and they have nonisomorphic socles. All $\cB(\Lambda')$-length one submodules of $P(X_i)$'s appears as $\cB(\Lambda')$-composition factors of $P(\Delta(S_1))$. This implies $\cB(\Lambda')$-length of $P(\Delta(S_n))=t+1$. We point out that summand $1$ comes from the inherited socle of $P(\Delta(S_1))$ . We get
\begin{align}\label{equ 4}
\begin{gathered}
t>\defect(P(\Delta(S_1)))\\
t+1>\defect(P(S_1))+1=c_1
\end{gathered}
\end{align}
which is contradiction. Therefore $t\leq \defect(P(\Delta(S_1)))$.
\par We can use the same proof of Proposition \ref{Structure of cyclic ordering of simple modules} to show that $t=\defect(P(S_1))$. Otherwise the inequalities \ref{some eq in } would imply the defects of $\Lambda'$ and $\Lambda$ are different.
\end{proof}

\begin{lemma}\label{lemma delta length nakayama cycle} Let $\Lambda$ be a Nakayama cycle and $S_{n_1},\ldots,S_{n_t}$ be simple projective modules. If $\Delta(S_{n_1}),\ldots,\Delta(S_{n_t})$ are the corresponding $\Lambda'$-modules then we have
\begin{align}
\ell\Delta(S_{n_i})=\defect(P(S_{n_{i+1}}))+1
\end{align}
for all $1\leq i\leq n_t-1$ and
\begin{align}
\ell\Delta(S_{n_t})=\defect(P(S_1))+1.
\end{align}
\end{lemma}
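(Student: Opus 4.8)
The plan is to handle each simple projective $S_{n_i}$ of the Nakayama cycle exactly as the single simple projective $S_n$ was handled in Lemma \ref{lemma simple proj linear }, the one genuinely new feature being that the simple module following $S_{n_i}$ under $\tau$ lies in a \emph{different} connected component. By Definition \ref{nakayamacycle} the permutation $\tau$ sends the simple projective $S_{n_i}$ (the sink of $\Lambda_i$) to the simple injective, i.e. the source, of the adjacent linear component; with the vertices labelled cyclically this source is $S_{n_i+1}$ for $1\le i\le t-1$ and is $S_1$ for $i=t$. Thus the two consecutive $\Lambda$-modules $S_{n_i}$ and $\tau S_{n_i}$ never sit inside a common indecomposable $\Lambda$-module, which is precisely why Claims \ref{claim 1}, \ref{claim 2} and Proposition \ref{Structure of cyclic ordering of simple modules} (all proved under the hypothesis that $\begin{vmatrix}S_i\\S_{i+1}\end{vmatrix}$ is indecomposable) cannot be quoted at a boundary and the junction must be treated on its own.

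First I would observe, as in Lemma \ref{lemma simple proj linear }, that $\Delta(S_{n_i})$ cannot be simple: otherwise $P(\topp\Delta(S_{n_i}))$ would be a simple projective $\Lambda'$-module, impossible since $\Lambda'$ is cyclic. Writing the non-inherited simple $\Lambda'$-modules strictly between the two inherited ones as
\[
\topp\Delta(S_{n_i})>X_1>\cdots>X_{t(i)}>\topp\Delta(S_{n_i+1}),
\]
the claim reduces to $t(i)=\defect(P(S_{n_i+1}))$. For the upper bound I would run the $\cB(\Lambda')$-length count of Lemma \ref{lemma simple proj linear }: because $S_{n_i+1}$ is a source of the linear component containing it, every proper quotient of $P(S_{n_i+1})$ is injective and non-projective, so $\defect(P(S_{n_i+1}))=\ell\big(P(S_{n_i+1})\big)-1$; by Proposition \ref{newmodules are injective} the covers $P(X_1),\ldots,P(X_{t(i)})$ are projective-injective with pairwise non-isomorphic socles, and each such socle occurs as a distinct $\cB(\Lambda')$-composition factor of $P(\Delta(S_{n_i+1}))$, whose $\cB(\Lambda')$-length equals $\ell\big(P(S_{n_i+1})\big)$ by Definition \ref{b length}. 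Counting slots then forces $t(i)+1\le \ell\big(P(S_{n_i+1})\big)$, i.e. $t(i)\le \defect(P(S_{n_i+1}))$, so $t(i)>\defect(P(S_{n_i+1}))$ is impossible.

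For the reverse inequality I would pass to a global count, just as in Proposition \ref{Structure of cyclic ordering of simple modules}. Summing the bounds over all $t$ junctions and over the interior consecutive pairs (for which Proposition \ref{Structure of cyclic ordering of simple modules} already gives equality), the total number of non-inherited simple $\Lambda'$-modules is $\sum_i t(i)$ while $\sum_i\defect(P(S_{n_i+1}))$ is a piece of $\defect\Lambda=\sum_P\defect(P)$. A strict inequality at even one junction would make $\sum_i t(i)<\defect\Lambda$; but Theorem \ref{thmLamdaauslanderimplies} gives $\defect\Lambda'=\defect\Lambda$ and Proposition \ref{prop defect} identifies $\defect\Lambda'$ with the number $\sum_i t(i)$ of non-inherited simples, exactly as in inequality \ref{some eq in }. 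Hence equality holds at every junction, giving $\ell\Delta(S_{n_i})=t(i)+1=\defect(P(S_{n_i+1}))+1$ for $1\le i\le t-1$ and $\ell\Delta(S_{n_t})=\defect(P(S_1))+1$.

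The main obstacle is the junction itself: since $S_{n_i}$ and $\tau S_{n_i}$ live in different linear components, the consecutive-module results of Section \ref{section cyclic} are unavailable, and one must instead extract $\defect(P(S_{n_i+1}))=\ell\big(P(S_{n_i+1})\big)-1$ from the fact that $S_{n_i+1}$ is a source of a linear piece, and then force the local counts $t(i)$ to match the local defects collectively, through the global invariance $\defect\Lambda'=\defect\Lambda$, rather than one boundary at a time.
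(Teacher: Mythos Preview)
Your proposal is correct and follows essentially the same route as the paper. The paper's own proof is extremely terse---it simply says to rerun the proof of Proposition \ref{Structure of cyclic ordering of simple modules} (via Lemma \ref{lemma simple proj linear }) after replacing the pair $S_n,S_1$ by $S_{n_i},S_{n_i+1}$---and what you have written is precisely that argument spelled out in full: the local upper bound $t(i)\le\defect(P(S_{n_i+1}))$ from the $\cB(\Lambda')$-length count, followed by the global equality forced by $\defect\Lambda'=\defect\Lambda$.
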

\begin{proof}
We can use the proof of the Proposition \ref{Structure of cyclic ordering of simple modules} (see corollary \ref{length of deltas}). The change of the indices of $S_n,S_1$ to $S_{n_i},S_{n_{i+1}}$ in the proof still works. For the second part, change of $n$ to $n_t$ is still valid.
\end{proof}

\begin{proposition}\label{lengths of delta proj linears} The lengths of inherited projective $\Lambda'$-modules $P(\topp\Delta(S_i))$ are given by
\begin{align}
\ell P\topp(\Delta(S_i))=\sum\limits^{\ell P(S_i)+i}_{j=i} \ell\Delta(S_j)=\sum\limits^{\ell P(S_i)+i}_{j=i} \defect(P(S_{j+1}))+1
\end{align}
\end{proposition}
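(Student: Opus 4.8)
The plan is to mirror the proof of Corollary \ref{lengths of delta proj}, which established the identical formula in the cyclic case, replacing its single ingredient Corollary \ref{length of deltas} by the three linear and Nakayama-cycle analogues available here. First I would observe that $P(\topp\Delta(S_i))$ is an inherited projective $\Lambda'$-module, so by Proposition \ref{prop 3.3} it carries a $\cB(\Lambda')$-filtration. Its ordinary length is therefore the sum of the ordinary lengths of its $\cB(\Lambda')$-composition factors, and this is exactly what the first equality records once these factors are identified.

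Next I would identify the composition factors. Applying the functor $\Hom_{\Lambda'}(\cP,-)$ carries $P(\topp\Delta(S_i))$ to the inherited projective $P(S_i)$ over $\Lambda$ (see \ref{categoricalequivalnce}), so by Definition \ref{b length} the $\cB(\Lambda')$-length of $P(\topp\Delta(S_i))$ equals $\ell P(S_i)$, and its $\cB(\Lambda')$-composition factors are precisely the $\Delta(S_j)$ as $S_j$ runs over the uniserial composition factors of $P(S_i)$, that is $j=i,i+1,\ldots$ down to the socle of $P(S_i)$. This yields the labelled sum $\sum_j \ell\Delta(S_j)$.

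For the second equality I would substitute the length of each base-set element in terms of the defects of projective $\Lambda$-modules. For an index $j$ with $S_j$ not simple projective this is Corollary \ref{length of deltas}, giving $\ell\Delta(S_j)=\defect(P(S_{j+1}))+1$. For the remaining indices, where $S_j$ is a simple projective module of a connected linear component, I would invoke Lemma \ref{lemma simple proj linear } in the single connected case (where $\ell\Delta(S_n)=\defect(P_1)+1$) and Lemma \ref{lemma delta length nakayama cycle} in the Nakayama-cycle case (where $\ell\Delta(S_{n_i})=\defect(P(S_{n_{i+1}}))+1$ and $\ell\Delta(S_{n_t})=\defect(P(S_1))+1$). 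Once we adopt the convention that $j+1$ is read cyclically according to the permutation $\tau$ of the Nakayama cycle, so that the successor of a simple projective is the socle of the injective of the adjacent component as in Definition \ref{nakayamacycle}, all three statements collapse into the single expression $\ell\Delta(S_j)=\defect(P(S_{j+1}))+1$, and summing produces the second equality.

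The main obstacle I anticipate is the index bookkeeping at the boundaries between the linear components of a Nakayama cycle. There one must check that the uniserial structure of $P(S_i)$ never has a simple projective as a strict intermediate composition factor, so that the special Lemmas \ref{lemma simple proj linear } and \ref{lemma delta length nakayama cycle} are needed only when $S_j$ sits at such a boundary, and that the cyclic reading of $j+1$ furnished by $\tau$ matches the socle of the injective of the previous component exactly. Verifying this coherence, namely that the filtration crosses each component boundary consistently, is the only point requiring genuine care; the arithmetic of summing the terms $\defect(P(S_{j+1}))+1$ is then routine.
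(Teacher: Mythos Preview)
Your proposal is correct and follows essentially the same approach as the paper: the paper's proof simply says to use Lemma \ref{lemma simple proj linear } for the simple projective indices and Proposition \ref{Structure of cyclic ordering of simple modules} with Corollary \ref{length of deltas} for the rest, exactly as in Corollary \ref{lengths of delta proj}. Your version spells out the filtration argument and the boundary bookkeeping more carefully than the paper does, but the strategy and the ingredients are identical.
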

\begin{proof}
The length of $\Delta(S_{n_j})$ where $S_{n_j}$ is simple projective $\Lambda$-module is given by lemma \ref{lemma simple proj linear }. For the inherited projective modules we can use Proposition \ref{Structure of cyclic ordering of simple modules} and corollary \ref{length of deltas} as we did for corollary \ref{lengths of delta proj}.
\end{proof}

The ordering in the Nakayama cycle is important because it affects the length of $P(\Delta(S_{n_i}))$ in mod-$\Lambda'$ because the defects of $P(S_{n_i+1})$ might be different and change the Kupisch series of $\Lambda'$. We give an example for this fact at \ref{example nak cycle}.

\begin{proposition}\label{defect prop 5} $\Lambda'$ is well-defined, its syzygy filtered algebra is $\Lambda$ and it is a higher Auslander algebra.
\end{proposition}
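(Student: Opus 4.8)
The plan is to follow the same strategy as in the proof of Proposition \ref{def prop4}, adapting it to account for the simple projective modules appearing in the connected linear components of $\Lambda$. First I would establish that $\Lambda'$ is well-defined by exhibiting the length of every indecomposable projective $\Lambda'$-module (which suffices, since $\Lambda'$ is Nakayama). The lengths of the inherited projectives $P(\topp\Delta(S_i))$ are given by Proposition \ref{lengths of delta proj linears}; the only new ingredient compared to the cyclic case is that for a simple projective $\Lambda$-module $S_{n_i}$ the length of the corresponding base element $\Delta(S_{n_i})$ is $\defect(P(S_{n_i+1}))+1$ by Lemmas \ref{lemma simple proj linear } and \ref{lemma delta length nakayama cycle}, rather than by the generic count. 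For the non-inherited projectives $P(X_j)$, the formula \ref{sth ccc} from the cyclic case applies verbatim, since the arguments of Propositions \ref{newmodules are injective} and \ref{Structure of cyclic ordering of simple modules} remain valid whenever $\begin{vmatrix} S_i\\S_{i+1}\end{vmatrix}$ is indecomposable. Together these pin down the full Kupisch series of $\Lambda'$.

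Next I would verify $\bm\varepsilon(\Lambda')\cong\Lambda$. By Proposition \ref{Structure of cyclic ordering of simple modules} and Corollary \ref{length of deltas}, supplemented by the two new lemmas at the boundaries between components, the structure of the base set is completely determined, so that $\cB(\Lambda')=\{\Delta(S_1),\ldots,\Delta(S_n)\}$ and $\cS'(\Lambda')=\{\tilde S_1,\ldots,\tilde S_n\}$. Since the $\cB(\Lambda')$-filtered projectives are exactly the inherited ones (Proposition \ref{prop 3.3}), Definition \ref{defep} gives
\[
\bm\varepsilon(\Lambda')=\End_{\Lambda'}\Bigl(\bigoplus_{S\in\cS'(\Lambda')}P(S)\Bigr)\cong\Lambda,
\]
and applying $\Hom_{\Lambda'}(\cP,-)$ recovers the simple and projective $\Lambda$-modules exactly as in Proposition \ref{def prop4}. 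The point requiring care here is the wraparound of the Nakayama cycle: I must check that $\tau$ sending the simple projective $\Lambda_i$-module to the simple injective $\Lambda_{i-1}$-module is compatible with the cyclic ordering \ref{ordering1} of the tops of the $\Delta(S_j)$, so that the seams between consecutive components do not destroy the cyclic structure of $\Lambda'$.

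Finally I would show $\Lambda'$ is a higher Auslander algebra by the dominant/global-dimension bookkeeping of Proposition \ref{def prop4}. Using Lemma \ref{every second syzy of injective} together with the structure \ref{exc seqq sec}, for each injective non-projective $\Lambda'$-module $I$ the projective cover $P(\Omega^1(I))$ is non-inherited and hence projective-injective by Proposition \ref{newmodules are injective}, which gives $\domdim I = 2 + \domdim\Omega^2(I)$. Combined with the uniqueness of the injective $\Lambda'$-module mapping to each injective quotient of $P(S_{i+1})$ (via Propositions \ref{different injectives} and \ref{newmodules are injective}), this yields $\domdim\Lambda' = 2 + \domdim\Lambda$, and since $\Lambda\cong\bm\varepsilon(\Lambda')$ has finite global dimension we also get $\gldim\Lambda' = 2 + \gldim\Lambda$. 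As $\Lambda$ is higher Auslander, $\domdim\Lambda=\gldim\Lambda$, whence $\domdim\Lambda'=\gldim\Lambda'$.

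The main obstacle I anticipate is precisely the treatment of the simple projective modules and the seams of the Nakayama cycle. In the cyclic case every indecomposable projective has length at least two, a fact used to exclude $\Omega^1(I)$ being projective; in the present setting the presence of simple projectives $S_{n_i}$ forces a separate argument, supplied by Lemmas \ref{lemma simple proj linear } and \ref{lemma delta length nakayama cycle}, that each $\Delta(S_{n_i})$ is \emph{never} simple. This is what guarantees that the constructed $\Lambda'$ genuinely has no simple projective, remains cyclic and connected, and that the dominant-dimension computation carries through unchanged at these boundary vertices.
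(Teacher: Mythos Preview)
Your proposal is essentially correct and follows the same approach as the paper's proof. The one place that needs tightening is the treatment of the non-inherited projectives at the seams between linear components. You say the formula \ref{sth ccc} ``applies verbatim'' whenever $\begin{vmatrix}S_i\\S_{i+1}\end{vmatrix}$ is indecomposable, but you do not actually supply the lengths of the $P(X_j)$ when $\topp\Delta(S_{n_i})>X_1>\cdots>X_t>\topp\Delta(S_{n_i+1})$ straddles a simple projective $S_{n_i}$. The paper computes these explicitly: $P(X_1)$ has the shape $\begin{vmatrix}X_1\\\vdots\\X_t\\\Delta(S_{n_i+1})\end{vmatrix}$ because $S_{n_i+1}$ is simple injective in $\Lambda$, and the remaining $P(X_j)$ pick up further $\Delta(T_\ell)$ as submodules. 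Without these seam formulas you cannot finish the well-definedness claim, and more importantly you cannot complete the argument that $\Omega^1(I)$ is never projective.

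On that last point your reasoning is slightly off. You argue that Lemmas \ref{lemma simple proj linear } and \ref{lemma delta length nakayama cycle} show each $\Delta(S_{n_i})$ is non-simple, hence $\Lambda'$ has no simple projective, hence the dominant-dimension bookkeeping goes through. But ``$\Lambda'$ has no simple projective'' is not what is needed; the issue is whether a \emph{proper} submodule of $\Delta(S_{n_i})$ could be projective (since $\Delta(S_{n_i})$ itself \emph{is} the inherited projective $P(\topp\Delta(S_{n_i}))$, of $\cB(\Lambda')$-length one). The paper excludes this by observing that any such proper submodule has top $X_j$ for some $j$, and the explicit seam formulas for $\ell(P(X_j))$ show that $P(X_j)$ is strictly longer, so the submodule cannot be projective. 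Once you write out those seam lengths, this step follows immediately; the rest of your outline matches the paper.
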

\begin{proof}
It is enough to find the lengths of indecomposable projective $\Lambda'$-modules because $\Lambda'$ is Nakayama algebra. By Proposition \ref{lengths of delta proj linears} we get the lengths of $\cB(\Lambda')$-filtered projective modules which are inherited projectives. We need to detect the lengths of  projective modules which are not inherited.
Consider simple $\Lambda'$-modules
\begin{align}\label{linear order 5}
\topp\Delta(S_i)>X_1>\cdots>X_t>\topp\Delta(S_{i+1})
\end{align}
where $S_i$ is not simple projective $\Lambda$-module.
If we denote the simple $\Lambda'$-module $\topp\Delta(S_i)$ by $\tilde{S_{i}}$, then by proposition \ref{Structure of cyclic ordering of simple modules}, we obtain the structure of $\Delta(S_i)$
\begin{align}\label{friday 3}
 \Delta(S_i)\cong\begin{vmatrix}
\tilde{S_i}\\X_1\\\vdots\\X_t
\end{vmatrix}
\end{align}
 and $\defect(P(S_{i+1}))=t$.
All $P(X_j)$ contains the module $\begin{vmatrix}
\Delta(S_{i+1})\\\vdots\\\Delta(T_i)
\end{vmatrix}$ as a subquotient where $\Delta(T_i)$ is the submodule of $P(\Delta(S_i))$. Moreover each $P(X_j)$ has a submodule $\Delta(T_{i+j})$ which corresponds to socle of injective module $I(T_{i+j})$ in $\Lambda$. Therefore the lengths of not inherited projective modules $P(X_j)$ $1\leq j\leq t$ satisfying \ref{linear order 5} are given by
\begin{align}
t-j+1+\ell\left(\begin{vmatrix}
\Delta(S_{i+1})\\\vdots\\\Delta(T_i)
\end{vmatrix}\right)+\ell\left(\begin{vmatrix}
\Delta(T_{i+1})\\\vdots\\\Delta(T_{i+j})
\end{vmatrix}\right).
\end{align}
By Proposition \ref{lengths of delta proj linears}, it can be expresses in terms of the defects of $\Lambda$-modules.
Now we want to describe projective covers of $P(X_j)$ satisfying
\begin{align}\label{linear order 6}
\topp\Delta(S_{n_i})>X_1>\cdots>X_{t}>\topp\Delta(S_{n_i+1})
\end{align}
where $S_{n_i}$ is simple projective $\Lambda$-module. By lemma \ref{lemma delta length nakayama cycle}, $t=\defect(P(S_{n_i+1}))$. By Proposition \ref{newmodules are injective}, each $P(X_j)$ are projective-injective and have nonisomorphic socles. Moreover, socles correspond to socles of injective $\Lambda$-modules via \ref{categoricalequivalnce}. $\Delta(S_{n_i})$ and $P(X_1)$ have the structures
\begin{align}\label{friday 1}
\Delta(S_{n_i})\cong \begin{vmatrix}
\tilde{S_{n_i}}\\X_1\\\vdots\\X_t
\end{vmatrix},&\quad\quad
P(X_1)\cong\begin{vmatrix}
X_1\\\vdots\\X_t\\\Delta(S_{n_i+1})
\end{vmatrix}
\end{align} 
because $S_{n_i+1}$ is simple injective $\Lambda$-module. Therefore the length of $P(X_1)$ is $t+\ell\Delta(S_{n_i+1})$. For the rest, each $P(X_j)$ contains  $\Delta(S_{n_i+1})$ as a subquotient. Therefore the lengths are
\begin{align}\label{friday 2}
t-j+1+\ell\left(\Delta(S_{n_i+1})\right)+\ell\left(\begin{vmatrix}
\Delta(T_1)\\\vdots\\\Delta(T_{j-1})
\end{vmatrix}\right)
\end{align}
for $2\leq j\leq t$ where $P(\Delta(S_{n_i+1}))$ is $\begin{vmatrix}
\Delta(S_{n_i+1})\\\Delta(T_1)\\\vdots\\\Delta(T_t)
\end{vmatrix}$.
By lemma \ref{lemma delta length nakayama cycle}, we can express the lengths in terms of the defects of projective $\Lambda$-modules precisely. By similar arguments, we can compute the lengths of $P(X_j)$'s where $\topp\Delta(S_{\rank\Lambda})>X_1>\ldots>X_t>\topp\Delta(S_1)$. This describes the algebra $\Lambda'$, because we know the length of each projective $\Lambda'$-module. However we  do not want to explicitly write its Kupisch series to avoid notational complications. \\

The second part follows from the definition \ref{defep} as we did in the proof of Proposition \ref{def prop4}. By the statements \ref{Structure of cyclic ordering of simple modules}, \ref{length of deltas}, \ref{lemma simple proj linear } and \ref{lemma delta length nakayama cycle} we know the structure of the elements of the base set $\cB(\Lambda')$
\begin{gather*}
\Delta(S_i)\, \text{is simple} \iff \defect(P(S_{i+1}))=0\\ \ell(\Delta(S_i))=t+1 \iff \defect(P(S_{i+1}))=t\\
\ell(\Delta(S_{n_i}))=t'+1 \iff \defect(P(S_{n_i+1}))=t'.
\end{gather*}
If $\Delta(S_j)$ is simple $\Lambda'$-module, let's denote it by $\tilde{S_j}=\Delta(S_j)$. If $\Delta(S_i)$ is not simple, it has the structure given in either \ref{friday 3} or \ref{friday 1}, so in either case we use the notation $\tilde{S_i}=\topp\Delta(S_i)$.
Therefore the sets given in definition \ref{defbase} are of the form
\begin{align*}
\cB(\Lambda')=&\left\{\Delta(S_1),\Delta(S_2),\ldots,\Delta(S_n)\right\}\\
\cS'(\Lambda')=&\left\{ \topp\Delta(S_1),\topp\Delta(S_2),\ldots,\topp\Delta(S_n)\right\}=\left\{\tilde{S_1},\tilde{S_2}\ldots,\tilde{S_n}\right\}\\
\cS(\Lambda')=&\left\{\soc\Delta(S_1),\soc\Delta(S_2),\ldots,\soc\Delta(S_n)\right\}=\left\{\tau^{-1}\tilde{S_1},\tau^{-1}\tilde{S_2}\ldots,\tau^{-1}\tilde{S_n}\right\}.
\end{align*}

$\cB(\Lambda')$-filtered projective modules are exactly the inherited projective modules, therefore by the definition of syzygy filtered algebra \ref{defep} we get
\begin{align*}
\bm{\varepsilon}(\Lambda'):=\End_{\Lambda'}\left(\bigoplus\limits_{S\in \cS'(\Lambda')}P(S)\right)=\End_{\Lambda'}\left(P(\tilde{S_1})\oplus\ldots\oplus P(\tilde{S_n})\right)
\cong \Lambda.
\end{align*}

Simple $\Lambda$-modules are $S_1,S_2,\ldots,S_n$ which follows from 
\begin{align*}
\Hom_{\Lambda'}\left(\bigoplus\limits^n_{i=1} P(\tilde{S_i}),\tilde{S_i}\right)=\Hom_{\Lambda'}\left(\bigoplus\limits^n_{i=1} P(\Delta(S_i)),\Delta(S_i)\right)=S_i.
\end{align*} Similarly, projective $\Lambda$-modules are 
\begin{align*}
\Hom_{\Lambda'}\left(\bigoplus\limits^n_{i=1} P(\tilde{S_i}),P(\tilde{S_i})\right)=\Hom_{\Lambda'}\left(\bigoplus\limits^n_{i=1} P(\Delta(S_i)),P(\Delta(S_i))\right)=P(S_i).
\end{align*}\\

The arguments we use for the last part are very similar to the proof of Proposition \ref{def prop4}. The existence of simple projective $\Lambda'$-modules requires attention. By lemma \ref{every second syzy of injective}, the first syzygy of any injective $\Lambda'$-module is a proper submodule of an element of the base set. By the Proposition \ref{Structure of cyclic ordering of simple modules} and corollary \ref{length of deltas} $\Delta(S_i)$ is not simple if and only if the defect of $P(S_{i+1})$ is nonzero provided that $\begin{vmatrix}
S_i\\S_{i+1}
\end{vmatrix}$ is an indecomposable $\Lambda$-module. If $S_{n_i}$ is simple projective $\Lambda$-module then by lemmas \ref{lemma simple proj linear } and \ref{lemma delta length nakayama cycle} $\Delta(S_{n_i})$ is not simple. Moreover, the projective covers of proper submodules of $\Delta(S_i)$ and $\Delta(S_{n_i})$ are projective-injective modules which are unique in their class by Propositions \ref{newmodules are injective} and the construction \ref{friday 1}, \ref{friday 2} respectively.  On the other hand, there exist projective $\Lambda'$-modules having $\Delta(S_i)$ and $\Delta(S_{n_i})$ as a submodule by Remark \ref{submodule of projectives}. Now we can construct the projective resolution of any injective $\Lambda'$-module. By using the exact sequence
\begin{align}
0\rightarrow \Omega^1(I)\rightarrow P(I)\rightarrow I\rightarrow 0
\end{align}
together with lemma \ref{every second syzy of injective} either we get $\Omega^1(I_{\Lambda})\cong\begin{vmatrix}
X_j\\\vdots\\ X_t
\end{vmatrix}$ for some $j$, $\topp\Delta(S_i),\topp\Delta(S_{i+1})$ satisfying \ref{exc seqq} or $\Omega^1(I_{\Lambda})\cong\begin{vmatrix}
X_{j'}\\\vdots\\ X_{t'}
\end{vmatrix}$ for some $j'$, $\topp\Delta(S_{n_i}),\topp\Delta(S_{n_i+1})$ satisfying \ref{linear order 6}. For the former,   $P(X_j)$ and $P(\Delta(S_{i+1}))$ are projective covers of $\Omega^1(I_{\Lambda})$ and $\tau X_t\cong\topp\Delta(S_{i+1})$ respectively, hence we get the first terms of the projective resolution of $I$.
\begin{align}
\cdots\rightarrow P(\Delta(S_{i+1}))\rightarrow P(X_j)\rightarrow P(I)\rightarrow I\rightarrow 0.
\end{align}
$\Omega^2(I)$ is proper quotient of $P(\Delta(S_{i+1}))$, because $P(\Delta(S_{i+1}))$ is not a submodule of $P(X_j)$ by \ref{sth ccc}. By Remark \ref{secondsyzygy}, $\Omega^2(I)$ has $\cB(\Lambda')$-filtration and $P(\Delta(S_{i+1}))$ is inherited projective module, therefore $\Hom_{\Lambda'}(\cP,\Omega^2(I))$ is injective $\Lambda$-module which is the quotient of $P(S_{i+1})$. 
\par For the latter, $P(X_{j'})$ and $P(\Delta(S_{n_i+1}))$ are projective covers of $\Omega^1(I_{\Lambda})$ and $\tau X_{t'}\cong\topp\Delta(S_{n_i+1})$ respectively, hence we get the first terms of the projective resolution of $I$.
\begin{align}
\cdots\rightarrow P(\Delta(S_{n_i+1}))\rightarrow P(X_{j'})\rightarrow P(I)\rightarrow I\rightarrow 0.
\end{align}
$\Omega^2(I)$ is proper quotient of $P(\Delta(S_{n_i+1}))$, because $P(\Delta(S_{n_i+1}))$ is not a submodule of $P(X_{j'})$ by \ref{friday 1},\ref{friday 2}. By Remark \ref{secondsyzygy}, $\Omega^2(I)$ has $\cB(\Lambda')$-filtration and $P(\Delta(S_{n_i+1}))$ is inherited projective module, therefore $\Hom_{\Lambda'}(\cP,\Omega^2(I))$ is injective $\Lambda$-module which is the quotient of $P(S_{n_i+1})$. This shows that for any injective non-projective $\Lambda'$-module, there exist injective non-projective $\Lambda$-module via the the functor $\Hom_{\Lambda'}(\cP,\Omega^2(-))$.\\

 We need to show that each injective quotient of $P(S_{i+1})$ and $P(S_{n_i+1})$ corresponds to unique injective $\Lambda'$-module via the functor $\Hom_{\Lambda'}(\cP,\Omega^2(-))$. Since we discussed the case for $P(S_{i+1})$ in the proof of Proposition  \ref{def prop4}, we do not want to repeat it here. We focus on $P(S_{n_i+1})$.\\

Let $\defect(P(S_{n_i+1}))=t$ and $I_1,\ldots,I_t$ be injective quotients of $P(S_{n_i+1})$ where the indices show just enumeration not  the injective envelope of simple module having that index. Let $\Delta I_1,\ldots,\Delta I_t$ be the corresponding $\Lambda'$-modules i.e. $\Hom_{\Lambda'}(\cP,\Delta I_i)\cong I_i$. By the construction we used in the proof of the Proposition \ref{Structure of cyclic ordering of simple modules} and \ref{friday 1},\ref{friday 2} each $\Delta I_j$ is submodule of not inherited projective module $P(X_j)$ which is projective-injective and unique in its class \ref{projectiveclasses}. Let $M_j$ denote the quotient $\faktor{P(X_j)}{\Delta I_j}$. Notice that $M_j\ncong M_{j'}$ when $j\neq j'$. Each $M_j$ is submodule of $\Delta(S_{n_i})$, and by the lemma \ref{injectivity}, submodule of the unique projective-injective module $P$, so
\begin{align*}
M_j\varlonghookrightarrow \Delta(S_{n_i})\varlonghookrightarrow P 
\end{align*}
Each of the quotient $\faktor{P}{M_j}$ is injective $\Lambda'$-module, moreover $\faktor{P}{M_j}\ncong\faktor{P}{M_{j'}}$ because socles are different i.e. $\soc\faktor{P}{M_j}\cong \tau^{-1}X_j\ncong \tau^{-1}X_{j'}\cong\soc\faktor{P}{M_{j'}}$. therefore $\Hom_{\Lambda'}(\cP,\faktor{P}{M_j})\cong I_j$. This shows for each injective $\Lambda$-module which is the quotient of $P(S_{n_i+1})$, there exists one injective $\Lambda'$-module. When we vary $n_i$ over the simple projective modules together with the proof of Proposition \ref{def prop4} for any injective $\Lambda$-module, there exists one injective $\Lambda'$-module.\\
\par By Proposition \ref{different injectives}, if there are more than one injective $\Lambda'$-module which maps to the same injective $\Lambda$-module via $\Hom_{\Lambda'}(\cP,\Omega^2(-))$, then the socle of the projective covers of the first syzygies are same, however the tops are not isomorphic. Moreover the tops  are non-inherited simple modules by the structure of non-simple elements of $\cB(\Lambda')$ (see \ref{friday 3},\ref{friday 1}). By using the Proposition \ref{newmodules are injective}, any projective cover of the first syzygy is unique not inherited projective module in its class, therefore this is not possible.

There is another case that we need to analyze, the first syzygy of an injective $\Lambda'$-module cannot be projective. If $\Omega^1(I)$ is projective, by lemma \ref{every second syzy of injective}, it is a proper submodule of either $\Delta(S_i)$ or $\Delta(S_{n_i})$ which makes $\Delta(S_i)$ a projective module. This cannot happen If $\begin{vmatrix}
S_i\\S_{i+1}
\end{vmatrix} $ is indecomposable. Because $\Omega^1(I)$ would be the subquotient of the module  $\begin{vmatrix}
\Delta(S_i)\\\Delta(S_{i+1})
\end{vmatrix}$  where  $\Hom_{\Lambda'}(\cP,\begin{vmatrix}
\Delta(S_i)\\\Delta(S_{i+1})
\end{vmatrix})\cong\begin{vmatrix}
S_i\\S_{i+1}
\end{vmatrix}$ which is not possible by the projectivity of $\Omega^1(I)$. If $\Omega^1(I)\subset \Delta(S_{n_i})$ and projective, then $\Omega^1(I)$ would be subquotient of the projective module of the form $P(X_j)$ where $X_j$ satisfies either \ref{friday 1} or \ref{friday 2}. Clearly, this cannot happen. Now we can adjust Proposition \ref{GorensteinReduction} accordingly. For any injective non-projective $\Lambda$-module $I$, there is a unique injective non-projective $\Lambda$-module $I_{\Lambda'}$ such that $\Hom_{\Lambda'}(\cP,\Omega^2(I_{\Lambda'}))\cong I$. Therefore the defects of $\Lambda$ and $\Lambda'$ are equal. Consider the projective resolution 
\begin{align}\label{friday 10}
\cdots\rightarrow P(\Omega^1(I_{\Lambda'}))\rightarrow P(I_{\Lambda'})\rightarrow I_{\Lambda'}\rightarrow 0.
\end{align}
$P(I_{\Lambda'})$ has to be projective-injective module since it is a projective cover of an injective. By combining the lemma \ref{every second syzy of injective} with \ref{friday 3}, \ref{friday 1} the projective module $P(\Omega^1(I))$ is not inherited and by Proposition \ref{newmodules are injective} any non-inherited projective $\Lambda'$-module is projective-injective. Therefore we get
\begin{align*}
\domdim I_{\Lambda'}=2+\domdim\Omega^2(I_{\Lambda'}).
\end{align*}
By Remark \ref{secondsyzygy}, any syzygy $\Omega^i(I)$ and projective module $P(\Omega^i(I))$ have $\cB(\Lambda')$-filtration for $i\geq 2$. We can apply the functor $\Hom_{\Lambda'}(\cP,-)$ to \ref{friday 10} in order  to get 
\begin{align*}
\domdim_{\Lambda'}\Omega^2(I_{\Lambda'})=\domdim_{\Lambda}\Hom_{\Lambda'}(\cP,\Omega^2(I_{\Lambda'})).
\end{align*}
$\Hom_{\Lambda'}(\cP,\Omega^2(I_{\Lambda'}))$ is injective $\Lambda$-module which we denote it by $I$. In summary, we get
\begingroup
\allowdisplaybreaks
\begin{align*}
\domdim I_{\Lambda'}&=2+\domdim\Omega^2(I_{\Lambda'})\\
&=2+\domdim_{\Lambda}\Hom_{\Lambda'}(\cP,\Omega^2(I_{\Lambda'}))\\
&=2+\domdim_{\Lambda} I
\end{align*}
\endgroup
for any injective $\Lambda'$-module. By the characterization \ref{gercek tanim} of the dominant dimension  we conclude that 
\begingroup
\allowdisplaybreaks
\begin{align}
\begin{split}
\domdim\Lambda'&=\sup\left\{\domdim I_{\Lambda'}\,\vert\, I_{\Lambda'} \text{ is injective non-projective } \Lambda'\, \text{module}\right\}\\
&=\sup\left\{2+\domdim I\,\vert\, I \text{ is injective non-projective } \Lambda\, \text{module}\right\}\\
&=2+\sup\left\{\domdim I\,\vert\, I \text{ is injective non-projective } \Lambda\, \text{module}\right\}\\
&=2+\domdim\Lambda.
\end{split}
\end{align}
\endgroup
Because $\gldim\Lambda$ is finite and $\Lambda\cong\bm\varepsilon(\Lambda')$, we obtain that 
$\gldim\Lambda'=2+\gldim\Lambda$. $\Lambda$ is a higher Auslander algebra so $\gldim\Lambda=\domdim\Lambda$. Therefore $\Lambda'$ is also a higher Auslander algebra because 
\begingroup
\allowdisplaybreaks
\begin{align}
\domdim\Lambda'=2+\domdim\Lambda=2+\gldim\Lambda=\gldim\Lambda'.
\end{align}
\endgroup \end{proof}

\begin{proposition}\label{defect prop 6} $\Lambda'$ is unique.
\end{proposition}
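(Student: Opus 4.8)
The plan is to mirror the argument in the proof of Proposition \ref{prop uniqu} from the cyclic case, inserting the extra bookkeeping needed to accommodate the simple projective $\Lambda$-modules. Suppose $\Lambda'$ and $\Lambda''$ are two cyclic Nakayama algebras, each a higher Auslander algebra, with $\bm\varepsilon(\Lambda')\cong\bm\varepsilon(\Lambda'')\cong\Lambda$. Since both are higher Auslander, Theorem \ref{thmLamdaauslanderimplies} gives $\defect\Lambda'=\defect\Lambda''=\defect\Lambda$, and the rank computation at the start of this section yields $\rank\Lambda'=\rank\Lambda''=\rank\Lambda+\defect\Lambda$. By \ref{categoricalequivalnce} the categories $Filt(\cB(\Lambda'))$ and $Filt(\cB(\Lambda''))$ are both equivalent to $\mathrm{mod}$-$\Lambda$, which is the weakest ingredient we shall use.

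First I would show that the base sets $\cB(\Lambda')$ and $\cB(\Lambda'')$ agree up to the permutation induced by $\tau$ on the simple modules. For each simple $\Lambda$-module $S_i$ for which $\begin{vmatrix}S_i\\S_{i+1}\end{vmatrix}$ is indecomposable, the length $\ell\Delta(S_i)=\defect(P(S_{i+1}))+1$ is fixed by Proposition \ref{Structure of cyclic ordering of simple modules} and Corollary \ref{length of deltas}. For the simple projective $\Lambda$-modules $S_{n_i}$, which have no analogue in the cyclic section, the lengths $\ell\Delta(S_{n_i})=\defect(P(S_{n_i+1}))+1$ are fixed instead by Lemmas \ref{lemma simple proj linear } and \ref{lemma delta length nakayama cycle}. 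Because $(\Lambda,\tau)$ is held fixed, every defect $\defect(P(S_{j}))$ appearing here is determined, so the lengths of all elements of $\cB(\Lambda')$ and $\cB(\Lambda'')$, and not merely their number, coincide.

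Next I would pin down the distribution of socles among the indecomposable projectives. The inherited projectives of $\Lambda'$ and of $\Lambda''$ carry exactly the inherited socles, coming from the socles of projective $\Lambda$-modules; by Proposition \ref{newmodules are injective} each non-inherited projective is projective-injective and is the unique projective in its class, its socle corresponding to the socle of an injective non-projective $\Lambda$-module. Hence the partition of socles into projective classes, and the number of projectives within each class, agree for $\Lambda'$ and $\Lambda''$. Finally the lengths are forced: the inherited projectives have the lengths of Proposition \ref{lengths of delta proj linears}, while the non-inherited ones have the lengths given in the proof of Proposition \ref{defect prop 5} (see \ref{friday 2}), all expressed solely through the fixed defects of projective $\Lambda$-modules. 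Since a Nakayama algebra is determined by its Kupisch series, this forces $\Lambda'\cong\Lambda''$.

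The main obstacle, exactly as in the remark following Proposition \ref{prop uniqu}, is excluding the family of uniformly shifted algebras $(c_1+j,\ldots,c_m+j)$ that share the same base-set combinatorics and socle distribution yet are not $\bm\varepsilon$-equivalent to $\Lambda$; this is resolved because fixing $\Lambda$ admits only a single value for the inherited projective lengths via Proposition \ref{lengths of delta proj linears}. The genuinely new subtlety relative to the cyclic case is that the cyclic ordering of the linear components encoded in $\tau$ enters through the defects $\defect(P(S_{n_i+1}))$, and hence through the lengths $\ell\Delta(S_{n_i})$; consequently uniqueness is asserted only for a fixed pair $(\Lambda,\tau)$, and distinct cyclic orderings genuinely produce nonisomorphic $\Lambda'$, in agreement with the necklace count.
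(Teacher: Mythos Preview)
Your proposal is correct and follows essentially the same route as the paper's own proof: both adapt Proposition \ref{prop uniqu} by invoking \ref{Structure of cyclic ordering of simple modules}, \ref{lemma simple proj linear }, \ref{lemma delta length nakayama cycle} to fix the base set, \ref{newmodules are injective} to fix the socle distribution, and \ref{lengths of delta proj linears} to rule out the uniformly shifted Kupisch families. Your explicit remark that uniqueness depends on the fixed $\tau$ through the defects $\defect(P(S_{n_i+1}))$ is a welcome clarification that the paper leaves to the surrounding discussion.
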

\begin{proof} 
We can use the proof of Proposition \ref{prop uniqu} with some modifications. Let $\Lambda'$ and $\Lambda''$ be two Nakayama algebras satisfying $\bm\varepsilon(\Lambda')\cong\bm\varepsilon(\Lambda'')\cong \Lambda$ where $\Lambda$ is either linear or Nakayama cycle. We have the following key observations:
\begin{itemize}
\item By \ref{categoricalequivalnce} the category of $\cB(\Lambda')$-filtered $\Lambda'$-modules is equivalent to the category of $\cB(\Lambda'')$-filtered $\Lambda''$-modules.
\item If $\Lambda'$ and $\Lambda''$ are higher Auslander algebras then their rank is same which is $\rank\Lambda'=\rank\Lambda''=\rank\Lambda+\defect\Lambda$ (see \ref{defect of linear}). Because $\Lambda$ is fixed either as a linear algebra or Nakayama cycle with fixed $\tau$, upto permutation the base sets $\cB(\Lambda')$ and $\cB(\Lambda)$ are same because of the statements \ref{Structure of cyclic ordering of simple modules}, \ref{lemma delta length nakayama cycle}, \ref{lemma simple proj linear }. This is much stronger than the categorical equivalence.
\item More importantly, the number of projective $\Lambda'$ and $\Lambda''$-modules sharing the same socle is same. This is true for simple projective modules also. If $P_i\supset P_{i+1}\supset\cdots\supset P_j$ have the socle $T_i$ in $\Lambda$, then the inherited projectives have the same socle $\soc\Delta'(T_i)$ in $\Lambda'$ and $\soc\Delta''(T_i)$ in $\Lambda''$. And by the Proposition \ref{newmodules are injective}, the remaining projective modules (non-inherited) are projective-injective i.e. unique projective modules having submodules $\Delta'(T_j)$'s in $\Lambda'$ and $\Delta''(T_j)$'s in $\Lambda''$ which correspond to socles of injective but non-projective $\Lambda$-modules $T_j$'s. Therefore we know distribution of socles to projective modules in both $\Lambda'$ and $\Lambda''$. In other words, number of classes of projective modules are equal (see \ref{frfr1}) and the number of projective modules in a class is same for both $\Lambda'$ and $\Lambda''$.
\end{itemize}

There are infinitely many algebras satisfying the items in the above list and they are parametrized by the lengths of projective modules. In details, If $(c_1,\ldots,c_n)$ is Kupisch series of an algebra $\Lambda'$ then each of the $(c_1+j,\ldots,c_n+j)$ for $j\neq 0$ can be $\Lambda''$. The increment or decrement $j$ has to be same for all $i$, otherwise it would change either the structure of $\cB(\Lambda')$ or the distribution of projective modules into projective classes and the number of projectives in the projective class. However, even for the same increment or decrement $j\neq 0$, $\bm\varepsilon(\Lambda'')\ncong\bm\varepsilon(\Lambda')$. Since $\Lambda$ is fixed, the length of inherited projective modules can take only one value which comes from the unique structure of the base set $\cB(\Lambda')$ via the Proposition \ref{lengths of delta proj linears}. This finishes the proof.
\end{proof}

\begin{remark} If we relax the conditions in the proof above, we loose the uniqueness.
\begin{itemize}
\item Two algebras can produce the same syzygy filtered algebra, but the lengths of modules in their base sets cannot be same. For instance the base set of the algebras $(n,n-1,\ldots,3,2,2)$ is
\begin{align*}
\cB(\Lambda)=\left\{\Delta_1\cong S_1,\Delta_2\cong\begin{vmatrix}
S_2\\\vdots\\S_n
\end{vmatrix}\right\}
\end{align*}
where each of them corresponds to different partition when $n$ varies. 
\item Projective covers of not inherited simple modules has to be projective-injective. If we drop the second and third conditions  we can get $\bm\varepsilon(\Lambda')\cong\Lambda$ but $\Lambda'$ is not higher Auslander algebra anymore. Here is an example. Let $(2,2,2,1)$ be Kupisch series of linear Nakayama algebra $\Lambda$. $(2,2,3,2,2)$ is higher Auslander algebra, $(3,2,2,3,2,3)$ and $(2,3,4,3,2,2)$ are not. This happens because there is a change in the numbers of projective modules in projective classes.
\item Third item is not enough by itself. For example if $\Lambda\cong\Aa_2$, simple modules of $\Lambda'$ are $1>2>x>1$. So $(3,2,2)$, $(3,2,3)$ are possible. But the latter changes the defect of $P_1$, and therefore the filtration. Moreover it does not finite global dimension
\item If $n\geq 2$ all algebras $(n+1,n,n)$ have the same base set. However only one of them is syzygy equivalent to $\Aa_2$. The length of $\Lambda$-modules is decisive also.
\end{itemize}
\end{remark}

\begin{corollary}\label{necklace} Let $\Lambda_1,\ldots,\Lambda_t$ be linear Nakayama algebras which are higher Auslander algebras satisying: $\gldim\Lambda_i=\domdim\Lambda_i=d$. For each Nakayama cycle $(\Lambda,\tau)$ where $\Lambda=\Lambda_1\times\cdots\times\Lambda_t$, by the Theorem \ref{thmreverseepsilon2}, there exists  unique cyclic Nakayama algebra $\Lambda'$.
The number of possible permutations $\tau$ is given by necklace numbers 
\begin{center}
$N_t(n)=\cfrac{1}{n}\sum_{p\vert n}\phi(p)t^{\frac{n}{p}}$
\end{center}
where $n=\sum^t_{i=1} im_i$ ,$m_i$ is the multiplicity of $\Lambda_i$ in $\Lambda$ and $\phi$ is Euler's totient function.  The number of cyclic Nakayama algebras of global dimension $d+2$ which are $\bm\varepsilon$-equivalent to $\Lambda$ is the number of possible permutations $\tau$ if we ignore the ordering in the Nakayama cycle. 
\end{corollary}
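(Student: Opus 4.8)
The plan is to recast the enumeration as a count of cyclic words up to rotation and then apply the orbit-counting (Burnside) lemma, which reproduces the necklace formula directly.

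\textbf{Step 1: from Nakayama cycles to cyclic words.} By Theorem \ref{thmreverseepsilon2}$(ii)$ every Nakayama cycle $(\Lambda,\tau)$ assembled from the blocks $\Lambda_1,\dots,\Lambda_t$ determines a unique cyclic higher Auslander algebra $\Lambda'$ of global dimension $d+2$ with $\bm\varepsilon(\Lambda')\cong\Lambda$. The datum of $\tau$ is nothing more than the cyclic order in which the linear blocks are glued, since $\tau$ sends the simple projective of one block to the simple injective of the preceding one. I would first make this explicit: the Nakayama cycles built from these blocks are in bijection with cyclic words $w=(w_1,\dots,w_n)$ over the alphabet $\{\Lambda_1,\dots,\Lambda_t\}$ of $t$ colors, where $w_j$ names the block sitting in the $j$-th position around the cycle.

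\textbf{Step 2: isomorphism of $\Lambda'$ equals rotation of $w$.} The heart of the argument is the claim that two words yield isomorphic algebras $\Lambda'$ if and only if they differ by a cyclic rotation. For the forward direction, a rotation of $w$ corresponds to the vertex shift $i\mapsto i+s$ of the oriented cycle underlying $\Lambda'$; by the explicit length formulas of Corollary \ref{length of deltas} and Proposition \ref{lengths of delta proj linears}, the Kupisch series of $\Lambda'$ is determined by the block pattern only up to such a shift, so rotated words give isomorphic algebras. For the converse I would use that any isomorphism between connected cyclic Nakayama algebras is induced by a rotation of the vertices, and that applying $\bm\varepsilon$ returns $\Lambda$ together with its decomposition into blocks; an isomorphism $\Lambda'(w)\cong\Lambda'(w')$ therefore carries the gluing pattern recorded by $w$ onto that recorded by $w'$, forcing $w'$ to be a rotation of $w$. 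This equivalence is precisely what ``ignoring the ordering in the Nakayama cycle'' means.

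\textbf{Step 3: the Burnside count.} The cyclic group $C_n=\langle\sigma\rangle$ acts on $\{1,\dots,t\}^n$ by rotation, and by Step 2 the isomorphism classes of $\Lambda'$ are exactly its orbits. The orbit-counting lemma gives
\begin{align*}
\#\{\Lambda'\}=\frac{1}{n}\sum_{k=0}^{n-1}\#\mathrm{Fix}(\sigma^k).
\end{align*}
A word is fixed by $\sigma^k$ exactly when it is constant along the cosets of the cyclic subgroup of index $\gcd(k,n)$, whence $\#\mathrm{Fix}(\sigma^k)=t^{\gcd(k,n)}$; grouping the indices $k$ according to the common value $\gcd(k,n)=n/p$, of which there are $\phi(p)$, yields
\begin{align*}
\#\{\Lambda'\}=\frac{1}{n}\sum_{p\mid n}\phi(p)\,t^{\,n/p}=N_t(n).
\end{align*}
I would add that a word fixed by a proper rotation is periodic, and by Theorem \ref{thmdoubling} the associated $\Lambda'$ is then a genuine covering of a cyclic higher Auslander algebra of smaller rank, so the fixed-point contributions carry a concrete algebraic meaning.

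\textbf{Main obstacle.} The delicate point is the converse half of Step 2, namely excluding accidental isomorphisms $\Lambda'(w)\cong\Lambda'(w')$ between non-rotation-equivalent words. This relies on the rigidity furnished by the uniqueness in Theorem \ref{thmreverseepsilon2} together with the fact that isomorphisms of connected cyclic Nakayama algebras are forced to be cyclic rotations of vertices; tracking how $\tau$, the base set $\cB(\Lambda')$, and the projective lengths transform under such a rotation is where I expect the real work to lie.
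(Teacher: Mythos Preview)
The paper states this corollary without proof; it is offered as an immediate consequence of the uniqueness in Theorem~\ref{thmreverseepsilon2} together with the classical necklace count. Your Burnside argument is exactly the standard derivation of the necklace formula and is the natural way to make the corollary explicit, so you are supplying what the paper leaves implicit rather than diverging from it.

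One comment on your Step~2, which you rightly flag as the substantive point: the converse follows cleanly because (i) any isomorphism between connected cyclic Nakayama algebras is induced by a cyclic relabelling of vertices, hence carries the Kupisch series to a cyclic shift of itself, and (ii) by the uniqueness in Proposition~\ref{defect prop 6} the Kupisch series of $\Lambda'$ determines the Nakayama cycle $(\Lambda,\tau)$ it came from. So the rigidity you anticipated needing is already packaged in the paper's uniqueness statement, and no further work is required.
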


\subsection{Examples for Theorems \ref{thmreverseepsilon},\ref{thmreverseepsilon2} }
By the Theorems \ref{thmreverseepsilon},\ref{thmreverseepsilon2} we can generate all the chains:
\begin{align}
\left(\rank\Lambda,k\right)\longrightarrow \left(\rank\Lambda+\defect\Lambda,k+2\right)
\end{align}
where $k=\gldim\Lambda=\domdim\Lambda$. By the Theorem \ref{thmdoubling}, we obtain the other direction
\begin{align}
\left(\rank\Lambda,k\right)\longrightarrow \left(2\rank\Lambda,k\right)\longrightarrow\left(3\rank\Lambda,k\right).
\end{align}

We give some examples to show how we apply main theorems.

\begin{example}\label{example1}
Consider linear Nakayama algebra with the Kupisch series $(2,3,2,2,1)$. We apply the Theorem \ref{thmreverseepsilon2} to
\begin{align}
\left\{\begin{vmatrix}
1\\2
\end{vmatrix},\begin{vmatrix}
2\\3\\4
\end{vmatrix},\begin{vmatrix}
3\\4
\end{vmatrix},\begin{vmatrix}
4\\5
\end{vmatrix},\begin{vmatrix}
5
\end{vmatrix}\right\}.
\end{align}
Injective non-projective modules are $I_1$, $I_3$, their tops are $1,2$. Hence the ordering of simple $\Lambda'$-modules is 
\begin{gather*}
1>x_1>2>3>4>5>x_2>1
\end{gather*}
Since the defect of $P_2$ and $P_1$ are one, the base set is
\begin{gather*}
\cB(\Lambda')=\left\{\begin{vmatrix}
1\\
x_1
\end{vmatrix},\vert 2\vert,\vert 3\vert,\vert 4\vert,\begin{vmatrix}
5\\
x_2
\end{vmatrix}\right\}
\end{gather*}
This forces that projective module with top $x_1$ ends at $3$, and projective module with top $x_2$ ends at $x_1$. Projectives of $\Lambda'$ are:
\begin{align}
\left\{\begin{vmatrix}
1\\x_1\\2
\end{vmatrix},\begin{vmatrix}
x_1\\2\\3
\end{vmatrix},
\begin{vmatrix}
2\\3\\4
\end{vmatrix},\begin{vmatrix}
3\\4
\end{vmatrix},\begin{vmatrix}
4\\5\\x_2
\end{vmatrix},\begin{vmatrix}
5\\x_2
\end{vmatrix},\begin{vmatrix}
x_2\\1\\x_1
\end{vmatrix}\right\}
\end{align}
One can verify that ${\bm\varepsilon}(\Lambda')\cong L$.\\

 We apply construction in the proof of \ref{thmreverseepsilon} to the previous result. Let $\Lambda$ be given by $(3,3,3,3,2,3,2)$. Injective but non-projective modules are $I_2$ and $I_7$. New simple modules satisfy $5>x_1>6$ and $7>x_2>1$. We get:

\begin{align}\label{exampleextension}
\left\{\begin{vmatrix}
1\\2\\3
\end{vmatrix},\begin{vmatrix}
2\\3\\4
\end{vmatrix},
\begin{vmatrix}
3\\4\\5\\x_1
\end{vmatrix},\begin{vmatrix}
4\\5\\x_1\\6
\end{vmatrix},\begin{vmatrix}
5\\x_1\\6
\end{vmatrix},\begin{vmatrix}
x_1\\6\\7\\x_2
\end{vmatrix},\begin{vmatrix}
6\\7\\x_2\\1
\end{vmatrix},\begin{vmatrix}
7\\x_2\\1
\end{vmatrix},\begin{vmatrix}
x_2\\1\\2
\end{vmatrix}
\right\}
\end{align}
If we continue the process, algebras we get are:
\begin{align*}
(2,3,2,2,1)\mapsto (3,3,3,3,2,3,2)\mapsto (3,3,4,4,3,4,4,3,3)\mapsto (4,4,4,5,5,5,4,4,4,4,3)\mapsto\cdots
\end{align*}

\end{example}

\begin{example} We construct cyclic Nakayama algebra which is syzygy equivalent to $(221)\oplus (23221)$. Notice that there is only one Nakayama cycle in this case.
Projective modules are :
\begin{gather}
\left\{\begin{vmatrix}
1\\2
\end{vmatrix},\begin{vmatrix}
2\\3
\end{vmatrix},\begin{vmatrix}
3
\end{vmatrix},\begin{vmatrix}
4\\5
\end{vmatrix},\begin{vmatrix}
5\\6\\7
\end{vmatrix},\begin{vmatrix}
6\\7
\end{vmatrix},\begin{vmatrix}
7\\8
\end{vmatrix},\begin{vmatrix}
8
\end{vmatrix}\right\}
\end{gather}
Injective but non-projective modules are $I_1,I_4,I_6$. Their tops are $1,4,5$ therefore $\Delta(8),\Delta(3), \Delta(4)$ are not simple modules.
\begin{gather}
\left\{\begin{vmatrix}
1\\2
\end{vmatrix},\begin{vmatrix}
2\\3\\x
\end{vmatrix},\begin{vmatrix}
3\\x
\end{vmatrix},
\begin{vmatrix}
P_x
\end{vmatrix},
\begin{vmatrix}
4\\y\\5
\end{vmatrix},
\begin{vmatrix}P_y\end{vmatrix},\begin{vmatrix}
5\\6\\7
\end{vmatrix},\begin{vmatrix}
6\\7
\end{vmatrix},\begin{vmatrix}
7\\8\\z
\end{vmatrix},\begin{vmatrix}
8\\z
\end{vmatrix},\begin{vmatrix}P_z\end{vmatrix}\right\}
\end{gather}
We detect the new projective modules. Because $\Delta(4)$ is not simple, we get $P_x=\begin{vmatrix}
x\\4\\y
\end{vmatrix}$. $\Delta(6)$ and $\Delta(1)$ are simple, so $P_y=\begin{vmatrix}
y\\5\\6
\end{vmatrix}$ and $P_z=\begin{vmatrix}
z\\1
\end{vmatrix} $.  Therefore the algebra is given by the Kupisch series $(3,2,3,3,3,3,2,3,2,2,2)$. Injective but non-projective $\Lambda'$-modules are $\begin{vmatrix}
2\\3
\end{vmatrix},\begin{vmatrix}
x\\4
\end{vmatrix},\begin{vmatrix}
7\\8
\end{vmatrix}$. As we did in example \ref{example1}, here is the chain upto the $\bm\varepsilon$-equivalences:
\begingroup
\allowdisplaybreaks
 \begin{center}
 $\xymatrixcolsep{5pt}\xymatrixrowsep{4pt}
\xymatrix{ (2,2,1)\oplus (2,3,2,2,1)\ar[d]
\\
(3,2,3,3,3,3,2,3,2,2,2)\ar[d]\\
(4,4,3,3,3,3,4,4,3,3,3,2,3,3)\ar[d]\\
(4,4,3,4,4,4,4,4,4,3,4,4,3,4,4,4,4)\ar[d]\\
(5,5,4,4,4,4,4,5,5,5,4,5,5,5,4,4,4,4,4,5)\ar[d]\\
\cdots}$
 \end{center}
\endgroup
Now, we give a chain of algebras which are ${\bm\varepsilon}$-equivalent to direct sum of $\Aa$ type quivers, therefore global dimensions are odd.
 \begin{align}\label{exampe a2+a3}
 \begin{gathered}
 \Aa_2\oplus\Aa_3\cong (2,1)\oplus (3,2,1)\\\downarrow\\
(4,3,3,3,4,3,2,2)\\\downarrow\\
(5,4,4,4,4,6,5,4,4,4,4)\\\downarrow\\
(7,6,6,6,6,6,6,7,6,5,5,5,5,5)\\\downarrow\\
\cdots
 \end{gathered}
\end{align}  
 
We show how to construct a higher Auslander algebra which is syzygy equivalent to semisimple algebra $\oplus_m\Aa_1$. Its simple modules are indexed by
$\left\{1,2,\ldots,m\right\}$. The defect is $m$ by the set up \ref{defect of linear}, therefore there are $m$ new simple modules such that
$1>x_1>2>x_2>\ldots>m-1>x_{m-1}> m> x_m>1$.
By \ref{lemma delta length nakayama cycle}, the base set is 
\begin{align}\label{friday example}
\left\{\begin{vmatrix}
1\\x_1
\end{vmatrix}, \begin{vmatrix}
2\\x_2
\end{vmatrix},\ldots,\begin{vmatrix}
m-1\\x_{m-1}
\end{vmatrix},\begin{vmatrix}
m\\x_m
\end{vmatrix}\right\}
\end{align}
By the constructions \ref{friday 1}, \ref{friday 2} we get
\begin{align*}
\left\{\begin{vmatrix}
1\\x_1
\end{vmatrix},\begin{vmatrix}
x_1\\2\\x_2
\end{vmatrix}, \begin{vmatrix}
2\\x_2
\end{vmatrix},\begin{vmatrix}
x_2\\3\\x_3
\end{vmatrix}\ldots,\begin{vmatrix}
x_{m-1}\\m\\x_{m}
\end{vmatrix},\begin{vmatrix}
m\\x_m
\end{vmatrix},\begin{vmatrix}
x_{m}\\1\\x_{1}
\end{vmatrix}\right\}
\end{align*}
This is just m-fold covering of cyclic Nakayama algebra given by Kupisch series $(3,2)$. It is the only cyclic Nakayama algebra of global and dominant dimension two. See Proposition \ref{sectionkis2} and \ref{gustafson} for details.
 \end{example}

\begin{example}\label{example nak cycle}
We conclude this part by an example which shows why Nakayama cycle is required for the uniqueness part of the Theorem \ref{thmreverseepsilon}. As algebras $\Aa_2\oplus\Aa_2\oplus\Aa_3\oplus\Aa_3$ and $\Aa_2\oplus\Aa_3\oplus\Aa_2\oplus\Aa_3$ are isomorphic, however they lead different cyclic Nakayama algebras:
\begin{center}
 $\bm\varepsilon (3,2,2,4,3,3,3,5,4,3,3,3,4,3,2,2)\cong \Aa_2\oplus\Aa_2\oplus\Aa_3\oplus\Aa_3$\\$\bm\varepsilon (4,3,3,3,4,3,2,2,4,3,3,3,4,3,2,2)\cong \Aa_2\oplus\Aa_3\oplus\Aa_2\oplus\Aa_3$.
 \end{center}
If we express the projective $\Aa_2\oplus\Aa_2\oplus\Aa_3\oplus\Aa_3$-modules and $\Aa_2\oplus\Aa_3\oplus\Aa_2\oplus\Aa_3$-modules according to the remark \ref{def relations liner}, we get
\begingroup
\allowdisplaybreaks
\begin{gather}
\begin{vmatrix}
1\\2
\end{vmatrix},\begin{vmatrix}
2
\end{vmatrix},\begin{vmatrix}
3\\4
\end{vmatrix},\begin{vmatrix}
4
\end{vmatrix},\begin{vmatrix}
5\\6\\7
\end{vmatrix},\begin{vmatrix}
6\\7
\end{vmatrix},\begin{vmatrix}
7
\end{vmatrix},\begin{vmatrix}
8\\9\\10
\end{vmatrix},\begin{vmatrix}
9\\10
\end{vmatrix},\begin{vmatrix}
10
\end{vmatrix}\\
\begin{vmatrix}
1\\2
\end{vmatrix},\begin{vmatrix}
2
\end{vmatrix},\begin{vmatrix}
3\\4\\5
\end{vmatrix},\begin{vmatrix}
4\\5
\end{vmatrix},\begin{vmatrix}
5
\end{vmatrix},\begin{vmatrix}
6\\7
\end{vmatrix},\begin{vmatrix}
7
\end{vmatrix},\begin{vmatrix}
8\\9\\10
\end{vmatrix},\begin{vmatrix}
9\\10
\end{vmatrix},\begin{vmatrix}
10
\end{vmatrix}
\end{gather}
\endgroup
By using \ref{lemma delta length nakayama cycle} we can detect the lengths of elements of the base sets. We give the lengths of non-simple elements of $\cB(\Lambda')$ and $\cB(\Lambda'')$:
\begin{align*}
\ell\Delta(2)=2,\ell\Delta(4)=3,\ell\Delta(7)=3,\ell\Delta(10)=2\\
\ell\Delta(2)=3,\ell\Delta(5)=2,\ell\Delta(7)=3,\ell\Delta(10)=2
\end{align*}
Therefore $\Lambda'$ and $\Lambda''$ cannot have the same Kupisch series by propositions \ref{defect prop 5} and \ref{defect prop 6} where $\bm\varepsilon(\Lambda')\cong\Aa_2\oplus\Aa_2\oplus\Aa_3\oplus\Aa_3$ and $\bm\varepsilon(\Lambda'')\cong\Aa_2\oplus\Aa_3\oplus\Aa_2\oplus\Aa_3$.
\end{example}

\begin{example}\label{example dominant dimension}
In the remark \ref{remarklis1}, to make the reduction $\domdim\Lambda=\bm\varepsilon(\Lambda)+2$ we assume that $\domdim\Lambda\geq 3$. We want to explain the reason behind it. Dominant dimension of semisimple algebra is infinity.  If there is an injective $\Lambda$-module $I$ satisfying $\pdim I=\domdim I=2$, then we cannot conclude $\bm\varepsilon(\Lambda)$ has dominant dimension zero. Surprisingly, it can take any number. If we relax the equality of dominant dimensions of linear components in the definition of Nakayama cycle (\ref{nakayamacycle}), the below claims of Proposition \ref{defect prop 5} are still true
\begin{itemize}
\item $\Lambda'$ is well-defined, $\bm\varepsilon(\Lambda')\cong \Lambda$.
\item $\defect\Lambda'=\defect\Lambda$
\end{itemize}
Moreover, if we relax the condition on being higher Auslander in Proposition \ref{def prop4} we get $\bm\varepsilon$-equivalent cyclic Nakayama algebras sharing the same defect. In other words, we can construct defect invariant cyclic Nakayama algebras which are $\bm\varepsilon$-equivalent.
We present two examples. Let $\Lambda$ be given by Kupisch series $(4,3,3,4,3,3,4)$. Then $\bm\varepsilon(\Lambda)$ is given by $(3,2,3,2,2)$ and $\bm\varepsilon^2(\Lambda)\cong \Aa_2\oplus\Aa_1$. We have
\begin{align}
\begin{gathered}
\gldim\Lambda=5, \gldim\bm\varepsilon(\Lambda)=3, \gldim\bm\varepsilon^2(\Lambda)=1\\
\domdim\Lambda=4, \domdim\bm\varepsilon(\Lambda)=2, \domdim\bm\varepsilon^2(\Lambda)=1\\
\defect\Lambda=\defect\bm\varepsilon(\Lambda)=\defect\bm\varepsilon^2(\Lambda)=2
\end{gathered}
\end{align}
$\domdim \Aa_2\oplus\Aa_1=1$ because $1<\infty$.
If we apply $\bm\varepsilon$-construction to algebra $\Lambda$ with Kupisch series $(3,3,3,4,3,2,4,3,3,3)$, then $\bm\varepsilon(\Lambda)$ splits into direct sum of $\Aa_1$ and linear algebra with Kupisch series $(3,3,3,3,2,1)$. 
\begin{align}
\begin{gathered}
\gldim\Lambda=5, \domdim\Lambda=2\\
\gldim\bm\varepsilon(\Lambda)=3, \domdim\bm\varepsilon(\Lambda)=3
\end{gathered}
\end{align}
If we apply the construction in the section \ref{section cyclic} to $\Lambda$, we get the sequence of algebras

 \begin{center}
 $\xymatrixcolsep{5pt}\xymatrixrowsep{4pt}
\xymatrix{ (3,3,3,3,2,1)\oplus (1)\ar[d]
\\
(3,3,3,4,3,2,4,3,3,3)\ar[d]\\
(4,4,4,6,6,5,4,4,4,4,3,3,3)\ar[d]\\
(6,6,6,6,6,6,6,5,4,4,4,5,4,4,4,6)\ar[d]\\
\cdots}$
 \end{center}
 sharing the same defect.
\end{example}

\section{Global Dimension}\label{sectionCalculation}
First we will construct some higher Auslander algebras which are linear. Then, we can apply Theorem \ref{thmreverseepsilon2} to generate families of algebras which are $\bm\varepsilon$-equivalent to each other.

\begin{notation} \begin{itemize}
\item $a^b$ denotes the sequence $a,a,\ldots,a$ of length $b$.
\item Finite integer sequences in parenthesis $()$ denotes Kupisch series.
\item $\Aa_n$ is oriented $\Aa$-type Dynkin quiver of rank $n$.
\item To simplify the exposition, by abuse of notation we may use Kupisch series instead of algebra. For instance $\bm\varepsilon^2(4,3,3,3)\cong\bm\varepsilon(3,2,2)\cong (2,1)=\Aa_2$.
\end{itemize}
\end{notation}

\begin{lemma}\label{lemma1} Algebras $\Lambda$ of format $(2^d,1)$ where $d\geq 1$ satisfies:
\begin{itemize}
\item $\rank\Lambda=d+1$
\item $\gldim\Lambda=\domdim\Lambda=d$
\item $\defect\Lambda=1$
\end{itemize}
Moreover, it is the only defect one linear Nakayama algebra.
\end{lemma}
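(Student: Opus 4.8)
The plan is to read off all the homological data directly from the explicit module structure of $\Lambda=(2^d,1)$ and then settle the uniqueness assertion by a short combinatorial analysis of the Kupisch series. Writing $S_1,\ldots,S_{d+1}$ for the simples, the format $(2^d,1)$ means the indecomposable projectives are the length-two modules $P_i$ with $\topp P_i=S_i$ and $\soc P_i=S_{i+1}$ for $1\le i\le d$, while $P_{d+1}=S_{d+1}$ is simple projective; in particular $\rank\Lambda=d+1$ is immediate. First I would record the injectives: the largest uniserial module with socle $S_j$ is $S_{j-1}/S_j=P_{j-1}$ for $2\le j\le d+1$, while for $j=1$ it is just $S_1$. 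Thus $I_1\cong S_1$ and $I_j\cong P_{j-1}$ for $j\ge 2$, so the only injective that fails to be projective is $I_1$. Since the defect counts injective non-projective modules, this gives $\defect\Lambda=1$.

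For the global dimension I would compute syzygies of simples: the structure of $P_i$ gives $\Omega^1 S_i\cong S_{i+1}$ for $1\le i\le d$ with $S_{d+1}$ projective, hence $\pdim S_i=d+1-i$ and $\gldim\Lambda=\pdim S_1=d$. For the dominant dimension I would use the characterization \ref{gercek tanim2}: the unique projective that is not injective is $P_{d+1}\cong S_{d+1}$, whose minimal injective coresolution is
\[
0\to S_{d+1}\to I_{d+1}\to I_d\to\cdots\to I_2\to I_1\to 0,
\]
where the first $d$ terms $I_{d+1},\ldots,I_2$ are projective and $I_1$ is not. Hence $\domdim S_{d+1}=d$, and since every other projective is projective-injective (so contributes infinite dominant dimension), $\domdim\Lambda=d=\gldim\Lambda$.

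For the final ``only'' assertion I would take an arbitrary connected linear Nakayama algebra of rank $n\ge 2$ with Kupisch series $(c_1,\ldots,c_n)$ and encode each projective by its socle index $v_i:=i+c_i-1$. By Remark \ref{kupisch} linearity and connectivity give $c_i\ge 2$ for $i<n$ and $c_n=1$, so $v_n=n$, $v_i\ge i+1$ for $i<n$, and always $v_i\le n$; consequently the image of $i\mapsto v_i$ lies in $\{2,\ldots,n\}$ and omits $1$. Since an injective is non-projective exactly when its socle lies outside $\cS(\Lambda)=\{S_{v_1},\ldots,S_{v_n}\}$, we get $\defect\Lambda=n-\#\{v_1,\ldots,v_n\}$, so $\defect\Lambda=1$ forces the image to be exactly $\{2,\ldots,n\}$ with precisely one repeated value. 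Now $v_{n-1}=(n-2)+c_{n-1}\ge n$ forces $v_{n-1}=n=v_n$ (hence $c_{n-1}=2$), so the remaining indices $v_1,\ldots,v_{n-2}$ must biject onto $\{2,\ldots,n-1\}$; a downward induction using $v_i\ge i+1$ then forces $v_{n-2}=n-1,\ v_{n-3}=n-2,\ \ldots,\ v_i=i+1$, i.e. $c_i=2$ for all $i<n$. This identifies $\Lambda$ with $(2^{n-1},1)$, and $d\ge 1$ corresponds to rank $n\ge 2$, the degenerate semisimple case $\Aa_1$ being excluded.

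The routine part is the homological bookkeeping of the first two paragraphs; the one place needing genuine care is the uniqueness argument, where I must rule out any extra jumps in the Kupisch series. The main obstacle is verifying that defect $1$ really forces every $c_i$ with $i<n$ down to $2$ — concretely, that no value of $\{2,\ldots,n\}$ may be skipped and that the single permitted collision must occur at the top index $n$. The socle-index bijection argument above is designed to expose exactly this rigidity, and because every connected linear Nakayama algebra already has $1\notin\{v_i\}$ and hence $\defect\ge 1$, additivity of the defect (Remark \ref{def relations liner}, \ref{defect of linear}) shows a defect-one linear algebra of rank $\ge 2$ is automatically connected, so the analysis indeed applies.
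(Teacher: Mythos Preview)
Your proof is correct and follows essentially the same approach as the paper's: both compute the homological invariants directly from the explicit projectives of $(2^d,1)$, and both prove uniqueness by counting socles of projectives and using the Kupisch constraints. Your version is more explicit---you actually verify $\domdim\Lambda=d$ via the injective coresolution of $S_{d+1}$, whereas the paper essentially asserts it, and your socle-index function $v_i=i+c_i-1$ makes the pigeonhole step and the downward induction cleaner than the paper's appeal to Remark~\ref{kupisch}; but the underlying idea is identical.
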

\begin{proof}
It is clear that the number of simple modules is $d+1$, and there is only one injective but non-projective module, simple $S_1$. Its projective dimension is $d+1$. This completes the proof of the first part.

Let $L$ be a linear Nakayama algebra of defect one and rank $n$. Therefore the number of relations and in particular number of classes of projective modules are $n-1$ (see \ref{frfr1}). This means two projective modules have isomorphic socles. Since  $S_n$ is simple projective and $L$ is connected, $P_{n-1}$ is $\begin{vmatrix}
S_{n-1}\\S_n
\end{vmatrix}$. There are $n-2$ socles and $n-2$ projective modules left, so every projective module $P_i$ is projective-injective. By the properties of Kupisch series \ref{kupisch}, $c_i\leq c_{i+1}$ for all $1\leq i\leq n-2 $. Because $c_{n-1}=2$, it follows either each $c_i$ is one or two. If there is $c_i=1$, then $S_{i+1}$ becomes simple injective module, together with $S_1$, it makes the defect at least two which contradicts to assumption on the defect. Therefore each $c_i=2$ for $1\leq i\leq n-1$ and $c_{n}=1$.
\end{proof}

\begin{lemma}[Gustafson's Example]\label{gustafson} Let $\Lambda$ be algebra of format $((n+1)^{n-1},n)$. $\Lambda$ satisfies:
\begin{itemize}
\item  ${\bm\varepsilon}^{n-2}(\Lambda)\cong (3,2)$
\item  ${\bm\varepsilon}^{n-1}(\Lambda)\cong \Aa_1$
\item $\gldim\Lambda=\domdim\Lambda=2n-2$
\end{itemize}
\end{lemma}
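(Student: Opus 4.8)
The plan is to reduce everything to a single recursive identity. Write $G_m := ((m+1)^{m-1},m)$ for the rank-$m$ algebra of the stated format, so that $\Lambda = G_n$; set $G_1 := \Aa_1$ and note $G_2 = (3,2)$. The heart of the argument is to prove $\bm\varepsilon(G_m)\cong G_{m-1}$ for every $m\geq 2$; granting this, the first two bullets are immediate by iterating $\bm\varepsilon$, since $\bm\varepsilon^{\,n-2}(G_n)\cong G_2=(3,2)$ and $\bm\varepsilon^{\,n-1}(G_n)\cong G_1=\Aa_1$.

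To compute $\bm\varepsilon(G_n)$ I would first record the projectives: $P_i$ has length $n+1$ and socle $S_i$ for $1\leq i\leq n-1$, while $P_n$ has length $n$ and socle $S_{n-1}$. Hence $\cS(G_n)=\{S_1,\dots,S_{n-1}\}$ and $\cS'(G_n)=\{S_2,\dots,S_n\}$, so $\bm\varepsilon(G_n)=\End_{G_n}(P_2\oplus\cdots\oplus P_n)$ has rank $n-1$ and defect $\defect G_n = n-(n-1)=1$. The minimal projectives are $P_1,\dots,P_{n-2},P_n$, and computing the second syzygies of their tops via Remark \ref{secondsyzygy} gives the base set $\cB(G_n)=\{S_2,S_3,\dots,S_{n-1}\}\cup\{\Delta\}$, where $\Delta\cong\begin{vmatrix}S_n\\S_1\end{vmatrix}$ is the unique non-simple element (arising as $\Omega^2(S_n)$, the ``wrap-around'' block).

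The remaining step is to read off the Kupisch series of $\bm\varepsilon(G_n)$ by computing the $\cB(G_n)$-length of each filtered projective $P_2,\dots,P_n$ (Definition \ref{b length}). Filtering from the socle and grouping the two consecutive factors $S_n,S_1$ into the block $\Delta$ each time they occur, one finds that $P_i$ has $\cB$-length $n$ for $2\leq i\leq n-1$ and $P_n$ has $\cB$-length $n-1$. Ordering the simple $\bm\varepsilon(G_n)$-modules by their tops (the elements of $\cS'(G_n)$) places the projective coming from $P_n$ last, so the Kupisch series is $((n)^{n-2},n-1)=G_{n-1}$, as claimed. For the terminal case $m=2$ the block $\Delta$ is itself projective and $\bm\varepsilon((3,2))\cong\Aa_1$, which I would check separately.

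Finally, for the dimensions I would argue by induction on $m\geq 2$ that $G_m$ is a higher Auslander algebra with $\gldim G_m=\domdim G_m=2m-2$, the base case $G_2=(3,2)$ being a direct check ($\Omega^1(S_1)=P_2$, $\pdim S_2 = 2$, and $\domdim = 2$). The global dimension is handled purely by the reduction $\gldim G_m=\gldim\bm\varepsilon(G_m)+2=\gldim G_{m-1}+2$ of Remark \ref{remarklis2}, giving $\gldim G_n=2+2(n-2)=2n-2$ (all global dimensions being finite since the chain ends at $\Aa_1$). The main obstacle is the dominant dimension, because the reduction $\domdim G_m=\domdim\bm\varepsilon(G_m)+2$ requires the lower bound $\domdim\geq 3$, which cannot be bootstrapped from $\domdim G_2=2$ directly. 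I would circumvent this by routing through Theorem \ref{thmreverseepsilon}: since $G_{m-1}$ is cyclic higher Auslander of defect $1$, its reverse construction is the unique cyclic higher Auslander algebra $\Lambda'$ with $\bm\varepsilon(\Lambda')\cong G_{m-1}$, and Proposition \ref{def prop4} shows $\Lambda'$ is higher Auslander with $\domdim\Lambda'=\gldim\Lambda'=2m-2$. Matching the explicit Kupisch series produced by the length formulas of Corollary \ref{lengths of delta proj} against $((m+1)^{m-1},m)$ identifies $\Lambda'=G_m$, so $G_m$ is higher Auslander and $\domdim G_n=\gldim G_n=2n-2$. The delicate points are thus the base-set bookkeeping with the single non-simple block $\Delta$ in the $\bm\varepsilon$-computation, and this identification of $G_m$ with the reverse construction needed to secure the dominant dimension.
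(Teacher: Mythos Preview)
Your argument for the recursive identity $\bm\varepsilon(G_m)\cong G_{m-1}$ is correct and essentially identical to the paper's: both compute $\cB(G_n)=\{S_2,\dots,S_{n-1},M\}$ with $M=\begin{vmatrix}S_n\\S_1\end{vmatrix}$, observe that every filtered projective contains $M$ exactly once so that $\cB$-lengths drop by one, and induct. The paper is simply terser.

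For the third bullet the paper does not argue at all---it cites Gustafson. Your self-contained treatment of the global dimension via the reduction formula is the natural one. Your dominant-dimension argument via Theorem~\ref{thmreverseepsilon} is valid, but heavier than necessary: you route through the uniqueness of the reverse construction and a Kupisch-series match just to avoid the hypothesis $\domdim G_m\geq 3$. That hypothesis can be checked directly in one line. For $m\geq 3$ the unique injective non-projective $G_m$-module is $I_m\cong P_1/\soc P_1$, and its projective resolution begins
\[
\cdots \longrightarrow P_2 \longrightarrow P_1 \longrightarrow P_1 \longrightarrow I_m \longrightarrow 0,
\]
since $\Omega^1(I_m)\cong S_1$ and $\Omega^2(I_m)\cong\rad P_1$ has top $S_2$. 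All of $P_1,P_2$ are projective-injective when $m\geq 3$, so $\domdim G_m\geq 3$ and Remark~\ref{remarklis2} gives $\domdim G_m=\domdim G_{m-1}+2$; together with the base case $\domdim G_2=2$ this yields $\domdim G_n=2n-2$ without invoking the reverse-$\bm\varepsilon$ machinery.
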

\begin{proof} The third item is very well known result \cite{gus}. It can be shown that $\cB(\Lambda)=\left\{ S_2,S_3,\ldots,S_{n-1},M\right\}$ where $M=\begin{vmatrix}
S_n\\S_1
\end{vmatrix}$. Every projective module except $P_1$ has $\cB(\Lambda)$ filtration. Lengths of the remaining projective modules drop by one, because $S_1$ appears exactly once in the composition series of projective modules. By induction, result follows.
\end{proof}

This shows that algebra $(3,2)$ is a special one, because it is the only higher Auslander algebra whose syzygy filtered algebra is $\Aa_1$.

\begin{lemma}\label{lemma defect one} If $\Lambda$ is a cyclic Nakayama algebra which is a higher Auslander algebra of defect one, then either $\bm\varepsilon^j(\Lambda)\cong \Aa_1$ or  $\bm\varepsilon^j(\Lambda)\cong (2^d,1)$ for some $d\geq 1$.
\end{lemma}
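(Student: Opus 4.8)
The plan is to run the syzygy-filtration process until it first produces a linear algebra and then to identify that linear algebra using the invariance of the defect. First I would observe that a cyclic higher Auslander algebra which is not selfinjective has finite global dimension (a selfinjective Nakayama algebra satisfies $\bm\varepsilon(\Lambda)\cong\Lambda$ and never reaches a linear stage), so by Remark \ref{remarklis1} there is an index $m$ with $\bm\varepsilon^m(\Lambda)$ cyclic and $\bm\varepsilon^{m+1}(\Lambda)$ linear (possibly disconnected, possibly with semisimple components); this terminal index is the $j$ appearing in the statement. By Theorem \ref{thmLamdaauslanderimplies} each intermediate $\bm\varepsilon^i(\Lambda)$ is again a higher Auslander algebra, and by Corollary \ref{corAus} the defect is unchanged along the whole chain, so $\defect\bm\varepsilon^{m+1}(\Lambda)=\defect\Lambda=1$.

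The heart of the argument is then a purely structural statement about the linear algebra $L:=\bm\varepsilon^{m+1}(\Lambda)$: a linear Nakayama algebra of defect one is either $\Aa_1$ or of format $(2^d,1)$. Here I would use that the defect is additive over the connected components (see \ref{defect of linear}), together with the fact that every connected linear component has defect at least one. Indeed, for a connected linear component of rank $\geq 2$ the source simple $S_1$ is injective but not projective, since the only module with $S_1$ in its socle is $S_1$ itself while $P(S_1)$ has length $\geq 2$; and the one-vertex component $\Aa_1$ is assigned defect one by convention. Consequently $\defect L=1$ forces $L$ to consist of a single connected component.

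To finish I would split into two cases according to whether this single component is semisimple. If it is semisimple it must be $\Aa_1$, since $\Aa_1^{\times k}$ has defect $k$ and we need $k=1$. If it is not semisimple it is a connected linear Nakayama algebra of defect one, which by Lemma \ref{lemma1} is exactly $(2^d,1)$ for some $d\geq 1$; this exhausts both alternatives. I expect the main obstacle to be the bookkeeping at the boundary between the two cases: one must check that a semisimple summand $\Aa_1$ cannot coexist with a genuinely linear summand (each already contributes defect one, so their sum would exceed the available defect), and that the global dimension is genuinely finite so that the terminal linear stage exists at all. Once these conventions are pinned down, the identification of $L$ follows immediately from the additivity of the defect and Lemma \ref{lemma1}.
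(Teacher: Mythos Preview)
Your proposal is correct and follows essentially the same approach as the paper: both reduce to the terminal linear stage via the $\bm\varepsilon$-process, use invariance of the defect (Corollary~\ref{corAus}), and then invoke Lemma~\ref{lemma1} to identify the defect-one linear algebra. Your version is in fact more complete than the paper's terse proof, since you explicitly argue connectedness of the terminal algebra via additivity of the defect and handle the semisimple case separately, whereas the paper simply asserts the two outcomes and points to Lemma~\ref{lemma1} and Gustafson's example.
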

\begin{proof}
The only linear Nakayama algebra of defect one has Kupisch series $(2^d,1)$ by lemma \ref{lemma1}. Algebras syzygy equivalent to $\Aa_1$ comes from the Gustafson's example which have Kupisch series $((n+1)^{n-1},n)$.
\end{proof}

\begin{lemma}\label{defect one}
If $k=2n-2$, then there is a unique cyclic Nakayama algebra $\Lambda$ of rank $n$ which is higher Auslander algebra.
If $k$ is odd, then for all $n\leq k\leq 2n-3$ there is a unique cyclic Nakayama algebra $\Lambda$ of rank $n$ which is higher Auslander algebra of defect one and global dimension $k$.
If $k$ is even, then for all $n\leq k\leq 2n-4$ there is a unique cyclic Nakayama algebra $\Lambda$ of rank $n$ which is higher Auslander algebra of defect one and global dimension $k$.
\end{lemma}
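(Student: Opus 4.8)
The plan is to separate existence from uniqueness and to organize everything around the two families of ``seeds'' supplied by Lemmas \ref{lemma1} and \ref{gustafson}, using the defect-preserving reverse construction of Theorems \ref{thmreverseepsilon} and \ref{thmreverseepsilon2}.

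For \textbf{existence} in the ranges $n\le k\le 2n-3$, I would start from the linear seed $(2^d,1)$ with $d=2n-k-2$; by Lemma \ref{lemma1} this is a connected linear higher Auslander algebra of defect one, rank $d+1$ and global dimension $d$. Applying Theorem \ref{thmreverseepsilon2} once produces a cyclic higher Auslander algebra of rank $d+2$ and global dimension $d+2$, and then applying Theorem \ref{thmreverseepsilon} a further $j-1$ times, where $j=k-n+1$, raises the rank to $d+1+j=n$ and the global dimension to $d+2j=k$. By Theorem \ref{thmLamdaauslanderimplies} the defect stays equal to one throughout, so the output is a defect-one cyclic higher Auslander algebra of rank $n$ and global dimension $k$. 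For the remaining value $k=2n-2$, Gustafson's algebra $((n+1)^{n-1},n)$ of Lemma \ref{gustafson} already has rank $n$ and global dimension $2n-2$, settling existence there.

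For \textbf{uniqueness among defect-one algebras}, I would run the construction backwards. Let $\Lambda$ be any defect-one cyclic higher Auslander algebra of rank $n$ and global dimension $k$. By Corollary \ref{corAus} each $\bm\varepsilon^i(\Lambda)$ again has defect one, while every $\bm\varepsilon$-step lowers the rank by the defect and the global dimension by $2$ (Remark, item \ref{remarklis2}). By Lemma \ref{lemma defect one} the chain terminates at $\Aa_1$ or at some $(2^d,1)$. Tracking rank and global dimension back along the chain shows the terminal seed is completely forced by the pair $(n,k)$: reaching $\Aa_1$ forces $k=2n-2$, while reaching $(2^d,1)$ forces $d=2n-k-2$ and hence $k\le 2n-3$. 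Since each reverse step is unique by Theorems \ref{thmreverseepsilon} and \ref{thmreverseepsilon2}, $\Lambda$ coincides with the algebra built in the existence step; this gives uniqueness among defect-one algebras in all three ranges, and shows that the defect-one global dimensions at rank $n$ fill exactly $\{n,\dots,2n-3\}\cup\{2n-2\}$. This explains the parity bookkeeping: the even value $2n-2$ is isolated in the first clause, the odd values run up to $2n-3$, and the even ones only to $2n-4$.

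The main obstacle is the \textbf{uniqueness in the first clause, where no bound on the defect is assumed}: I must show that $k=2n-2$ forces $\defect\Lambda=1$, after which the defect-one argument pins $\Lambda$ down as Gustafson's algebra. For this I would again iterate $\bm\varepsilon$ on a cyclic higher Auslander algebra of rank $n$ and defect $e\ge 1$, where now each step drops the rank by $e$ and the global dimension by $2$. The chain ends either at a cyclic algebra of global dimension two, which by the classification of the global-dimension-two case (Section \ref{sectionkis2}, cf. \ref{friday example}) must be the $e$-fold covering of $(3,2)$ of rank $2e$, giving $k=2n/e-2$; or at a non-semisimple linear higher Auslander algebra of global dimension $g\ge 1$ whose rank is at least $g+1$ per component, which yields $k\le n-1$. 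In either case $e\ge 2$ forces $k<2n-2$, so $k=2n-2$ can occur only for $e=1$. I expect the delicate point to be justifying the rank count at the terminal step, namely the covering identification and the $\ge g+1$ estimate, since this is exactly where the defect is converted into a numerical inequality.
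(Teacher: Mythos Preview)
Your argument is correct and, for the second and third clauses, follows essentially the same route as the paper: parametrize a defect-one cyclic higher Auslander algebra by the pair $(d,j)$ with $d=\gldim(2^d,1)$ and $j$ the number of $\bm\varepsilon$-steps, use Lemma~\ref{lemma defect one} to identify the seed, and invoke the uniqueness of the reverse construction (Theorems~\ref{thmreverseepsilon}, \ref{thmreverseepsilon2}) together with the arithmetic $n=d+1+j$, $k=d+2j$. Your explicit choice $d=2n-k-2$, $j=k-n+1$ is simply the inverse of the paper's interval computation for $j+d$.

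Where you genuinely go further is the first clause. The paper's proof handles $k=2n-2$ by pointing to Gustafson's example for existence and citing \cite{rene} and Lemma~\ref{lemma defect one} for uniqueness; it does not itself supply the step ``$k=2n-2$ forces $\defect\Lambda=1$.'' Your proposed argument fills this gap cleanly: iterating $\bm\varepsilon$ on a rank-$n$, defect-$e$ cyclic higher Auslander algebra drops rank by $e$ and global dimension by $2$ at each step, and the two possible endpoints (the $e$-fold cover of $(3,2)$, giving $k=2n/e-2$, or a nonsemisimple linear higher Auslander algebra, where $r\ge g+1$ yields $k-n=(g-r)+(m+1)(2-e)\le -1$ for $e\ge 2$) both force $k<2n-2$ unless $e=1$. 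This is a genuine addition to the paper's proof, not a different method; the paper simply outsources this step to the literature.
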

\begin{proof} This result appears in the work \cite{rene}.
In all cases, the uniqueness of such $\Lambda$ holds by the lemma \ref{lemma defect one} because those are the only defect one linear algebras. The first claim follows from the Gustafson's example, see lemma  \ref{gustafson}.
$\Lambda$ is cyclic and defect one, it has to satisfy $\bm\varepsilon^j(\Lambda)\cong (2^d,1)$ for some $d\geq 1$. Uniqueness of such $\Lambda$ follows from Theorem \ref{thmreverseepsilon2} part i. Notice that
\begin{align}\begin{gathered}
\rank\Lambda=d+1+j\\
\gldim\Lambda=\domdim\Lambda=d+2j
\end{gathered}
\end{align}
Solutions to $k=d+2j$, $j\geq 1$, $d\geq 1$ leads to
\begin{align}
j+d\in\left\{k-1,k-2,\ldots,\frac{k+1}{2}\right\}
\end{align}
which is an interval. Therefore 
\begin{gather}
\begin{gathered}
\frac{k+1}{2}\leq j+d\leq k-1 \implies\\
\frac{k+3}{2}\leq j+d+1\leq k \implies\\
\frac{k+3}{2}\leq n\leq k
\end{gathered}
\end{gather}
If we regroup inequalities, we get $n\leq k\leq 2n-3$. If $k$ is even, solutions to $k=d+2j$, $j\geq 1$, $d\geq 2$ forms the interval
\begin{align}
j+d\in\left\{k-1,k-2,\ldots,\frac{k+2}{2}\right\}
\end{align}
which is an interval. Therefore 
\begin{gather}\begin{gathered}
\frac{k+2}{2}\leq j+d\leq k-1 \implies\\
\frac{k+4}{2}\leq j+d+1\leq k \implies\\
\frac{k+4}{2}\leq n\leq k
\end{gathered}
\end{gather}
If we regroup inequalities, we get $n\leq k\leq 2n-4$.
\end{proof}

\begin{lemma}\label{lemma2} Algebras $\Lambda$ of format $((2^{d-1},3)^a,2^d,1)$ with $a\geq 1$, $d\geq 2$ satisfies:
\begin{itemize}
\item $\rank\Lambda=ad+d+1$
\item $\gldim\Lambda=\domdim\Lambda=d$
\item $\defect\Lambda=a+1$
\end{itemize}
\end{lemma}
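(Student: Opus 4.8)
The plan is to extract all three quantities directly from the Kupisch series, generalizing the computation behind Lemma \ref{lemma1} (which is the degenerate case $a=0$). Since every entry before the last is $2$ or $3$ and the last is $1$, Remark \ref{kupisch} shows $\Lambda$ is a connected linear Nakayama algebra, and the rank is just the length of the sequence: $a$ blocks of length $d$, a tail of length $d$, and one more vertex, so $\rank\Lambda=ad+d+1$. Writing the ``$3$''-positions as $bd$ ($1\leq b\leq a$), the indecomposable projectives are $P_{bd}=\begin{vmatrix}S_{bd}\\S_{bd+1}\\S_{bd+2}\end{vmatrix}$, while $P_p=\begin{vmatrix}S_p\\S_{p+1}\end{vmatrix}$ at every other $p<n$ and $P_n=S_n$. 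By the projective--injective criterion of Remark \ref{kupisch} ($P_m$ is projective--injective iff $c_{m-1}\leq c_m$, with $P_1$ always so), the only non-injective projectives are $P_{d+1},P_{2d+1},\dots,P_{ad+1}$ and $P_n$, i.e. those at positions $\equiv 1\pmod d$ beyond the first block, together with $n$.

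To get the defect I would list the socles $S_{p+c_p-1}$ of all projectives and form $\cS(\Lambda)$. A short check shows the simples that never occur as such a socle are exactly $S_1,S_{d+1},\dots,S_{ad+1}$, that is $S_{jd+1}$ for $0\leq j\leq a$. Since an injective is non-projective precisely when its socle lies outside $\cS(\Lambda)$, the injective non-projective modules are $I_1,I_{d+1},\dots,I_{ad+1}$, so $\defect\Lambda=a+1$.

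For the global dimension I would resolve the simples using $\Omega S_p=\rad P_p$. At a ``$3$''-position $\Omega S_{bd}=\begin{vmatrix}S_{bd+1}\\S_{bd+2}\end{vmatrix}=P_{bd+1}$ is projective, giving $\pdim S_{bd}=1$, while at all other positions $\Omega S_p=S_{p+1}$. Climbing leftwards from $\pdim S_n=0$ and restarting at each ``$3$''-position, the projective dimension rises by one per step and attains its maximum $d$ at the left end of each block and at the left end of the tail (namely $S_1,S_{d+1},\dots,S_{ad+1}$), all other simples being strictly smaller. Hence $\gldim\Lambda=\max_i\pdim S_i=d$.

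Finally, for the dominant dimension I would resolve each injective non-projective $I_{jd+1}$ and count how many of its projective covers are projective--injective, exactly as in the reduction $\domdim I=2+\domdim\Omega^2(I)$ used in Proposition \ref{def prop4}. For $I_{jd+1}=\begin{vmatrix}S_{jd}\\S_{jd+1}\end{vmatrix}$ with $1\leq j\leq a-1$ the resolution has covers $P_{jd},P_{jd+2},P_{jd+3},\dots,P_{(j+1)d}$ and then the projective syzygy $\Omega^{d}(I_{jd+1})=P_{(j+1)d+1}$; the first $d$ covers are projective--injective while $P_{(j+1)d+1}$ is not, so $\domdim I_{jd+1}=d$. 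The boundary cases $j=0$ (covers $P_1,\dots,P_d$, then $P_{d+1}$) and $j=a$ (covers $P_{ad},P_{ad+2},\dots,P_{ad+d}$, then $P_n$) give the same count $d$. As every injective non-projective module yields the value $d$, characterization \ref{gercek tanim} gives $\domdim\Lambda=d$, and together with $\gldim\Lambda=d$ this proves $\Lambda$ is a higher Auslander algebra. The main obstacle is precisely this last bookkeeping: one must verify that the syzygy walk launched by each $I_{jd+1}$ advances one vertex at every $c=2$ step, is driven into the next ``$3$''-generator, and so meets its first non-injective projective cover after exactly $d$ steps — the same $d$ for every $j$ — which is what forces $\domdim\Lambda=\gldim\Lambda$.
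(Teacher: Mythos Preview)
Your proof is correct and follows essentially the same approach as the paper: read off the rank from the Kupisch series, identify the injective non-projective modules as $I_1,I_{d+1},\dots,I_{ad+1}$, and compute the explicit projective resolutions of these modules to see that both global and dominant dimension equal $d$. Your version is in fact more complete than the paper's, since you verify $\gldim\Lambda\le d$ by computing $\pdim S_i$ for all simples, whereas the paper only exhibits resolutions of $S_1$ and the $I_{td+1}$ and asserts the conclusion.
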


\begin{proof} The number of elements in the Kupisch series is simply $ad+d+1$. Injective non-projective $\Lambda$-modules are quotients of projective modules indexed by $\{1,d,2d,\ldots,ad\}$. Therefore defect is $a+1$. The projective resolution of $S_1$ is
\begin{align*}
0\longrightarrow P_{d+1}\longrightarrow\ldots \longrightarrow P_1\longrightarrow S_1\longrightarrow 0
\end{align*}
where $\Omega^{d}(S_1)\cong P_{d+1}$. For the remaining injective non-projective modules we get 
\begin{align*}
0\longrightarrow P_{td+d+1}\longrightarrow\ldots\longrightarrow P_{td+2}\longrightarrow P_{td}\longrightarrow I_{td+1}\longrightarrow 0
\end{align*}
Therefore, global and dominant dimensions are $d$.
\end{proof}

\begin{proposition} Algebra $\Lambda$ of format $((2^{d-1},3)^{a},2^d,1)^{\delta}\oplus (2^d,1)^{b} $ satisfies:
\begin{itemize}
\item $\gldim\Lambda=\domdim\Lambda=d$
\item $\rank \Lambda=\delta(a d+d+1)+b(d+1)$
\item $\defect\Lambda=\delta(a+1)+b$
\end{itemize} 
\end{proposition}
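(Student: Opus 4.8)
The plan is to read off $\Lambda$ as the direct product of $\delta$ copies of the connected linear Nakayama algebra $L_1$ of format $(2^{d-1},3)^{a},2^d,1$ from Lemma \ref{lemma2} together with $b$ copies of the connected linear Nakayama algebra $L_2$ of format $2^d,1$ from Lemma \ref{lemma1}, and then to obtain each of the three asserted equalities from the invariants of the two factors by additivity.

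First I would record the factor data: by Lemma \ref{lemma2} each copy of $L_1$ has $\rank L_1=ad+d+1$, $\gldim L_1=\domdim L_1=d$ and $\defect L_1=a+1$, while by Lemma \ref{lemma1} each copy of $L_2$ has $\rank L_2=d+1$, $\gldim L_2=\domdim L_2=d$ and $\defect L_2=1$. The rank formula is then immediate, since the number of simple $\Lambda$-modules is the sum over the components:
\begin{align*}
\rank\Lambda=\delta\,\rank L_1+b\,\rank L_2=\delta(ad+d+1)+b(d+1).
\end{align*}
For the defect I would invoke the additivity of the defect over connected components recorded in \ref{defect of linear}, which yields
\begin{align*}
\defect\Lambda=\delta\,\defect L_1+b\,\defect L_2=\delta(a+1)+b.
\end{align*}
Here the $\Aa_1$-convention plays no role, because under the standing hypotheses ($d\geq 1$, and $d\geq 2$, $a\geq 1$ whenever $\delta\geq 1$) none of the factors is semisimple.

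For the homological invariants I would use that the module category of $\Lambda$ splits as the product of the module categories of the factors. Consequently $\gldim\Lambda$ is the maximum of the global dimensions of the factors, all of which equal $d$, so $\gldim\Lambda=d$. For the dominant dimension I would apply the characterization \ref{gercek tanim}: the injective non-projective $\Lambda$-modules are exactly those supported on a single factor, each factor contributes such modules (its defect is at least one), and for each factor the supremum of the dominant dimensions of its injective non-projective modules is $d$; taking the supremum over all factors gives $\domdim\Lambda=d$. Combining the two yields $\gldim\Lambda=\domdim\Lambda=d$.

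There is no genuinely hard step: the entire content sits in Lemmas \ref{lemma1} and \ref{lemma2}, and the present statement is simply their additive combination. The only point that deserves a moment of care is the dominant dimension of a direct product, where one must check that the convention giving $\Aa_1$ infinite dominant dimension cannot inflate the answer; this is harmless precisely because the standing hypotheses exclude semisimple factors.
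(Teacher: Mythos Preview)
Your proof is correct and follows the same approach as the paper's: invoke Lemmas~\ref{lemma1} and~\ref{lemma2} for the invariants of the two factors and then combine by additivity. The paper's proof is a one-line version of exactly this (``By lemmas \ref{lemma1}, \ref{lemma2}, global and dominant dimensions are $d$. After counting modules up to multiples $\delta$ and $b$, claim follows.''); your argument simply spells out the additivity of rank, defect, global and dominant dimension across the direct summands, and correctly flags that no semisimple factors appear under the standing hypotheses.
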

\begin{proof}
By lemmas \ref{lemma1}, \ref{lemma2}, global and dominant dimensions are $d$. After counting modules upto multiples $\delta$ and $b$, claim follows.
\end{proof}

Actually, $k=2$ corresponds to the original Auslander algebra. We have the following classification.
\begin{proposition} \label{sectionkis2} Fix $k=2$. We have
\begin{itemize}
\item if $n$ is odd, there is no cyclic Nakayama algebra, but there are linear algebras: $n=3$, $(2,2,1)$, $n\geq 5$, $((2,3)^a,2,2,1)$ which are higher Auslander algebras.
\item if $n$ is even then the covering algebra $(3,2)^{\frac{n}{2}}$ is a higher Auslander algebra.
\end{itemize}
\end{proposition}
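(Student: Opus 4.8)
The plan is to run the whole classification through the semisimple ``floor'' of the $\bm\varepsilon$-construction. First I would record the basic dichotomy: if $\Lambda$ is a higher Auslander algebra with $\gldim\Lambda=2$, then $\Lambda$ has finite global dimension and is not selfinjective, so by Remark \ref{remarklis2} the reduction $\gldim\Lambda=\gldim\bm\varepsilon(\Lambda)+2$ forces $\gldim\bm\varepsilon(\Lambda)=0$; that is, $\bm\varepsilon(\Lambda)\cong\Aa_1^{\times m}$ is semisimple for some $m\geq 1$. Conversely, the reverse construction of Theorem \ref{thmreverseepsilon2} applied to $\Aa_1^{\times m}$ (with the single Nakayama-cycle permutation available for equal components) produces the candidate cyclic algebras. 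This is exactly the semisimple case that the covering corollary of Section \ref{sectionmaintools} postponed to the present statement, so it must be settled directly here.

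For even $n$ I would exhibit the cyclic example. Gustafson's Lemma \ref{gustafson} with $n=2$ says $(3,2)$ is a higher Auslander algebra with $\gldim=\domdim=2$ and $\bm\varepsilon(3,2)\cong\Aa_1$. Theorem \ref{thmdoubling} then gives $\bm\varepsilon\big((3,2)^{m}\big)\cong\Aa_1^{\times m}$, so Remark \ref{remarklis2} yields $\gldim(3,2)^{m}=2$; a direct inspection of the length-$3$ projectives $P_1,P_3,\dots,P_{2m-1}$ (which are projective-injective) against the $m$ injective non-projective modules shows $\domdim(3,2)^{m}=2$ as well, whence $(3,2)^{m}$ is higher Auslander. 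Taking $m=n/2$ settles the even case, and this is precisely the algebra constructed in Example \ref{friday example} as the one $\bm\varepsilon$-equivalent to $\oplus_m\Aa_1$.

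For odd $n$ the nonexistence of a cyclic example is a parity count. If $\Lambda$ were cyclic and higher Auslander with $\gldim\Lambda=2$, then $\bm\varepsilon(\Lambda)\cong\Aa_1^{\times m}$, so $\#\rel\Lambda=\rank\bm\varepsilon(\Lambda)=m$ by \ref{equalityof some numbers}, while the defect invariance in global dimension $2$ (valid by the convention $\defect\Aa_1^{\times m}=m$) gives $\defect\Lambda=m$. The rank identity \ref{rankequaation} then forces $\rank\Lambda=\#\rel\Lambda+\defect\Lambda=2m$, which is even, a contradiction. The linear examples are immediate from the computations already made: Lemma \ref{lemma1} with $d=2$ gives $(2,2,1)$ of rank $3$ with $\gldim=\domdim=2$, and Lemma \ref{lemma2} with $d=2$ gives $((2,3)^a,2,2,1)$ of rank $2a+3$ with $\gldim=\domdim=2$ for each $a\geq 1$; together these realize every odd rank $n\geq 3$.

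The step I expect to be the main obstacle is the even case, specifically the dominant-dimension bookkeeping for $(3,2)^m$: since $\domdim=2$ lies below the threshold $\domdim\geq 3$ required for the clean reduction in Remark \ref{remarklis2}, I cannot simply quote $\domdim(3,2)^m=\domdim\bm\varepsilon((3,2)^m)+2$ (indeed $\bm\varepsilon((3,2)^m)$ is semisimple of infinite dominant dimension). The verification must instead be carried out by hand, checking that each injective non-projective module has both its projective cover and the projective cover of its first syzygy equal to a projective-injective module, so that its dominant dimension is exactly $2$. This is the same phenomenon flagged in Remark \ref{remarklis2} and Example \ref{example dominant dimension}, and is the reason the covering corollary explicitly deferred the semisimple case to this proposition.
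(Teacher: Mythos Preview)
Your proposal is correct and follows essentially the same line as the paper's proof. The odd-rank nonexistence is the identical parity count (the paper writes it in one line: $\bm\varepsilon(\Lambda)\cong\oplus_m\Aa_1$ forces $n=2m$), and the linear examples are dispatched by the same lemmas you cite.

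The one genuine difference is in the even case. You take $(3,2)$ and apply the covering Theorem \ref{thmdoubling}, then must check $\domdim(3,2)^m=2$ by hand because the reduction in Remark \ref{remarklis2} is unavailable below the threshold $\domdim\geq 3$; you correctly flag this and your direct check is fine. The paper instead goes the other direction: it applies the reverse-$\bm\varepsilon$ construction (Theorem \ref{thmreverseepsilon2}, Proposition \ref{defect prop 5}) to $\oplus_m\Aa_1$, which by that machinery produces a cyclic higher Auslander algebra automatically, and Example \ref{friday example} identifies the output as the $m$-fold cover of $(3,2)$. So the paper's route sidesteps the dominant-dimension bookkeeping you worried about, at the cost of invoking the heavier reverse construction; your route is more elementary but requires the small manual verification. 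Both are short and either is acceptable.
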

\begin{proof} The global and dominant dimensions of the given algebras for $n$ is odd calculated in lemma \ref{lemma2}. If there were a cyclic $\Lambda$ of odd rank and global dimension $2$, then $\bm\varepsilon(\Lambda)$ would have to be $\bigoplus_m \Aa_1 $. Its rank and defect are $m$, so $n=2m$ which is not possible. If $n$ is even, the same algebra $\bigoplus_m \Aa_1 $ solves the problem, i.e. $n=2m$ or $m=\frac{n}{2}$.
\end{proof}

\begin{corollary} If a Nakayama algebra is cyclic and its global dimension is $2$,then it has to be covering algebra of $(3,2)$. In particular, rank has to be even.
\end{corollary}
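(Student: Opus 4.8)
The plan is to run the $\bm\varepsilon$-reduction once and read off the structure of $\Lambda$ from the resulting semisimple algebra. Since $\gldim\Lambda=2$ is finite, $\Lambda$ is not self-injective, so Remark \ref{remarklis2} applies and gives $\gldim\Lambda=\gldim\bm\varepsilon(\Lambda)+2$. Hence $\gldim\bm\varepsilon(\Lambda)=0$, i.e. $\bm\varepsilon(\Lambda)$ is semisimple; being a Nakayama algebra it is $\bm\varepsilon(\Lambda)\cong\bigoplus_m\Aa_1$ for some $m\geq 1$. This is exactly the starting point of the second half of the proof of Proposition \ref{sectionkis2}.

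Next I would pin down $\Lambda$ itself. Because $\gldim\Lambda=2$, every simple module has projective dimension at most two, so each element of the base set $\cB(\Lambda)$---being a second syzygy $\Omega^2(S)$ of a top of a minimal projective by Remark \ref{secondsyzygy}---is projective. Semisimplicity of $\bm\varepsilon(\Lambda)$ then forces every $\cB(\Lambda)$-filtered (equivalently inherited) projective $\Lambda$-module to have $\cB(\Lambda)$-length one, so the inherited projectives are precisely the base elements. Reading the reverse construction of Theorem \ref{thmreverseepsilon2}(ii) with the Nakayama cycle $\bigoplus_m\Aa_1$ as base, each $\Aa_1$ has defect one, so by Lemma \ref{lemma delta length nakayama cycle} every $\Delta(S_i)$ has length $\defect(P(S_{i+1}))+1=2$; thus all base elements have length two and the $m$ non-inherited projectives are projective--injective of length three, interleaved around the cycle. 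This is the explicit computation of Example \ref{friday example}, which identifies $\Lambda$ as the $m$-fold covering of $(3,2)$. Finally the rank follows from the defect being preserved: $\rank\Lambda=\rank\bm\varepsilon(\Lambda)+\defect\bm\varepsilon(\Lambda)=m+m=2m$ (using $\defect(\bigoplus_m\Aa_1)=m$ from \ref{defect of linear}), which is even.

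The main obstacle is the boundary value $\gldim\Lambda=2$ itself, which sits just outside the hypotheses of Theorem \ref{thmdefect} (stated for $\gldim\geq 3$) and of the uniqueness in Theorem \ref{thmreverseepsilon2} (whose target is a genuine higher Auslander algebra with finite dominant dimension). Here $\bm\varepsilon(\Lambda)$ is semisimple and $\domdim\bm\varepsilon(\Lambda)=\infty$, so the clean reduction $\domdim\Lambda=\domdim\bm\varepsilon(\Lambda)+2$ is unavailable and the equality $\defect\Lambda=\defect\bm\varepsilon(\Lambda)$ cannot be quoted verbatim. The careful step is therefore to justify the defect count directly in this degenerate case---either through the convention $\defect(\Aa_1^{\times m})=m$ together with the observation that each injective non-projective $\Lambda$-module has its first syzygy a proper submodule of a (now projective) base element by Lemma \ref{every second syzy of injective}, yielding exactly $m$ injective non-projectives, or by verifying on the candidate $(3,2)^m$ that it has defect $m$ and is itself higher Auslander, so that the uniqueness of Theorem \ref{thmreverseepsilon2} then closes the argument.
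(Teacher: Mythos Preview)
Your proposal is correct and follows essentially the same approach as the paper: reduce $\gldim\Lambda=2$ to $\bm\varepsilon(\Lambda)\cong\bigoplus_m\Aa_1$, then invoke the reverse construction (Theorem~\ref{thmreverseepsilon2} together with Example~\ref{friday example}) to identify $\Lambda$ as the $m$-fold cover of $(3,2)$. Your extra care about the boundary case $\gldim=2$---where Theorem~\ref{thmdefect} and the formal hypotheses of Theorem~\ref{thmreverseepsilon2} do not literally apply because $\domdim(\Aa_1^m)=\infty$---is well-placed; the paper handles this by the defect convention in~\ref{defect of linear} and the explicit computation in Example~\ref{friday example}, which is exactly the workaround you propose.
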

\begin{proof}
Let $\Lambda$ be a cyclic Nakayama algebra with $\gldim\Lambda=2$. It has to be syzygy equivalent to semisimple algebra $\oplus_m\Aa_1$, because $\gldim{\bm\varepsilon}(\Lambda)=0$. Now, if we apply Theorem \ref{thmreverseepsilon2} to $\oplus_m\Aa_1$ (see Example \ref{friday example}) result follows.
\end{proof}

\begin{definition} If $E$ is a set whose elements are positive integers, we define the set $mE+t$ as
\begin{align}
mE+t:=\left\{me+t\,\,\vert\,\, e\in E\right\}
\end{align}
for given integers $m\geq 1$, $t\geq 0$. It is called \emph{translated set} of $E$.
\end{definition}

\begin{theorem}\label{thmnumericalreduction}
There exists at least one \textbf{cyclic, connected} Nakayama algebra of rank $n$ with global and dominant dimensions $k$ such that $k+4\leq n$ if $k$ is odd and $k+2\leq n$ if $k$ is even.
\end{theorem}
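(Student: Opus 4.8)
The plan is to realize each required algebra by climbing the syzygy-filtration chain from an explicit small model: I would use Theorem \ref{thmreverseepsilon2} to pass from a linear (or Nakayama-cycle) higher Auslander algebra of global dimension $k-2$ to a cyclic one of global dimension $k$, Theorem \ref{thmreverseepsilon} to climb further among cyclic algebras, and the covering Theorem \ref{thmdoubling} (with Corollary \ref{corAus}) to rescale the rank while keeping the global dimension fixed. Because the defect is invariant under ${\bm\varepsilon}$-construction (Theorem \ref{thmLamdaauslanderimplies}), each reverse step raises the rank by exactly the defect and raises both the global and dominant dimension by exactly $2$, so the arithmetic of the attainable ranks is completely controlled.

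First I would fix the parity of $k$ and set $d=k-2$. The connected linear higher Auslander seeds of global dimension $d$ that I use are $(2^d,1)$ from Lemma \ref{lemma1} (rank $d+1$, defect $1$) and $((2^{d-1},3)^a,2^d,1)$ from Lemma \ref{lemma2} (rank $(a+1)d+1$, defect $a+1$). Applying Theorem \ref{thmreverseepsilon2}(i) to a connected linear seed of rank $R$ and defect $\delta$ produces a cyclic connected higher Auslander algebra of global dimension $k$ and rank $R+\delta$; for these two families the contribution equals $(a+1)(k-1)+1$ for $a\ge 0$. To fuse several seeds into a single connected algebra I pass to Nakayama cycles $\Lambda_1\times\cdots\times\Lambda_t$ (Definition \ref{nakayamacycle}) and apply Theorem \ref{thmreverseepsilon2}(ii): the resulting cyclic connected algebra has rank $\sum_i\bigl(\rank\Lambda_i+\defect\Lambda_i\bigr)$, so the ranks attainable at global dimension $k$ contain the numerical semigroup generated by the contributions $(a+1)(k-1)+1$, whose $t$-fold sums lie in the residue class $t\pmod{k-1}$.

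For even $k$ I would additionally feed in the Gustafson tower: starting from the unique cyclic global-dimension-two algebra $(3,2)$ and its coverings $(3,2)^{m}$ (Proposition \ref{sectionkis2}), Theorem \ref{thmreverseepsilon} produces cyclic connected higher Auslander algebras of every even global dimension, and covering (Theorem \ref{thmdoubling}) supplies the ranks $\tfrac{k+2}{2}\,m$. These fill exactly the residue classes modulo $k-1$ that the semigroup of the previous paragraph reaches only for large ranks. For odd $k$ there is no Gustafson point, since Gustafson's algebra (Lemma \ref{gustafson}) has even global dimension, so the low-rank models are instead the defect-one cyclic algebras of Lemmas \ref{lemma defect one} and \ref{defect one}, whose minimal rank is strictly larger; this parity asymmetry in the available low-rank models is what forces the threshold $k+4\le n$ in the odd case against $k+2\le n$ in the even case, consistent with the fact that $n=k+1$ is forbidden because there $k=n-1$.

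The representation-theoretic content is already supplied by the cited theorems: every algebra I assemble is automatically cyclic, connected, and higher Auslander of the claimed global and dominant dimension $k$. Hence the remaining work is purely arithmetic, and this is where the main obstacle lies. I would argue residue class by residue class modulo $k-1$, respecting the parity of the rank, that the union of the Nakayama-cycle semigroup with the covered Gustafson (respectively defect-one) towers leaves no gap above the stated threshold. The delicate point is the small-rank end of the unfavorable residue classes, where one must check by a direct Frobenius-type estimate that the covered low-rank models reach down far enough; verifying that the threshold is exactly $k+2$ for even $k$ and $k+4$ for odd $k$, rather than something larger, is the crux of the proof.
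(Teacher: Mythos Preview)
Your overall architecture is the same as the paper's: start from the linear seeds of Lemmas \ref{lemma1} and \ref{lemma2}, lift to cyclic algebras via Theorems \ref{thmreverseepsilon2} and \ref{thmreverseepsilon}, and patch with coverings. But there is a genuine gap in the way you set up the arithmetic. You fix $d=k-2$ and apply reverse-$\bm\varepsilon$ exactly once; as you correctly observe, every building block then contributes a rank congruent to $1\pmod{k-1}$, so the $t$-fold sums hit only the residue class $t\pmod{k-1}$, and the smallest sum in that class is $tk$. Your proposed patches (Gustafson coverings $\tfrac{k+2}{2}m$ for even $k$, defect-one towers for odd $k$, together with coverings of semigroup elements) do not fill all the missing ranks. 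A concrete failure: for $k=6$ you need $n=9$, but $9\equiv 4\pmod 5$ forces $t=4$ with minimum $24$; the Gustafson multiples $4m$ miss $9$; the defect-one ranks $5,6$ and their multiples miss $9$; and $9$ is not a multiple of any semigroup element $\ge 6$. So $n=9$, $k=6$ is unreachable by your scheme.

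The paper avoids this by \emph{not} fixing $d$. It writes $k=d+2j$ with $j\ge1$ and $d\ge2$ (even case) or $d\ge3$ (odd case), so that $x:=d+j$ ranges over an entire interval $X$, and then shows $n=x(a+1)+(x+1)b+1$. Because $X$ is an interval, the translated sets $mX+t$ overlap for $m\ge2$ once $k$ is moderately large, and a short case check handles small $k$. In the example above the paper takes $d=2$, $j=2$, i.e.\ the seed $(2,3,2,2,1)$ lifted twice, which lands exactly at rank $9$. In short, the flexibility of letting $j$ vary is what makes the arithmetic close up; fixing $d=k-2$ collapses $X$ to a single point and the residue-class argument you sketch cannot be completed without it. You should allow all $j\ge1$, derive the interval $X=\{d+j\}$, and then carry out the covering argument the paper does.
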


\begin{proof} 
By the Theorem \ref{thmreverseepsilon2}, we can assume that there is cyclic Nakayama algebra $\Lambda'$ with global and dominant dimensions $k$ such that ${\bm\varepsilon}^{j}(\Lambda')\cong\cL$. In other words, $j$th level syzygy filtered algebra is linear. First we analyze the case where $\cL$ has Kupisch series
\begin{align*}
((2^{d-1},3)^{a},2^d,1)^{\delta}\oplus (2^d,1)^{b}.
\end{align*}
Therefore:
\begin{gather*} 
\rank\Lambda'= n=\delta(a d+d+1)+b(d+1)+j(\delta(a+1)+b)\\
\gldim\Lambda'=k=d+2j
\end{gather*}
We set $\delta=1$. $n$ can be rewritten as:
\begin{align}\label{equationN}
n=\left(d+j\right)\left(a+1\right)+b\left(d+j+1\right)+1
\end{align}
Since $\Lambda'$ is cyclic, we have $j\geq 1$. By the construction of $\cL$, if $a$ is nonzero, then $d-1\geq 1$, so $d\geq 2$. First we analyze the case $k$ is odd. The possible values of $d+j$ forms the set (indeed an interval)
\begin{align}
X=\left\{k-1,k-2\ldots,\frac{k+5}{2},\frac{k+3}{2}\right\}
\end{align}
which follows from the integer solutions of $1\leq j$, $3\leq d$, $k=d+2j$. 
By \ref{equationN}, we get 
\begin{align}
n=x(a+1)+(x+1)b+1
\end{align}
where $a\geq 1$, $b\geq 0$, $x\in X$. Therefore the union of certain translated sets will provide such $n$. In details, we get the following translated sets
 \begin{align}\label{equ 1}
 \begin{split}
b=0,&\hspace{2cm} 2X+1, 3X+1, 4X+1, 5X+1,\ldots\\
b=1,&\hspace{3.5cm} 3X+2,4X+2,5X+2,\ldots\\
b=2,&\hspace{5cm} 4X+3,5X+3,\ldots\\
\vdots&\hspace{6cm}\vdots
\end{split}
\end{align}
If we set $\delta=0$ and $a=0$, then $n$ can take values $(x+1)(b+1)$ where $x\in X$. In details
\begin{align}\label{equ 55}
2X+2,3X+3,4X+4,5X+5,\ldots
\end{align}
are possible translated sets. Therefore any $mX+t$, $m\geq 1$, $m\geq t\geq 1$ can be obtained if we combine \ref{equ 1} and \ref{equ 55}. For a fixed $m$, the union of all translated sets $mX+t$, $1\leq t\leq m$ is an interval. Namely:
\begin{align} \label{equ 2}
\begin{gathered}
2X+1 \cup 2X+2=\left\{k+4,k+5,\ldots,2k\right\}\\
3X+1\cup 3X+2 \cup 3X+3=\left\{3\frac{k+3}{2}+1,\ldots,3k\right\}\\
4X+1\cup 4X+2 \cup 4X+3\cup 4X+4=\left\{2k+7,\ldots 4k\right\}\\
\vdots
\end{gathered}
\end{align}
In general, we get
\begin{align}
\bigcup\limits^{m}_{t=1} mX+t=\left\{m\frac{k+3}{2}+1,\ldots,mk\right\}.
\end{align}
Union of these sets covers every integer $n\geq k+4 $ if the infimum of $(m+1)X+1$ is smaller than the supremum of $mX$  where $m\geq 2$. We get:
\begin{align}\label{equ3}
\begin{gathered}
(m+1)\frac{k+3}{2}+1\leq mk\implies\\
3m+5\leq k(m-1)\implies\\
\frac{3m+5}{m-1}\leq k
\end{gathered}
\end{align}
Since the left hand side is decreasing function when $m\geq 2$, its maximum occurs at $m=2$, therefore for all $k\geq 11$, the union covers all integers $n\geq k+4$. As a result we get
\begin{align}
n\in\bigcup\limits_{m\geq 2}\left(\bigcup\limits^{m}_{t=1} mX+t\right)=\Zz_{\geq k+4}\quad\text{if}\,\,k\geq 11.
\end{align}

 The remaining possible values of $k$ are $3,5,7,9$.
 If $k=9$, $X=\left\{6,7,8\right\}$. By \ref{equ3}, if $m\geq 3$, the union of all translated sets $mX+t$ is $\Zz_{\geq 19}$. Therefore
 \begin{align}
 \begin{gathered}
 2X+1\cup 2X+2\cup \bigcup\limits_{m\geq 3}\left(\bigcup\limits^{m}_{t=1} mX+t\right)= \\
 \left\{13,14,15,16,17,18\right\}\cup \Zz_{\geq 19}=\Zz_{\geq 13}
 \end{gathered}
 \end{align}
 Hence for $k=9$, all numbers starting at $13$ is in the union of translated sets.\\
 
If $k=7$ then $X=\left\{5,6\right\}$ and the union 
$\bigcup\limits_{m\geq 4}\left(\bigcup\limits^{m}_{t=1} mX+t\right)$ gives $\Zz_{\geq 22}$ when $m\geq 4$. For $m=2$ and $m=3$ we get the following translated sets $\left\{11,12,13,14\right\}$ and $\left\{16,\ldots,21\right\}$. Therefore the union of translated sets covers all $\Zz_{\geq 11}$ but $15$. The following $3$-fold covering of $(5,4,4,4,4)$ provides $n=15$, $k=7$ algebra, which is $\bm\varepsilon$-equivalent to $3$-fold covering of $\Aa_2$, i.e. $\delta=0$ and $d=1$ in \ref{equationN}.\\

If $k=5$ then $X=\left\{4\right\}$. The union can take values
\begin{align}
\left\{9,10\right\}\cup\left\{13,14,15\right\}\cup \left\{17,18,19,20\right\}\cup\Zz_{\geq 21}.
\end{align}
Therefore the missing ranks are $11,12,16$.
For $n=11$, $(5,4,4,4,4,6,5,4,4,4,4)$ is the Kupisch series, which is equivalent to $\Aa_3\oplus\Aa_2$ (see example \ref{exampe a2+a3}).
$n=12$, the algebra $(4,3,3,3)$ is of global and dominant dimension $5$. Its threefold covering $(4,3,3,3,4,3,3,3,4,3,3,3)$ is what we want. For
$n=16$, $4$-fold covering of $(4,3,3,3)$ provides what we want. Notice that $(4,3,3,3)$ is $\bm\varepsilon$-equivalent to $\Aa_2$.\\

If $k=3$, then $j=1$ and $d=1$. Therefore $\bm\varepsilon(\Lambda')\cong \Aa_n$ where $n$ can take odd numbers $3,5,7,9,11,\ldots$. For $n=8$, we take $(4,3,3,3,4,3,2,2)$ which is $\bm\varepsilon$-equivalent to $\Aa_3\oplus\Aa_2$. Any even number greater than $8$, algebras $\bm\varepsilon$-equivalent to $\Aa_n\oplus\Aa_2$ solve the problem.\\

Now we analyze the case $k$ is even. The possible values of $d+j$ forms the set (indeed an interval)
\begin{align}
X=\left\{k-1,k-2\ldots,\frac{k+4}{2},\frac{k+2}{2}\right\}
\end{align}
which follows from the integer solutions of $1\leq j$, $2\leq d$, $k=d+2j$. 
By \ref{equationN}, we get 
\begin{align}
n=x(a+1)+(x+1)b+1
\end{align}
where $a\geq 1$, $b\geq 0$, $x\in X$. Therefore the union of certain translated sets will provide such $n$. In details,
 \begin{align}\label{equ 52}
 \begin{split}
b=0,&\hspace{2cm} 2X+1, 3X+1, 4X+1, 5X+1,\ldots\\
b=1,&\hspace{3.5cm} 3X+2,4X+2,5X+2,\ldots\\
b=2,&\hspace{5cm} 4X+3,5X+3,\ldots\\
\vdots&\hspace{6cm}\vdots
\end{split}
\end{align}
If we set $\delta=0$ and $a=0$, then $n$ can take values $(x+1)(b+1)$ where $x\in X$. In details
\begin{align}\label{equ 51}
2X+2,3X+3,4X+4,5X+5,\ldots
\end{align}
are possible values. Therefore any $mX+t$, $m\geq 1$, $m>t\geq 1$ can be obtained if we combine \ref{equ 51} and \ref{equ 52}. For a fixed $m$, the union of all translated sets $mX+t$, $1\leq t\leq m$ is an interval. Namely:
\begingroup
\allowdisplaybreaks
\begin{align}
\begin{gathered}
2X+1 \cup 2X+2=\left\{k+2,k+3,\ldots,2k\right\}\\
3X+1\cup 3X+2 \cup 3X+3=\left\{3\frac{k+2}{2}+1,\ldots,3k\right\}\\
4X+1\cup 4X+2 \cup 4X+3\cup 4X+4=\left\{2k+5,\ldots 4k\right\}\\
\vdots
\end{gathered}
\end{align}
\endgroup
In general, we get
\begingroup
\allowdisplaybreaks
\begin{align}
\bigcup\limits^{m}_{t=1} mX+t=\left\{m\frac{k+2}{2}+1,\ldots,mk\right\}
\end{align}
\endgroup
Union of these sets covers every integer $n\geq k+2 $ if the infimum of $(m+1)X+1$ is smaller than the supremum of $mX$  where $m\geq 2$. This implies
\begin{align}\label{ccc ddd}\begin{gathered}
(m+1)\frac{k+2}{2}+1\leq mk\implies\\
2m+4\leq k(m-1)\implies\\
\frac{2m+4}{m-1}\leq k.
\end{gathered}
\end{align}
Since the left hand side is decreasing function when $m\geq 2$, its maximum occurs at $m=2$, therefore for all $k\geq 8$, the union covers all integers $n\geq k+2$. As a result we get
\begin{align}
n\in\bigcup\limits_{m\geq 2}\left(\bigcup\limits^{m}_{t=1} mX+t\right)=\Zz_{\geq k+2}\quad\text{if}\,\, k\geq 8.
\end{align}

 The remaining possible values of $k$ are $2,4,6$.
 If $k=6$, $X=\left\{4,5\right\}$. By \ref{ccc ddd}, if $m\geq 3$, the union of all translated sets $mX+t$ is $\Zz_{\geq 13}$. Therefore
 \begin{gather}
 2X+1\cup 2X+2\cup \bigcup\limits_{m\geq 3}\left(\bigcup\limits^{m}_{t=1} mX+t\right)= \\
 \left\{9,10,11,12\right\}\cup \Zz_{\geq 13}= \Zz_{\geq 9}
  \end{gather}
 Hence for $k=6$, all numbers starting at $9$ is in the union of translated sets. For $n=8$, double cover of $(5,5,5,4)$ solves it.\\
 If $k=4$ then $X=\left\{3\right\}$.  If $m\geq 4$, the union of all translated sets $mX+t$ is $\Zz_{\geq 13}$. We get
 \begin{align}\begin{gathered}
 2X+1\cup 2X+2\cup \bigcup\limits_{m=3} mX+t \bigcup\limits_{m\geq 4}\left(\bigcup\limits^{m}_{t=1} mX+t\right)= \\
 \left\{7,8\right\}\cup\left\{10,11,12\right\}\cup \Zz_{\geq 13}.\end{gathered}
 \end{align}
 Therefore the union covers all $\Zz_{\geq 7}$ but $9$.
 If $n=9$, then the threefold covering of $(4,4,3)$ is what we are looking for. For $n=6$, the double covering of $(4,4,3)$ handles it. The case $k=2$ is analyzed in \ref{sectionkis2}.
\end{proof}

\begin{proposition}\label{thm k odd}
If $k$ is odd, there are cyclic Nakayama algebras of rank $k+2$ and $k+3$ which are higher Auslander algebras.
\end{proposition}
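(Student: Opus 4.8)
The plan is to realise both algebras by applying the reverse $\bm\varepsilon$-construction of Theorems~\ref{thmreverseepsilon} and~\ref{thmreverseepsilon2} repeatedly to a small hereditary seed, using that each reverse step raises the global (and dominant) dimension by two while preserving the defect (Theorem~\ref{thmLamdaauslanderimplies} and Corollary~\ref{corAus}). Concretely, if $\cL$ is a connected linear Nakayama algebra or a Nakayama cycle which is a higher Auslander algebra of global dimension $k_0$, defect $d$ and rank $r$, then after $j$ reverse steps one obtains a cyclic higher Auslander algebra of global dimension $k_0+2j$, rank $r+jd$ and defect $d$. Since the target global dimension $k$ is odd, the seed must have odd $k_0$; the economical choice is $k_0=1$, and one checks that a drift-free landing on the prescribed rank forces the seed to have defect $2$.

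For rank $k+2$ I would take $\cL=\Aa_3$, the path algebra of the linear $A_3$ quiver, with Kupisch series $(3,2,1)$. It is hereditary, so $\gldim\Aa_3=1$; moreover its injective envelope is $P_1=I_3$, which is projective-injective, while the next cosyzygy is a non-projective injective, so $\domdim\Aa_3=1$ and $\Aa_3$ is a higher Auslander algebra. Its injective non-projective modules are $I_1=S_1$ and $I_2$, whence $\defect\Aa_3=2$ and $\rank\Aa_3=3$. Applying Theorem~\ref{thmreverseepsilon2}(i) once and then Theorem~\ref{thmreverseepsilon} a further $j-1$ times, with $j=(k-1)/2$, yields a connected cyclic higher Auslander algebra of global dimension $1+2j=k$ and rank $3+2j=k+2$.

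For rank $k+3$ a connected seed of odd global dimension cannot work, because with defect invariance fixed the difference $\rank-\gldim$ is stationary only when $d=2$, and the unique connected linear choices of defect $2$ have even global dimension; so I would use the disconnected seed $\cL=\Aa_2\oplus\Aa_2$, regarded as a Nakayama cycle $(\cL,\tau)$ with the permutation $\tau$ of Definition~\ref{nakayamacycle}. Each copy of $\Aa_2=(2,1)$ is hereditary with $\gldim=\domdim=1$ (again $P_1=I_2$ is projective-injective and the remaining cosyzygy $I_1=S_1$ is non-projective), so both components are linear, non-semisimple, and share global and dominant dimension $1$; hence $(\cL,\tau)$ is an admissible higher Auslander Nakayama cycle with $\defect\cL=1+1=2$ and $\rank\cL=4$. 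Applying Theorem~\ref{thmreverseepsilon2}(ii) and then Theorem~\ref{thmreverseepsilon} for a total of $j=(k-1)/2$ steps produces a connected cyclic higher Auslander algebra of global dimension $k$ and rank $4+2j=k+3$.

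Both constructions need $j\geq 1$, i.e. $k\geq 3$, which is automatic since a cyclic Nakayama algebra cannot have global dimension one (Remark~\ref{remarklis2}). I expect the main obstacle to be the two dominant-dimension computations certifying that the seeds are genuinely higher Auslander, together with the bookkeeping that pins the defect of each seed to $2$: it is precisely the invariance of the defect under $\bm\varepsilon$ (Theorem~\ref{thmLamdaauslanderimplies}) that forces the output rank to equal $k+2$, respectively $k+3$, rather than drifting. For the disconnected seed one must also verify the Nakayama-cycle hypotheses of Theorem~\ref{thmreverseepsilon2}(ii) — no semisimple component, and equal global and dominant dimension across components — both of which hold for $\Aa_2\oplus\Aa_2$.
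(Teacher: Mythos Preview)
Your proof is correct and follows essentially the same approach as the paper. For rank $k+2$ you use $\Aa_3$ exactly as the paper does. For rank $k+3$ the paper instead seeds with the connected $\Aa_2$ (rank $2$, defect $1$), applies $j=(k-1)/2$ reverse steps to reach a cyclic algebra of rank $2+j$, and then takes the double covering via Theorem~\ref{thmdoubling} to land on rank $4+2j=k+3$; your route through the Nakayama cycle $\Aa_2\oplus\Aa_2$ reaches the very same algebra, since by Theorem~\ref{thmdoubling} the double cover commutes with $\bm\varepsilon$. One small remark: your motivational sentence that ``the unique connected linear choices of defect $2$ have even global dimension'' is not quite accurate (you just used $\Aa_3$, which is connected, defect $2$, and of global dimension $1$); the actual obstruction is that a connected defect-$2$ seed with $\rank-\gldim=3$ and odd global dimension does not exist among the linear higher Auslander algebras, which is why either a disconnected seed or a covering is needed.
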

\begin{proof}
For the algebra $\Aa_3$, we have
\begin{align}
\rank\Aa_3=3, \defect\Aa_3=2, \gldim\Aa_3=1.
\end{align}
Therefore if $\Lambda$ is a cyclic Nakayama algebra which is a higher Auslander algebra satisfying $\bm\varepsilon^j(\Lambda)\cong \Aa_3$, then we have
\begingroup
\allowdisplaybreaks
\begin{align}\begin{gathered}
\rank\Lambda=3+2j\\
\gldim\Lambda=\domdim\Lambda=1+2j.
\end{gathered}
\end{align}
\endgroup
Therefore if $k=1+2j$, then $n=k+2$. It is easy to describe their Kupish series which are $(n,n-1,(n-2)^{n-2})$.\\
For the algebra $\Aa_2$, we have
\begin{align}
\rank\Aa_2=2, \defect\Aa_2=1, \gldim\Aa_2=1.
\end{align}
Therefore if $\Lambda$ is a cyclic Nakayama algebra which is a higher Auslander algebra satisfying $\bm\varepsilon^j(\Lambda)\cong \Aa_2$, then we have
\begin{align}\begin{gathered}
\rank\Lambda=2+j\\
\gldim\Lambda=\domdim\Lambda=1+2j
\end{gathered}
\end{align}
Therefore the rank of the double covering of $\Lambda$ is $n=4+2j$, and its dominant dimension is $k=1+2j$. So for all $n=k+3$, there is a higher Auslander algebra. It is easy to describe their Kupish series which are $(n,(n-1)^{n-1},n,(n-1)^{n-1})$.
\end{proof}

\subsection{Algebras of rank ${\bm{k+1}}$ and ${\bm{\gldim k}}$ }

First we prove lemma:

\begin{lemma}\label{lemmalinear} Let $L$ be linear Nakayama algebra of rank $n$. Then:
\begin{enumerate}[label=\roman*)]
\item $\gldim L\leq n-1$
\item $\gldim L=n-1 \iff L=(2^{n-1},1)$
\end{enumerate}
\end{lemma}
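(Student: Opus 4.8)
The plan is to reduce everything to the projective dimensions of the simple modules, since for any Nakayama algebra $\gldim L=\sup_i\pdim S_i$, and then to track how the minimal projective resolution of a simple module moves through the quiver. For a connected linear $L$ with Kupisch series $(c_1,\ldots,c_n)$ (so $c_n=1$), the first syzygy of $S_i$ is $\Omega^1(S_i)=\rad P_i$, a uniserial module with top $S_{i+1}$; more generally I would let $t_k$ be the index with $\topp\Omega^k(S_i)\cong S_{t_k}$, so that the $k$-th projective in the minimal resolution of $S_i$ is $P_{t_k}$, and write $\ell_k$ for the length of $\Omega^k(S_i)$.

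The key step I would establish is that the sequence $t_0<t_1<t_2<\cdots$ is strictly increasing as long as the syzygies are nonzero. The recursion $\Omega^{k+1}(S_i)=\ker\big(P_{t_k}\twoheadrightarrow\Omega^k(S_i)\big)$ exhibits $\Omega^{k+1}(S_i)$ as the submodule of $P_{t_k}$ whose top is $S_{t_k+\ell_k}$, so $t_{k+1}=t_k+\ell_k\ge t_k+1$. Since all $t_k$ lie in $\{i,i+1,\ldots,n\}$, a strictly increasing chain starting at $t_0=i$ has at most $n-i+1$ terms; hence $\pdim S_i\le n-i$, and taking the maximum over $i$ gives $\gldim L\le n-1$. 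This proves (i). For a possibly disconnected $L$ the same count applies inside each connected component, so the bound persists.

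For (ii), the implication $L=(2^{n-1},1)\Rightarrow\gldim L=n-1$ is a direct computation: here $\rad P_{k+1}\cong S_{k+2}$, so $\Omega^k(S_1)\cong S_{k+1}$ for all $k$ and $\pdim S_1=n-1$. Conversely, assume $\gldim L=n-1$. Since $\pdim S_i\le n-i\le n-2$ for $i\ge 2$, the value $n-1$ must be attained at $i=1$, so $\pdim S_1=n-1$. Then the resolution of $S_1$ produces $n$ strictly increasing indices $t_0=1<t_1<\cdots<t_{n-1}$ inside $\{1,\ldots,n\}$, which forces $t_k=k+1$ and hence $\ell_k=t_{k+1}-t_k=1$ for $0\le k\le n-2$. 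Thus each $\Omega^k(S_1)\cong S_{k+1}$ is simple, so $\rad P_{k+1}\cong S_{k+2}$ and $c_{k+1}=2$ for $k=0,\ldots,n-2$; together with $c_n=1$ this is exactly $L=(2^{n-1},1)$, and the connectedness of $L$ is forced along the way.

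The main obstacle I anticipate is making the monotonicity step fully rigorous at the moment the resolution terminates: I must verify that $t_{k+1}=t_k+\ell_k$ is valid precisely while $\Omega^k(S_i)$ is nonprojective (equivalently $\ell_k<c_{t_k}$, so that $\Omega^{k+1}(S_i)\neq 0$), and that the uniserial composition factors of each syzygy really form an initial segment of those of its projective cover $P_{t_k}$. Once this bookkeeping of tops, lengths, and the stopping condition is pinned down, both parts follow from the single strictly-increasing-index count.
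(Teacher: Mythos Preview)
Your argument is correct and rests on the same observation as the paper's proof: in a linear Nakayama algebra the tops of successive syzygies have strictly increasing vertex indices, so a minimal resolution can involve at most $n$ distinct projectives, and equality forces every step to advance by exactly one. The paper phrases part (i) as a count of the $n-1$ non-simple projectives and handles (ii) by contraposition (if some $c_i\ge 3$ the resolution skips an index), but the content is the same; your index-tracking version is simply a more explicit rendering of that sketch.
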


\begin{proof}
$L$ is linear, there are $n-1$ projective non-simple modules. Therefore longest projective resolution can have at most $n-1$ projective modules, which gives the claimed upper bound. To show  the second part: we can assume that there is at least one projective module with the length $\geq 3$. Without loss of generality, let it be $P_1$. The projective resolution of injective module which is the subquotient of $P_1$, can have at most $n-2$ projective modules, because the projective covers of the socle of the injective module cannot appear in the resolution. We conclude that the global dimension is $n-1$ if and only if there is no projective module of length longer than $2$. 
\end{proof}
Now, we show why $n-1$ is not in the spectrum \ref{defspec}.
\begin{proposition}\label{thmlinearNak} Nakayama algebra $\Lambda$ of rank $n$ and global dimension $k$ which is higher Auslander algebra where $n=k+1$ has to be linear Nakayama algebra $(2^k,1)$.
\end{proposition}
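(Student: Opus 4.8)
The plan is to split according to whether $\Lambda$ is linear or cyclic, and to show the cyclic case is impossible. If $\Lambda$ is linear, the statement is almost immediate: a disconnected linear Nakayama algebra of rank $n$ with $t\geq 2$ connected components $\Lambda_i$ has $\gldim\Lambda=\max_i\gldim\Lambda_i\leq\max_i(\rank\Lambda_i-1)\leq n-2<n-1$ by Lemma \ref{lemmalinear}(i), so global dimension $n-1$ forces $\Lambda$ to be connected, and then Lemma \ref{lemmalinear}(ii) yields $\Lambda\cong(2^{n-1},1)=(2^k,1)$. Hence the real content is to rule out that a \emph{cyclic} Nakayama higher Auslander algebra can satisfy $\gldim=\rank-1$.

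For the cyclic case I would run the $\bm\varepsilon$-reduction. Since $\gldim\Lambda=n-1$ is finite, by Remark \ref{remarklis1} there is a minimal $s\geq 1$ with $L:=\bm\varepsilon^s(\Lambda)$ linear (possibly semisimple) while $\bm\varepsilon^{s-1}(\Lambda)$ is cyclic. Iterating Theorem \ref{thmLamdaauslanderimplies} keeps every $\bm\varepsilon^j(\Lambda)$ higher Auslander, and Corollary \ref{corAus} makes the defect invariant, $\defect L=\defect\Lambda=:d$. Combining the rank identity $\rank\Lambda'=\#\rel\Lambda'+\defect\Lambda'$ of \ref{rankequaation} with $\#\rel\Lambda'=\rank\bm\varepsilon(\Lambda')$ and the global-dimension reduction of Remark \ref{remarklis2}, applied $s$ times, gives
\begin{align*}
\rank L=n-sd,\qquad \gldim L=(n-1)-2s=:g.
\end{align*}
Because $L$ arises as $\bm\varepsilon$ of the cyclic higher Auslander algebra $\bm\varepsilon^{s-1}(\Lambda)$, it is a Nakayama cycle, so by the discussion after Definition \ref{nakayamacycle} either $L\cong\bigoplus_m\Aa_1$ or all connected components $L_1,\dots,L_t$ are linear of the common global dimension $g\geq 1$.

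The main step is the non-semisimple case. Each component satisfies $\rank L_i\geq g+1$ by Lemma \ref{lemmalinear}(i) and $\defect L_i\geq 1$, so $d\geq t$ and $\rank L\geq t(g+1)$. Writing $n=g+1+2s$, the inequality $n-sd\geq t(g+1)$ rearranges to $s(2-d)\geq (t-1)(g+1)$. If $t\geq 2$, then $d\geq t\geq 2$ makes the left side $\leq 0$ while the right side is $\geq g+1\geq 1$, a contradiction; hence $t=1$ and $d\leq 2$. If $d=1$, Lemma \ref{lemma1} forces $L\cong(2^{g},1)$ with $\rank L=g+1=n-2s$, whereas $\rank L=n-s$, giving $s=0$, impossible. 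If $d=2$, then $\rank L=n-2s=g+1$, so equality in Lemma \ref{lemmalinear}(i) triggers Lemma \ref{lemmalinear}(ii) and $L\cong(2^{g},1)$, whose defect is $1\neq 2$, again a contradiction.

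Finally, in the semisimple case $L\cong\bigoplus_m\Aa_1$ one has $d=m=\rank L=n-sd$, and since reducing to a semisimple algebra yields $\gldim\Lambda=2s$, the hypothesis $k=n-1$ becomes $(s+1)d=2s+1$, which has no solution with $s\geq1$ and $d\geq1$. Thus no cyclic $\Lambda$ can occur, and the proposition follows. I expect the non-semisimple bookkeeping to be the main obstacle: specifically, securing the fact that \emph{all} components of $L$ share the global dimension $g$ (so that $\rank L_i\geq g+1$ may be summed), and then combining the rank, defect, and global-dimension identities correctly; once these are in place the remaining contradictions are one-line computations.
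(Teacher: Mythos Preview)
Your proof is correct and follows essentially the same $\bm\varepsilon$-reduction strategy as the paper: pass to the linear algebra $L=\bm\varepsilon^s(\Lambda)$, use defect invariance and the rank/global-dimension drops, and reach a contradiction via Lemma~\ref{lemmalinear}. The paper's version is terser: it applies $\gldim L\leq\rank L-1$ directly to get $\defect\Lambda\leq 2$, then eliminates $d=1$ by citing Lemma~\ref{defect one} (defect-one cyclic higher Auslander algebras satisfy $n\leq k$) rather than your rank computation via Lemma~\ref{lemma1}, and handles $d=2$ exactly as you do. Your explicit treatment of the linear input case, of the possibility that $L$ is disconnected, and of the semisimple case $L\cong\bigoplus_m\Aa_1$ makes the argument more self-contained, but the underlying idea is the same.
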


\begin{proof}
Assume to the contrary that let $\Lambda$ be a cyclic Nakayama algebra of rank  $n$, global dimension $k$ such that $n-k=1$. By remark \ref{remarklis1}, there is $m$ such that $L={\bm\varepsilon}^{m+1}(\Lambda)$ is linear. By the results \ref{thmLamdaauslanderimplies}, \ref{thmreverseepsilon}, the defect is invariant and the reductions given in Remark \ref{remarklis2} are two, we get:
\begin{itemize}
\item Global dimension of $L$ is $k-2(m+1)$
\item Rank of $L$ is  $n-(m+1)\defect\Lambda$
\end{itemize}
Global dimension of linear Nakayama algebra $L$ is bounded by $\rank L-1$ by lemma \ref{lemmalinear}. Hence
\begin{gather*}
k-2(m+1)\leq n-(m+1)\defect\Lambda-1\implies\\
\defect\Lambda\leq 2.
\end{gather*}
The defect of $\Lambda$ cannot be one, because in \ref{defect one}, we showed that $\defect\Lambda=1$ implies $\rank\Lambda\leq k$. Therefore the defect of $\Lambda$ and in particular the defect of $L$ has to be two. However, defect two forces that $\rank L=n-2(m+1)$, which makes $\rank L=\gldim L+1$, and by lemma \ref{lemmalinear} $L$ has to be $(2^d,1)$. But this is not possible because the defect of that algebra is not two but one. As a result, there is no cyclic $\Lambda$ satisfying $\rank\Lambda=\gldim\Lambda+1=\domdim\Lambda+1$.
\end{proof}

\subsection{Proof of the Theorem \ref{thmsolutiontoproblem} and the Conjecture}

Here we give the solution to the problem \ref{problem}.
\begin{proof}
By proposition \ref{thmlinearNak}, there is no cyclic Nakayama algebra of rank $k+1$ and global dimension $k$ which is higher Auslander algebra. By the proposition \ref{sectionkis2}, if $k=2$, there is no odd rank cyclic Nakayama algebra. It can take all the remaining values by the results \ref{thmnumericalreduction}, \ref{gustafson}, \ref{thm k odd}. Therefore $\zeta(Q)=\left\{3,\ldots,2n-2\right\}\setminus \left\{n-1\right\}$ if $n$ is odd. If the rank is even, by the results \ref{thmnumericalreduction}, \ref{gustafson}, together with \ref{sectionkis2} we get $\zeta(Q)=\left\{2,\ldots,2n-2\right\}\setminus \left\{n-1\right\}$. To verify the conjecture, we can just add linear algebras discussed in \ref{lemmalinear}, and \ref{sectionkis2}. This solves the conjecture. 
\end{proof}

\section{Appendix}\label{sectionappendix}
By the Theorem \ref{thmreverseepsilon}, finding all cyclic Nakayama algebras which are higher Auslander is equivalent to determining all linear Nakayama algebras which are higher Auslander. By varying the underlying linear quiver, we get different classes of algebras. Now we apply the Theorem to linear Nakayama algebras of format $(n^{an},n,n-1,\ldots,1)$.

\begin{proposition}\label{prop 3.2}
Nakayama algebra $\Lambda$ of format $(n^{\alpha n},n,n-1,\ldots,1)$ satisfies $\gldim\Lambda=\domdim\Lambda=2\alpha+1$.
\end{proposition}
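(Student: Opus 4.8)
The plan is to compute everything directly from the shape of the Kupisch series, splitting the work into the injective non-projective modules (which govern $\domdim\Lambda$ and furnish the lower bound for $\gldim\Lambda$) and the simple modules (which furnish the upper bound for $\gldim\Lambda$). Write $N=(\alpha+1)n$ for the rank and, for $a\le b$, let $[a,b]$ denote the uniserial module with top $S_a$ and socle $S_b$. The series gives $c_i=n$ for $1\le i\le\alpha n+1$ and $c_{\alpha n+j}=n-j+1$ for $1\le j\le n$, so $P_i=[i,i+n-1]$ for $1\le i\le\alpha n$ and $P_{\alpha n+j}=[\alpha n+j,N]$. Analysing socles via Remark \ref{kupisch}, the projective socles are exactly $S_n,\dots,S_N$, so $\cS(\Lambda)=\{S_n,\dots,S_N\}$, $\#\rel\Lambda=\alpha n+1$ and $\defect\Lambda=n-1$. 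Hence the injective non-projective modules are the envelopes $I_t$ of $S_t$ for $1\le t\le n-1$; since $c_1=n>t$ each is $I_t=[1,t]$. Comparing each $P_i$ with the injective envelope of its socle shows $P_i$ is projective-injective precisely when $1\le i\le\alpha n+1$ (for $i\ge\alpha n+2$ the envelope of $S_N$ is the longer module $P_{\alpha n+1}$).

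Next I would resolve each $I_t$ by tracking syzygies as intervals, using $\Omega^1([a,b])=[b+1,\soc P_a]$. One checks inductively the two-step pattern $\Omega^{2k}I_t=[kn+1,kn+t]$ and $\Omega^{2k+1}I_t=[kn+t+1,(k+1)n]$, whose covers $P_{kn+1}$ (for $0\le k\le\alpha$) and $P_{kn+t+1}$ (for $0\le k\le\alpha-1$) all lie in the plateau $\{1,\dots,\alpha n+1\}$ and are therefore projective-injective. The decisive terminal step is at $k=\alpha$: the cover is $P_{\alpha n+1}=[\alpha n+1,N]$, and $\Omega^{2\alpha+1}I_t=[\alpha n+t+1,N]$ has the same top, socle and length $n-t$ as $P_{\alpha n+t+1}$, hence $\Omega^{2\alpha+1}I_t\cong P_{\alpha n+t+1}$ is projective. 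Thus $\pdim I_t=2\alpha+1$, the first $2\alpha+1$ covers are projective-injective while the terminal $P_{\alpha n+t+1}$ is not injective, so by the characterization \ref{gercek tanim} one gets $\domdim I_t=2\alpha+1$ for every $t$. Therefore $\domdim\Lambda=2\alpha+1$ and in particular $\gldim\Lambda\ge\pdim I_1=2\alpha+1$.

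Finally I would bound $\gldim\Lambda$ from above via $\gldim\Lambda=\max_i\pdim S_i$. For $\alpha n+1\le i\le N-1$ one has $\rad P_i=[i+1,N]=P_{i+1}$, so $\pdim S_i\le1$, and $\pdim S_N=0$. For $1\le i\le\alpha n$ the same interval recursion yields $\Omega^2S_i\cong S_{i+n}$, that is $\pdim S_i=2+\pdim S_{i+n}$; each application raises the index by $n$ at the cost of $2$ in projective dimension and reaches the descending range after at most $\alpha$ applications, so $\pdim S_i\le2\alpha+1$. Combining with the previous paragraph gives $\gldim\Lambda=2\alpha+1=\domdim\Lambda$.

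I expect the main obstacle to be exactly the terminal step in the resolution of $I_t$: one must verify simultaneously that the exact unit-by-unit descent of the Kupisch series forces $\Omega^{2\alpha+1}I_t$ to be an honest projective (its length $n-t$ matching $P_{\alpha n+t+1}$ precisely), and that all $2\alpha+1$ earlier covers remain inside the plateau $\{1,\dots,\alpha n+1\}$ where projectives are injective. All the bookkeeping of the syzygy intervals, and the interaction between the constant part and the descending part of the series, is concentrated at this boundary, so care is needed to see that the projective and dominant dimensions both terminate at the same value $2\alpha+1$.
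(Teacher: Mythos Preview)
Your proof is correct and complete. You compute the minimal projective resolution of each injective non-projective $I_t=[1,t]$ explicitly by tracking syzygies as intervals, verify that the first $2\alpha+1$ covers lie in the plateau $\{1,\dots,\alpha n+1\}$ of projective-injectives while the terminal one $P_{\alpha n+t+1}$ does not, and then separately bound $\gldim\Lambda$ from above by resolving the simples via the recursion $\Omega^2 S_i\cong S_{i+n}$. All of this checks out.

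The paper takes a different and much terser route: it argues by induction on $\alpha$, observing (via Ringel's simplification process) that the second syzygy of each injective non-projective module of $(n^{\alpha n},n,n-1,\dots,1)$ is an injective non-projective module of the smaller algebra $(n^{(\alpha-1)n},n,n-1,\dots,1)$, with base case $\Aa_n$. So the paper never writes down the full resolution; it peels off two steps at a time and reduces to a smaller instance of the same family, which fits the $\bm\varepsilon$-reduction philosophy of the whole article. Your approach is more elementary and self-contained, and has the advantage of producing the entire resolution explicitly (and of giving an independent upper bound on $\gldim$ via the simples, which the paper's sketch leaves implicit). The paper's approach, on the other hand, is shorter and highlights the structural recursion that drives the rest of the paper.
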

\begin{proof}
The second syzygy of any injective non-projective module $I_a$ can be viewed injective non-projective module of algebra $(n^{(\alpha-1)n},n,n-1,\ldots,1)$, by Ringel's simplification process \cite{rin1}. Claim follows by induction, the last algebra is of type $\Aa$.
\end{proof}

\begin{definition}\label{defbra}
To present Kupisch series without cumbersome indices we define the bracket: $[X,Y]:=(X,X-1,\ldots,Y+1,Y^Y)$ where $X>Y$.
\end{definition}

\begin{proposition}\label{prop 4.1} Let $X=(j+1)n-j$, $Y=jn-(j-1)$. Nakayama algebra of format $[X,Y]$ (see definition \ref{defbra}) satisfies:
\begin{enumerate}[label=\roman*)]
\item $\bm{\varepsilon}(\Lambda)=[Y,Y-n+1]$
\item $\bm{\varepsilon}^j(\Lambda)\cong \Aa_n$
\item $\gldim\Lambda=2j+1$
\item $\domdim\Lambda=2j+1$
\end{enumerate}
\end{proposition}
\begin{proof}
Easiest way to prove the first item is based on relations and the computation of the base set \ref{defbase}. We also include description of projectives with respect to their socles \cite{sen18}:
Relations are: (see section \ref{sectionprelim})
\begin{gather*}
\alpha_{X}\cdots \alpha_{n}=0\\
\alpha_{1}\cdots \alpha_{n+1}=0\\
\vdots\\
\alpha_{Y-1}\cdots \alpha_{X}=0.
\end{gather*}

In particular, classes of projective modules are:
\begin{gather}\label{classesofprojectives}
P_{n}\hookrightarrow P_{n-1}\hookrightarrow\ldots\hookrightarrow  P_{2}\hookrightarrow  P_{1} \quad\text{ have simple } S_{X} \text{  as their socle}\\
 P_{n+1} \nonumber \quad\text{ has simple } S_{1} \text{  as its socle}\\\nonumber
\vdots\qquad\qquad\qquad\qquad\qquad\qquad\qquad\\\nonumber
P_{X}\quad  \text{ has simple } S_{Y-1} \text{  as its socle}\nonumber
\end{gather}

The base set is $\cB(\Lambda)=\{S_1,S_2,\ldots,S_{Y-1},M\}$ where $M=\begin{vmatrix}
S_Y\\
S_{Y+1}\\
\vdots\\
S_X
\end{vmatrix}$. All projective modules indexed by numbers $1,2,\ldots,Y-1,Y$ have $\bm{\varepsilon}$ filtration, their $\bm{\varepsilon}(\Lambda)$ length reduces by $X-Y$ which is $n-1$. Projective $\Lambda$-modules indexed by $Y+1,\ldots,X$ are eliminated, hence $\bm{\varepsilon}(\Lambda)$ algebra has $Y$ indecomposable projective modules. Lengths of those projectives forms Kupisch series $[Y,Y-n+1]$, the first item is done.

We can apply the proof above recursively to reach $\Aa_n$.
Completing one step reduces $j$ by $1$, recall that $Y=jn-(j-1)$. Because $\Aa_n$ is simply the case corresponds to $j=1$ in $X=(j+1)n-j$, we get $\bm{\varepsilon}^j(\Lambda)\cong\Aa_n$. Also, this shows that global dimension of $\Lambda$ is $2j+1$.

Now, we check dominant dimension of $\Lambda$ by using syzygy filtrations. $\Aa_n$ is a hereditary algebra hence its dominant dimension is one, which is the first step of induction. Assume that $\bm{\varepsilon}(\Lambda)$ is higher Auslander algebra. Injective modules of $\Lambda$ we need to check are $I_Y,\ldots, I_{X-1}$. We have the resolution:
\begin{align}
 P_{X\dotplus 1}\longrightarrow P_{z\dotplus 1}\longrightarrow P_1\longrightarrow I_z\longrightarrow 0
\end{align}
where $Y-1\leq z\leq X-1$. $z\dotplus 1$ is larger than $Y$ and in particular larger than $n$, therefore $P_{z\dotplus 1}$ is projective injective module. Projective cover of the second syzygy is $P_{X\dotplus 1}$ which is cyclically $P_1$. Therefore we get reduction:
\begin{align}
\domdim I_z=2+\domdim\Omega^2(I_z)
\end{align} 
By induction, $\Lambda$ is a higher Auslander algebra of global dimension $2j+1$.
\end{proof}

\begin{proposition}\label{prop 4.2}
Let $X=(j+1)n-j$, $Y=jn-j+1$.
Nakayama algebra $\Lambda$ of format $(X^{\alpha X},[X,Y],Y^{\beta Y})$ satisfies:
\begin{enumerate}[label=\roman*)]
\item If $\alpha\geq 1$ then  $\bm{\varepsilon}(\Lambda)\cong (X^{(\alpha-1) X},[X,Y],Y^{(\beta+1) Y})$
\item If $\alpha=0$, then  $\bm{\varepsilon}(\Lambda)\cong (\bm{\varepsilon}([X,Y]),Y^{\beta Y})$
\item If $\bm{\varepsilon}(\Lambda)$ is higher Auslander algebra, then $\Lambda$ is.
\end{enumerate}
\end{proposition}

\begin{proof}
First we need to find the base set of the given algebra. It includes all simple modules except $\{S_Y,\ldots S_X\}$. Indeed, as in the proof of previous proposition the only module of length greater than one in $\cB(\Lambda)$ is $M=\begin{vmatrix}
S_Y\\
S_{Y+1}\\
\vdots\\
S_X
\end{vmatrix}$.
Now, syzygy filtration reduces length of projective modules containing $M$ as subquotient by $n-1$, there are exactly $Y$ many of them. Furthermore, those modules are of length $X$, therefore we are shorting modules from the $(X^{\alpha X})$ part. Remaining modules have $\bm\varepsilon$-filtration by the simple modules of the base set $\cB(\Lambda)$. Therefore, they preserve their length. This finishes the proof of the first item.

If $\alpha=0$, base set has similar structure, all simple modules except simple composition factors of module $M$. By proposition \ref{prop 4.1}, this gives required result, since remaining modules cannot have $M$ as subquotient.

The last item follows easily, since injective modules indexed by $I_{Y},\ldots,I_{X-1}$ satisfies:
\begin{align}
\domdim I_z=2+\domdim \Omega^2(I_z)
\end{align}
where $I_z$ runs through $I_Y$ to $I_{X-1}$. By induction, $\Lambda$ is higher Auslander algebra.
\end{proof}

\begin{example}
We describe some classes of higher Auslander algebras with odd global dimensions.
\begin{itemize}
\item $k=5$
\begin{enumerate}[label=\arabic*)]
\item $[3n-2,2n-1]=(3n-2,\ldots,2n-1,(2n-1)^{2n-1})$
\item  $([2n-1,n],n^n)=(2n-1,\ldots,n^{2n})$
\item $(n^{2n},n,\ldots,1)$.
\end{enumerate}

\item $k=7$
\begin{enumerate}[label=\arabic*)]
\item $[4n-3,3n-2]$
\item $([2n-1,n],n^{2n})=(2n-1,\ldots,n^{3n})$
\item $((2n-1)^{2n-1},[2n-1,n])$
\item $(n^{3n},n,\ldots,1)$
\end{enumerate}
\item $k=9$
\begin{enumerate}[label=\arabic*)]
\item  $[5n-4,4n-3]$
\item  $([3n-2,2n-1],(2n-1)^{2n-1})$
\item $([2n-1,n],n^{3n})$
\item  $((2n-1)^{2n-1},[2n-1,n],n^n)$
\item  $(n^{4n},n,\ldots,1)$
\end{enumerate}
\end{itemize}
\end{example}

\begin{example}
We give some higher Auslander algebras of small rank with even global dimensions.
\begin{itemize}
\item $k=4$, ranks are in the parentheses.
\begin{enumerate}[label=(\arabic*)]
  \setcounter{enumi}{2}
\item $(4,4,3)$
\item $(3,2,2,2)$
\item $(2,2,2,2,1)$
\item $(4,4,3,4,4,3)$
\item $(3,3,3,3,2,3,2)$
\item $(3,2,2,2,3,2,2,2)$
\item $(4,4,3,4,4,3,4,4,3)$
\item $(3,3,4,4,3,3,3,2,3,2)$
\item $(2,2,3,2,3,3,3,3,2,3,2)$
\item $(3,2,2,2,3,2,2,2,3,2,2,2)$
\item $(3,3,4,4,3,4,4,3,3,3,2,3,2)$
\item $(3,3,3,3,2,3,2,3,3,3,3,2,3,2)$
\item $(2,2,3,2,2,2,3,2,3,3,3,3,2,3,2)$
\end{enumerate}
\item $k=6$, ranks are in the parentheses.
\begin{enumerate}[label=(\arabic*)]
  \setcounter{enumi}{3}
  \item $(5,5,5,4)$
  \item $(3,3,3,3,2)$
  \item $(3,2,2,2,2,2)$
  \item $(2,2,2,2,2,2,1)$
  \item $(5,5,5,4,5,5,5,4)$
  \item $(4,4,3,3,3,3,4,4,3)$
  \item $(3,3,3,3,2,3,3,3,3,2)$
  \item $(3,3,3,3,2,2,2,3,2,2,2)$
  \item $(3,2,2,2,2,2,3,2,2,2,2,2)$
  \item $(4,4,4,4,4,4,3,4,4,3,4,4,3)$
  \item $(4,4,3,3,3,3,4,4,3,3,3,2,3,3)$
  \item $(3,3,3,3,2,3,3,3,3,2,3,3,3,3,2)$
  \end{enumerate}
\end{itemize}

\end{example}

\bibliographystyle{alpha}

\end{document}